\documentclass[a4paper,12pt]{scrartcl}

\usepackage[backref=true,style=alphabetic,maxnames=5]{biblatex}
\bibliography{kan}

\usepackage[left=2.5cm,right=2.5cm,top=2.7cm,bottom=2.7cm]{geometry} 
\usepackage[utf8]{inputenc}
\usepackage[english]{babel}
\usepackage{amssymb}
\usepackage{amsmath}
\usepackage{latexsym}
\usepackage[bookmarks]{hyperref}
\usepackage{amsthm}
\usepackage{graphicx}
\usepackage{bbm}
\usepackage{verbatim}
\usepackage{epigraph}
\usepackage{mathtools}
\usepackage{authblk}
\usepackage[pdflatex]{crop}
\usepackage{tikz}
\usepackage{tikz-cd}
\usepackage{wasysym}
\usepackage{adjustbox}
\usepackage{microtype}
\usepackage{chngcntr}
\usepackage{csquotes}
\usepackage[capitalize]{cleveref}
\usepackage{stmaryrd}	
\allowdisplaybreaks
\usetikzlibrary{shapes.geometric,fit}
\usetikzlibrary{decorations.pathmorphing}

\setkomafont{disposition}{\normalfont\bfseries}
\setlength{\jot}{2ex}
\linespread{1.2} 

\newcommand{\auth}[0]{Paolo Perrone}
\newcommand{\tit}[0]{Kan extensions are partial colimits}
\newcommand{\kw}[0]{Categorical probability, Giry monad, Kantorovich monad.}

\hypersetup{
pdfauthor={\auth},%
pdftitle={\tit},%
colorlinks, linktocpage=true, pdfstartpage=1, pdfstartview=FitV,%
breaklinks=true, pdfpagemode=UseNone, pageanchor=true, pdfpagemode=UseOutlines,%
plainpages=false, bookmarksnumbered, bookmarksopen=true, bookmarksopenlevel=1,%
hypertexnames=true, pdfhighlight=/O,%
urlcolor=black, linkcolor=black, citecolor=black, 
}
\pdfinfo{%
  /Title    (\tit)
  /Author   (\auth)
  /Creator  (\auth)
  /Subject  (Category Theory, Probability, Topology, Measure Theory)
  /Keywords (\kw)
}


\numberwithin{equation}{section}

\theoremstyle{plain}
\newtheorem{thm}{Theorem}[section]
\newtheorem{lemma}[thm]{Lemma}
\newtheorem{prop}[thm]{Proposition}
\newtheorem{cor}[thm]{Corollary}

\newtheorem{deph}[thm]{Definition}

\theoremstyle{definition}
\newtheorem{remark}[thm]{Remark}
\newtheorem{eg}[thm]{Example}

\DeclareMathOperator{\Hom}{Hom}

\DeclareMathOperator{\im}{Im}


\newcommand{\N}{\mathbb{N}}

\newcommand{\R}{\mathbb{R}}

\newcommand{\cat}[1]{{\mathsf{#1}}} 
\newcommand{\ar}[2][]{\arrow{#2}{#1}}
\newcommand{\nat}[2][]{\arrow[Rightarrow]{#2}{#1}} 
\newcommand{\idar}[2][]{\arrow[equal]{#2}{#1}} 

\newcommand{\op}{\mathrm{op}}

\DeclareMathOperator*{\colim}{colim}
\newcommand{\id}{\mathrm{id}} 
\newcommand{\Lan}[2]{\mathrm{Lan}_{#2}{#1}} 


\DeclareMathOperator*{\bigunion}{\bigcup}


\def\groth{\begingroup\textstyle \int \! \endgroup}

\let\originalleft\left
\let\originalright\right
\renewcommand{\left}{\mathopen{}\mathclose\bgroup\originalleft}
\renewcommand{\right}{\aftergroup\egroup\originalright}


\tikzset{shorten <>/.style={shorten >=#1,shorten <=#1}}
\tikzset{Rightarrow/.style={double equal sign distance,>={Implies},->},
triple/.style={-,preaction={draw,Rightarrow}}}

\title{\tit}

\author[1]{Paolo Perrone\footnote{Correspondence: paolo.perrone [at] cs.ox.ac.uk}}
\affil[1]{\small University of Oxford, England, U.K.}

\author[2]{Walter Tholen}
\affil[2]{\small York University, Toronto ON, Canada}

\date{}

\begin{document}

\maketitle

\begin{abstract}
 One way of interpreting a left Kan extension is as taking a kind of ``partial colimit'', whereby one replaces parts of a diagram by their colimits. We make this intuition precise by means of the ``partial evaluations'' sitting in the so-called bar construction of monads. The (pseudo)monads of interest for forming colimits are the monad of diagrams and the monad of small presheaves, both on the (huge) category CAT of locally small categories. Throughout, particular care is taken to handle size issues, which are notoriously delicate in the context of free cocompletion.  
 
 We spell out, with all 2-dimensional details, the structure maps of these pseudomonads. Then, based on a detailed general proof of how the ``restriction-of-scalars'' construction of monads extends to the case of pseudoalgebras over pseudomonads, we define a morphism of monads between them, which we call ``image''. This morphism allows us in particular to generalize the idea of ``confinal functors'', i.e.~of functors which leave colimits invariant in an absolute way. This generalization includes the concept of absolute colimit as a special case.
  
 The main result of this paper spells out how a pointwise left Kan extension of a diagram corresponds precisely to a partial evaluation of its colimit. This categorical result is analogous to what happens in the case of probability monads, where a conditional expectation of a random variable corresponds to a partial evaluation of its center of mass. 
\end{abstract}

\newpage
\tableofcontents

\section{Introduction}

Kan extensions are a prominent tool of category theory, to the extent that, already in the preface to the first edition of \cite{maclane}, Mac Lane declared that \emph{``all concepts of category theory are Kan extensions''}, a claim reinforced more recently in \cite[Chapter~1]{riehl}. 
However, they are also considered to be a notoriously slippery concept, especially by newcomers to the subject. One of the most powerful pictures that help understanding how they work may be the idea that Kan extensions, especially in their pointwise form, ``replace parts of a diagram with their best approximations, either from the right or from the left''. 
In other words, Kan extensions can be seen as taking limits or colimits of ``parts'' of a diagram. The scope of this paper is making this intuition mathematically precise. 

We make use of the concept of \emph{partial evaluation}, which was introduced in \cite{pev}, and which is a way to formalize ``partially computed operations'' in terms of monads. The standard example is that ``$1+2+3+4$'' may be evaluated to ``$10$'', but also \emph{partially evaluated} to ``$3+7$'', whereby \emph{parts} of the given sum have been replaced by their sums. 

Just as monads on sets may be seen as encoding different algebraic structures and operations, here we consider pseudomonads on categories which encode the operation of \emph{taking colimits}. 
We are in particular interested in two pseudomonads: the \emph{monad of diagrams} and the \emph{monad of small presheaves} (also known as the \emph{free (small) cocompletion monad}). Both monads are known in the literature, but certainly not presented in sufficient detail as needed for our purposes. To make the paper more accessible, we therefore decided to spell out their definition in full detail, in \Cref{monadofdiagrams} and \Cref{smallpsh}. 

The definitions of pseudomonads, pseudoalgebras, and their morphisms are also hard to find in the literature in sufficient detail. For this reason, to avoid any ambiguity, we have given a detailed account of them in \Cref{pseudomonads}.
Readers who are familiar with these pseudomonads, and with the concepts of pseudomonads in general, may skip these sections, with the exception of \Cref{seccocompalg}, \Cref{immm} and \Cref{restrictionofscalars}, which contain new results.

Here is what the novel content of this work consists of. First of all, we introduce the concept of ``image presheaf'', which takes a diagram and forms a presheaf that can be considered the ``free colimit'' of the diagram. This induces a morphism of monads from the monad of diagrams to the monad of small presheaves, which in turn gives a ``pullback'' functor between the categories of algebras (we prove the 2-dimensional version of this statement in \Cref{restrictionofscalars}). 
This morphism of monads is not injective in any sense. Indeed, it turns out that diagrams with isomorphic image presheaves have the same colimit, in a very strong sense, analogous to ``differing by a confinal functor''. We indeed generalize the theory of confinal functors, and connect it to the theory of absolute colimits -- both because we need that in order to prove the subsequent statements, and because it should be interesting for its own sake.

We then turn to the central topic of this paper and study partial evaluations for both monads. We prove that partial evaluations for the monad of diagrams correspond to pointwise left Kan extensions along split opfibrations, by invoking the Grothendieck correspondence between split opfibrations and functors into $\cat{Cat}$. 
For the monad of small presheaves, we show that partial evaluations correspond to pointwise left Kan extensions along arbitrary functors. This result may be summarized in the following way: given small presheaves $P$ and $Q$ on a locally small, small-cocomplete category, $Q$ is a partial colimit of $P$ if and only if they can be written as image presheaves of small diagrams $D$ and $D'$, in such a way that $D'$ is the left Kan extension of $D$ along some functor. More concisely, \emph{Kan extensions are partial colimits}, as claimed by the paper's title. 

This result is analogous to, and was motivated by, an analogous result in measure theory involving probability monads, where partial evaluations (or ``partial expectations'') correspond exactly to conditional expectations (\Cref{condexpthm}).
Indeed, one could say that ``if coends are like integrals, then Kan extensions are like conditional expectations''.
(See \Cref{integrals} for more on this.)

As usual, when one talks about free cocompletion, one has to be very careful with size issues. This is why some parts of this work, such as the proof of \Cref{imimeso}, appear to be rather technical. The payoff is that the main theorems of this work will hold for arbitrary (small) colimits in arbitrary (locally small) categories, beyond the trivial case of preorders.

\paragraph{Outline.}
\addcontentsline{toc}{subsection}{Outline}
In \Cref{monadofdiagrams} we study the category of diagrams in a given category, and show that the construction gives a pseudomonad on the 2-category of locally small categories. While this construction seems to be known, its details don't seem to have been spelled out previously.
The content of \Cref{seccocompalg}, however, seems to be entirely new. We show that cocomplete categories, equipped with a choice of colimit for each diagrams, are pseudoalgebras over this pseudomonad, and that not all pseudoalgebras are of that form.

In \Cref{imagepsh} we define the concept of ``image presheaf'', which can be seen as a ``free colimit of a diagram'', or as a ``colimit blueprint''. We show that ``having the same image presheaf'' is a strong and consistent generalization both of the theory of confinal functors (\Cref{confinallemma}), and of the concept of absolute colimit (\Cref{absolutelemma}). 
As far as we know, this generalization is new.

In \Cref{smallpsh} we study small presheaves and show that they form a pseudomonad. Again, this is known, but here we spell out the construction in much greater detail than previous accounts have done. This enables us to establish the new result presented in \Cref{immm}: the image presheaf construction forms a morphism of pseudomonads from diagrams to presheaves.

The principal new results of this paper appear in  \Cref{partialcolimits}. \Cref{weakform} and \Cref{strongform} state that partial colimits for the monad of diagrams correspond to pointwise left Kan extensions of diagrams along split opfibrations. In \Cref{kanpevP} we prove that partial colimits for the free cocompletion monad correspond to pointwise left Kan extensions of diagrams along arbitrary functors.
Then, in \Cref{integrals}, we compare this categorical result to the analogous measure-theoretic fact that partial expectations for probability monads correspond (in some cases) to conditional expectations of random variables. This is in line with the famous analogy between coends and integrals. 

Finally, in \Cref{pseudomonads} we recall the (known) definition of pseudomonads and pseudoalgebras, and of the categories they form, which we use in the rest of the paper.
We also provide a 2-dimensional version of the ``restriction of scalars'' construction (\Cref{rescalthm}), where a morphism of monads induces a functor between the categories of their algebras in opposite direction. As far as we know, this 2-dimensional version has not appeared in the literature previously..

\paragraph{Acknowledgements.}
\addcontentsline{toc}{subsection}{Acknowledgements}
The first author would like to thank Bartosz Milewski and David Jaz Myers for the insight on coends and weighted limits, Joachim Kock and Emily Riehl for enlightenment on some of the higher-dimensional aspects, and Tobias Fritz for further helpful insight.

The first author would also like to thank Sean McKenna, as well as David Spivak and MIT as a whole, for all the support during the 2020 pandemic, and Nathanael Arkor for the development of the app Quiver, which proved to be very helpful in writing some of the diagrams in this document.

The first author was affiliated to the Massachusetts Institute of Technology (MIT) for most of the time of writing. This research was partially funded by the Fields Institute (Canada) and by AFOSR grants FA9550-19-1-0113 and FA9550-17-1-0058 (U.S.A.).
The second author acknowledges partial financial support by the Natural Science and Engineering Council of Canada under the Discovery Grants Program (grant no.~501260).

\paragraph{Categorical setting, notation, and conventions.}
\addcontentsline{toc}{subsection}{Categorical setting, notation, and conventions}
As it is to be expected when one talks about generic colimits, size issues are relevant. Here are our conventions.

All the categories in this work (except $\cat{CAT}$) are assumed locally small. We denote by $\cat{Cat}$ the 2-category of small categories, and by $\cat{CAT}$ the 2-category of possibly large, locally small categories.
Note that $\cat{CAT}$ is itself larger than a large category (some authors call it a ``huge'' category).

When we say ``category'', without specifying the size, we will always implicitly refer to a possibly large, locally small category.

Similarly, by ``cocomplete category'' we always mean a possibly large, locally small category which admits all \emph{small} colimits.

\section{The monad of diagrams}\label{monadofdiagrams}

In this section we define the monad of diagrams. The first source for it that we are aware of is Guitart’s article \cite{guitart}, but without an explicit construction.
We give in detail all the structure maps, and in \Cref{seccocompalg} we prove that cocomplete categories with a choice of colimits are pseudoalgebras (but not all pseudoalgebras are in this form). 
The notions of pseudomonad and pseudoalgebra that we use are given in detail in \Cref{pseudomonads}.

Note that, differently from some of the literature, we use the following slightly generalized notion of \emph{morphism of diagrams} (also used, for example, in Guitart's original work \cite{guitart}). Moreover, in order to avoid size issues, we require every diagram to be small.

\begin{deph}\label{defdiag}
Let $\cat{C}$ be a locally small category. 

\begin{itemize}
\item We call a \emph{diagram in $\cat{C}$} a small category $\cat{J}$ together with a functor $D:\cat{J}\to\cat{C}$. Throughout this work, all the diagrams will be implicitly assumed to be of this form (i.e.~be small).

\item Given diagrams $(\cat{J},D)$ and $(\cat{J'},D')$ in $\cat{C}$, we call a \emph{morphism} of diagrams a functor $R:\cat{J}\to\cat{J'}$ together with a natural transformation $\rho:D\to D'\circ R$, i.e.~a diagram in $\cat{CAT}$ as the following.
$$
\begin{tikzcd}[row sep=small]
 \cat{J} 
  \ar[""{name=D,below}]{dr}{D} 
  \ar{dd}[swap]{R} \\
 & \cat{C} \\
 \cat{J'} 
  \ar{ur}[swap]{D'} 
  \ar[Rightarrow,from=D, "\rho", shorten <= 0.5em, shorten >= 1em, near start]
\end{tikzcd}
$$

\item Given diagrams $(\cat{J},D)$ and $(\cat{J'},D')$ in $D$ and morphisms of diagrams $(R,\rho), (R',\rho'): (\cat{J},D) \to (\cat{J'},D')$, we call a \emph{2-cell} of diagrams a natural transformation $\alpha:R\Rightarrow R'$ such that the following 2-cells are equal.
$$
\begin{tikzcd}[column sep=large]
 \cat{J} 
  \ar[""{name=D,below, pos=0.7}]{dr}{D} 
  \ar[bend right, ""{name=RP, right}]{dd}[swap]{R'} \ar[bend left, ""{name=R, left}]{dd}[pos=0.4]{R} \\
 & \cat{C} \\
 \cat{J'} 
  \ar{ur}[swap]{D'} 
  \ar[Rightarrow,from=D, "\rho", shorten <= 0.5em, shorten >= 1.5em, near start]
  \ar[Rightarrow,from=R,to=RP,"\alpha"]
\end{tikzcd}
\qquad=\qquad\begin{tikzcd}[column sep=large]
 \cat{J} 
  \ar[""{name=D,below}]{dr}{D} 
  \ar[bend right]{dd}[swap]{R'} \\
 & \cat{C} \\
 \cat{J'} 
  \ar{ur}[swap]{D'} 
  \ar[Rightarrow,from=D, "\rho'", shorten <= 0.5em, shorten >= 1em, swap]
\end{tikzcd}
$$
\end{itemize}

We denote by $\cat{Diag(C)}$ the 2-category of diagrams in $\cat{C}$, their morphisms, and their 2-cells. (Sometimes we will still denote by $\cat{Diag(C)}$ the underlying 1-category.)
\end{deph}

Note that the definition of morphism of diagrams is slightly more general than just a natural transformation between parallel functors. 
This is still compatible with the traditional intuitive picture of ``deforming a diagram into another one'', provided that one notices the following. In principle $R$ may be not essentially surjective, so one should visualize the natural transformation $\rho$ as ``deforming'' the figure drawn by $D$ into a \emph{subfigure} of the one drawn by $D'$. 

Note moreover that:
\begin{itemize}
 \item For $\cat{C}$ locally small, $\cat{Diag(C)}$ is locally small too;
 \item The forgetful functor $\cat{Diag(C)}\to \cat{Cat}$ given by the domain is a fibration (via precomposition), and it is an opfibration (via left Kan extensions) if and only if $\cat{C}$ is cocomplete (see for example \cite[Proposition~2.8]{decomposition}).
\end{itemize}

In the rest of this section we show that $\cat{C}\rightsquigarrow\cat{Diag(C)}$ is part of a pseudomonad on $\cat{CAT}$, and that cocomplete categories with a choice of colimit for each diagram are pseudoalgebras, with the structure map given by such chosen colimits. For the precise definitions of pseudomonads and pseudoalgebras, see \Cref{pseudomonads}.

\subsection{Functoriality} 

We show that the assignment $\cat{C}\rightsquigarrow\cat{Diag(C)}$ is part of a 2-functor on $\cat{CAT}$. First of all, let $\cat{C}$ and $\cat{D}$ be locally small categories, and let $F:\cat{C}\to\cat{D}$ be a functor. 

Consider now $\cat{Diag(C)}$ and $\cat{Diag(D)}$ as 1-categories. There is a (1-)functor $F_*:\cat{Diag(C)}\to\cat{Diag(D)}$ induced by $F$ via postcomposition and whiskering, as follows. 
$$
\begin{tikzcd}
 \cat{J} \ar{r}{D} & \cat{C} 
\end{tikzcd}
\qquad\rightsquigarrow\qquad
\begin{tikzcd}
 \cat{J} \ar{r}{D} & \cat{C} \ar{r}{F} & \cat{D}
\end{tikzcd}
$$

$$
\begin{tikzcd}[row sep=small]
 \cat{J} 
  \ar[""{name=D,below}]{dr}{D} 
  \ar{dd}[swap]{R} \\
 & \cat{C} \\
 \cat{J'} 
  \ar{ur}[swap]{D'} 
  \ar[Rightarrow,from=D, "\rho", shorten <= 0.5em, shorten >= 1em, near start]
\end{tikzcd}
\qquad\rightsquigarrow\qquad
\begin{tikzcd}[row sep=small]
 \cat{J} 
  \ar[""{name=D,below}]{dr}{D} 
  \ar{dd}[swap]{R} \\
 & \cat{C} \ar{r}{F} & \cat{D} \\
 \cat{J'} 
  \ar{ur}[swap]{D'} 
  \ar[Rightarrow,from=D, "\rho", shorten <= 0.5em, shorten >= 1em, near start]
\end{tikzcd}
$$
Therefore, $\cat{Diag}$ is an endofunctor of $\cat{CAT}$.

The functor $F_*:\cat{Diag(C)}\to\cat{Diag(D)}$ extends to $2$-cells giving a 2-functor, but we will not need this in order for $\cat{Diag}$ to be a pseudomonad $\cat{CAT}$.

On the other hand, we need to extend $\cat{Diag}$ to the 2-cells of $\cat{CAT}$. So let $\cat{C}$ and $\cat{D}$ be locally small categories, let $F,G:\cat{C}\to\cat{D}$ be functors, and let $\alpha:F\Rightarrow G$ be a natural transformation.
We have an induced natural transformation $\alpha_*:F_*\Rightarrow G_*$ induced via whiskering, as follows. 
To each diagram $(\cat{J},D)$ in $\cat{C}$, we assign the morphism of diagrams $(\id_\cat{J}, \alpha D)$ of $\cat{D}$, i.e.
$$
\begin{tikzcd}
 \cat{J} \ar{r}{D} & \cat{C} 
\end{tikzcd}
\qquad\rightsquigarrow\qquad
\begin{tikzcd}
 \cat{J} \ar{r}{D} & \cat{C} \ar[bend left=35, ""{name=F,below}]{r}{F} \ar[bend right=35, ""{name=G,above}]{r}[swap]{G} & \cat{D}
 \ar[Rightarrow, from=F, to=G, "\alpha"]
\end{tikzcd}
$$
Naturality follows from naturality of $\alpha$.
This makes $\cat{Diag}$ a strict 2-functor $\cat{CAT}\to\cat{CAT}$.

\subsection{Unit and multiplication}

\subsubsection{The unit: one-object diagrams}

The unit of the monad is a map constructing ``one-object diagrams''.
In detail, let $\cat{C}$ be a locally small category. We construct the functor $\eta_\cat{C}:\cat{C}\to \cat{Diag(C)}$ as follows.
$$
\begin{tikzcd}
 C
\end{tikzcd}
\qquad\rightsquigarrow\qquad
\begin{tikzcd}
 \cat{1} \ar{r}{C} & \cat{C}
\end{tikzcd}
$$
$$
\begin{tikzcd}
 C \ar{r}{f} & C'
\end{tikzcd}
\qquad\rightsquigarrow\qquad
\begin{tikzcd}[row sep=small]
 \cat{1} 
  \ar[""{name=D,below}]{dr}{C} 
  \ar{dd}[swap]{\id} \\
 & \cat{C} \\
 \cat{1} 
  \ar{ur}[swap]{C'} 
  \ar[Rightarrow,from=D, "f", shorten <= 0.5em, shorten >= 1em, near start]
\end{tikzcd}
$$
For brevity, we will denote $\eta_\cat{C}$ simply by $\eta$. This is (strictly) natural in the category $\cat{C}$: given a functor $F:\cat{C}\to\cat{D}$, the following diagram commutes strictly.
$$
\begin{tikzcd}
 \cat{C} \ar{d}{\eta} \ar{r}{F} & \cat{D} \ar{d}{\eta} \\
 \cat{Diag(C)} \ar{r}{F_*} & \cat{Diag(D)} 
\end{tikzcd}
$$
Indeed, both paths in the diagram give the following assignment,
$$
\begin{tikzcd}
 C
\end{tikzcd}
\qquad\rightsquigarrow\qquad
\begin{tikzcd}
 \cat{1} \ar{r}{C} & \cat{C} \ar{r}{F} & \cat{D}
\end{tikzcd}
$$
$$
\begin{tikzcd}
 C \ar{r}{f} & C'
\end{tikzcd}
\qquad\rightsquigarrow\qquad
\begin{tikzcd}[row sep=small]
 \cat{1} 
  \ar[""{name=D,below}]{dr}{C} 
  \ar{dd}[swap]{\id} \\
 & \cat{C} \ar{r}{F} & \cat{D} \\
 \cat{1} 
  \ar{ur}[swap]{C'} 
  \ar[Rightarrow,from=D, "f", shorten <= 0.5em, shorten >= 1em, near start]
\end{tikzcd}
$$
using the fact that $F(C)=F\circ C$ if we view $C$ as a functor $1\to\cat{C}$, and that analogously $Ff$ is given by whiskering $f$ (seen as a natural transformation) with $F$.

\subsubsection{Diagrams of diagrams are lax cocones}

Let's now turn to the multiplication. We first notice that an object of $\cat{Diag(Diag(C))}$ is the same as a lax cocone in $\cat{CAT}$ with tip $\cat{C}$, where the indexing category and all the categories appearing in the cone except $\cat{C}$ are required to be small. Let's see how. 
Let $\cat{J}$ be a small category. A functor $D:\cat{J}\to\cat{Diag(C)}$ assigns to each object $J$ of $\cat{J}$ a diagram in $\cat{C}$, i.e.~a small category $D_0J$ together with a functor $D_1J:D_0J\to\cat{C}$:
$$
\begin{tikzcd}
 J
\end{tikzcd}
\qquad\rightsquigarrow\qquad
\begin{tikzcd}
 D_0J \ar{r}{D_1J} & \cat{C}
\end{tikzcd}
$$
and to each morphism $j:J\to J'$ of $\cat{J}$ a morphism of diagrams, which amounts to a functor $D_0j:D_0J\to D_0J'$ together with a natural transformation $D_1j$ as below:
$$
\begin{tikzcd}
 J \ar{d}[swap]{j} \\
 J'
\end{tikzcd}
\qquad\rightsquigarrow\qquad
\begin{tikzcd}[row sep=small]
 D_0J
  \ar[""{name=D,below}]{dr}{D_1J} 
  \ar{dd}[swap]{D_0j} \\
 & \cat{C} \\
 D_0J'
  \ar{ur}[swap]{D_1J'} 
  \ar[Rightarrow,from=D, "D_1j", shorten <= 0.5em, shorten >= 1em, swap, near start]
\end{tikzcd}
$$
Moreover, since we want $D$ to be a functor, we need it to preserve identities and composition, i.e.~$D_0$ needs to be a functor, and $D_1$ needs to satisfy the conditions $D_1(\id_J)=\id_{D_1J}$ and $D_1(j'\circ j)=D_1j'D_0j\circ D_1J$, which are exactly the conditions of lax naturality. In pictures,
$$
\begin{tikzcd}
 J \ar{d}[swap]{\id} \\
 J
\end{tikzcd}
\qquad\rightsquigarrow\qquad
\begin{tikzcd}[row sep=small]
 D_0J
  \ar[""{name=D,below}]{dr}{D_1J} 
  \ar{dd}[swap]{\id} \\
 & \cat{C} \\
 D_0J
  \ar{ur}[swap]{D_1J} 
  \ar[Rightarrow,from=D, "\id", shorten <= 0.5em, shorten >= 1em, swap, near start]
\end{tikzcd}
$$
$$
\begin{tikzcd}
 J \ar{dd}[swap]{j} \\ \\
 J' \ar{dd}[swap]{j'} \\ \\
 J''
\end{tikzcd}
\qquad\rightsquigarrow\qquad
\begin{tikzcd}[column sep=huge]
 D_0J
  \ar[""{name=D,below}]{ddr}{D_1J} 
  \ar{dd}[swap]{D_0j} \\
 \\
 D_0J
  \ar[""{name=MID}]{r}{D_1J'} \ar{dd}[swap]{D_0j'} \ar[Rightarrow,from=D, "D_1j", shorten <= 0.2em, shorten >= 0.2em, swap, near start]  & \cat{C} \\
  \\
 D_0J'' \ar{uur}[swap]{D_1J''}
 \ar[Rightarrow,from=MID, "D_1j'", shorten <= 1.5em, shorten >= 1.5em, swap, pos=0.38]
\end{tikzcd}
$$
In other words, a functor $D:\cat{J}\to\cat{Diag(C)}$ consists of a functor $D_0:\cat{J}\to\cat{Cat}\subseteq\cat{CAT}$, together with a lax cocone $D_1$ in $\cat{CAT}$ under $D_0$ with tip $\cat{C}$. A lax cocone is a lax natural transformation $D_1:D_0\Rightarrow \Delta \cat{C}$, where $\Delta\cat{C}$ is the constant functor at $\cat{C}$.

\subsubsection{The multiplication: the Grothendieck construction}

Given now $D=(D_0,D_1)$ as above, take the Grothendieck construction $\groth D_0$ of $D_0:\cat{J}\to\cat{Cat}\subseteq \cat{CAT}$, which we recall. 
\begin{itemize}
 \item An object of $\groth D_0$ consists of a pair $(J,X)$ where $J$ is an object of $\cat{J}$ and $X$ is an object of the category $D_0J$;
 \item A morphism $(J,X)\to (J',X')$ of $\groth D_0$ consists of a pair $(j,f)$ where $j:J\to J'$ is a morphism of $\cat{J}$, and $f:D_0j(X)\to X'$ is a morphism of the category $D_0J'$. 
\end{itemize}
The short integral sign does not denote a coend here, it is standard for the Grothendieck construction (we use different sizes to avoid confusion, since both symbols are standard notation).
Note that since $\cat{J}$ and all the $D_0J$ are small, $\groth D_0$ is small too. Its set of objects is given by
$$
\coprod_{J\in\cat{J}} D_0J .
$$
Moreover: 
\begin{itemize}
 \item For each object $J$ of $\cat{J}$, the inclusion maps $i_J:D_0J\to \groth D_0$ defined by the coproduct above can be canonically made into functors via 
 $$
 \begin{tikzcd}
  X \ar{d}[swap]{f} \\
  X'
 \end{tikzcd}
 \qquad\rightsquigarrow\qquad
 \begin{tikzcd}
  (J,X) \ar{d}{(\id_J,f)} \\
  (J,X')
 \end{tikzcd}
 $$
 We will call the images of the $i_J$ the \emph{fibers} of $\groth D_0$.
 \item For each morphism $j:J\to J'$ of $\cat{J}$, there is a natural transformation 
 $$
 \begin{tikzcd}[row sep=small]
 D_0J
  \ar[""{name=D,below}]{dr}[near end]{i_J} 
  \ar{dd}[swap]{D_0j} \\
 & \groth D_0  & &  \\
 D_0J' \ar{ur}[near end,swap]{i_{J'}}
  \ar[Rightarrow,from=D, "i_j", shorten <= 0.5em, shorten >= 1em, swap, near start]
\end{tikzcd}
  $$
  whose component at each object $X$ of $D_0J$ is given by 
  $$
  \begin{tikzcd}[column sep=huge]
   (J,X) \ar{r}{(j,\id_{D_0j(X)})} & (J', D_0j(X))
  \end{tikzcd}
  $$
 \item The $i_J$ and $i_j$ assemble into a lax cocone $D_0 \Rightarrow \Delta \groth D_0$, i.e.~the identity and composition conditions are satisfied.
\end{itemize}

It is well known that $\groth D_0$ is the oplax colimit of $D_0$ in $\cat{Cat}$, with the universal lax cocone given by the $i_J$. We now show that it is so also in $\cat{CAT}$, and we also give a \emph{strict} version of the universal property.\footnote{By ``strict'' here we mean ``we give an \emph{isomorphism} of hom-categories, not just an equivalence''. The colimit is still oplax, not strict.}

\begin{prop}\label{oplaxcolimit}
 Let $D_0:\cat{J}\to\cat{Cat}\subseteq \cat{CAT}$ be a small diagram of small categories. Let $D_1:D_0\Rightarrow \cat{C}$ be a lax cocone over $D_0$ in $\cat{CAT}$, with tip $\cat{C}$ locally small (but not necessarily small). There is a unique functor $\groth D_0\to \cat{C}$ such that 
 \begin{itemize}
  \item for all objects $J$ of $\cat{J}$, the following triangle commutes (strictly);
  \begin{equation}\label{ijstrict}
  \begin{tikzcd}
   D_0J \ar{d}[swap]{i_J} \ar{dr}{D_1J} \\
   \groth D_0 \ar{r} & \cat{C}
  \end{tikzcd}
  \end{equation}
  \item for all morphisms $j:J\to J'$ of $\cat{J}$, the following 2-cells coincide.
  \begin{equation}\label{ijlax}
  \begin{tikzcd}[column sep=large]
 D_0J
  \ar[""{name=D,below}]{dr}{D_1J} 
  \ar{dd}[swap]{D_0j} \\
 & \cat{C} \\
 D_0J'
  \ar{ur}[swap]{D_1J'} 
  \ar[Rightarrow,from=D, "D_1j", shorten <= 0.5em, shorten >= 1em, swap, near start]
\end{tikzcd}
\qquad=\qquad
\begin{tikzcd}
 D_0J
  \ar{drrr}{D_1J} 
  \ar[""{name=D,below}]{dr}[near end]{i_J} 
  \ar{dd}[swap]{D_0j} \\
 & \groth D_0 \ar{rr} & & \cat{C} \\
 D_0J' \ar{ur}[near end,swap]{i_{J'}}
  \ar{urrr}[swap]{D_1J'} 
  \ar[Rightarrow,from=D, "i_j", shorten <= 0.5em, shorten >= 1em, swap, near start]
\end{tikzcd}
  \end{equation}
 \end{itemize}
\end{prop}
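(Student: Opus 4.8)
The plan is to construct the mediating functor explicitly and then read off the two conditions and uniqueness. Write $\Phi\colon\groth D_0\to\cat{C}$ for the functor to be built. On objects I set $\Phi(J,X):=(D_1J)(X)$. On a morphism $(j,f)\colon(J,X)\to(J',X')$ — so $j\colon J\to J'$ in $\cat{J}$ and $f\colon D_0j(X)\to X'$ in $D_0J'$ — I set
$$
\Phi(j,f) := (D_1J')(f)\circ (D_1j)_X ,
$$
where $(D_1j)_X\colon (D_1J)(X)\to (D_1J')(D_0j(X))$ is the component at $X$ of the lax-naturality transformation $D_1j$. This definition is essentially forced: the strict triangle \eqref{ijstrict} must pin $\Phi$ down on the fibers, while \eqref{ijlax} must pin it down on the ``horizontal'' morphisms $(j,\id)$, and in $\groth D_0$ every morphism factors through morphisms of these two kinds.

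Next I would verify that $\Phi$ is a functor. Preservation of identities is immediate from the unit condition of the lax cocone, $D_1(\id_J)=\id_{D_1J}$, which gives $\Phi(\id_J,\id_X)=(D_1J)(\id_X)\circ\id=\id$. For composition, recall the composition law in the Grothendieck construction, $(j',f')\circ(j,f)=(j'\circ j,\ f'\circ D_0j'(f))$. Expanding $\Phi$ of both sides and using functoriality of $D_1J''$, the required equality reduces to two inputs: the lax-cocone coherence, which at $X$ reads $(D_1(j'\circ j))_X=(D_1j')_{D_0j(X)}\circ (D_1j)_X$, and the naturality square of $D_1j'$ applied to $f\colon D_0j(X)\to X'$, which rewrites $(D_1J'')(D_0j'(f))\circ (D_1j')_{D_0j(X)}$ as $(D_1j')_{X'}\circ (D_1J')(f)$. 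I expect this compatibility step to be the main obstacle: it is where the bookkeeping is heaviest, since one must keep straight which transformation is whiskered with which functor and at which object its component is taken.

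The two asserted conditions are then quick. For \eqref{ijstrict}, the composite $\Phi\circ i_J$ sends $X\mapsto (D_1J)(X)$ and $f\mapsto\Phi(\id_J,f)=(D_1J)(f)$ (again by the unit condition), so it equals $D_1J$ on the nose. For \eqref{ijlax}, the component at $X$ of $\Phi$ whiskered with $i_j$ is $\Phi(j,\id_{D_0j(X)})=(D_1J')(\id)\circ (D_1j)_X=(D_1j)_X$, so the whiskered $2$-cell is exactly $D_1j$, as required.

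Finally, uniqueness. If $\Psi$ also satisfies both conditions, then \eqref{ijstrict} forces $\Psi(J,X)=(D_1J)(X)$ and $\Psi(\id_{J'},f)=(D_1J')(f)$, while \eqref{ijlax} forces the component $\Psi(j,\id_{D_0j(X)})=(D_1j)_X$. Since in $\groth D_0$ one has the factorization $(j,f)=(\id_{J'},f)\circ(j,\id_{D_0j(X)})$, functoriality of $\Psi$ yields $\Psi(j,f)=(D_1J')(f)\circ (D_1j)_X=\Phi(j,f)$, hence $\Psi=\Phi$. No size subtleties intervene here: since $\cat{J}$ and all the $D_0J$ are small, $\groth D_0$ is small, and $\cat{C}$ is locally small, so every hom-set used is a genuine set and the whole construction stays inside $\cat{CAT}$.
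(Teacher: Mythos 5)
Your proposal is correct and follows essentially the same route as the paper: the structure maps are forced on objects and on fiber morphisms by the strict triangles \eqref{ijstrict}, forced on the opcartesian morphisms $(j,\id_{D_0j(X)})$ by \eqref{ijlax}, and then extended to all of $\groth D_0$ via the factorization $(j,f)=(\id_{J'},f)\circ(j,\id_{D_0j(X)})$, which also yields uniqueness. The only difference is that you spell out the functoriality check (unit law of the lax cocone, its composition coherence, and the naturality square of $D_1j'$), which the paper dismisses as routine — your verification is accurate.
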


Denote this functor by $\mu(D_0,D_1)$, or more briefly by $\mu(D)$. It is a diagram in $\cat{C}$. This will give the multiplication of the monad $\cat{Diag}$.

\begin{proof}[Proof of \Cref{oplaxcolimit}]
 Since we want the diagram \ref{ijstrict} to commute strictly, the only possibility to define $\mu(D)$ on objects is as follows. For every object $J$ of $\cat{J}$, and for every object $X$ of $D_0J$,
 $$
 \mu(D) (J,X) \coloneqq D_1J(X) .
 $$
 Just as well, for all the morphisms of $\groth D_0$ in the fiber, i.e.~in the form $(\id_J,f)$ for a morphism $f:X\to X'$ of $D_0J$, we are forced to define
 $$
 \mu(D) (\id_J,f) \coloneqq D_1J(f) .
 $$
 
 Moreover, since we want the condition \eqref{ijlax} to hold, for all morphisms $j:J\to J'$ of $\cat{J}$ we have to require that $\mu(D)$ on the components of $i_j$ has to give the respective component of $D_1j$. Explicitly, for each object $X$ of $D_0J$, 
 $$
 \mu(D) (i_j)_X = \mu(D) (j, \id_{D_0j(X)}) \coloneqq (D_1j)_X .
 $$
 
 The generic morphism $(j,f):(J,X)\to(J',X')$, for $j:J\to J'$ and $f:D_0j(X)\to X'$ can be decomposed as 
 $$
 \begin{tikzcd}[column sep=huge]
  (J,X) \ar{r}{(j,\id_{D_0j(X)})} & (J',D_0j(X)) \ar{r}{(\id_J,f)} & (J',X')
 \end{tikzcd}
 $$
 and so we have determined the action of $\mu(D)$ for all morphisms of $\groth D_0$.
 
 Functoriality of this assignment is routine. 
\end{proof}

\subsubsection{Functoriality and naturality of the multiplication}

We now have to show that the assignment $(D_0,D_1)\mapsto \mu(D_0,D_1)$ is functorial, and that it is natural in the category $\cat{C}$.

To address functoriality, we need to look at morphisms in $\cat{Diag(Diag(C))}$.

Given diagrams $D:\cat{J}\to\cat{Diag(C)}$ and $E:\cat{K}\to\cat{Diag(C)}$, where $\cat{J}$ and $\cat{K}$ are small categories, a morphism of diagrams from $D$ to $E$ amounts to a functor $F:\cat{J}\to \cat{K}$ together with a natural transformation
$$
\begin{tikzcd}[row sep=small]
 \cat{J}
  \ar[""{name=D,below}]{dr}{D} 
  \ar{dd}[swap]{F} \\
 & \cat{Diag(C)} \\
 \cat{K}
  \ar{ur}[swap]{E} 
  \ar[Rightarrow,from=D, "\phi", shorten <= 0.5em, shorten >= 1em, near start]
\end{tikzcd}
$$
Explicitly, $D$ consists of a functor $D_0:\cat{J}\to\cat{Cat}\subseteq\cat{CAT}$ and a lax cocone $D_1:D_0\Rightarrow \cat{C}$, and $E$ has an analogous form. The natural transformation $\phi$ amounts to the following. For each object $J$ of $\cat{J}$, we have a morphism of diagrams 
$$
\begin{tikzcd}[row sep=small]
 D_0J
  \ar[""{name=D,below}]{dr}{\phi_0J} 
  \ar{dd}[swap]{D_0J} \\
 & \cat{C} \\
 E_0FJ
  \ar{ur}[swap]{E_1J} 
  \ar[Rightarrow,from=D, "\phi_1J", shorten <= 0.5em, shorten >= 1em, swap, near start]
\end{tikzcd}
$$
and for each morphism $j:J\to J'$ of $\cat{J}$, the following diagrams have to commute. 
First of all, this diagram of functors has to commute strictly. 
$$
\begin{tikzcd}[sep=large]
 D_0J \ar{d}{D_0j} \ar{r}{\phi_0J} & E_0FJ \ar{d}{E_0Fj} \\
 D_0J' \ar{r}{\phi_0J'} & E_0FJ'
\end{tikzcd}
$$
Moreover, the following composite 2-cells have to coincide, forming a commutative pyramid with 2-cells as lateral faces, whose square base is the commutative square just described above.
\begin{equation}\label{pyramid}
\begin{tikzcd}[column sep=large]
 & \cat{C} \\
 \\ \\ \phantom{E}\\
 D_0J \ar{dr}[swap]{D_0j} \ar[""{name=D1J}]{uuuur}[near start]{D_1J} & & |[alias=E0FJP]| E_0FJ' \ar{uuuul}[swap, near start]{E_1FJ'} \\
 & D_0J 
 \ar[Rightarrow, from=D1J, "D_1j", shorten <= 2em, shorten >= 2em, swap]
 \ar[""{name=D1JP}]{uuuuu}[swap]{D_1J'} \ar{ur}[swap]{\phi_0J'}
 \ar[Rightarrow, from=D1JP, to=E0FJP, "\phi_1J'", shorten <= 2em, shorten >= 1.5em, swap, pos=0.6]
\end{tikzcd}
\qquad=\qquad
\begin{tikzcd}[column sep=large]
 & \cat{C} \\
 \\ \\ 
 & |[alias=E0FJ]| E_0FJ \ar[""{name=E1FJ}]{uuu}[swap, pos=0.4]{E_1FJ} \ar{dr}[swap]{E_0Fj}\\
 D_0J \ar{ur}[swap]{\phi_0J} \ar{dr}[swap]{D_0j} \ar[""{name=D1J}]{uuuur}[near start]{D_1J} & & E_0FJ' \ar{uuuul}[swap, near start]{E_1FJ'}
 \ar[Rightarrow, from=E1FJ, "E_1Fj", shorten <= 2em, shorten >= 3em, pos=0.4] \\
 & D_0J \ar{ur}[swap]{\phi_0J'}
 \ar[Rightarrow, from=D1J, to=E0FJ, "\phi_1J", shorten <= 0.5em, shorten >= 0em, swap, pos=0.7]
\end{tikzcd}
\end{equation}

Form now the Grothendieck construction of $E_0$. We can form a lax cocone over $D_0$ with tip $\groth E_0$ as follows.
$$
\begin{tikzcd}
 J
\end{tikzcd}
\qquad\rightsquigarrow\qquad
\begin{tikzcd}
 D_0J \ar{r}{\phi_0J} & E_0FJ \ar{r}{i_{FJ}} & \groth E_0
\end{tikzcd}
$$

$$
\begin{tikzcd}
 J \ar{d}{j} \\
 J'
\end{tikzcd}
\qquad\rightsquigarrow\qquad
\begin{tikzcd}[row sep=small]
 D_0J \ar{r}{\phi_0J} \ar{dd}[swap]{D_0j} & E_0FJ
  \ar[""{name=D,below}]{dr}{i_{FJ}} 
  \ar{dd}[swap]{E_0Fj} \\
 & & \groth E_0 \\
 D_0J' \ar{r}{\phi_0J'} & E_0FJ' 
  \ar{ur}[swap]{i_{FJ'}} 
  \ar[Rightarrow,from=D, "i_{Fj}", shorten <= 0.5em, shorten >= 1em, swap, near start]
\end{tikzcd}
$$

Note that this is a lax cocone $D_0\Rightarrow\groth E_0$. By the universal property of the Grothendieck construction as an oplax colimit (\Cref{oplaxcolimit}), there is a unique functor $\groth D_0\to \groth E_0$ such that 
\begin{itemize}
  \item for all objects $J$ of $\cat{J}$, the following square commutes;
  \begin{equation}\label{squarestrict}
  \begin{tikzcd}
   D_0J \ar{d}[swap]{i_J} \ar{r}{\phi_0J} & E_0FJ \ar{d}{i_{FJ}} \\
   \groth D_0 \ar{r} & \groth E_0
  \end{tikzcd}
  \end{equation}
  \item for all morphisms $j:J\to J'$ of $\cat{J}$, the following composite 2-cells coincide.
$$
\begin{tikzcd}
 D_0J \ar{r}{\phi_0J} \ar{dd}[swap]{D_0j} & E_0FJ
  \ar[""{name=D,below}]{dr}{i_{FJ}} 
  \ar{dd}[swap]{E_0Fj} \\
 & & \groth E_0 \\
 D_0J' \ar{r}{\phi_0J'} & E_0FJ' 
  \ar{ur}[swap]{i_{FJ'}} 
  \ar[Rightarrow,from=D, "i_{Fj}", shorten <= 1em, shorten >= 2em, swap, near start]
\end{tikzcd}
\qquad=\qquad
\begin{tikzcd}
 D_0J 
  \ar{dd}[swap]{D_0j} 
   \ar[""{name=D,below}]{dr}[near end]{i_J}
  \ar{r}{\phi_0J} & E_0 FJ \ar{dr}{i_{FJ}}\\
 & \groth D_0 \ar{r} & \groth E_0\\
 D_0J' 
   \ar[Rightarrow,from=D, "i_j", shorten <= 1em, shorten >= 2em, swap, near start]
 \ar{ur}[near end,swap]{i_{J'}}
  \ar{r}[swap]{\phi_0J'} & E_0FJ' \ar{ur}[swap]{i_{FJ'}}
\end{tikzcd}
$$
\end{itemize}

Denote this functor by $\mu_0(F,\phi)$.
This gives a triangle 
$$
\begin{tikzcd}[row sep=small]
 \groth D_0
  \ar[""{name=D,below}]{dr}{\mu(D)} 
  \ar{dd}[swap]{\mu_0(F,\phi)} \\
 & \cat{C} \\
 \groth E_0
  \ar{ur}[swap]{\mu(E)} 
\end{tikzcd}
$$
which does not necessarily commute. In order to get a morphism of diagram $\mu(D)\to\mu(E)$ we need to fill the triangle above with a 2-cell which we form as follows. 
Consider the object $(J,X)$ of $\groth D_0$, where $J$ is an object of $\cat{J}$ and $X$ is an object of $D_0J$.
Note that, by the diagram~\ref{ijstrict}, $\mu(D)(J,X)=D_1J(X)$. Analogously, using the diagram~\ref{ijstrict} for $E$ together with the diagram~\ref{squarestrict}, 
$$
\mu(E)(\mu_0(J,X)) = \mu(E)(J,\phi_0J(X)) = E_1FJ(\phi_0J(X)).
$$
We now assign to the object $(J,X)$ the morphism of $\cat{C}$ given by the component of $\phi_1J$ at $X$,
$$
\begin{tikzcd}
 \mu(D)(J,X) = D_1J(X) \ar{r}{(\phi_1J)_X} & E_1FJ(\phi_0J(X)) = \mu(E)(\mu_0(J,X)) .
\end{tikzcd}
$$
Let's now show that this assignment is natural. We will again test this first along the fibers, and then on the opcartesian morphisms of $\groth D_0$. So let $f:X\to Y$ be a morphism of $D_0J$. The following diagram commutes simply by naturality of $\phi_1J$.
$$
\begin{tikzcd}
 \mu(D)(J,X) \ar{d}{\mu(D)(\id_J, f)} \idar{r} & D_1J(X) \ar{d}{D_1J(f)}  \ar{r}{(\phi_1J)_X} & E_1FJ(\phi_0J(X)) \ar{d}{E_1FJ(\phi_0J(f))}  \idar{r} & \mu(E)(\mu_0(J,X)) \ar{d}{\mu(E)(\mu_0(\id_J, f))} \\
 \mu(D)(J,D_0j(X)) \idar{r} & D_1J(X) \ar{r}{(\phi_1J)_X} & E_1FJ(\phi_0J(X)) \idar{r} & \mu(E)(\mu_0(J,X))
\end{tikzcd}
$$
Let now $j:J\to J'$ be a morphism of $\cat{J}$. We have to prove that the following diagram commutes.
$$
\begin{tikzcd}[column sep=huge]
 D_1J(X) \ar{d}{(D_1j)_X} \ar{r}{(\phi_1J)_X} & E_1FJ(\phi_0J(X)) \ar{d}{(E_1Fj)_{\phi_0J(X)}}  \\
 D_1J'(D_0j(X)) \ar{r}{(\phi_1J')_{D_0j(X)}} & E_1FJ'(\phi_0J'(D_0j(X))) 
\end{tikzcd}
$$
This is however exactly \Cref{pyramid}, written out in components.
Therefore we have a natural transformation, which we denote by $\mu_1(F,\phi)$, and we get a morphism of diagrams 
$$
\begin{tikzcd}[row sep=small]
 \groth D_0
  \ar[""{name=D,below}]{dr}{\mu(D)} 
  \ar{dd}[swap]{\mu_0(F,\phi)} \\
 & \cat{C} \\
 \groth E_0
  \ar{ur}[swap]{\mu(E)} 
  \ar[Rightarrow,from=D, "\mu_1", shorten <= 0.5em, shorten >= 1em, swap, near start]
\end{tikzcd}
$$
which makes $\mu$ functorial. (The identity and composition conditions follow by uniqueness.)

\begin{prop}
 The functor $\mu:\cat{Diag(Diag(C))}\to\cat{Diag(C)}$ is strictly natural in $\cat{C}$.
\end{prop}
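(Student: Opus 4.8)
The plan is to unwind what strict naturality asserts and then reduce every verification to the uniqueness clause of \Cref{oplaxcolimit}. Concretely, for a functor $G\colon\cat{C}\to\cat{D}$ we must show that the square of $1$-functors
$$
\begin{tikzcd}
 \cat{Diag(Diag(C))} \ar{d}[swap]{\mu_\cat{C}} \ar{r}{(G_*)_*} & \cat{Diag(Diag(D))} \ar{d}{\mu_\cat{D}} \\
 \cat{Diag(C)} \ar{r}{G_*} & \cat{Diag(D)}
\end{tikzcd}
$$
commutes on the nose, where $(G_*)_* = \cat{Diag}(G_*)$. The structural observation that makes everything work is that both $G_*$ and $(G_*)_*$ act purely by postcomposition and whiskering with $G$, and never touch indexing categories: for an object $D=(D_0,D_1)$ of $\cat{Diag(Diag(C))}$ one has $(G_*)_*D = G_*\circ D$, whose underlying functor is still $D_0\colon\cat{J}\to\cat{Cat}$ and whose lax cocone is $G\circ D_1$. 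In particular $\groth\big((G_*)_*D\big)_0 = \groth D_0$, so both composites send $D$ to a diagram with the \emph{same} indexing category $\groth D_0$.

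On objects I would argue by uniqueness. Both $G\circ\mu(D)$ and $\mu\big((G_*)_*D\big)$ are functors $\groth D_0\to\cat{D}$. By the strict triangle \eqref{ijstrict} for $D$ we have $\mu(D)\circ i_J = D_1 J$, hence $G\circ\mu(D)\circ i_J = G\circ D_1 J = \big((G_*)_*D\big)_1 J$; whiskering the defining $2$-cell identity \eqref{ijlax} by $G$ shows likewise that $G\circ\mu(D)$ is compatible with the $i_j$ relative to the cocone $G\circ D_1$. Thus $G\circ\mu(D)$ satisfies exactly the two defining conditions of \Cref{oplaxcolimit} for the lax cocone $G\circ D_1$ with tip $\cat{D}$, and the uniqueness clause forces $G\circ\mu(D)=\mu\big((G_*)_*D\big)$, which is the commutativity of the square on objects.

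On morphisms, I would take $(P,\phi)\colon D\to E$ in $\cat{Diag(Diag(C))}$ and compare the two images, each a morphism of diagrams $\big(\mu_0,\mu_1\big)$. For the functor part, the definition of $\mu_0(P,\phi)$ via \Cref{oplaxcolimit} refers only to the $0$-dimensional data $P$, $\phi_0$ and the Grothendieck constructions of $D_0,E_0$, all of which are untouched by whiskering with $G$ (indeed $\big((G_*)_*\phi\big)_0 = \phi_0$); uniqueness therefore gives $\mu_0\big((G_*)_*(P,\phi)\big)=\mu_0(P,\phi)$, which is also the functor part produced along the lower path since $G_*$ leaves the indexing functor of a morphism of diagrams unchanged. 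For the natural-transformation part, the component of $\mu_1(P,\phi)$ at $(J,X)$ is $(\phi_1 J)_X$; along the path through $G_*$ this gets whiskered to $G\big((\phi_1 J)_X\big)$, whereas along the path through $(G_*)_*$ one first whiskers $\phi_1 J$ to $G\cdot(\phi_1 J)$ and then reads off its component, obtaining the same morphism $G\big((\phi_1 J)_X\big)$. Hence the two morphisms of diagrams agree, and $2$-cells follow by the same whiskering bookkeeping together with uniqueness.

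The only place demanding genuine care, rather than routine bookkeeping, is the claim that the functor part $\mu_0$ is literally preserved under $G$-whiskering. This is not apparent from the formulas, but it is immediate once one notices that the oplax-colimit universal property cutting out $\mu_0$ sees only the base data $(D_0,E_0,P,\phi_0)$, which $G$ fixes; everything else reduces to the fact that postcomposition with $G$ commutes with the triangle and $2$-cell equations imposed by \Cref{oplaxcolimit}. I expect no serious obstacle beyond keeping the whiskering identities straight.
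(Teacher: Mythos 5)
Your proof is correct and follows essentially the same route as the paper: observe that $(G_*)_*(D_0,D_1)=(D_0,G\circ D_1)$ leaves the indexing data (hence $\groth D_0$) untouched, check that $G\circ\mu(D)$ satisfies the two defining conditions of \Cref{oplaxcolimit} for the cocone $G\circ D_1$, and conclude by the uniqueness clause. The only difference is that you also spell out the morphism-level verification (that $\mu_0$ is fixed and $\mu_1$ is whiskered identically along both paths), which the paper leaves implicit.
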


\begin{proof}
Let $F:\cat{C}\to\cat{D}$ be a functor. The induced functor 
$$
\begin{tikzcd}
 \cat{Diag(Diag(C))} \ar{r}{F_{**}} & \cat{Diag(Diag(D))}
\end{tikzcd}
$$
maps a lax cocone with tip $\cat{C}$ to the lax cocone with tip $\cat{D}$ obtained simply via postcomposition with $F$. 
$$
\begin{tikzcd}[row sep=small]
 D_0J
  \ar[""{name=D,below}]{dr}{D_1J} 
  \ar{dd}[swap]{D_0j} \\
 & \cat{C} \\
 D_0J'
  \ar{ur}[swap]{D_1J'} 
  \ar[Rightarrow,from=D, "D_1j", shorten <= 0.5em, shorten >= 1em, swap, near start]
\end{tikzcd}
\qquad\rightsquigarrow\qquad
\begin{tikzcd}[row sep=small]
 D_0J
  \ar[""{name=D,below}]{dr}{D_1J} 
  \ar{dd}[swap]{D_0j} \\
 & \cat{C} \ar{r}{F} & \cat{D} \\
 D_0J'
  \ar{ur}[swap]{D_1J'} 
  \ar[Rightarrow,from=D, "D_1j", shorten <= 0.5em, shorten >= 1em, swap, near start]
\end{tikzcd}
$$
In other words, $F_{**}(D_0,D_1)=(D_0, F\circ D_1)$. If we take the Grothendieck construction in both cases we get morphisms $\mu(D):\groth D_0 \to \cat{C}$ and $\mu(F_{**}D):\groth D_0\to \cat{D}$. By uniqueness (\Cref{oplaxcolimit}), necessarily $\mu(F_**D)=F\circ\mu(D)$, and therefore $\mu$ is strictly natural.
\end{proof}

\subsection{Unitors and associators}

\subsubsection{Left unitor}

Let $D:\cat{J}\to\cat{C}$ be a diagram. We can apply to it the unit $\eta_{\cat{Diag(C)}}:\cat{Diag(C)}\to\cat{Diag(Diag(C))}$ to form the diagram 
$$
\begin{tikzcd}
 \cat{1} \ar{r}{(\cat{J}, D)} & \cat{Diag(C)}
\end{tikzcd}
$$
corresponding to the following (rather trivial) lax cocone in $\cat{CAT}$ with tip $C$.
$$
\begin{tikzcd}
 \cat{J} \ar{r}{D} \ar{r} & \cat{C}
\end{tikzcd}
$$
We view this as a lax cocone over the diagram $\cat{J}:\cat{1}\to\cat{CAT}$ which maps the unique object of $\cat{1}$ to $\cat{J}$.
If we form the Grothendieck construction as prescribed by the multiplication of the monad, we get the category $\groth \cat{J}$ which is isomorphic to $\cat{J}$, explicitly given as follows:
\begin{itemize}
 \item Objects are pairs $(\bullet, X)$, where $\bullet$ is the unique object of $\cat{1}$ and $X$ is an object of $\cat{J}$;
 \item A morphism $(\bullet, X)\to (\bullet, Y)$ is simply a morphism $f:X\to Y$ of $\cat{J}$.
\end{itemize}
The functor $\mu(\eta(D))$ maps then $(\bullet, X)$ to $DX$ and $f:(\bullet, X)\to (\bullet, Y)$ to $Df$ in $\cat{C}$.

The functor $\groth \cat{J}\to\cat{J}$ given by $(\bullet, X)\mapsto X$ is an isomorphism of categories.
This defines an isomorphism of diagrams $\mu_\cat{C}(\eta_{\cat{Diag(C)}}(D))\to D$, in the category $\cat{Diag(C)}$.
Denote this isomorphism by $\ell$. 
This is the map that we take as left unitor. 

\begin{prop}
 The map $\ell$ induces a modification $\mu\circ(\eta\,\cat{Diag}) \Rrightarrow \id$.
\end{prop}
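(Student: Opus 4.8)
The plan is to verify directly the two conditions required of a modification, as recalled in \Cref{pseudomonads}. Write $\sigma \coloneqq \mu\circ(\eta\,\cat{Diag})$, so that $\sigma_\cat{C} = \mu_\cat{C}\circ\eta_{\cat{Diag(C)}}$ is an endofunctor of $\cat{Diag(C)}$, and write $\tau\coloneqq\id$. For each fixed $\cat{C}$ I first want to show that $D\mapsto\ell_D$ is natural in $D$, so that it defines a 2-cell $m_\cat{C}\colon\sigma_\cat{C}\Rightarrow\id$ of $\cat{CAT}$; then I must check compatibility of the family $\{m_\cat{C}\}$ with the 2-functor $\cat{Diag}$. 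The key simplification is that $\mu$ and $\eta$ are \emph{strictly} natural in $\cat{C}$, so the structure 2-cells of both $\sigma$ and $\tau$ are identities and the modification axiom collapses to the single equation $F_*(\ell^{\cat{C}}_D)=\ell^{\cat{D}}_{F_*D}$, one for each functor $F\colon\cat{C}\to\cat{D}$.

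For naturality in $D$, I would start from the observation that $\ell_D=(\ell^0_D,\id)$, where $\ell^0_D\colon\groth\cat{J}\to\cat{J}$ is the isomorphism $(\bullet,X)\mapsto X$ and the natural-transformation component is the identity, since $\mu(\eta(D))=D\circ\ell^0_D$ holds strictly. Given a morphism of diagrams $(R,\rho)\colon(\cat{J},D)\to(\cat{J'},D')$, the essential step is to identify $\sigma_\cat{C}(R,\rho)=\mu(\eta(R,\rho))$ explicitly: applying $\eta_{\cat{Diag(C)}}$ produces the morphism $(\id_{\cat{1}},(R,\rho))$ of $\cat{Diag(Diag(C))}$, and feeding this through the description of $\mu$ on morphisms (via the universal property of \Cref{oplaxcolimit}) shows that its underlying functor is $(\bullet,X)\mapsto(\bullet,RX)$ and that its comparison 2-cell has component $\rho_X$ at $(\bullet,X)$. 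Under the isomorphisms $\ell^0_D,\ell^0_{D'}$ this is exactly $(R,\rho)$, whence the naturality square
$$
\begin{tikzcd}[column sep=large]
 \sigma_\cat{C}(D) \ar{r}{\ell_D} \ar{d}[swap]{\sigma_\cat{C}(R,\rho)} & D \ar{d}{(R,\rho)} \\
 \sigma_\cat{C}(D') \ar{r}{\ell_{D'}} & D'
\end{tikzcd}
$$
commutes: on underlying functors both composites send $(\bullet,X)$ to $RX$, and on 2-cells both reduce to $\rho_X$. Since each $\ell^0_D$ is an isomorphism of categories, $m_\cat{C}$ is moreover invertible.

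For compatibility with a functor $F\colon\cat{C}\to\cat{D}$, I would simply note that $F_*$ acts by postcomposition with $F$ and hence does not touch indexing categories: both $\sigma_\cat{C}(D)=\mu(\eta(D))$ and $F_*D=F\circ D$ are carried by $F_*$ to diagrams indexed by $\groth\cat{J}$ and $\cat{J}$ respectively, and strict naturality of $\mu$ and $\eta$ gives $F_*\sigma_\cat{C}(D)=\sigma_\cat{D}(F_*D)$. Consequently $F_*(\ell^\cat{C}_D)$ has underlying functor the same isomorphism $(\bullet,X)\mapsto X$ and 2-cell component the identity whiskered by $F$, which is again the identity; this is precisely $\ell^\cat{D}_{F_*D}$.

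The only genuinely non-formal point is the identification, in the second paragraph, of $\mu(\eta(R,\rho))$ with $(R,\rho)$ under the canonical isomorphism $\groth\cat{J}\cong\cat{J}$. This requires tracing the morphism $(\id_{\cat{1}},(R,\rho))$ through the universal property that defines the action of $\mu$ on morphisms of $\cat{Diag(Diag(C))}$ and checking that the induced functor returns $R$ on objects while the comparison 2-cell returns the components of $\rho$. Once this bookkeeping is settled, both modification conditions are immediate consequences of the strictness of $\mu$ and $\eta$ in $\cat{C}$ together with the fact that $\ell_D$ is built from a strict isomorphism of categories with identity 2-cell component.
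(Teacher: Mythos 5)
Your proof is correct and follows essentially the same route as the paper's: first establish naturality of $\ell_D$ in $D$ (by identifying $\mu(\eta(R,\rho))$ with $(R,\rho)$ under the isomorphism $\groth\cat{J}\cong\cat{J}$), then reduce the modification axiom, via strict naturality of $\mu$ and $\eta$, to the equation $F_*(\ell^{\cat{C}}_D)=\ell^{\cat{D}}_{F_*D}$, which holds because $F_*$ acts on the codomain by postcomposition while $\ell$ acts on the domain with identity 2-cell component. Your write-up is in fact somewhat more explicit than the paper's in tracing the action of $\mu$ on the morphism $(\id_{\cat{1}},(R,\rho))$ through the universal property of \Cref{oplaxcolimit}, where the paper simply asserts the corresponding square commutes.
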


\begin{proof}
Let's first show that $\ell$ is natural in the diagram $D$. If $(R,\rho):(\cat{J},D)\to (\cat{K},E)$ is a morphism of diagrams, it's easy to see that this diagram commutes strictly,
$$
\begin{tikzcd}
 \groth \cat{J} \ar{d}{\tilde R} \ar{r}{\ell} & \cat{J} \ar{d}{R}\\
 \groth \cat{K} \ar{r}{\ell} & \cat{K}
\end{tikzcd}
$$
where $\tilde R$ is the functor mapping $(\bullet, X)$ to $(\bullet, RX)$, and acting similarly on morphisms. This commutative diagram induces an analogous commutative diagram in $\cat{Diag(C)}$, so that $\ell$ is a natural isomorphism of functors 
$$
\begin{tikzcd}[column sep=small]
\cat{Diag(C)} \ar[""{name=ID}]{dr}[swap]{\id} \ar{r}{\eta} & \cat{Diag(Diag(C))} \ar[Rightarrow, to=ID, "\ell", shorten >= 0.2em, shorten <= 0.2em] \ar{d}{\mu} \\
& \cat{Diag(C)}
\end{tikzcd}
$$

In order to show that $\ell$ is a modification, we have to show that $\ell$ is natural in the category $\cat{C}$ as well, in the sense that for each functor $F:\cat{C}\to\cat{D}$ the following composite 2-cells are equal,
$$
\begin{tikzcd}[column sep=tiny]
& \cat{Diag(D)} \ar{r}{\eta} & \cat{Diag(Diag(D))}  \ar{d}{\mu} \\
\cat{Diag(C)} \ar{r}{\eta} \ar{ur}{F_*} \ar[""{name=ID2}]{dr}[swap]{\id} & \cat{Diag(Diag(C))} \ar{ur}{F_{**}} \ar{d}{\mu} \ar[Rightarrow, to=ID2, "\ell", shorten >= 0.2em, shorten <= 0.2em] & \cat{Diag(D)} \\
& \cat{Diag(C)} \ar{ur}[swap]{F_*}
\end{tikzcd}
\;=\quad\;\,
\begin{tikzcd}[column sep=tiny]
& \cat{Diag(D)} \ar[""{name=ID}]{dr}[swap]{\id} \ar{r}{\eta} & \cat{Diag(Diag(D))} \ar[Rightarrow, to=ID, "\ell", shorten >= 0.2em, shorten <= 0.2em] \ar{d}{\mu} \\
\cat{Diag(C)} \ar{ur}{F_*} \ar[""{name=ID2}]{dr}[swap]{\id} & & \cat{Diag(D)} \\
& \cat{Diag(C)} \ar{ur}[swap]{F_*}
\end{tikzcd}
$$
where the squares without a 2-cell commute (by naturality).

Explicitly, we have to check that given a diagram $D:\cat{J}\to\cat{C}$, the following parallel functors coincide.
$$
\begin{tikzcd}
 \mu(\eta(F_*D)) \ar[shift left]{r}{F_*\ell} \ar[shift right]{r}[swap]{\ell} & F_* D
\end{tikzcd}
$$
Note however that $F_*$ acts on the codomain of the diagram, by postcomposing with $F$, while $\ell$ acts on the domain of the diagram, mapping $\cat{J}$ to its isomorphic copy $\groth\cat{J}$. Therefore both arrows give the same morphism of diagrams, explicitly the following commutative diagram of $\cat{CAT}$,
$$
\begin{tikzcd}[row sep=small]
 \groth \cat{J}
  \ar[""{name=D,below}]{dr}{D} 
  \ar{dd}[swap]{\cong} \\
 & \cat{C} \ar{r}{F} & \cat{D} \\
 \cat{J}
  \ar{ur}[swap]{\tilde D} 
\end{tikzcd}
$$
where $\tilde D(\bullet, X)=D(X)$, and the action on morphisms is similarly defined.
Therefore $\ell$ is a modification. 
\end{proof}

\subsubsection{Right unitor}\label{rightunitdiag}

Let $D:\cat{J}\to\cat{C}$ be a diagram. This time we apply to it the map $\eta_*:\cat{Diag(C)}\to\cat{Diag(Diag(C))}$ given by the functor image of $\eta$ under $\cat{Diag}$. Explicitly, the result is the following lax cocone in $\cat{CAT}$, with tip $\cat{C}$, indexed by $\cat{J}$ via the constant functor $\Delta \cat{1}:\cat{J}\to\cat{CAT}$ at $\cat{1}$. In pictures,
$$
\begin{tikzcd}[row sep=small]
 J \ar{dd}[swap]{j} \\
 \\
 J'
\end{tikzcd}
\qquad\rightsquigarrow\qquad
\begin{tikzcd}[row sep=small]
 \cat{1}
  \ar[""{name=D,below}]{dr}{DJ} 
  \ar{dd} \\
 & \cat{C}  \\
 \cat{1}
  \ar{ur}[swap]{DJ'} 
  \ar[Rightarrow,from=D, "j", shorten <= 0.5em, shorten >= 1em, swap, near start]
\end{tikzcd}
$$

If we form the Grothendieck construction, this time we get the category $\groth \cat{\Delta 1}$ which is again isomorphic to $\cat{J}$, explicitly given as follows:
\begin{itemize}
 \item Objects are pairs $(X,\bullet)$, where $X$ is an object of $\cat{J}$ and $\bullet$ is the unique object of $\cat{1}$; 
 \item A morphism $(X,\bullet)\to (Y,\bullet)$ is simply a morphism $f:X\to Y$ of $\cat{J}$.
\end{itemize}
The functor $\mu(\eta_*(D))$ maps $(X,\bullet)$ to $DX$ and $f:(X,\bullet)\to (Y,\bullet)$ to $Df$ in $\cat{C}$.

Analogously to the left unitor case, we have a functor $\groth \Delta\cat{1}\to\cat{J}$ given by $(X,\bullet)\mapsto X$ which induces an isomorphism of categories.
This defines in turn an isomorphism of diagrams $\mu(\eta_*(D))\to D$, which we denote by $r^{-1}$ (and its inverse by $r$. See \Cref{defpseudomonad} for the convention we are using). 
The map $r$ is the one that we take as right unitor. 

\begin{prop}
 The map $r$ induces a modification $\id\Rrightarrow\mu\circ\eta_* $.
\end{prop}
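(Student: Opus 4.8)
The plan is to mirror the two-step argument given for the left unitor $\ell$. To show that $r$ is a modification $\id\Rrightarrow\mu\circ\eta_*$ it suffices to check, first, that the components $r$ are natural in the diagram $D$, so that they assemble into a natural isomorphism between the endofunctors $\id$ and $\mu\circ\eta_*$ of $\cat{Diag(C)}$, and second, that this natural isomorphism is compatible with the $2$-functoriality of $\cat{Diag}$, i.e.\ natural in the category $\cat{C}$. The second condition is precisely the extra datum that promotes a natural isomorphism to a modification.

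For naturality in $D$, I would take a morphism of diagrams $(R,\rho)\colon(\cat{J},D)\to(\cat{K},E)$ and recall that the underlying functor of $r^{-1}$ is the canonical projection $\groth\Delta\cat{1}\to\cat{J}$, $(X,\bullet)\mapsto X$, coming from the isomorphism $\groth\Delta\cat{1}\cong\cat{J}$, while the codomain leg of $\mu(\eta_*(D))$ is exactly $D$ again. Hence the square
$$
\begin{tikzcd}
 \groth\Delta\cat{1} \ar{d}[swap]{\hat R} \ar{r}{r^{-1}} & \cat{J} \ar{d}{R}\\
 \groth\Delta\cat{1} \ar{r}{r^{-1}} & \cat{K}
\end{tikzcd}
$$
(with the left-hand objects the Grothendieck constructions over $\cat{J}$ and $\cat{K}$ respectively, and $\hat R(X,\bullet)\coloneqq(RX,\bullet)$) commutes strictly; since $r^{-1}$ leaves the codomain leg untouched, the natural transformation $\rho$ is carried along unchanged and the induced square of morphisms of diagrams in $\cat{Diag(C)}$ commutes on the nose. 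This exhibits $r$ as a natural isomorphism $\id\Rightarrow\mu\circ\eta_*$.

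For naturality in $\cat{C}$, I would fix a functor $F\colon\cat{C}\to\cat{D}$; exactly as for $\ell$, the required equality of pasted $2$-cells reduces to showing that the two parallel morphisms of diagrams
$$
\begin{tikzcd}
 F_*\,D \ar[shift left]{r}{F_*\,r} \ar[shift right]{r}[swap]{r} & \mu(\eta_*(F_*D))
\end{tikzcd}
$$
coincide. Here $F_*$ acts only on the codomain, by postcomposition with $F$, whereas $r$ acts only on the domain, through the reindexing isomorphism $\groth\Delta\cat{1}\cong\cat{J}$; as these operations are performed on independent legs of the diagram they commute, and both composites equal the single morphism of diagrams whose domain leg is $\groth\Delta\cat{1}\to\cat{J}$ and whose codomain leg is $F\circ D$. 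This yields the modification condition.

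The one place deserving genuine care—more than in the left unitor case—is the bookkeeping of the lax-cocone $2$-cells. For $\ell$ the indexing category is $\cat{1}$ and the cocone is essentially trivial, whereas here the indexing is $\cat{J}$ via $\Delta\cat{1}$, so the cocone carries the non-identity $2$-cells $Dj\colon DJ\to DJ'$ obtained from the morphisms $j$ of $\cat{J}$ viewed as natural transformations between functors $\cat{1}\to\cat{C}$. One must confirm that the universal property of \Cref{oplaxcolimit} reconstructs exactly these $Dj$, so that $r^{-1}$ is an isomorphism of \emph{diagrams}, not merely of the underlying indexing categories. Once this is verified, the identity and composition conditions, as well as both naturality statements, all follow by the uniqueness clause of \Cref{oplaxcolimit}, as in the functoriality argument for $\mu$.
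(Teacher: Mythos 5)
Your proof is correct and is exactly the argument the paper intends: the paper omits the proof of this proposition, saying it is analogous to the case of the left unitor $\ell$, and your two-step verification (naturality in the diagram $D$ via the strictly commuting square with $\hat R$, then the modification condition from the fact that $F_*$ acts only on the codomain leg while $r$ acts only on the domain leg) is precisely that analogue spelled out. The extra care you flag about reconstructing the $2$-cells $Dj$ is already settled by the paper's explicit description of $\mu(\eta_*(D))$ given just before the proposition, so nothing further is needed.
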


We omit the proof, since it is analogous to the case of $\ell$.

\subsubsection{Associator}\label{assdiag}

In order to define the associator, we have to look at $\cat{Diag(Diag(Diag(C)))}$. 
So let $D:\cat{J}\to\cat{Diag(Diag(C))}$ be a diagram which assigns to each object $J$ of $\cat{J}$ a diagram of diagrams $D_1J:D_0J\to\cat{Diag(C)}$, itself mapping the object $K$ of $D_0J$ to the diagram $(D_1J)_1:(D_1J)_0K\to\cat{C}$, which, in turn, maps an object $L$ of $(D_1J)_0K$ to the object $(D_1J)_1(L)$ of $\cat{C}$. We could depict the situation as follows. For brevity we omit the action on morphisms, which is similarly constructed. 
$$
\begin{tikzcd}[row sep=0, column sep=tiny]
	{\cat{J}} && {\cat{Diag(Diag(C))}} \\
	{J} & {\rightsquigarrow} & {\bigg\{ D_0J } && {\cat{Diag(C)} \bigg\}} \\
	&& {K} & {\rightsquigarrow} & {\bigg\{ (D_1J)_0K} && {\cat{C} \bigg\}} \\
	&&&& {L} & {\rightsquigarrow} & {((D_1J)_1K)(L)}
	\arrow["{D}", from=1-1, to=1-3]
	\arrow["{D_1J}", from=2-3, to=2-5]
	\arrow["{(D_1J)_1K}", from=3-5, to=3-7]
\end{tikzcd}
$$
We can now take the Grothendieck construction at two different depths, which we can think of as ``joining levels $J$ and $K$'' and ``joining levels $K$ and $L$''. The former way, which is $\mu(D)\in \cat{Diag(Diag(C))}$, gives the following diagram of diagrams (only two levels).
$$
\begin{tikzcd}[row sep=0, column sep=tiny]
	{\groth D_0} && {\cat{Diag(C)}} \\
	{(J\in\cat{J}, K\in D_0J)} & {\rightsquigarrow} & {\bigg\{ (D_1J)_0K} && {\cat{C} \bigg\}} \\
	&& {L} & {\rightsquigarrow} & {((D_1J)_1K)(L)}
	\arrow["{\mu(D)}", from=1-1, to=1-3]
	\arrow["{(D_1J)_1K}", from=2-3, to=2-5]
\end{tikzcd}
$$
The latter way, which is $\mu_*D\in \cat{Diag(Diag(C))}$, gives instead the following diagram of diagrams, which is in general not isomorphic to the former. 
$$
\begin{tikzcd}[row sep=0, column sep=tiny]
	{\cat{J}} && {\cat{Diag(C)}} \\
	{J} & {\rightsquigarrow} & {\bigg\{ \groth (D_1J)_0} && {\cat{C} \bigg\}} \\
	&& {(K\in D_0J, L\in (D_1J)_0K)} & {\rightsquigarrow} & {((D_1J)_1K)(L)}
	\arrow["{\mu_*D}", from=1-1, to=1-3]
	\arrow["{\mu(D_1J)}", from=2-3, to=2-5]
\end{tikzcd}
$$
If we apply once again the Grothendieck construction to the two, we do obtain isomorphic diagrams:
$$
\begin{tikzcd}[row sep=0, column sep=tiny]
    {\groth \mu(D)} && {\cat{C}} \\
	{((J\in\cat{J}, K\in D_0J), L\in (D_1J)_0K)} & {\rightsquigarrow} & {((D_1J)_1K)(L)}
	\arrow[from=1-1, to=1-3]
\end{tikzcd}
$$
and
$$
\begin{tikzcd}[row sep=0, column sep=tiny]
    {\groth \mu_*D} && {\cat{C}} \\
	{(J\in\cat{J}, (K\in D_0J, L\in (D_1J)_0K))} & {\rightsquigarrow} & {((D_1J)_1K)(L)}
	\arrow[from=1-1, to=1-3]
\end{tikzcd}
$$
These are isomorphic as diagrams through the map 
$$
\begin{tikzcd}[row sep=0, column sep=tiny]
    {\groth \mu_*D} && {\groth \mu(D)} \\
	{(J, (K, L))} & {\rightsquigarrow} & {((J, K), L)}
	\arrow[from=1-1, to=1-3]
\end{tikzcd}
$$
and so the following diagram commutes up to isomorphism.
$$
\begin{tikzcd}[sep=tiny]
	{\cat{Diag(Diag(Diag(C)))}} && {\cat{Diag(Diag(C))}} \\
	& {\cong} \\
	{\cat{Diag(Diag(C))}} && {\cat{Diag(C)}}
	\arrow["{\mu_*}", from=1-1, to=1-3]
	\arrow["{\mu}"', from=1-1, to=3-1]
	\arrow["{\mu}", from=1-3, to=3-3]
	\arrow["{\mu}"', from=3-1, to=3-3]
\end{tikzcd}
$$
We call this isomorphism the \emph{associator}, and denote it by $a$. Again, analogously as for the unitors, we have:
\begin{prop}
 The associators assemble to a modification $\mu\circ\mu_* \Rrightarrow \mu\circ\mu$.
\end{prop}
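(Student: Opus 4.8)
The plan is to mirror the proofs given for the two unitors, establishing two separate naturality statements for the family of re-bracketing isomorphisms $a$. Recall that, for a fixed $\cat{C}$ and a diagram $D\in\cat{Diag(Diag(Diag(C)))}$, the component $a_D$ is the isomorphism of diagrams
$$
\begin{tikzcd}[row sep=0, column sep=tiny]
    {\groth \mu_*D} && {\groth \mu(D)} \\
	{(J, (K, L))} & {\rightsquigarrow} & {((J, K), L)}
	\arrow[from=1-1, to=1-3]
\end{tikzcd}
$$
filled by the \emph{identity} 2-cell, since both $\mu(\mu_*D)$ and $\mu(\mu(D))$ send the object labelled by the triple $(J,K,L)$ to $((D_1J)_1K)(L)$ in $\cat{C}$. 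First I would verify that $a_D$ really is an isomorphism in $\cat{Diag(C)}$: the underlying reindexing functor is visibly invertible on objects, and on morphisms one would invoke the decomposition of an arrow of an iterated Grothendieck construction into a fibre part and an opcartesian part. Rather than carry this out by hand, the cleanest route is to apply the universal property of $\groth$ as an oplax colimit (\Cref{oplaxcolimit}) twice: both $\groth\mu(D)$ and $\groth\mu_*D$ are assembled from the same triply-indexed lax cocone by joining levels in a different order, so uniqueness of the comparison functor forces the two re-bracketings to be mutually inverse and to commute strictly with the legs down to $\cat{C}$.

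Next I would prove that $a$ is natural in the diagram $D$, that is, that for every morphism $\Phi:D\to D'$ of $\cat{Diag(Diag(Diag(C)))}$ the square
$$
\begin{tikzcd}
 \groth\mu_*D \ar{r}{a_D} \ar{d}[swap]{(\mu\circ\mu_*)(\Phi)} & \groth\mu(D) \ar{d}{(\mu\circ\mu)(\Phi)} \\
 \groth\mu_*D' \ar{r}{a_{D'}} & \groth\mu(D')
\end{tikzcd}
$$
commutes in $\cat{Diag(C)}$. Both composites are again determined, by \Cref{oplaxcolimit}, by their action on objects and on the canonical inclusions, where they plainly agree (each relabels the triple $(J,K,L)$ and acts on $\Phi$ componentwise), so the square commutes by uniqueness. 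This exhibits $a$ as an invertible 2-cell in $\cat{CAT}$ filling the associativity square, i.e.\ a natural isomorphism $\mu\circ\mu_*\cong\mu\circ\mu$.

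Finally, to upgrade $a$ from a natural isomorphism to a modification I would check naturality in the base category $\cat{C}$, exactly as in the unitor proofs: for each functor $F:\cat{C}\to\cat{D}$ the pasting of $a$ with the naturality cells of the whiskers $F_*,F_{**},F_{***}$ on one side must equal the corresponding pasting on the other. Here the argument is especially short because $\mu$ is \emph{strictly} natural in $\cat{C}$ (proven earlier), so $\mu\circ\mu_*$ and $\mu\circ\mu$ are strictly natural and the modification axiom reduces to the single equality $F_*(a_D)=a_{F_{***}(D)}$. The key observation that makes this immediate is the same one used for $\ell$ and $r$: postcomposition by $F$ alters only the \emph{tip} of the lax cocones, replacing each diagram value by its image under $F$, whereas $a_D$ is a reindexing of the \emph{domain}, mapping $\groth\mu_*D$ to $\groth\mu(D)$ without touching $\cat{C}$. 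The two operations act on disjoint data and so commute on the nose.

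The main obstacle I anticipate is the first step, the careful bookkeeping needed to confirm that the re-bracketing is genuinely a morphism of diagrams commuting strictly with the two projections to $\cat{C}$, since the morphisms of a doubly iterated Grothendieck construction unwind into several nested fibre and opcartesian pieces. This is precisely where invoking the universal property of \Cref{oplaxcolimit} twice, instead of manipulating components directly, pays off and keeps the argument short; the two subsequent naturality checks are then formal consequences of uniqueness and of the orthogonality between the reindexing $a$ and the tip-level action of $F$.
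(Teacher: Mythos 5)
Your proof is correct and takes essentially the approach the paper intends: the paper omits the proof of this proposition, declaring it analogous to the unitor cases, and your write-up is exactly that analogy carried out (naturality in the diagram $D$ via the universal property of $\groth$ as an oplax colimit, then the modification axiom reduced, by strict naturality of $\mu$, to the equality $F_*(a_D)=a_{F_{***}(D)}$, which holds because $F_*$ acts only on the codomain/tip while $a$ only reindexes the domain). Nothing further is needed.
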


\subsection{Higher coherence laws}

The associator and unitors satisfy coherence conditions that are analogous to the ones of monoidal categories (see \Cref{defpseudomonad} for the precise definition). We spell them out in detail for this case.

\subsubsection{Unit condition}

Instantiating the unit condition of \Cref{defpseudomonad} in our case, we get the following statement, which reminds us of the unit condition of monoidal categories.

\begin{itemize}
 \item Consider a diagram of diagrams as follows.
 $$
 \begin{tikzcd}[row sep=0, column sep=tiny]
	{\cat{J}} && {\cat{Diag(C)}} \\
	{J} & {\rightsquigarrow} & {\bigg\{ D_0J} && {\cat{C} \bigg\}} \\
	&& {K} & {\rightsquigarrow} & {(D_1J)(K)}
	\arrow["{D}", from=1-1, to=1-3]
	\arrow["{D_1J}", from=2-3, to=2-5]
\end{tikzcd}
 $$
 Applying directly the Grothendieck construction we would have a diagram $(J,K)\mapsto (D_1J)(K)$. However, we instead want to insert a ``bullet'' via the unitor, and this can be done in two ways.
 
 \item We can either apply the (inverse of the) left unitor $\ell$ at depth $K$, and then take the Grothendieck construction, obtaining the following diagram.
 $$
 \begin{tikzcd}[row sep=0, column sep=tiny]
	{\groth (\mu_*\eta_*D)_0} && {\cat{C}} \\
	{(J,(\bullet, K))} & {\rightsquigarrow} & {(D_1J)(K)}
	\arrow["{\mu(\mu_*\eta_*D)}", from=1-1, to=1-3]
\end{tikzcd}
 $$ 

 \item Alternatively, we can apply the right unitor $r$ at depth $J$, and again take the Grothendieck construction, obtaining the following \emph{isomorphic} diagram.
 $$
 \begin{tikzcd}[row sep=0, column sep=tiny]
	{\groth(\mu(\eta_*D))_0} && {\cat{C}} \\
	{((J,\bullet), K)} & {\rightsquigarrow} & {(D_1J)(K)}
	\arrow["{\mu(\mu(\eta_*D))}", from=1-1, to=1-3]
\end{tikzcd}
 $$ 
 
 \item Now, not only are the two diagrams isomorphic, but the isomorphism relating them is exactly the associator,
 $$
\begin{tikzcd}[row sep=0, column sep=tiny]
	{(J, (\bullet, K))} & {\rightsquigarrow} & {((J, \bullet), K)}
\end{tikzcd}
$$
which we can view as ``rebracketing''.
\end{itemize}

\subsubsection{Pentagon equation}

Instantiating the pentagon condition of \Cref{defpseudomonad} in our case, we get the following statement, which reminds us of the analogous condition for monoidal categories. Consider a four-level diagram, as follows. 
$$
\begin{tikzcd}[column sep=-0.5em, row sep=0]
	{\cat{J}} && {\cat{Diag(Diag(Diag(C)))}} \\
	{J} & {\rightsquigarrow} & {\bigg\{ D_0J } && {\cat{Diag(Diag(C))} \bigg\}} \\
	&& {K} & {\rightsquigarrow} & {\bigg\{ (D_1J)_0 K} && {\cat{Diag(C)} \bigg\}} \\
	&&&& {L} & {\rightsquigarrow} & {\bigg\{ ((D_1J)_1K)_0L} && {\cat{C} \bigg\}} \\
	&&&&&& {M} & {\rightsquigarrow} & {(((D_1J)_1K)_1L)(M)}
	\arrow["{D}", from=1-1, to=1-3]
	\arrow[from=2-3, to=2-5]
	\arrow["{(D_1J)_1K}", from=3-5, to=3-7]
	\arrow["{((D_1J)_1K)_1L}", from=4-7, to=4-9]
\end{tikzcd}
$$
There are several ways of obtaining a depth-one diagram via applying the Grothendieck construction three times, and they are related to one another via the associators. In particular, we can apply the Grothendieck construction repeatedly starting from the deepest (rightmost) level,
$$
\begin{tikzcd}[column sep=tiny, row sep=0]
    \groth \mu_*\mu_{**} D \ar{rr}{\mu(\mu_*\mu_{**} D)} & & \cat{C} \\
	(J,(K,(L,M))) & \rightsquigarrow & (((D_1J)_1K)_1L)(M)
\end{tikzcd}
$$
or we can start from the outermost (leftmost) level.
$$
\begin{tikzcd}[column sep=tiny, row sep=0]
    \groth \mu(\mu(D)) \ar{rr}{\mu(\mu(\mu(D)))} & & \cat{C} \\
	(((J,K),L),M) & \rightsquigarrow & (((D_1J)_1K)_1L)(M)
\end{tikzcd}
$$
There are now two ways of obtaining the former from the latter via associators, and they are equal. They are induced by the following rebracketings, which form a commutative pentagon (analogous to the one of monoidal categories).
$$
\begin{tikzcd}[sep=tiny]
	&& {(J,((K,L),M))} \\
	{(J,(K, (L,M)))} \\
	&&& {((J,(K,L)),M)} \\
	{((J,K),(L,M))} \\
	&& {(((J,K),L),M)}
	\arrow["{a}"', from=2-1, to=4-1, squiggly]
	\arrow["{a}"', from=4-1, to=5-3, squiggly]
	\arrow["{a_*}", from=2-1, to=1-3, squiggly]
	\arrow["{a}", from=1-3, to=3-4, squiggly]
	\arrow["{a}", from=3-4, to=5-3, squiggly]
\end{tikzcd}
$$

\subsection{Cocomplete categories are algebras}\label{seccocompalg}

In this section we prove the following statements.

\begin{thm}\label{cocompalg}
 Every cocomplete category $\cat{C}$ equipped with a choice of colimit for each diagram has the structure of a pseudoalgebra over $\cat{Diag}$.
\end{thm}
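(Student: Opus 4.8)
The plan is to take the structure map to be the chosen-colimit functor and to derive every coherence 2-cell from the universal property of colimits, so that each coherence equation collapses to the uniqueness clause of that property; cocompleteness guarantees that all the (small) colimits in play exist.

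First I would define the structure functor $\colim\colon\cat{Diag(C)}\to\cat{C}$. On an object $(\cat{J},D)$ it returns the chosen colimit together with its universal cocone $\lambda^{D}\colon D\Rightarrow\Delta(\colim D)$. On a morphism $(R,\rho)\colon(\cat{J},D)\to(\cat{J'},D')$ I use the universal property: whiskering $\lambda^{D'}$ by $R$ and precomposing with $\rho\colon D\Rightarrow D'\circ R$ yields a cocone $D\Rightarrow\Delta(\colim D')$, which factors uniquely through $\lambda^{D}$, defining $\colim(R,\rho)\colon\colim D\to\colim D'$. Preservation of identities and composites is immediate from uniqueness of such factorizations, so $\colim$ is a functor; note this is exactly where the generalized morphisms of \Cref{defdiag} are needed.

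Next I would produce the unit coherence. For $C\in\cat{C}$ the diagram $\eta_{\cat{C}}(C)$ is $\cat{1}\xrightarrow{C}\cat{C}$, whose colimit is canonically (if not strictly) isomorphic to $C$; this canonical isomorphism $\theta_{C}\colon\colim(\eta_{\cat{C}}C)\xrightarrow{\ \sim\ }C$ is the algebra unitor, and its naturality in $C$ again follows by uniqueness. The heart of the construction is the multiplication coherence. Writing $\widehat{D}\coloneqq\cat{Diag}(\colim)(D)$ for the diagram $\cat{J}\to\cat{C}$, $J\mapsto\colim(D_{1}J)$, one side is the iterated colimit $\colim\widehat{D}=\colim_{J}\colim_{K}(D_{1}J)(K)$, while $\mu(D)$ is the diagram $\groth D_{0}\to\cat{C}$ and the other side is $\colim(\mu(D))=\colim_{(J,K)\in\groth D_{0}}(D_{1}J)(K)$. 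The required invertible 2-cell
$$
\gamma_{D}\colon\colim\widehat{D}\xrightarrow{\ \sim\ }\colim\bigl(\mu(D)\bigr)
$$
is the Fubini/interchange isomorphism for colimits over a Grothendieck construction. I would build it straight from \Cref{oplaxcolimit}: since $\mu(D)\circ i_{J}=D_{1}J$ by the strict triangle \eqref{ijstrict}, the fiberwise universal cocones assemble through the lax-cocone structure of $D_{1}$ and the inclusions $i_{J}$ into a cocone under $\mu(D)$ that exhibits $\colim\widehat{D}$ as $\colim(\mu(D))$; uniqueness of the oplax-colimit factorization supplies a canonical iso, with naturality in $D$ once more a uniqueness argument.

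Finally I would check the two pseudoalgebra coherence axioms of \Cref{pseudomonads}: the unit axiom, relating $\theta$ and $\gamma$ through the monad unitors $\ell,r$, and the associativity axiom, relating the two collapses of a three-level diagram through $\gamma$ and the associator $a$ of \Cref{assdiag}. Because every map involved is the unique factorization through a colimiting cocone, each axiom equates two morphisms with the same universal source and hence holds automatically. I expect the associativity axiom to be the main obstacle: it forces one to match iterated Fubini isomorphisms against the rebracketing associator at three depths at once, and the bookkeeping of the lax-cocone transition maps across the nested Grothendieck constructions is delicate. The reassuring point is that no new colimit computation is needed beyond the interchange isomorphism $\gamma$ — all remaining identities are forced by universality.
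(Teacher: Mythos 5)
Your proposal follows the paper's own proof in all essentials: the structure map is the chosen-colimit functor $c$ with its action on morphisms of diagrams defined by the universal property (functoriality by uniqueness); the unitor is the canonical isomorphism $C\cong c(\eta(C))$; the multiplicator is the interchange isomorphism between the iterated colimit $c(c_*D)$ and the colimit $c(\mu(D))$ of the Grothendieck construction; and both pseudoalgebra coherence axioms are discharged by the uniqueness clause of the universal property (the paper's phrasing: colimit cocones form a contractible groupoid, see \Cref{cocompcoherence}, with the auxiliary fact that $c$ kills 2-cells of diagrams isolated as \Cref{c2functor}).

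The one place where your argument is too thin is, unfortunately, the heart of the matter: why the comparison cell $\gamma_D$ is \emph{invertible}. You correctly assemble the fiberwise colimit cocones into a cocone under $\mu(D)$ with tip $\colim\widehat{D}$, but you then assert that this cocone is colimiting because ``uniqueness of the oplax-colimit factorization'' in \Cref{oplaxcolimit} supplies the isomorphism. \Cref{oplaxcolimit} is a universal property of $\groth D_0$ \emph{in $\cat{CAT}$}: it classifies functors out of $\groth D_0$ in terms of lax cocones under $D_0$, and by itself says nothing about colimits in $\cat{C}$ of diagrams indexed by $\groth D_0$. What is actually needed is the statement that cocones under $\mu(D)$ with an arbitrary tip $K$ correspond to families of cocones under the $D_1J$ compatible with the transition maps $D_1j$, hence, by the universal properties of each $c(D_1J)$ and of $c(c_*D)$, to morphisms $c(c_*D)\to K$. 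This is exactly what the paper proves in \Cref{decomplemma}, via the chain of bijections $\Hom_\cat{C}\big(c(\mu(D)),K\big)\cong S\cong S'\cong\Hom_\cat{C}\big(c(c_*D),K\big)$ followed by Yoneda. Your gap is repairable within your own framework --- for instance, apply \Cref{oplaxcolimit} with tip the slice category $\cat{C}/K$: a cocone under $\mu(D)$ with tip $K$ is a functor $\groth D_0\to\cat{C}/K$ lying over $\mu(D)$, and such functors correspond by \Cref{oplaxcolimit} to lax cocones $D_0\Rightarrow\cat{C}/K$ lifting $D_1$, i.e.\ precisely to compatible families of fiberwise cocones --- but some argument of this kind must be supplied; the bare appeal to uniqueness in \Cref{oplaxcolimit} does not do the job.
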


As shown in \Cref{secnotallalg}, the converse of the theorem does not hold: not all pseudoalgebras are in this form.

A definition of pseudoalgebra over a pseudomonad is given in \Cref{pseudomonads}.

\subsubsection{Structure map}

Let $\cat{C}$ be a cocomplete category.
For each diagram $D:\cat{J}\to\cat{C}$, choose a colimit (they are all isomorphic, pick one in each equivalence class). Let's see why this construction is functorial.
A colimit does not just consist of an object of $\cat{C}$, but also of the arrows of the colimiting cocone. 
Denote by $c(D)$ the (chosen) colimit object of $D$, and by $h(D):D\Rightarrow c(D)$ the colimiting cocone. In components, the cocone consists of arrows
$$
\begin{tikzcd}[column sep=large]
 DJ \ar{r}{h(D)_J} & c(D)
\end{tikzcd}
$$
for each object $J$ of $\cat{J}$.
Now consider a morphism of diagrams as follows. 
$$
\begin{tikzcd}[row sep=small]
 \cat{J} 
  \ar[""{name=D,below}]{dr}{D} 
  \ar{dd}[swap]{R} \\
 & \cat{C} \\
 \cat{J'} 
  \ar{ur}[swap]{D'} 
  \ar[Rightarrow,from=D, "\rho", shorten <= 0.5em, shorten >= 1em, near start]
\end{tikzcd}
$$
Using $\rho$ and the colimit cocone $h(D'):D'\Rightarrow c(D')$ we can construct a cocone under $D$, with tip $c(D')$: the one of components
$$
\begin{tikzcd}[column sep=large]
 DJ \ar{r}{\rho_J} & D'RJ \ar{r}{h(D')_{RJ}} & c(D') 
\end{tikzcd}
$$
for each $J$ of $\cat{J}$. This cocone must then factor uniquely through $c(D)$ by the universal property of $c(D)$ as a colimit:
$$
\begin{tikzcd}[column sep=small, row sep=tiny]
 DJ \ar{dd}[swap]{h(D)_J} \ar{dr}{\rho_J} \\
 & D'RJ \ar{dr}{h(D')_{RJ}} \\
 c(D) \ar[dotted]{rr} & & c(D') 
\end{tikzcd}
$$
Denote the resulting map $c(D)\to c(D')$ by $c(R,\rho)$. It is the unique map that makes the diagram above commute for each $J$ of $\cat{J}$. By uniqueness, this assignment preserves identities and composition, and so $c$ is a functor $\cat{Diag(C)}\to\cat{C}$.
Technically speaking, for each diagram one can choose many possible colimits within the same equivalence class. However, once $c(D)$ and $c(D')$ are fixed, the map $c(D)\to c(D')$ is unique. So any choice of such colimit objects gives rise to a functor, and all these functors will be naturally isomorphic, again by uniqueness. 

We can say even more: the map $c:\cat{Diag(C)}\to\cat{C}$ is even \emph{2-functorial} if we view $\cat{Diag(C)}$ as a 2-category (as in \Cref{defdiag}), and $\cat{C}$ (which is a 1-category) as a locally discrete 2-category.
This is made precise by the following lemma.

\begin{lemma}\label{c2functor}
 Let $\cat{C}$ be a cocomplete category and $c:\cat{Diag(C)}\to\cat{C}$ a choice of colimit for each diagram. 
 Consider diagrams $(\cat{J},D)$ and $(\cat{J'},D')$ in $\cat{C}$, morphisms of diagrams $(R,\rho), (R',\rho'): (\cat{J},D) \to (\cat{J'},D')$, and suppose there exists a 2-cell of diagrams $\alpha:(R,\rho)\Rightarrow (R',\rho')$. 
 Then $c(R,\rho) = c(R',\rho')$.
\end{lemma}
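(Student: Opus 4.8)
The plan is to reduce the claim to the universal property that \emph{defines} the two morphisms, together with the cocone compatibility satisfied by the chosen colimiting cocone on $\cat{J'}$. Recall that $c(R,\rho)\colon c(D)\to c(D')$ is characterized as the \emph{unique} morphism satisfying
$$
c(R,\rho)\circ h(D)_J = h(D')_{RJ}\circ \rho_J
$$
for every object $J$ of $\cat{J}$, and similarly $c(R',\rho')$ is the unique morphism with $c(R',\rho')\circ h(D)_J = h(D')_{R'J}\circ \rho'_J$. Hence, to prove $c(R,\rho)=c(R',\rho')$ it suffices to show that $c(R',\rho')$ \emph{also} satisfies the defining equation of $c(R,\rho)$; the conclusion then follows from the uniqueness clause of the universal property of $c(D)$ as a colimit.

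First I would unwind the hypothesis that $\alpha\colon R\Rightarrow R'$ is a 2-cell of diagrams into its pointwise form. Spelling out the equality of the two pasting diagrams of \Cref{defdiag}, the condition is precisely that
$$
\rho'_J = D'(\alpha_J)\circ \rho_J
$$
for every object $J$ of $\cat{J}$, where $\alpha_J\colon RJ\to R'J$ is a morphism of $\cat{J'}$ and $D'(\alpha_J)\colon D'RJ\to D'R'J$ is its image under $D'$.

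Next I would compute $c(R',\rho')\circ h(D)_J$ and rewrite it. Substituting the defining equation of $c(R',\rho')$ and then the pointwise 2-cell condition gives
$$
c(R',\rho')\circ h(D)_J = h(D')_{R'J}\circ \rho'_J = h(D')_{R'J}\circ D'(\alpha_J)\circ \rho_J.
$$
The key geometric input is that $h(D')$ is a cocone under $D'$: its compatibility condition applied to the morphism $\alpha_J\colon RJ\to R'J$ of $\cat{J'}$ reads $h(D')_{R'J}\circ D'(\alpha_J) = h(D')_{RJ}$. Substituting this identity yields $c(R',\rho')\circ h(D)_J = h(D')_{RJ}\circ \rho_J$, which is exactly the equation defining $c(R,\rho)$.

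Since this holds for every object $J$, the uniqueness part of the colimit universal property of $c(D)$ forces $c(R',\rho')=c(R,\rho)$, as claimed. I do not expect a serious obstacle here: the only point requiring care is correctly transcribing the equality of the two pasting diagrams that defines a 2-cell of diagrams into the componentwise identity $\rho'_J = D'(\alpha_J)\circ\rho_J$, after which the argument is a one-line application of cocone compatibility followed by the uniqueness clause.
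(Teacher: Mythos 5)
Your proposal is correct and follows essentially the same route as the paper's proof: unwind the 2-cell condition to the componentwise identity $\rho'_J = D'(\alpha_J)\circ\rho_J$, use the cocone compatibility $h(D')_{R'J}\circ D'(\alpha_J) = h(D')_{RJ}$ to conclude that the two induced cocones under $D$ coincide, and then invoke the uniqueness clause of the colimit universal property of $c(D)$. (Incidentally, your transcription is cleaner than the paper's, whose diagrams contain apparent label swaps of $h(D')_{RJ}$ and $h(D')_{R'J}$.)
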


\begin{proof}
 Recall that $\alpha$ consists of a natural transformation $\alpha:R\Rightarrow R'$ such that
$$
\begin{tikzcd}[column sep=large]
 \cat{J} 
  \ar[""{name=D,below, pos=0.7}]{dr}{D} 
  \ar[bend right, ""{name=RP, right}]{dd}[swap]{R'} \ar[bend left, ""{name=R, left}]{dd}[pos=0.4]{R} \\
 & \cat{C} \\
 \cat{J'} 
  \ar{ur}[swap]{D'} 
  \ar[Rightarrow,from=D, "\rho", shorten <= 0.5em, shorten >= 1.5em, near start]
  \ar[Rightarrow,from=R,to=RP,"\alpha"]
\end{tikzcd}
\qquad=\qquad\begin{tikzcd}[column sep=large]
 \cat{J} 
  \ar[""{name=D,below}]{dr}{D} 
  \ar[bend right]{dd}[swap]{R'} \\
 & \cat{C} . \\
 \cat{J'} 
  \ar{ur}[swap]{D'} 
  \ar[Rightarrow,from=D, "\rho'", shorten <= 0.5em, shorten >= 1em, swap]
\end{tikzcd}
$$
 This way, for each object $J$ of $\cat{J}$, the following diagram commutes,
 $$
 \begin{tikzcd}[row sep=small]
 & D'RJ \ar{dd}{D\alpha_J} \ar{dr}{h(D')_{R'J}} \\
 DJ \ar{ur}{\rho_J} \ar{dr}[swap]{\rho'_{J}} && c(D') \\
 & D'R'J \ar{ur}[swap]{h(D')_{RJ}}
 \end{tikzcd}
 $$
 where $h(D'):D'\Rightarrow c(D')$ denotes the colimit cone of $c(D')$ under $D'$. 
 The maps $c(R,\rho)$ and $c(R',\rho'): c(D)\to c(D')$ are both defined as the unique maps making the following diagram commute,
 $$
 \begin{tikzcd}
 DJ \ar{dr}{h(D')_{R'J}\circ\rho_J \,=\, h(D')_{RJ}\circ\rho'_J} \ar{d}[swap]{h(D)_J} \\
 c(D) \ar[dotted]{r} & c(D')
 \end{tikzcd}
 $$
 and are therefore equal.
\end{proof}

\subsubsection{Structure 2-cells}

We now give the structure 2-cells of the pseudoalgebras, namely the unitor and the multiplicator.

\begin{lemma}
 The following diagram commutes up to a canonical natural isomorphism. 
 $$
 \begin{tikzcd}[row sep=tiny, column sep=-1em]
  \cat{C} \ar{ddrr}[swap]{\id} \ar{rr}{\eta} &[+50pt] & \cat{Diag(C)} \ar{dd}{c} \\[-5pt]
  & \cong \\[+10pt]
  && \cat{C}
 \end{tikzcd}
 $$
\end{lemma}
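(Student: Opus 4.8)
The plan is to exhibit the canonical natural isomorphism explicitly and then verify naturality using the description of $c$ on morphisms of diagrams given above. The composite $c\circ\eta$ sends an object $C$ of $\cat{C}$ to the chosen colimit $c(\eta(C))$ of the one-object diagram $\cat{1}\xrightarrow{C}\cat{C}$, so the content of the lemma is precisely that this colimit is canonically $C$ again.

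First I would observe that $C$ itself, equipped with the single-component cocone $\id_C:C\to C$, is already a colimit of the diagram $\eta(C)$: a cocone under $\cat{1}\xrightarrow{C}\cat{C}$ with tip $X$ is nothing but a morphism $C\to X$, and $\id_C$ is initial among these. Consequently the single component $h(\eta(C))_\bullet:C\to c(\eta(C))$ of the chosen colimiting cocone is an isomorphism, its inverse being the unique comparison map from the colimit $(c(\eta(C)),h(\eta(C))_\bullet)$ to the colimit $(C,\id_C)$. I would take
$$
u_C \coloneqq h(\eta(C))_\bullet : C \xrightarrow{\ \sim\ } c(\eta(C))
$$
as the component at $C$ of the desired natural isomorphism $u:\id\Rightarrow c\circ\eta$ filling the triangle (its inverse supplying the opposite orientation, should that convention be preferred).

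It then remains to check naturality in $C$. Given $f:C\to C'$, recall that $\eta(f)$ is the morphism of diagrams $(\id_{\cat{1}},f):\eta(C)\to\eta(C')$, so by the defining property of $c$ on morphisms of diagrams, $c(\eta(f))$ is the unique map satisfying
$$
c(\eta(f))\circ h(\eta(C))_\bullet = h(\eta(C'))_\bullet\circ f ,
$$
where on the right I have used $R=\id_{\cat{1}}$, so that $R\bullet=\bullet$, and that the single component of the transformation part of $\eta(f)$ is exactly $f$. Rewritten as $c(\eta(f))\circ u_C = u_{C'}\circ f$, this is precisely the naturality square for $u$, which is therefore a natural isomorphism.

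I expect no genuine obstacle here; the only points requiring care are (i) confirming that $h(\eta(C))_\bullet$ is invertible, which follows because $(C,\id_C)$ and $(c(\eta(C)),h(\eta(C))_\bullet)$ are two colimits of the same diagram and hence uniquely isomorphic, and (ii) correctly unwinding the definition of $c$ on $\eta(f)$, remembering that the indexing functor $R=\id_{\cat{1}}$ contributes nothing. The naturality computation is then immediate, and since each $u_C$ is invertible the triangle commutes up to a canonical natural \emph{iso}morphism, as claimed.
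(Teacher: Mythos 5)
Your proposal is correct and follows essentially the same route as the paper: both identify the canonical isomorphism as the (invertible) component of the chosen colimiting cocone over the one-object diagram $\eta(C)$, and both deduce naturality from the fact that $c(\eta(f))$ is by definition the unique map compatible with the cocones. The only cosmetic difference is the direction convention for the isomorphism, which you already address.
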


We denote the natural isomorphism by $\iota: c\circ\eta\Rightarrow \id_C$.

\begin{proof}
 Let $C$ be an object of $\cat{C}$. The diagram $\eta(C)$ is the one-object diagram whose unique node is given by $C$. A colimit cocone over $\eta(C)$ consists of an object $C'$ together with a specified isomorphism $C\to C'$. Therefore, for any choice of $c$, we get canonically an isomorphism $\iota_C:C\to c(\eta(C))$. The maps $\iota_C$ assemble to a natural isomorphism $\iota:c\circ\eta\Rightarrow \id_C$, since for each $f:C\to C'$ of $\cat{C}$ the following diagram commutes,
 $$
 \begin{tikzcd}
  C \ar{r}{f} \ar[leftarrow]{d}{\iota_C} & C' \ar[leftarrow]{d}{\iota_{C'}} \\
  c(\eta(C)) \ar{r}{c(\eta(f))} & c(\eta(C'))
 \end{tikzcd}
 $$
 since the map $c(\eta(f))$ is defined (by definition of how $c$ acts on morphisms) as the unique map making the diagram above commute.
\end{proof}

\begin{lemma}\label{decomplemma}
 The following diagram commutes up to a canonical natural isomorphism. 
 $$
 \begin{tikzcd}[row sep=small, column sep=0]
  \cat{Diag(Diag(C))} \ar{dd}{\mu} \ar{rr}{c_*} && \cat{Diag(C)} \ar{dd}{c} \\
  & \cong \\
  \cat{Diag(C)} \ar{rr}{c} && \cat{C}
 \end{tikzcd}
 $$
\end{lemma}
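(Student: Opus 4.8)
The plan is to recognize this statement as the \emph{Fubini theorem} for colimits: the colimit of a diagram indexed by a Grothendieck construction is the iterated colimit, computed first over the fibers and then over the base. Concretely, fix a diagram of diagrams $D=(D_0,D_1)$ over a small category $\cat{J}$. By \Cref{oplaxcolimit} the lower-left path sends $D$ to $c(\mu(D))$, the chosen colimit of the total diagram $\mu(D):\groth D_0\to\cat{C}$, $(J,X)\mapsto D_1J(X)$. The upper-right path sends $D$ to $c(c_*D)$, where $c_*D=c\circ D:\cat{J}\to\cat{C}$ is the diagram $J\mapsto c(D_1J)$ with $c_*D(j)=c(D_0j,D_1j)$ on a morphism $j:J\to J'$. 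I must produce a canonical isomorphism between these two objects of $\cat{C}$, natural in $D$.

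First I would build a comparison map. Writing $h(D_1J):D_1J\Rightarrow c(D_1J)$ for the fiberwise colimiting cocones and $h(c_*D):c_*D\Rightarrow c(c_*D)$ for the outer one, I define, for each object $(J,X)$ of $\groth D_0$, the composite
$$
\mu(D)(J,X)=D_1J(X)\xrightarrow{h(D_1J)_X} c(D_1J)\xrightarrow{h(c_*D)_J} c(c_*D).
$$
I claim this family is a cocone under $\mu(D)$. Using the decomposition of morphisms of $\groth D_0$ into fiber morphisms $(\id_J,f)$ and the opcartesian components $(i_j)_X=(j,\id)$ from the proof of \Cref{oplaxcolimit}, it suffices to check compatibility on each type: for $(\id_J,f)$ this is immediate from $h(D_1J)$ being a cocone, while for $(j,\id)$ it follows by combining the defining equation of $c(D_0j,D_1j)=c_*D(j)$ on morphisms of diagrams with the fact that $h(c_*D)$ is a cocone under $c_*D$. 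The universal property of $c(\mu(D))$ then yields a unique map $\theta_D:c(\mu(D))\to c(c_*D)$ factoring this cocone.

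The main work is to show $\theta_D$ is an isomorphism; this is the Fubini step and the only genuine obstacle. I would prove it by showing that $c(c_*D)$, equipped with the cocone above, is \emph{itself} a colimit of $\mu(D)$, so that $\theta_D$ is forced to be an isomorphism by uniqueness of colimits. For this I exhibit a natural bijection, for every object $T$ of $\cat{C}$, between cocones under $\mu(D)$ with tip $T$ and cocones under $c_*D$ with tip $T$. Given a cocone $\{\gamma_{(J,X)}:D_1J(X)\to T\}$, restricting to each fiber $D_0J$ gives a cocone under $D_1J$, which factors uniquely through a map $\delta_J:c(D_1J)\to T$; naturality of $\gamma$ on the opcartesian morphisms shows $\{\delta_J\}$ is a cocone under $c_*D$. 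Conversely $\{\delta_J\}$ produces $\gamma_{(J,X)}:=\delta_J\circ h(D_1J)_X$, and the two assignments are mutually inverse. Hence both sides corepresent the same functor on $\cat{C}$ and $\theta_D$ is invertible.

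Finally, naturality of $\theta$ in $D$ is routine and handled by uniqueness, exactly as in \Cref{c2functor} and the functoriality argument for $\mu$: for a morphism $(F,\phi):D\to E$ of $\cat{Diag(Diag(C))}$, both ways around the naturality square are maps $c(\mu(D))\to c(c_*E)$ induced from the same cocone under $\mu(D)$, and hence coincide. This exhibits the required canonical natural isomorphism $c\circ\mu\cong c\circ c_*$, completing the proof.
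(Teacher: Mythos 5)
Your proof is correct, and its substance -- a bijection between cocones under $\mu(D)$ and cocones under $c_*D$ -- is the same Fubini-for-colimits argument the paper uses; but the two constructions are mirror images, and the difference is worth recording. The paper builds its comparison map in the direction $\gamma_D : c(c_*D) \to c(\mu(D))$: it first produces, for each $J$, the map $c(i_J,\id) : c(DJ) \to c(\mu(D))$ induced by the fiber inclusion, and then must verify that these maps assemble into a cocone under $c_*D$. That verification is not a bare diagram chase: it rewrites \eqref{ijlax} as a 2-cell of diagrams and invokes \Cref{c2functor}, i.e.~the fact that $c$ collapses 2-cells of diagrams. Your map $\theta_D : c(\mu(D)) \to c(c_*D)$ goes the other way, induced by the composite cocone $h(c_*D)_J \circ h(D_1J)_X$, and your cocone check needs only the defining equation of $c_*D(j) = c(D_0j, D_1j)$ on morphisms together with the cocone identity for $h(c_*D)$, so \Cref{c2functor} is never used. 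The isomorphism step is also packaged differently but is equivalent: the paper's chain of bijections $\Hom_\cat{C}(c(\mu(D)),K) \cong S \cong S' \cong \Hom_\cat{C}(c(c_*D),K)$, followed by the Yoneda lemma, is exactly your statement that $c(c_*D)$ with the composite cocone corepresents cocones under $\mu(D)$, whence $\theta_D$ is a comparison of two colimits of the same diagram and therefore invertible. What the paper's direction buys is conceptual rather than technical: the maps $c(i_J,\id)$ from the fiber colimits into the total colimit are precisely the ``partial colimit'' comparisons that \Cref{partialcolimits} is built on, which is why the paper goes to the trouble of constructing them. What your direction buys is economy: a shorter verification with one fewer prerequisite lemma.
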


Denote the natural isomorphism by $\gamma: c\circ c_* \Rightarrow c\circ\mu$. 

This statement is known in the literature, see for example \cite[Section~40]{homdiag}, as well as \cite[Theorem~3.2]{decomposition}.
Here we present a direct proof. In the proof we can see 
how the objects in the top right corner of the square are, in some sense, partial colimits of the objects in the bottom left corner. This will be made precise in \Cref{partialcolimits}.

\begin{proof}
 Let $D:\cat{J}\to\cat{Diag(C)}$.
 The diagram $c_*D:\cat{J}\to\cat{C}$ is given by the following postcomposition.
 $$
\begin{tikzcd}[row sep=small]
\cat{J} \ar{rrr}{D} && &[-3.5em] \cat{Diag(C)} \ar{rrr}{c} &[-2.5em] && \cat{C} \\
 J \ar{dd}[swap]{j} 
& &  D_0J
  \ar[""{name=D,below}]{drr}{D_1J} 
  \ar{dd}[swap]{D_0j}
& && & 
c(DJ) \ar{dd}{c(Dj)}
  \\
& \rightsquigarrow & & & \cat{C} & \rightsquigarrow \\
 J' & & D_0J'
  \ar{urr}[swap]{D_1J'} 
  \ar[Rightarrow,from=D, "D_1j", shorten <= 0.5em, shorten >= 1em, swap, near start]
& && & c( D_1J')
\end{tikzcd}
$$
 In other words, the nodes of $D$ are diagrams (one for each object of $\cat{J}$), and $c_*$ replaces them by their (chosen) colimit, obtaining a diagram in $\cat{C}$ indexed by $\cat{J}$. 
 
 Recall that $\mu$ is given by the Grothendieck construction, so that $\mu(D)$ is a diagram obtained by the union of the diagrams $DJ$ for each $J$ of $\cat{J}$, plus additional arrows between those subdiagrams, induced by the morphisms of $\cat{J}$. Specifically, for every morphism $j:J\to J'$ of $\cat{J}$ and for every $f:X\to Y$ of $D_0J$, the following square commutes.
 $$
 \begin{tikzcd}
 D_1J (X) \ar{d}{D_1Jf} \ar{r}{(D_1j)_X} & D_1J'(D_0j(X)) \ar{d}{D_1J'(D_0j(f))} \\
 D_1J (Y)  \ar{r}{(D_1j)_Y} & D_1J'(D_0j(Y))
 \end{tikzcd}
 $$
 The object $c(\mu(D))$ is a colimit of the resulting diagram involving all the $j$ and the $f$ as above.
 Recall now the universal property of the Grothendieck construction, and in particular diagrams \eqref{ijstrict} and \eqref{ijlax}. For each object $J$ of $\cat{J}$, the morphism of diagrams given by the inclusion $i_J$ of the fiber over $J$, 
 $$
 \begin{tikzcd}[row sep=small]
  D_0 J \ar{dd}[swap]{i_J} \ar{dr}{D_1J} \\
  & \cat{C} \\
  \groth D_0 \ar{ur}[swap]{\mu(D)}
 \end{tikzcd}
 $$
 induces a map between their (chosen) colimits $c(i_J,\id):c(DJ) \to c(\mu(D))$.
 The maps $c(i_J,\id):c(DJ) \to c(\mu(D))$, for each $J$, assemble to a cocone under $c_*D:\cat{J}\to\cat{C}$, with tip $c(\mu(D))$, meaning that for each morphism $j:J\to J'$ of $\cat{J}$, the following diagram commutes.
 \begin{equation}\label{coconeofcolims}
 \begin{tikzcd}[row sep=small]
  c(DJ) \ar{dd}[swap]{c(Dj)} \ar{dr}{c(i_{J},\id)} \\
  & c(\mu(D)) \\
  c(DJ') \ar{ur}[swap]{c(i_{J'},\id)}
 \end{tikzcd}
 \end{equation}
 Indeed, we can rewrite \eqref{ijlax} as follows,
 $$
\begin{tikzcd}[column sep=large]
 D_0J
  \ar[""{name=D,below, pos=0.7}]{dr}{D_1J} 
  \ar[bend right, ""{name=RP, right}]{dd}[swap]{i_{J'}\circ D_0j} \ar[bend left, ""{name=R, left}]{dd}[pos=0.4]{i_J} \\
 & \cat{C} \\
 \groth D_0
  \ar{ur}[swap]{\mu(D)} 
  \ar[equal,from=D, shorten <= 0.5em, shorten >= 1.5em, near start]
  \ar[Rightarrow,from=R,to=RP,"i_j"]
\end{tikzcd}
\qquad=\qquad\begin{tikzcd}[column sep=large]
 D_0J
  \ar[""{name=D,below}]{dr}{D_1J} 
  \ar[bend right]{dd}[swap]{i_{J'}\circ D_0j} \\
 & \cat{C} \\
 \groth D_0 
  \ar{ur}[swap]{\mu(D)} 
  \ar[Rightarrow,from=D, "D_1j", shorten <= 0.5em, shorten >= 1em, swap]
\end{tikzcd}
$$
 which is exactly the condition for a 2-cells of diagrams $(i_J,\id)\Rightarrow (i_{J'}\circ D_0J,D_1j)$ (see \Cref{defdiag}). 
 By \Cref{c2functor}, then, $c(i_J,\id)=c(i_{J'}\circ D_0J,D_1j)$, which means that \eqref{coconeofcolims} commutes.
 
 As we said, the maps $c(i_J,\id):c(DJ) \to c(\mu(D))$, assemble to a cocone under $c_*D:\cat{J}\to\cat{C}$, with tip $c(\mu(D))$. Therefore, by the universal property of $c(c_*D)$ as a colimit, there exists a unique arrow $c(c_*D)\to c(\mu(D))$, which we denote by $\gamma_D$, making the following diagram commute for all $J$ of $\cat{J}$,
 $$
 \begin{tikzcd}
  c(DJ) \ar{dr}{c(i_J,\id)} \ar{d}[swap]{h(c_*D)_J} \\
  c(c_*D) \ar[dotted]{r}[swap]{\gamma_D} & c(\mu(D))
 \end{tikzcd}
 $$
 where the $h(c_*D)_J$ denote the arrows of the colimiting cocone. Note that for each object $X$ of $D_0J$ we can extend the diagram above to the following commutative diagram,
 \begin{equation}\label{twocones}
 \begin{tikzcd}
  D_1J(X) \ar{d}[swap]{h(DJ)_X} \ar[bend left]{ddr}{h(\mu(D))_X} \\
  c(DJ) \ar{dr}[near start]{c(i_J,\id)} \ar{d}[swap]{h(c_*D)_J} \\
  c(c_*D) \ar[dotted]{r}[swap]{\gamma_D} & c(\mu(D))
 \end{tikzcd}
 \end{equation}
 where $h(DJ)_X$ and $h(\mu(D))_X$ are the components at $X$ of the colimiting cocones of $c(DJ)$ and $c(\mu(D))$.
 
 To show that $\gamma_D$ is an isomorphism, we invoke the Yoneda embedding. Let $K$ be any object of $\cat{C}$. We want to show that the function 
 \begin{equation}\label{hastoiso}
 \begin{tikzcd}[row sep=0]
  \Hom_\cat{C}\big(c(\mu(D)), K\big) \ar{r}{{\gamma_D}_*} & \Hom_\cat{C}\big(c(c_*D), K\big) \\
  f \ar[mapsto]{r} & f\circ\gamma_D ,
 \end{tikzcd}
 \end{equation}
 which is natural in $K$, is a bijection. 
 To this end, we note that by the universal property of colimits, the set on the left is naturally isomorphic (via composing with the components of $h(\mu(D))$) to the subset  
 $$
 S\; \subseteq \;\prod_{J\in\cat{J}} \prod_{X\in D_0J} \Hom_\cat{C}\big( D_1J(X), K \big)
 $$
 whose elements are families of arrows $(k_{J,X}: D_1J(X) \to K)$ such that for each $j:J\to J'$ of $\cat{J}$ and each $f:X\to Y$ of $D_0J$ the following diagram commutes.
 $$
 \begin{tikzcd}[sep=large]
  	{D_1J(X)} && {D_1J'(D_0j(X))} \\
	& {K} \\
	{D_1J(Y)} && {D_1J'(D_0j(Y))}
	\arrow["{k_{J,X}}"', from=1-1, to=2-2]
	\arrow["{k_{J,Y}}", from=3-1, to=2-2]
	\arrow["{D_1J(f)}"', from=1-1, to=3-1]
	\arrow["{D_1J(D_0j(f))}", from=1-3, to=3-3]
	\arrow["{(D_1j)_X}", from=1-1, to=1-3]
	\arrow["{(D_1j)_Y}"', from=3-1, to=3-3]
	\arrow["{k_{J', D_0j(X)}}", from=1-3, to=2-2]
	\arrow["{k_{J', D_0j(Y)}}"', from=3-3, to=2-2]
 \end{tikzcd}
 $$
 Now, for each $J$ of $\cat{J}$, the quantity appearing in $S$ given by 
 $$
 S_J \;\subseteq\; \prod_{X\in D_0J} \Hom_\cat{C}\big( D_1J(X), K \big)
 $$
 and whose elements are arrows $(k_{J,X}: D_1J(X) \to K)$ such that $f:X\to Y$ of $D_0J$ the following diagram commutes,
 $$
 \begin{tikzcd}
  	{D_1J(X)}  \\
	& {K} \\
	{D_1J(Y)} 
	\arrow["{k_{J,X}}", from=1-1, to=2-2]
	\arrow["{k_{J,Y}}"', from=3-1, to=2-2]
	\arrow["{D_1J(f)}"', from=1-1, to=3-1]
 \end{tikzcd}
 $$
 is in natural bijection with 
 $$
 \Hom_\cat{C}\big( c(DJ), K \big)
 $$
 by universal property of the colimit, via composing with the cocones $h(DJ)$. In other words, $S$ is in natural bijection with the subset of 
 $$
 S' \;\subseteq\;\prod_{J\in \cat{J}} \Hom_\cat{C}\big( c(DJ), K \big) 
 $$
 whose elements are arrows $(k'_{J}: c(DJ) \to K)$ such that the following diagram commutes.
 $$
 \begin{tikzcd}
	{c(DJ)} && {C(DJ')} \\
	& {K}
	\arrow["{c(Dj)}", from=1-1, to=1-3]
	\arrow["{k'_J}"', from=1-1, to=2-2]
	\arrow["{k'_{J'}}", from=1-3, to=2-2]
\end{tikzcd}
 $$
 The subset $S'$, again by the universal property of the colimit, is in bijection (via composing with $h(c_*D)$) with $\Hom_\cat{C}\big(c(c_*D), K\big)$, which is exactly at the right side of \eqref{hastoiso}. Since \eqref{twocones} commutes, composing with $\gamma_D$ has the same effect as applying the bijections given by (the inverse of) composing with $h(c_*D)$ and $h(DJ)_X$ (all the bijections are invertible), and then composing with $h(\mu(D))_X$. Therefore \eqref{hastoiso} is a bijection too. By the Yoneda lemma, then, $\gamma_D$ is an isomorphism. 
\end{proof}

\subsubsection{Coherence laws}\label{cocompcoherence}

In order to prove \Cref{cocompalg} it remains to be checked that the unit and multiplication coherence conditions of \Cref{defpseudoalgebra} hold.
Intuitively, such coherence conditions hold by the ``uniqueness property of maps between colimits''. In other words, not only do objects satisfying the same universal property admit an isomorphism between them, but they admit a unique one compatible with the universal property (in our case, the cocone): while colimit objects of a diagram may have many automorphisms (as objects), colimit \emph{cocones} over the same diagram form a contractible groupoid.

Let's see this more explicitly.
The unit condition of \Cref{defpseudoalgebra}, instantiated in our case, says the following.
Let $\cat{C}$ be a cocomplete locally small category, and construct (choose) the functor $c:\cat{Diag(C)}\to\cat{C}$ as before. 
Consider now a diagram $D:\cat{J}\to\cat{C}$. We can apply the map $\eta_*:\cat{Diag(C)}\to\cat{Diag(Diag(C))}$ as in \Cref{rightunitdiag} and obtain the diagram $\eta_*D:\cat{J}\to\cat{Diag(C)}$ as follows.
$$
\begin{tikzcd}[column sep=tiny, row sep=0]
	{\cat{J}} && {\cat{Diag(C)}} \\
	{J} & {\rightsquigarrow} & {\bigg\{ \cat{1} } && {\cat{C} \bigg\}} \\
	&& {\quad \bullet} & {\rightsquigarrow} & {DJ}
	\arrow["\eta_*D", from=1-1, to=1-3]
	\arrow["{DJ}", from=2-3, to=2-5]
\end{tikzcd}
$$
Now we can either 
\begin{itemize}
 \item apply to $\eta_*D$ the map $c_*$, which replaces each one-object diagram $DJ$ with its chosen colimit $c(DJ)$ (isomorphic to $DJ$ via the unitor $\iota$), giving the diagram $c(D-):\cat{J}\to\cat{C}$; or
 \item form the Grothendieck construction and obtain the diagram $(J,\bullet) \mapsto DJ$, with exactly the same image in $\cat{C}$ as $D$, but indexed by a nominally different category, and isomorphic to $D$ via the counit $\rho$ . 
\end{itemize}
Both ways give isomorphic diagrams in $\cat{C}$, which then have isomorphic colimits. The isomorphism between the colimits can be written a priori in two ways: 
\begin{itemize}
 \item It is the one induced by $\gamma: c\circ c_* \Rightarrow c\circ\mu$;
 \item It is the one induced by the isomorphism of diagrams of components $\iota:DJ \to c(DJ)$ for each object $J$ of $\cat{J}$.
\end{itemize}
The unit condition of pseudoalgebras says that these two isomorphism should be equal. This is indeed the case, by uniqueness of the morphism $\gamma$: forming the colimit cocones of $D$ and of $c(D-)$, which are isomorphic diagrams via $\iota$, we have a unique morphism making the following diagram commute for all $J$ of $\cat{J}$,
$$
\begin{tikzcd}
	{DJ} && {c(DJ)} \\
	{c(D)} && {c(c(D-))}
	\arrow["{\iota^{-1}}", from=1-1, to=1-3]
	\arrow["{h(D)}"', from=1-1, to=2-1]
	\arrow["{h(c(D-))}", from=1-3, to=2-3]
	\arrow[from=2-1, to=2-3, dashed]
\end{tikzcd}
$$
which can be seen as either the map $\gamma$ (by definition), or as the map induced by $\iota$, after suitably translating $D$ into $(J,\bullet) \mapsto DJ$ via the right unitor $r$. 
 
The multiplication condition of \Cref{defpseudoalgebra}, again instantiated in our case, says the following.
As in \Cref{assdiag}, let $D\in\cat{Diag(Diag(Diag(C)))}$ be a diagram as follows.
$$
\begin{tikzcd}[row sep=0, column sep=tiny]
	{\cat{J}} && {\cat{Diag(Diag(C))}} \\
	{J} & {\rightsquigarrow} & {\bigg\{ D_0J } && {\cat{Diag(C)} \bigg\}} \\
	&& {K} & {\rightsquigarrow} & {\bigg\{ (D_1J)_0K} && {\cat{C} \bigg\}} \\
	&&&& {L} & {\rightsquigarrow} & {((D_1J)_1K)(L)}
	\arrow["{D}", from=1-1, to=1-3]
	\arrow["{D_1J}", from=2-3, to=2-5]
	\arrow["{(D_1J)_1K}", from=3-5, to=3-7]
\end{tikzcd}
$$
We can now take the colimit progressively, a priori in two ways: first of all, ``from the inside out'', that is,
\begin{itemize}
 \item For each $J$ of $\cat{J}$ and $K$ of $D_0J$, take the (chosen) colimits of the diagrams $(D_1J)_1K$, obtaining the following diagram of diagrams;
$$
\begin{tikzcd}[row sep=0, column sep=tiny]
	{\cat{J}} && {\cat{Diag(C)}} \\
	{J} & {\rightsquigarrow} & {\bigg\{ D_0J} && {\cat{C} \bigg\}} \\
	&& {K} & {\rightsquigarrow} & {c((D_1J)_1K)}
	\arrow["{c_{**}D}", from=1-1, to=1-3]
	\arrow["{c_*(D_1J)}", from=2-3, to=2-5]
\end{tikzcd}
$$
 \item Then, for each $J$ of $\cat{J}$, take the (chosen) colimit of the remaining innermost level diagram $c_*(D_1J)$, obtaining the following diagram;
$$
\begin{tikzcd}[row sep=0, column sep=tiny]
	{\cat{J}} && {\cat{C}} \\
	{J} & {\rightsquigarrow} & {c(c_*(D_1J))} 
	\arrow["{c_*c_{**}D}", from=1-1, to=1-3]
\end{tikzcd}
$$
 \item Finally, take the colimit $c(c_*(c_{**}D))$ of the diagram just obtained.
\end{itemize}
Alternatively, we could
\begin{itemize}
 \item Form the Grothendieck construction of $D$ joining levels $J$ and $K$, obtaining the following diagram of diagrams;
 $$
 \begin{tikzcd}[row sep=0, column sep=tiny]
	{\groth D_0} && {\cat{Diag(C)}} \\
	{(J\in\cat{J}, K\in D_0J)} & {\rightsquigarrow} & {\bigg\{ (D_1J)_0K} && {\cat{C} \bigg\}} \\
	&& {L} & {\rightsquigarrow} & {((D_1J)_1K)(L)}
	\arrow["{\mu(D)}", from=1-1, to=1-3]
	\arrow["{(D_1J)_1K}", from=2-3, to=2-5]
\end{tikzcd}
 $$
 \item Form again the Grothendieck construction, joining level $L$ as well;
 $$
\begin{tikzcd}[row sep=0, column sep=tiny]
    {\groth \mu(D)} && {\cat{C}} \\
	{((J\in\cat{J}, K\in D_0J), L\in (D_1J)_0K)} & {\rightsquigarrow} & {((D_1J)_1K)(L)}
	\arrow["\mu(\mu(D))", from=1-1, to=1-3]
\end{tikzcd}
 $$
 \item Finally, take the colimit $c(\mu(\mu(D)))$ of the resulting diagram.
\end{itemize}

The colimits constructed this way are isomorphic, a priori, in two different ways, using the maps obtained by $\gamma$ in different orders (first inner level, then outer, or vice versa).
However, the different ways coincide, since both colimits come equipped with the following cocones,
$$
\begin{tikzcd}[column sep=large]
	{((D_1J)_1K)(L)} & {c((D_1J)_1K)} & {c(c_*(D_1J))} & {} \\
	{c(\mu(\mu(D)))} && {c(c_*(c_{**}D))}
	\arrow["{h((D_1J)_1K)}", from=1-1, to=1-2]
	\arrow["{h(c_*(D_1J))}", from=1-2, to=1-3]
	\arrow["{h(c_*(c_{**}D))}", from=1-3, to=2-3]
	\arrow["{h(\mu(\mu(D)))}"', from=1-1, to=2-1]
	\arrow[from=2-1, to=2-3, dashed]
\end{tikzcd}
$$
and there is a unique map making the diagram above commute for all $J$, $K$ and $L$.

This finally proves that cocomplete categories are pseudoalgebras of $\cat{Diag}$ (\Cref{cocompalg}).

\subsubsection{Not all algebras are of this form}\label{secnotallalg}

We now want to show the following statement.

\begin{prop}\label{notallalg}
 Not every pseudoalgebra over $\cat{Diag}$ is in the form of \Cref{cocompalg}.
\end{prop}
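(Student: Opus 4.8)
The plan is to exhibit a single explicit pseudoalgebra whose structure map provably does not compute colimits, and the cleanest source of such an example is a \emph{free} pseudoalgebra. For any pseudomonad, and in particular for $\cat{Diag}$, the pair $(\cat{Diag(C)},\mu_\cat{C})$ carries a canonical pseudoalgebra structure, whose structure $2$-cells are assembled directly from the associator (\Cref{assdiag}) and the unitors constructed above. Consequently all the coherence conditions of \Cref{defpseudoalgebra} hold automatically, and nothing needs to be verified by hand: it suffices to choose $\cat{C}$ so that the multiplication $\mu_\cat{C}$ visibly fails to be a choice of colimits.

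First I would specialise to $\cat{C}=\cat{1}$, the terminal category. Unwinding \Cref{defdiag}, a diagram in $\cat{1}$ is nothing but a small category, since the codomain data is trivial; hence $\cat{Diag(\cat{1})}\cong\cat{Cat}$. Under this identification a diagram of diagrams is just a functor $D_0\colon\cat{K}\to\cat{Cat}$ (the lax-cocone part being trivial, as the tip is $\cat{1}$), and by construction the multiplication $\mu_{\cat{1}}$ sends it to the category $\groth D_0$. In other words, transported along $\cat{Diag(\cat{1})}\cong\cat{Cat}$, the structure map of the free algebra is exactly the Grothendieck construction $\groth(-)\colon\cat{Diag(Cat)}\to\cat{Cat}$. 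Thus $(\cat{Cat},\groth)$ is a pseudoalgebra over $\cat{Diag}$. Now $\cat{Cat}$ is cocomplete, so the only way for this algebra to be of the form of \Cref{cocompalg} is for $\groth(-)$ to be naturally isomorphic to a choice of colimits; but by \Cref{oplaxcolimit} the Grothendieck construction is the \emph{oplax} colimit, not the genuine one, and the two differ.

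To make the discrepancy explicit I would evaluate the structure map on the constant diagram $\Delta\cat{1}\colon\cat{2}\to\cat{Cat}$, where $\cat{2}$ is the walking arrow $\{0\to 1\}$ and both objects are sent to $\cat{1}$. Its genuine colimit in $\cat{Cat}$ is $\cat{1}$, because $\cat{2}$ has a terminal object and the colimit over such an indexing category is the value there; whereas $\groth(\Delta\cat{1})\cong\cat{2}$, a category with two objects. Since $\cat{2}\not\simeq\cat{1}$, the structure map $\groth(-)$ is not isomorphic to any colimit functor, so $(\cat{Cat},\groth)$ is not of the form described in \Cref{cocompalg}.

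The only point requiring care is the precise reading of ``in the form of \Cref{cocompalg}'': one must rule out that $(\cat{Cat},\groth)$ is merely a disguised presentation of the colimit algebra $(\cat{Cat},\colim)$ through some autoequivalence $\Phi$ of $\cat{Cat}$. This is exactly where the object-level computation does the work. An equivalence of algebras would in particular force $\Phi(\groth(\Delta\cat{1}))\simeq\colim(\Phi_*\Delta\cat{1})$, i.e.\ $\Phi(\cat{2})\simeq\Phi(\cat{1})$; but an equivalence preserves and reflects equivalence of categories, while $\cat{2}\not\simeq\cat{1}$, a contradiction. Hence no identification of the underlying category can intertwine $\groth(-)$ with a colimit functor, and the example stands. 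I expect this verification to be the entire content of the proof, the existence of the pseudoalgebra structure itself being free.
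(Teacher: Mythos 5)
Your proof is correct, and at its core it is the same counterexample as the paper's: a free pseudoalgebra whose multiplication, evaluated on the constant diagram $\Delta\cat{1}\colon\cat{2}\to\cat{Cat}$, returns the Grothendieck construction $\cat{2}$ rather than the colimit $\cat{1}$. The packaging differs in two genuine ways, both in your favor on self-containedness. The paper works with the free algebra $(\cat{Diag(C)},\mu)$ over an arbitrary cocomplete $\cat{C}$, takes the diagram of diagrams to be $\eta(f)$ for a morphism $f\colon X\to Y$, and to identify the true colimit in $\cat{Diag(C)}$ it must import \Cref{colimofdiag}, i.e.~Theorem~2.7 of \cite{decomposition}, which says that the domain functor $\cat{Diag(C)}\to\cat{Cat}$ preserves colimits. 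By specializing to $\cat{C}=\cat{1}$ and using the isomorphism $\cat{Diag(1)}\cong\cat{Cat}$, you identify the free algebra concretely as $(\cat{Cat},\groth)$, so the colimit computation happens directly in $\cat{Cat}$ and needs no external input. Second, you address a stronger reading of ``in the form of \Cref{cocompalg}'' by ruling out an equivalence of pseudoalgebras onto a colimit algebra; the paper contents itself with the literal reading that the structure map $\mu$ is not (isomorphic to) a choice of colimits. Two small refinements to that last step: the comparison target should in principle be an arbitrary algebra $(\cat{D},c)$ of the form of \Cref{cocompalg}, not just $(\cat{Cat},\colim)$, though your computation handles this verbatim; and the structure $2$-cell of such an equivalence already yields isomorphisms $\Phi(\cat{2})\cong c(\Phi_*\Delta\cat{1})\cong\Phi(\cat{1})$ in the underlying $1$-category, so you only need that an equivalence of $1$-categories reflects \emph{isomorphisms} together with $\cat{1}\not\cong\cat{2}$ --- invoking reflection of equivalence of categories by a mere $1$-categorical autoequivalence of $\cat{Cat}$ is both subtler and unnecessary.
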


We use the following known result \cite[Theorem~2.7]{decomposition}.

\begin{thm}\label{colimofdiag}
 Let $\cat{C}$ be a cocomplete category. Then the category $\cat{Diag(C)}$ is cocomplete too. Moreover, the functor $\cat{Diag(C)}\to\cat{Cat}$ which assigns to each diagram $D:\cat{I}\to\cat{C}$ its domain $\cat{I}$ preserves colimits.
\end{thm}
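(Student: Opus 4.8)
The plan is to recognize $\cat{Diag(C)}$ as the total category of an opfibration over $\cat{Cat}$ and to invoke the standard result that the total category of an opfibration inherits colimits from its base and its fibers. Recall from the discussion following \Cref{defdiag} that, since $\cat{C}$ is cocomplete, the domain functor $p\colon\cat{Diag(C)}\to\cat{Cat}$, $(\cat{J},D)\mapsto\cat{J}$, is an opfibration, whose cocartesian lift of a functor $R\colon\cat{J}\to\cat{J'}$ at a diagram $(\cat{J},D)$ is $(\cat{J'},\Lan{D}{R})$, the pointwise left Kan extension. Equivalently, $\cat{Diag(C)}$ is the Grothendieck construction of the pseudofunctor $\cat{Cat}\to\cat{CAT}$ sending $\cat{J}\mapsto[\cat{J},\cat{C}]$ and $R\mapsto\Lan{(-)}{R}$: a morphism $(\cat{J},D)\to(\cat{J'},D')$ is a functor $R$ together with a map $\Lan{D}{R}\to D'$ in the fiber $[\cat{J'},\cat{C}]$, which by the adjunction $\Lan{(-)}{R}\ladj R^*$ is precisely the same datum as a natural transformation $\rho\colon D\to D'\circ R$, matching \Cref{defdiag} exactly.

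I would then apply the following fact, dual to the classical statement for fibrations: if $p\colon\mathcal{E}\to\mathcal{B}$ is an opfibration such that $\mathcal{B}$ has all small colimits, each fiber has all small colimits, and every pushforward (cocartesian-lift) functor preserves small colimits, then $\mathcal{E}$ has all small colimits and $p$ preserves them. It then remains to verify the three hypotheses. First, $\cat{Cat}$ is cocomplete. Second, each fiber $[\cat{J},\cat{C}]$ is cocomplete, because $\cat{J}$ is small and $\cat{C}$ admits all small colimits, computed pointwise. Third, each pushforward functor $\Lan{(-)}{R}$ is a left adjoint to the restriction functor $R^*$ and hence preserves all small colimits. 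This immediately yields both cocompleteness of $\cat{Diag(C)}$ and preservation of colimits by $p$.

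Unwinding the general argument gives the explicit construction of the colimit of a small diagram $i\mapsto(\cat{J}_i,D_i)$ in $\cat{Diag(C)}$: one first forms $\cat{J}\coloneqq\colim_i\cat{J}_i$ in $\cat{Cat}$, with coprojections $q_i\colon\cat{J}_i\to\cat{J}$; then pushes each $D_i$ forward to $\Lan{D_i}{q_i}\colon\cat{J}\to\cat{C}$; and finally sets $D\coloneqq\colim_i\Lan{D_i}{q_i}$, taken in $[\cat{J},\cat{C}]$ (equivalently, pointwise in $\cat{C}$). The pair $(\cat{J},D)$, equipped with the evident coprojections, is the colimit, and its image under $p$ is $\cat{J}=\colim_i\cat{J}_i$ by construction, which is exactly the asserted colimit-preservation.

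The step requiring genuine care is, as throughout this paper, the control of size. One must confirm that the indexing category $\cat{J}$ of the colimit stays \emph{small}, so that $(\cat{J},D)$ is a legitimate object of $\cat{Diag(C)}$; this holds because small colimits of small categories in $\cat{Cat}$ are again small. One must likewise ensure the pointwise left Kan extensions $\Lan{D_i}{q_i}$ exist: their defining colimits are indexed by comma categories, which are small since the $\cat{J}_i$ and $\cat{J}$ are small, and $\cat{C}$ is assumed to admit all such small colimits. Granting these size checks, the three hypotheses of the opfibration criterion hold verbatim, and the theorem follows.
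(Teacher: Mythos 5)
Your proposal is correct, but there is no in-paper proof to compare it with: the paper quotes \Cref{colimofdiag} as a known result, citing \cite[Theorem~2.7]{decomposition}, and only ever \emph{uses} it (in the proof of \Cref{notallalg}). So your argument supplies a proof where the paper supplies a reference, and it is a sound one. The identification of the domain functor $\cat{Diag(C)}\to\cat{Cat}$ as an opfibration with fibre $[\cat{J},\cat{C}]$ over $\cat{J}$ and pushforward $\Lan{(-)}{R}$ along $R:\cat{J}\to\cat{J'}$ is exactly the observation the paper records after \Cref{defdiag} (there credited to \cite[Proposition~2.8]{decomposition}), and your matching of the morphisms $(R,\rho)$ of \Cref{defdiag} with pairs consisting of $R$ and a vertical map $\Lan{D}{R}\to D'$, via the adjunction $\Lan{(-)}{R}\ladj R^*$, is right. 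The criterion you invoke --- the total category of an opfibration is cocomplete, with cocontinuous projection, whenever the base and the fibres are cocomplete and the pushforward functors preserve colimits --- is classical (the dual of the corresponding fact for fibrations and limits), and your verification of its hypotheses is complete and correctly attentive to size: $\cat{Cat}$ is cocomplete; $[\cat{J},\cat{C}]$ has pointwise small colimits since $\cat{J}$ is small and $\cat{C}$ is small-cocomplete; $\Lan{(-)}{R}$ exists because the relevant comma categories are small, and preserves colimits because it is a left adjoint; and $\colim_i\cat{J}_i$ is again small. Your unwound construction ($\cat{J}=\colim_i\cat{J}_i$, then $D=\colim_i\Lan{D_i}{q_i}$ computed pointwise) also reproduces exactly the behaviour the paper extracts from this theorem in \Cref{notallalg}.

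The one step you should shore up is the opfibration criterion itself, which you state but neither prove nor cite; it is the load-bearing part, and it is precisely where cocontinuity of the pushforwards enters. The short argument: given a cocone from the $(\cat{J}_i,D_i)$ to some $(\cat{K},E)$, its image in $\cat{Cat}$ factors uniquely through a functor $h:\cat{J}\to\cat{K}$ with $h\circ q_i=R_i$; each leg, rewritten as a vertical map $\Lan{D_i}{R_i}\to E$, becomes via \Cref{verticalpaste} a vertical cocone on the objects $\Lan{(\Lan{D_i}{q_i})}{h}$; and since $\Lan{(-)}{h}$ preserves colimits, this factors uniquely through $\Lan{D}{h}$, i.e.\ through a unique morphism $(\cat{J},D)\to(\cat{K},E)$ over $h$. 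Once that standard argument is written out or referenced, your proof is complete.
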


We are now ready to prove the proposition. We will prove it by showing that for \emph{free} pseudoalgebras, in the form $(\cat{Diag(C)},\mu)$, the map $\mu$ is in general not taking colimits of diagrams (of diagrams).

\begin{proof}[Proof of \Cref{notallalg}] 
 Let $\cat{C}$ be a cocomplete category with at least two non-isomorphic objects $X$ and $Y$ and a morphism $f:X\to Y$. Consider now the morphism $\eta(f)$ of $\cat{Diag(C)}$, which can be seen as the morphism of diagrams,
 $$
 \begin{tikzcd}[row sep=small]
 \cat{1} 
  \ar[""{name=D,below}]{dr}{X} 
  \ar{dd}[swap]{\id} \\
 & \cat{C} \\
 \cat{1} 
  \ar{ur}[swap]{Y} 
  \ar[Rightarrow,from=D, "f", shorten <= 0.5em, shorten >= 1em, near start]
\end{tikzcd}
 $$
 and so, in particular, also as a diagram of diagrams (indexed by the walking arrow $\cat{2}$). Denote by $D:\cat{2}\to\cat{Diag(C)}$ this diagram of diagrams. 
 We have that $\mu(D)$, as given by the Grothendieck construction, is a diagram indexed again by $\cat{2}$. Instead, by \Cref{colimofdiag}, the colimit of $D$ in $\cat{Diag(C)}$ is a diagram whose domain must be the colimit of $\id:\cat{1}\to\cat{1}$ in $\cat{Cat}$, which is $\cat{1}$. In particular, this colimit is not isomorphic to $\mu(D)$ .
 
 Therefore, for the free algebra $(\cat{Diag(C)},\mu)$, the algebra structure map $\mu$ is not in the form of \Cref{cocompalg}. (See the end of \Cref{secpseudoalg} for why $(\cat{Diag(C)},\mu)$ is indeed a pseudoalgebra.)
\end{proof}

A structural reason for why not all $\cat{Diag}$-algebras arise this way will be given in \Cref{pbfunctor}.
Conjecturally, the generic $\cat{Diag}$-algebras may be given by taking \emph{oplax} colimits, instead of strict (in 2-categories rather than categories).

\section{Image presheaves}\label{imagepsh}

In this section we define the notion of \emph{image presheaf} of a diagram, which may be interpreted as its ``free'' or ``prototype colimit''. We also extend and generalize the theory of cofinal functors (which we call \emph{confinal}, see \Cref{confinal}) and of absolute colimits, giving conditions for when certain diagrams have isomorphic colimits even after applying a functor to them (\Cref{confinallemma}). This incorporates absolute colimits as a special case (see \Cref{absolutelemma} and the subsequent discussion).

\subsection{Diagrams and presheaves}

Given a diagram $D:\cat{J}\to\cat{C}$, we obtain a presheaf $\im{D}$ on $\cat{C}$ canonically, as follows. 

\begin{deph}
 The \emph{image presheaf} of the diagram $D$, which we denote by $\im D$, 
 is the colimit of the following composite functor,
$$
\begin{tikzcd}
 \cat{J} \ar{r}{D} & \cat{C} \ar{r}{Y} & {[\cat{C}^\op, \cat{Set}]}
\end{tikzcd}
$$
where $Y$ denotes the Yoneda embedding.
\end{deph}

We can view the image as a ``free colimit'', the presheaf obtained as the colimit of representables indexed by the diagram $D$.
As usual, by the universal property of colimits this assignment is functorial.

Equivalently, $\im D$ is the (pointwise) left Kan extension
$$
\im{D} \coloneqq \Lan{1}{D^\op},
$$
as in the following diagram,
$$
\begin{tikzcd}[row sep=small]
 \cat{J}^\op 
  \ar[""{name=ONE,below}]{dr}{1} \ar{dd}[swap]{D^\op} \\
 & \cat{Set} \\
 \cat{C}^\op 
  \ar{ur}[swap]{\im{D}} \ar[Rightarrow,from=ONE, shorten <= 0.5em, shorten >= 1em, "\lambda_D", near start]
\end{tikzcd}
$$
where $1:\cat{J}^\op\to\cat{Set}$ is the constant presheaf at the singleton set $1$, and $\lambda_D$ denotes the universal $2$-cell.
This way one could generalize the definition to the case of weighted diagrams, which is however beyond the scope of the present paper.

Concretely, given an object $C$ of $\cat{C}$, the set $(\im D) (C)$ is the set 
$$
\colim_{J\in \cat{J}} \big( \Hom_\cat{C}(C,DJ) \big).
$$
Its elements are the equivalence classes of arrows of $\cat{C}$ of the form $C\to DJ$, for some object $J$ of $\cat{J}$, where we identify any two arrows $f:C\to DJ$ and $f':C\to DJ'$ whenever there exists a morphism $g:J\to J'$ of $\cat{J}$ such that $f'=Dg\circ f$, as in the following diagram.
$$
\begin{tikzcd}[row sep=tiny]
 J 
  \ar{dd}{g} 
 & & & DJ  
  \ar{dd}{Dg} \\
 & \rightsquigarrow 
 & C 
  \ar{ur}{f} 
  \ar{dr}[swap]{f'} \\
 J' 
 & & & DJ'
\end{tikzcd}
$$
Functoriality of $\im$ is given by pasting arrows and commutative diagrams.

In general, two arrows $f:C\to DJ'$ and $f':C\to DJ'$ are identified if there is a zig-zag of arrows of $\cat{J}$ connecting $J$ and $J'$, which we write as $J\leftrightsquigarrow J'$, such that the following diagram ``commutes''.
$$
\begin{tikzcd}[row sep=tiny]
 J
  \ar[leftrightsquigarrow]{dd} 
  & & & DJ  
  \ar[leftrightsquigarrow]{dd} \\
 & \rightsquigarrow & C 
 \ar{ur}{f} \ar{dr}[swap]{f'} \\
 J' & & & DJ'
\end{tikzcd}
$$
By convention, we say that a triangle containing a zig-zag as the one above commutes if and only if each arrow in the zig-zag gives a commutative triangle.
For later use, we denote by $[J,f]$ the equivalence class in $\im D$ represented by $f:C\to DJ$. 

\subsubsection{The category of elements}\label{catel}

Let $P:\cat{C}^\op\to\cat{Set}$ be a presheaf. Recall the discrete fibration given by the category of elements
$$
\groth^o P \to \cat{C} .
$$
Note that we use again the short integral sign, as we had used for the Grothendieck construction -- but here it denotes the category of elements, as we are using the contravariant version. The category $\groth^o P$ is the category where
\begin{itemize}
 \item Objects consist of pairs $(C,x)$, where $C$ is an object of $\cat{C}$ and $x_C$ is an element of the set $PC$.
 \item A morphism $(C,x)\to (C',y)$ is a morphism $g:C\to C'$ of $\cat{C}$ such that the function $Pg:PC'\to PC$ sends $y\in PC'$ to $x\in PC$.
\end{itemize}
If $C$ is small (resp.~locally small), $\groth^o P$ is small too (resp.~locally small).
The functor $\groth^o P \to C$, which is a discrete fibration, maps $(C,x)$ to $C$ and a morphism of $\groth^o P$ to the underlying morphism of $C$.
The category of elements is functorial in the following way. Let $\alpha:P\to Q$ be a morphism of presheaves, i.e.~a natural transformation
$$
\begin{tikzcd}
 \cat{C}^\op \ar[bend left, ""{name=P, below}]{r}{P} \ar[bend right, ""{name=Q, above},swap]{r}{Q} & \cat{Set} .
 \ar[Rightarrow, from=P, to=Q, "\alpha"]
\end{tikzcd}
$$
we can construct a functor $\groth^o \alpha:\groth^o P \to \groth^o Q$ which makes the following diagram commute,
$$
\begin{tikzcd}[column sep=small]
 \groth^o P \ar{rr}{\groth^o\alpha} \ar{dr} && \groth^o Q \ar{dl} \\
 & \cat{C}
\end{tikzcd}
$$
where the morphisms into $\cat{C}$ are the canonical discrete fibrations.
The functor $\groth^o \alpha$ is constructed as follows.
\begin{itemize}
 \item It maps the object $(C,p)$, where $C$ is an object of $\cat{C}$ and $p\in PC$, to the object $(C,\alpha_C(p))$. Note that $\alpha_C(p)\in QC$;
 \item It maps the morphism $(C,Pf(q))\to (C',q)$ induced by the morphism $f:C\to C'$ of $\cat{C}$ to the morphism $(C,Qf(\alpha_{C'}(q)))\to (C', \alpha_{C'}(q))$ again induced by $f$. Note that the following naturality diagram commutes.
 $$
 \begin{tikzcd}
  PC' \ar{r}{\alpha_{C'}} \ar{d}{Pf} & QC' \ar{d}{Qf} \\
  PC \ar{r}{\alpha_{C}} & QC
 \end{tikzcd}
 $$
\end{itemize}

Consider now a diagram $D:\cat{J}\to\cat{C}$, take its image presheaf $\im D$ and form its category of elements $\groth^o\im D$. Let's see what we get explicitly.
\begin{itemize}
 \item An object of $\groth^o\im D$ consists of an object $C$ of $\cat{C}$ together with an equivalence class $[J,f]$ represented by an object $J$ of $\cat{J}$ and a morphism $f:C\to DJ$ of $\cat{C}$.
 \item A morphism $(C,[J,f])\to (C',[J',f'])$ consists of a morphism $g:C\to C'$ of $\cat{C}$ such that $(\im D)(g)([J',f'])=[J,f]$. This means that there is a zig-zag of arrows of $\cat{J}$ connecting $J$ and $J'$, which we write as $J\leftrightsquigarrow J'$, such that the following diagram commutes.
 $$
 \begin{tikzcd}[row sep=tiny]
   J \ar[leftrightsquigarrow]{dd} & & C \ar{dd}{g} \ar{r}{f} & DJ \ar[leftrightsquigarrow]{dd}  \\ 
  & \rightsquigarrow  \\
   J' & & C' \ar{r}{f'} & DJ'
 \end{tikzcd}
 $$
\end{itemize}

\begin{prop}
Given a morphism of diagrams 
$$
\begin{tikzcd}[row sep=small]
 \cat{J} 
  \ar[""{name=F,below}]{dr}{F} \ar{dd}[swap]{R} \\
 & \cat{C} \\
 \cat{K} 
  \ar{ur}[swap]{F'} 
  \ar[Rightarrow,from=F, "\rho", shorten <= 0.5em, shorten >= 1em, near start]
\end{tikzcd}
$$
the 2-cell $\rho$ factors in the following form,
$$
\begin{tikzcd}[row sep=small]
 \cat{J} \ar{dd}[swap]{R}  \ar{r}{\tilde F} \ar[phantom,bend left,""{name=F, below}]{ddr}
 \ar[phantom,bend right,""{name=G, above,pos=0.7}]{ddr} & \groth^o \im F  \ar{dd} \ar{dr}
   \\
 && \cat{C} \\
 \cat{K} \ar{r}[swap]{\tilde F'} & \groth^o \im F'
  \ar{ur} 
  \ar[Rightarrow,from=F, to=G, "\tilde\rho"]
\end{tikzcd}
$$
where the triangle on the right commutes, with the vertical arrow given by $\groth^o\im(R,\rho)$ (recall that both $\groth^o$ and $\im$ are functorial).
\end{prop}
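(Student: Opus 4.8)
The plan is to construct the three as-yet-undefined pieces of the square explicitly and then verify that whiskering with the canonical discrete fibrations $p_F\colon\groth^o\im F\to\cat{C}$ and $p_{F'}\colon\groth^o\im F'\to\cat{C}$ recovers $\rho$. First I would define the comparison functor $\tilde F\colon\cat{J}\to\groth^o\im F$ on objects by $\tilde F(J)\coloneqq(FJ,[J,\id_{FJ}])$, where $[J,\id_{FJ}]$ is the class in $(\im F)(FJ)$ of the identity $\id_{FJ}\colon FJ\to FJ$, and on a morphism $j\colon J\to J'$ by the underlying arrow $Fj\colon FJ\to FJ'$. That this is a well-defined morphism $(FJ,[J,\id_{FJ}])\to(FJ',[J',\id_{FJ'}])$ of $\groth^o\im F$ amounts to the identity $[J',\,\id_{FJ'}\circ Fj]=[J,\id_{FJ}]$ in $(\im F)(FJ)$, which holds because $j$ itself witnesses the defining equivalence (indeed $Fj=Fj\circ\id_{FJ}$). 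Functoriality of $\tilde F$, and the strict commutativity of the triangle $\cat{J}\xrightarrow{\tilde F}\groth^o\im F\xrightarrow{p_F}\cat{C}$ (the fibration sends $(FJ,-)$ to $FJ$), are then immediate; the same recipe gives $\tilde F'\colon\cat{K}\to\groth^o\im F'$ with $p_{F'}\circ\tilde F'=F'$.

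Next I would observe that the right-hand triangle commutes by construction: its vertical arrow is $\groth^o\im(R,\rho)$, and by the functoriality of the category-of-elements construction recalled in \Cref{catel}, the functor $\groth^o\alpha$ attached to any morphism of presheaves $\alpha$ commutes with the discrete fibrations into $\cat{C}$; applying this to $\alpha=\im(R,\rho)$ yields $p_{F'}\circ\groth^o\im(R,\rho)=p_F$. Unwinding the functoriality of $\im$ gives the explicit formula $\im(R,\rho)_C([J,f])=[RJ,\rho_J\circ f]$, which is well-defined precisely by naturality of $\rho$, so on objects $\groth^o\im(R,\rho)$ sends $(C,[J,f])$ to $(C,[RJ,\rho_J\circ f])$, keeping the same underlying object $C$ and the same underlying arrows on morphisms.

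The core step is the construction of the filling $2$-cell of the square. I would set, for each object $J$ of $\cat{J}$, the component $\tilde\rho_J\coloneqq\rho_J$, now read as a morphism of $\groth^o\im F'$ from $(\groth^o\im(R,\rho)\circ\tilde F)(J)=(FJ,[RJ,\rho_J])$ to $(\tilde F'\circ R)(J)=(F'RJ,[RJ,\id_{F'RJ}])$. This is a legitimate morphism there because $(\im F')(\rho_J)([RJ,\id_{F'RJ}])=[RJ,\,\id_{F'RJ}\circ\rho_J]=[RJ,\rho_J]$. Naturality of $\tilde\rho$ in $J$ reduces, since $\groth^o\im F'$ is a discrete fibration over $\cat{C}$, to commutativity of the underlying $\cat{C}$-square with sides $\rho_J$, $Fj$, $F'(Rj)$, $\rho_{J'}$, which is exactly the naturality of $\rho\colon F\Rightarrow F'R$. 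Finally, whiskering $\tilde\rho$ with $p_{F'}$ and using $p_{F'}\circ\groth^o\im(R,\rho)=p_F$, $p_F\circ\tilde F=F$, and $p_{F'}\circ\tilde F'=F'$ produces a $2$-cell $F\Rightarrow F'R$ whose component at $J$ is $p_{F'}(\rho_J)=\rho_J$; that is, $p_{F'}\star\tilde\rho=\rho$, which is the asserted factorization.

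I expect the only genuinely delicate bookkeeping to be the repeated verification that the chosen underlying $\cat{C}$-arrows really represent morphisms in the categories of elements, i.e.\ that the relevant equivalence classes match. Each such check collapses either to the definition of the equivalence relation defining $\im$ or to the naturality of $\rho$, so no real obstacle arises beyond keeping the variances straight.
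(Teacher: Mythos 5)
Your proposal is correct and takes essentially the same route as the paper: the component $\tilde\rho_J$ is the arrow $\rho_J$ itself, viewed as a morphism $(FJ,[RJ,\rho_J])\to(F'RJ,[RJ,\id_{F'RJ}])$ of $\groth^o\im F'$ (legitimate via the identity zig-zag), and whiskering with the forgetful functor to $\cat{C}$ returns $\rho$. The paper's proof is terser---it leaves the naturality of $\tilde\rho$ and the strict commutativity of the two triangles implicit---so your additional verifications (well-definedness of $\tilde F$, $\tilde F'$, and naturality checked downstairs using that the discrete fibration is faithful) simply fill in details of the same argument.
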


Before the proof, let's see what the functor $\groth^o\im(R,\rho):\groth^o \im F\to \groth^o \im F'$ looks like. It maps an object 
$$
(C,[J,C\xrightarrow{f} FJ])
$$
of $\groth^o\im F$ to the object 
$$
(C,[RJ,C\xrightarrow{f} FJ \xrightarrow{\rho_J} F'RJ])
$$
of $\groth^o\im F'$. On morphisms, given $g:C\to C'$ in $\cat{C}$ and a zig-zag $J \leftrightsquigarrow J'$ in $\cat{J}$ making the diagram on the left commute, we get the diagram on the right.
$$
 \begin{tikzcd}
   C \ar{d}{g} \ar{r}{f} & FJ \ar[leftrightsquigarrow]{d} \\
   C' \ar{r}{f} & FJ'
 \end{tikzcd}
 \qquad\rightsquigarrow\qquad 
 \begin{tikzcd}
   C \ar{d}{g} \ar{r}{f} & FJ \ar{r}{\rho_J} \ar[leftrightsquigarrow]{d} & F'RJ \ar[leftrightsquigarrow]{d}  \\
   C' \ar{r}{f} & FJ' \ar{r}{\rho_J'} & F'RJ'
 \end{tikzcd}
 $$
 The right-most square commutes by naturality of $\rho$ applied to the zig-zag.

\begin{proof}
 Let $J$ be an object of $\cat{J}$. Let's give the component of $\tilde\rho$ at $J$ explicitly. Note first that 
 $$
 \groth^o\im(R,\rho) (\tilde F (J)) \;=\; (J, [FJ \xrightarrow{\id} FJ \xrightarrow{\rho_J} F'RJ])
 $$
 and that 
 $$
 \tilde F'(R(J)) \;=\; (RJ, [F'RJ \xrightarrow{\id} F'RJ]). 
 $$
 The morphism $\tilde\rho_J: \groth^o\im(R,\rho) (\tilde F (J)) \to \tilde F'(R(J))$ is then given by the following diagram,
 $$
 \begin{tikzcd}
  FJ \ar{d}{\rho} \ar{r}{\id} & FJ \ar{r}{\rho} & F'RJ \ar[leftrightsquigarrow]{d} \\
  F'RJ \ar{rr}{\id} && F'RJ
 \end{tikzcd}
 $$
 with the zig-zag given  by the identity. By construction, whiskering $\tilde\rho$ with the forgetful functor to $\cat{C}$ we get back $\rho$.
\end{proof}

\subsection{Connecting confinal functors and absolute colimits}\label{confinal}

Here we extend a bit the theory of confinal functors,%
\footnote{These are known in the literature also as ``cofinal'', ``coinitial'' and ``final'', terms which may cause confusion. The ``co'' in ``cofinal'' (e.g.~in ``cofinal subnet'') does not denote duality, but rather, follows the Latin particle ``cum'' which means ``with, together''. As such, we feel that ``confinal'' is both closer to the original etymology, and less prone to cause confusion.
The term ``confinal'' (or in German, ``konfinal'') was introduced by Hausdorff for the case of ordered sets \cite[Section~IV.4, page~86]{hausdorff}, and it has been in use at least until \cite[Definition 2.12]{konfinal}. 
} and unify it with the theory of absolute colimits.
A reference for the standard theory is for example given in \cite[Section~2.11]{borceux} (note that there the term ``final functor'' is used instead, for limit-invariant functors, rather than colimit-invariant).

\begin{deph}
 A functor $F:\cat{C}\to\cat{D}$ is called \emph{confinal} if for every object $D$ of $\cat{D}$, the comma category $D/F$ is non-empty and connected.
\end{deph}

The importance of confinal functors is due to the following well-known statement, which is actually an equivalent characterization of confinality.

\begin{prop}\label{whichiso}
 If $F:\cat{C}\to\cat{D}$ is confinal, for every functor $G:\cat{D}\to\cat{E}$ admitting a colimit, the functor $G\circ F:\cat{C}\to\cat{E}$ admits a colimit too, and the map between colimits
 $$
 \colim_{C\in \cat{C}} G(F(C)) \to \colim_{D\in\cat{D}} G(D) 
 $$
 induced by the following morphism of (possibly large) diagrams
 $$
 \begin{tikzcd}[column sep=small]
  \cat{C} \ar{rr}{F} \ar{dr}[swap]{G\circ F} && \cat{D} \ar{dl}{G} \\
  & \cat{E}
 \end{tikzcd}
 $$
 is an isomorphism.
\end{prop}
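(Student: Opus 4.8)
The plan is to prove a slightly sharper statement that yields both claims simultaneously: if $(L,(\lambda_D)_{D\in\cat{D}})$ is a colimiting cocone for $G$, then the restricted family $(\lambda_{FC})_{C\in\cat{C}}$ is already a colimiting cocone for $G\circ F$. Once this is established, $G\circ F$ automatically admits a colimit, and the comparison map $\colim_{C} G(F(C))\to\colim_{D} G(D)$ induced by the morphism of diagrams $(F,\id)$ is exactly the canonical map between two colimiting cocones of the \emph{same} diagram $G\circ F$, hence an isomorphism. So I would first fix a colimit $(L,(\lambda_D))$ of $G$ and check that $(\lambda_{FC})_C$ is a cocone under $G\circ F$: for $h\colon C\to C'$ in $\cat{C}$ this is just the cocone identity $\lambda_{FC'}\circ G(Fh)=\lambda_{FC}$ for $G$, applied to the arrow $Fh$ of $\cat{D}$.

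The substance is the universal property. Given any cocone $(t_C\colon GFC\to M)_C$ under $G\circ F$, I would manufacture a cocone $(s_D\colon GD\to M)_D$ under $G$ by using confinality: for each object $D$ of $\cat{D}$, pick an object $(C,f\colon D\to FC)$ of the comma category $D/F$, which is non-empty, and set $s_D\coloneqq t_C\circ G(f)$. The key step, and the one I expect to be the main obstacle, is showing that $s_D$ is independent of the chosen representative. Along a morphism $g\colon(C,f)\to(C',f')$ of $D/F$, that is $g\colon C\to C'$ with $Fg\circ f=f'$, one computes $t_{C'}\circ G(f')=t_{C'}\circ GF(g)\circ G(f)=t_C\circ G(f)$, using the cocone identity for $(t_C)$. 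Since $D/F$ is connected, the value is then constant along every zig-zag, so $s_D$ is well defined; here both halves of the confinality hypothesis, non-emptiness and connectedness, are used essentially.

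Next I would verify that $(s_D)$ is a genuine cocone under $G$. For $d\colon D\to D'$ in $\cat{D}$, starting from a representative $(C',f')$ of $D'/F$ I can use the representative $(C',f'\circ d)$ of $D/F$, which gives $s_D=t_{C'}\circ G(f')\circ G(d)=s_{D'}\circ G(d)$; this is precisely where well-definedness pays off, since it lets me select convenient representatives. By the universal property of $L$ there is then a unique $\phi\colon L\to M$ with $\phi\circ\lambda_D=s_D$ for all $D$, and evaluating at $D=FC$ with the representative $(C,\id_{FC})$ yields $\phi\circ\lambda_{FC}=s_{FC}=t_C$, so $\phi$ factors the given cocone.

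Finally I would argue uniqueness of the factorization through $(\lambda_{FC})$: if $\psi\colon L\to M$ satisfies $\psi\circ\lambda_{FC}=t_C$ for all $C$, then for arbitrary $D$, choosing $(C,f\colon D\to FC)$ and using the cocone identity $\lambda_{FC}\circ G(f)=\lambda_D$ for $G$, I get $\psi\circ\lambda_D=\psi\circ\lambda_{FC}\circ G(f)=t_C\circ G(f)=s_D$; thus $\psi$ and $\phi$ agree after precomposition with every $\lambda_D$, forcing $\psi=\phi$ by the universal property of $L$. This shows $(L,(\lambda_{FC}))$ is a colimit of $G\circ F$, and the asserted isomorphism follows as explained in the first paragraph. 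I would close by noting that no smallness of $\cat{C}$, $\cat{D}$, or the diagrams is invoked anywhere: the argument only uses the existence of the single colimit $\colim G$, so it applies verbatim to the possibly large diagrams appearing in the statement.
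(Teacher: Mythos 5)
Your proof is correct, and it is essentially the argument the paper relies on: the paper gives no proof of its own but defers to Borceux's Proposition~2.11.2, whose standard proof (dualized to colimits) is exactly what you wrote—use non-emptiness of the comma categories $D/F$ to extend a cocone under $G\circ F$ to one under $G$, use connectedness for well-definedness of that extension, and conclude that the restricted cocone $(\lambda_{FC})_{C}$ is itself colimiting, so the comparison map is a map between two colimits of the same diagram and hence an isomorphism. Your closing observation that no smallness hypotheses are needed is also pertinent, since the paper applies the proposition to possibly large diagrams such as $\pi:\groth^o\im F\to\cat{C}$.
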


For a proof, see for example the proof of the very similar statement \cite[Proposition~2.11.2]{borceux} (again, note the different conventions there).

\subsubsection{Refining the comprehension factorization}

We would like now to prove the following statement.

\begin{prop}\label{samecolimit}
 Let $\cat{C}$ be (small-)cocomplete. The (large) colimit of the fibration 
 $$
 \pi:\groth^o\im F \to \cat{C}
 $$
 exists, and it coincides with the (small) colimit of the diagram $F:\cat{J}\to\cat{C}$.
\end{prop}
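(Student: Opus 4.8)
The plan is to realise $F\colon\cat{J}\to\cat{C}$ as the restriction of $\pi$ along a \emph{confinal} functor and then transport colimits across it by means of \Cref{whichiso}. The functor in question is the canonical comparison $\tilde F\colon\cat{J}\to\groth^o\im F$ introduced above, which sends an object $J$ to $(FJ,[J,\id_{FJ}])$ and a morphism $j\colon J\to J'$ to the morphism of $\groth^o\im F$ carried by $Fj\colon FJ\to FJ'$ (this is well defined since $j$ witnesses the identification $[J,\id_{FJ}]=[J',Fj]$). By construction $\pi\circ\tilde F=F$, so once $\tilde F$ is shown to be confinal, the colimit of $F$ and the colimit of $\pi$ must coincide.

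First I would show that $\tilde F$ is confinal, which is the heart of the argument. Fix an object $(C,[J,f])$ of $\groth^o\im F$ and consider the comma category $(C,[J,f])/\tilde F$ (note that it is small, since $\cat{J}$ is small and $\cat{C}$ is locally small). Unwinding the definitions, an object is a pair $(J',g)$ with $g\colon C\to FJ'$ a morphism of $\cat{C}$ such that $[J',g]=[J,f]$ in $(\im F)(C)$, while a morphism $(J_1,g_1)\to(J_2,g_2)$ is a morphism $j\colon J_1\to J_2$ of $\cat{J}$ with $Fj\circ g_1=g_2$. This category is nonempty, since $(J,f)$ is such an object: indeed $(\im F)(f)\big([J,\id_{FJ}]\big)=[J,f]$. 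For connectedness, recall that $(\im F)(C)=\colim_{J\in\cat{J}}\Hom_\cat{C}(C,FJ)$ is the quotient of $\coprod_{J\in\cat{J}}\Hom_\cat{C}(C,FJ)$ by the equivalence relation generated by the elementary identifications $(J,g)\sim(J',Fj\circ g)$ for $j\colon J\to J'$ in $\cat{J}$. Any two objects $(J_1,g_1)$ and $(J_2,g_2)$ of the comma category have the same class $[J_1,g_1]=[J_2,g_2]=[J,f]$, hence are joined by a zig-zag of such elementary identifications; every intermediate representative still has class $[J,f]$, so it is again a comma object, and each elementary identification is exactly a comma morphism (or its formal inverse). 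Thus any two objects are connected, the comma category is connected, and $\tilde F$ is confinal.

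Finally I would assemble the conclusion while taking care of size. Although $\groth^o\im F$ is in general large, confinality of $\tilde F$ means that restriction along $\tilde F$ provides, for every object $K$ of $\cat{C}$, a bijection between cocones under $\pi$ with tip $K$ and cocones under $\pi\circ\tilde F=F$ with tip $K$, natural in $K$; this bijection is precisely what the proof of \Cref{whichiso} establishes. Since $\cat{C}$ is small-cocomplete and $\cat{J}$ is small, the (small) colimit $\colim F$ exists and represents the functor sending $K$ to the set of cocones under $F$ with tip $K$. Transporting this representation across the natural bijection shows that the functor of cocones under $\pi$ is represented by the very same object, so the \emph{a priori} large colimit of $\pi$ exists and is canonically isomorphic to $\colim F$, the isomorphism being the comparison map of \Cref{whichiso}. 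The one genuinely delicate point is exactly this last step: confinality is what reduces the large cocone data under $\pi$ to the small cocone data under $F$, and thereby guarantees that the large colimit exists at all.
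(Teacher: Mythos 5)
Your proof is correct and takes essentially the same route as the paper: you construct the same comparison functor $\tilde F\colon\cat{J}\to\groth^o\im F$ sending $J$ to $(FJ,[J,\id_{FJ}])$, prove it confinal by exhibiting the comma categories $(C,[J,f])/\tilde F$ as nonempty and connected via zig-zags of elementary identifications, and then transfer the colimit across the factorization $\pi\circ\tilde F=F$. If anything, your final step is slightly more careful than the paper's, since you make explicit the representability argument showing that confinality lets the small colimit of $F$ induce the existence of the a priori large colimit of $\pi$, a direction the paper leaves implicit in the standard theory of confinal functors.
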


This is almost an instance of the following known result, sometimes called the ``comprehension factorization schema''.

\begin{thm}[\cite{comprehension}]
 There is an orthogonal $(E,M)$-factorization system on $\cat{Cat}$, where $E$ are the confinal functors and $M$ are the discrete fibrations.
\end{thm}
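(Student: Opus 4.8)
The plan is to verify the three defining properties of an orthogonal factorization system for the pair $(E,M)$, where $E$ is the class of confinal functors and $M$ the class of discrete fibrations: (1) every functor factors as a confinal functor followed by a discrete fibration; (2) the class $E$ is left-orthogonal to $M$, i.e.\ every commutative square with an $E$-map on the left and an $M$-map on the right admits a unique diagonal filler; and (3) both classes contain the isomorphisms and are closed under composition. Properties (1) and (2), together with (3), force the factorization to be unique up to unique isomorphism and yield $E = {}^\perp M$ and $M = E^\perp$, which is exactly the content of the theorem. Property (3) is routine (a composite of discrete fibrations is a discrete fibration, an isomorphism is trivially both confinal and a discrete fibration, and confinality is closed under composition by \Cref{whichiso}), so the real work lies in (1) and (2).

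For the factorization (1) I would use precisely the image-presheaf machinery developed above. Given $F:\cat{J}\to\cat{C}$, form the category of elements $\groth^o\im F$ of its image presheaf, together with the canonical projection $\pi:\groth^o\im F\to\cat{C}$ and the comparison functor $\tilde F:\cat{J}\to\groth^o\im F$, $J\mapsto (FJ,[J,\id_{FJ}])$; one checks immediately that $\pi\circ\tilde F = F$. The projection $\pi$ is a discrete fibration by the standard property of categories of elements. That $\tilde F$ is confinal is read off the explicit description of $\im F$: fixing an object $(C,[J_0,f])$ of $\groth^o\im F$, an object of the comma category $(C,[J_0,f])/\tilde F$ is a pair $(J,g\colon C\to FJ)$ with $[J,g]=[J_0,f]$ in $(\im F)(C)$, and a morphism $(J,g)\to(J',g')$ is an arrow $h\colon J\to J'$ of $\cat{J}$ with $Fh\circ g = g'$. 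This comma category is non-empty (take $(J_0,f)$) and connected, since two representatives of the same class $[J_0,f]$ are by construction joined by a zig-zag in $\cat{J}$ compatible with the maps into $C$, and each step of such a zig-zag is exactly a morphism of the comma category.

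The main obstacle is the orthogonality (2). Consider a commutative square whose left edge $E:\cat{A}\to\cat{B}$ is confinal and whose right edge $M:\cat{X}\to\cat{Y}$ is a discrete fibration, with upper edge $U:\cat{A}\to\cat{X}$ and lower edge $V:\cat{B}\to\cat{Y}$ satisfying $MU=VE$. To build the diagonal $W:\cat{B}\to\cat{X}$ I would proceed objectwise: for each object $b$ of $\cat{B}$ choose an object $(a,\beta\colon b\to Ea)$ of the non-empty comma category $b/E$; then $V\beta\colon Vb\to VEa = M(Ua)$ lifts uniquely, by discreteness of $M$, to an arrow $\tilde\beta\colon x_b\to Ua$ of $\cat{X}$ with $Mx_b = Vb$, and I set $Wb := x_b$. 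The crux is independence of the chosen $(a,\beta)$: any two choices are linked by a zig-zag in $b/E$ (by connectedness), and for a single step $\alpha\colon a\to a'$ with $E\alpha\circ\beta=\beta'$ the composite $U\alpha\circ\tilde\beta$ is a lift of $V\beta'$ with codomain $Ua'$, so by uniqueness of lifts it equals $\tilde\beta'$ and in particular its source agrees with $x_b$. Hence the object over $Vb$ is well defined. The action of $W$ on morphisms, the identities $MW=V$ and $WE=U$, and the uniqueness of $W$ all follow from the same unique-lift principle, so the entire argument turns on the interplay between connectedness of the comma categories (finality) and uniqueness of lifts (discreteness). Granting (1)--(3), uniqueness of the factorization up to unique isomorphism is then a formal consequence of orthogonality, completing the proof.
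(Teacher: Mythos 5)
Your proposal is correct, but note that the paper never actually proves this theorem: it cites it from \cite{comprehension} and, because it needs the construction explicitly for a codomain that is only locally small rather than small, it establishes only the factorization half itself, namely \Cref{propconfinal} (the functor $\tilde F\colon\cat{J}\to\groth^o\im F$ is confinal) together with the observation that the discrete fibration $\pi\colon\groth^o\im F\to\cat{C}$ satisfies $\pi\circ\tilde F=F$. Your step (1) is exactly that construction, and your analysis of the comma category $(C,[J_0,f])/\tilde F$ — objects are pairs $(J,g\colon C\to FJ)$ with $[J,g]=[J_0,f]$, non-emptiness witnessed by $(J_0,f)$, connectedness via the zig-zags generating the colimit relation — reproduces the paper's proof of \Cref{propconfinal} essentially verbatim. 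Your step (2) is the genuinely additional content relative to the paper, and it is sound; it is the standard Street--Walters argument. To close the sketch at the end of (2): for $g\colon b\to b'$, define $Wg$ as the unique lift of $Vg$ with codomain $Wb'$, and verify its source is $Wb$ by choosing $\beta'\colon b'\to Ea'$ and comparing the two lifts of $V(\beta'\circ g)$ with codomain $Ua'$; functoriality, $MW=V$, $WE=U$ (evaluate at $(a,\id_{Ea})$), and uniqueness of $W$ (which needs only non-emptiness of $b/E$, while well-definedness needs connectedness) all follow by the same comparison of unique lifts. Two minor caveats: your appeal to \Cref{whichiso} for closure of confinal functors under composition is imprecise as stated, since that proposition is formulated as a one-way implication — either invoke the paper's remark that it is an equivalent characterization of confinality (so $\colim H\circ G\circ F\cong\colim H\circ G\cong\colim H$ for all $H$, via the canonical maps) or check closure directly on comma categories; and the result in \cite{comprehension} is really a 2-categorical (``comprehensive'') factorization, of which the statement in the paper, and your proof, give the 1-categorical form — which is all the paper uses.
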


However, in our case we are not requiring $\cat{C}$ to be small, only locally small. Because of this, and because we need the construction explicitly, we give a dedicated proof.
We construct a functor $\tilde{F}:\cat{J}\to\groth^o\im F$ as follows.
\begin{itemize}
 \item For each object $J$ of $\cat{J}$, define 
 $$
 \tilde{F}J \;\coloneqq\; (FJ,[J,\id_{FJ}]),
 $$
 i.e.~assign to $J$ the equivalence class represented by the identity arrow $FJ\to FJ$ of $\cat{C}$. 
 \item For each morphism $f:J\to J'$, take the map $Ff:FJ\to FJ'$. Notice that we have the following commutative diagram,
 $$
 \begin{tikzcd}
  FJ \ar{d}{Ff} \ar{r}{\id} & FJ \ar{d}{Ff} \\
  FJ' \ar{r}{\id} & FJ'
 \end{tikzcd}
 $$
 so that we have a well-defined morphism of $\groth^o\im F$ (the zig-zag is simply given by the morphism $f$).
\end{itemize}

\begin{prop}\label{propconfinal}
 The functor $\tilde{F}:\cat{J}\to\groth^o\im F$ is confinal.
\end{prop}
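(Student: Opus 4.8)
The plan is to verify the two defining conditions of confinality directly, by unwinding the comma categories of $\tilde F$. Fix an object $(C,\xi)$ of $\groth^o\im F$, where $C$ is an object of $\cat{C}$ and $\xi=[K,f]$ is the class represented by some object $K$ of $\cat{J}$ and a morphism $f:C\to FK$. First I would describe the comma category $(C,\xi)/\tilde F$ explicitly. By the definition of $\groth^o\im F$ and of $\tilde F$, an object of this comma category is a pair $(J,g)$ consisting of an object $J$ of $\cat{J}$ together with a morphism $g:C\to FJ$ of $\cat{C}$ such that $(\im F)(g)([J,\id_{FJ}])=[K,f]$. Since the (contravariant) presheaf action sends $[J,\id_{FJ}]$ to $[J,\id_{FJ}\circ g]=[J,g]$, this condition reads simply $[J,g]=[K,f]$ in $(\im F)(C)$; that is, the objects of $(C,\xi)/\tilde F$ are precisely the representatives of the class $\xi$. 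Likewise, a morphism $(J,g)\to(J',g')$ is a morphism $h:J\to J'$ of $\cat{J}$ whose image $Fh:FJ\to FJ'$ satisfies $Fh\circ g=g'$.

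Non-emptiness is then immediate: the pair $(K,f)$ is an object of $(C,\xi)/\tilde F$, since $[K,f]=[K,f]$ trivially. For connectedness, I would invoke the concrete construction of the colimit $(\im F)(C)=\colim_{J\in\cat{J}}\Hom_\cat{C}(C,FJ)$ in $\cat{Set}$ as the quotient of $\coprod_{J}\Hom_\cat{C}(C,FJ)$ by the equivalence relation generated by the elementary identifications $(J,g)\sim(J',Fh\circ g)$ for morphisms $h:J\to J'$ of $\cat{J}$. The decisive observation is that these generating identifications are exactly the morphisms of $(C,\xi)/\tilde F$, read either forwards or backwards. Consequently, two representatives $(J,g)$ and $(J',g')$ of $\xi$ are identified in $(\im F)(C)$ if and only if they are joined by a finite zig-zag of such elementary steps; every intermediate representative occurring along the zig-zag again represents the same class $\xi$, hence is itself an object of the comma category, and each elementary step is a morphism of it. Thus any two objects of $(C,\xi)/\tilde F$ are connected by a zig-zag of morphisms, and the comma category is connected. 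Since $(C,\xi)$ was arbitrary, $\tilde F$ is confinal.

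The only real subtlety I anticipate is the bookkeeping: keeping straight the contravariant direction of the presheaf action $(\im F)(g)$ and matching the generating relations of the set-theoretic colimit with the morphisms of the comma category (including the check that every intermediate term of a zig-zag still lies over $\xi$, so that it is a genuine object of $(C,\xi)/\tilde F$). Once this identification is set up, both conditions are forced by the explicit description of the colimit, and no further computation is needed.
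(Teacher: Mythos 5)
Your proof is correct and takes essentially the same approach as the paper: both verify non-emptiness via the representative $f$ itself, and connectedness by noting that equality of classes in $(\im F)(C)$ is witnessed by a zig-zag of elementary identifications in $\cat{J}$, which lifts to a zig-zag of morphisms in the comma category. Your write-up merely makes explicit the bookkeeping (that every intermediate representative along the zig-zag still lies over the class $\xi$, hence is an object of the comma category) that the paper's proof leaves implicit.
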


This suffices to deduce \Cref{samecolimit}, since the following diagram commutes.
$$
\begin{tikzcd}
 & \groth^o \im F \ar{d}{\pi} \\
 \cat{J} \ar{ur}{\tilde{F}} \ar{r}[swap]{F} & \cat{C}
\end{tikzcd}
$$

\begin{proof}[Proof of \Cref{propconfinal}]
 We need to show that for every object $(C,[J,f])$ of $\groth^o\im F$, the comma category $(C,[J,f])/\tilde{F}$ is non-empty and connected. This is guaranteed by the way the category $\groth^o\im F$ is constructed, as follows.
 
 First, since $\tilde{F}J$ is the equivalence class represented by the identity $FJ\to FJ$, we can consider $f$ as an arrow  $(C,[J,f])\to \tilde{F}J$ of $\groth^o\im F$ and, hence, an object of $(C,[J,f])/\tilde{F}$, 
 as we have the trivially commuting diagram 
 $$
 \begin{tikzcd}
 C \ar{d}{f} \ar{r}{f} & FJ \ar[leftrightsquigarrow]{d} \\
 FJ \ar{r}{\id} & FJ 
 \end{tikzcd}
 $$
 with the zig-zag given by the identity. Now, given any other arrow $f':(C,[J,f])\to \tilde{F}J'$ in $\groth^o\im F$, we have $[J',f']=\im F(f')([J',1_{DJ'}])=[J,f]$. So there is a zig-zag $J\leftrightsquigarrow J'$ in $\cat{J}$, which actually links $f'$ with $f$ in the comma category $(C,[J,f])/\tilde{F}$, as required. 
\end{proof}

\subsubsection{Mutually confinal diagrams}

We will make use of the following well-known fact:
\begin{lemma}\label{confinalsubcat}
 Consider the functors
 $$
 \begin{tikzcd}
  \cat{A} \ar{r}{F} & \cat{B} \ar{r}{G} & \cat{C}
 \end{tikzcd}
 $$
 where $\cat{A}$, $\cat{B}$ and $\cat{C}$ are locally small categories. If $G\circ F$ is confinal and $G$ is fully faithful, then $F$ and $G$ separately are confinal too.
\end{lemma}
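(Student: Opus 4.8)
The plan is to handle $F$ and $G$ separately, proving confinality of $F$ directly from full faithfulness of $G$, and then bootstrapping confinality of $G$ from that of $F$ together with the hypothesis on $G\circ F$.

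First I would prove that $F$ is confinal. Fixing an object $B$ of $\cat{B}$, I must show that the comma category $B/F$ (objects $(A,\beta\colon B\to FA)$, morphisms $h\colon A\to A'$ with $Fh\circ\beta=\beta'$) is non-empty and connected. The idea is to compare it with $GB/(GF)$, which is non-empty and connected by hypothesis. Postcomposition with $G$ defines a functor $B/F\to GB/(GF)$ sending $(A,\beta)$ to $(A,G\beta)$ and acting as the identity on underlying $\cat{A}$-morphisms. I claim this is an \emph{isomorphism} of categories: on objects it is a bijection because $G$ is full and faithful, so every $\delta\colon GB\to GFA$ equals $G\beta$ for a unique $\beta\colon B\to FA$; on morphisms it is a bijection because $G(Fh\circ\beta)=GFh\circ G\beta$, so faithfulness of $G$ lets one cancel $G$ and recover the condition $Fh\circ\beta=\beta'$. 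Hence $B/F\cong GB/(GF)$, and confinality of $F$ follows at once.

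Next I would prove that $G$ is confinal. Fixing an object $C$ of $\cat{C}$, consider $C/G$ (objects $(B,\gamma\colon C\to GB)$). There is a projection functor $\Phi\colon C/(GF)\to C/G$ sending $(A,\delta\colon C\to GFA)$ to $(FA,\delta)$ and $g\colon A\to A'$ to $Fg$; this is well defined since $G(Fg)=GFg$. Non-emptiness of $C/G$ is then immediate, as $C/(GF)$ is non-empty and $\Phi$ takes values in $C/G$. For connectedness I would use that $F$ is already known to be confinal, so for any $(B,\gamma)$ the category $B/F$ is non-empty; picking $(A,\beta\colon B\to FA)$ in it, the arrow $\beta$ is a morphism $(B,\gamma)\to(FA,G\beta\circ\gamma)$ of $C/G$ (its defining identity $G\beta\circ\gamma=G\beta\circ\gamma$ holds trivially), and its target $(FA,G\beta\circ\gamma)=\Phi(A,G\beta\circ\gamma)$ lies in the image of $\Phi$. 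Thus every object of $C/G$ is joined by a single arrow to an object in the image of $\Phi$. Given two objects of $C/G$, I connect each to the image of $\Phi$ in this way, connect the two chosen objects of $C/(GF)$ by a zig-zag (using connectedness of $C/(GF)$), and apply $\Phi$ to transport that zig-zag into $C/G$; concatenating the three paths shows $C/G$ is connected.

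The routine parts are the verifications that the comparison functor $B/F\to GB/(GF)$ and the projection $\Phi$ are well defined and functorial; these are pure bookkeeping. The one genuinely load-bearing point is the \emph{order} of the argument: the connectedness of $C/G$ relies on a bridging arrow $\beta$ into the image of $\Phi$, which in turn needs $B/F$ to be non-empty, so confinality of $F$ must be established first. I expect this reduction of the connectedness of $C/G$ to that of $C/(GF)$ to be the main obstacle, and I would stress that the whole argument is elementary and stays within locally small (possibly large) categories, so that no appeal to the comprehension factorization on $\cat{Cat}$ (stated only for small categories) is needed.
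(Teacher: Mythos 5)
Your proof is correct. Note that the paper itself offers no proof to compare against: \Cref{confinalsubcat} is stated there as a ``well-known fact'' and is simply cited, so your argument fills in exactly what the paper leaves to folklore. Both halves of your argument check out. For $F$: postcomposition with $G$ does give a functor $B/F\to GB/(GF)$ that is bijective on objects (fullness and faithfulness of $G$ give existence and uniqueness of $\beta$ with $G\beta=\delta$) and bijective on hom-sets (faithfulness lets you cancel $G$ in $GFh\circ G\beta=G\beta'$), so $B/F\cong GB/(GF)$ and confinality of $F$ is inherited from that of $G\circ F$ objectwise. For $G$: the projection $\Phi\colon C/(GF)\to C/G$, $(A,\delta)\mapsto(FA,\delta)$, is well defined precisely because the morphism conditions $GFg\circ\delta=\delta'$ agree on both sides; non-emptiness of $C/G$ follows, and your bridging arrow $\beta\colon(B,\gamma)\to(FA,G\beta\circ\gamma)$ lands in the image of $\Phi$ since $G\beta\circ\gamma\colon C\to GFA$ exhibits $(A,G\beta\circ\gamma)$ as an object of $C/(GF)$. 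Concatenating the two bridges with the $\Phi$-image of a zig-zag in $C/(GF)$ (functors preserve zig-zags) gives connectedness. You are also right to flag the order of the argument as load-bearing: the bridge needs $B/F\neq\emptyset$, i.e.~confinality of $F$ first. Finally, your observation that the argument is elementary and works for locally small categories is apt, since the paper explicitly cannot invoke the comprehension factorization of \cite{comprehension} here for exactly that size reason.
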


\begin{prop}\label{confinallemma}
 Let $\cat{C}$ be a locally small category. 
 Let $D:\cat{J}\to\cat{C}$ and $E:\cat{K}\to\cat{C}$ be small diagrams. The following conditions are equivalent.
 \begin{enumerate}
  \item\label{condsameimage} $\im D$ and $\im E$ are naturally isomorphic;
  \item\label{condsamecolimit} for every locally small category $\cat{D}$ and every functor $F:\cat{C}\to\cat{D}$, the composite diagram $F\circ D:\cat{J}\to\cat{D}$ admits a colimit if and only if $F\circ E:\cat{K}\to\cat{D}$ does, and in that case the two colimits are isomorphic;
  \item\label{condzigzag} $D$ and $E$ are connected by a zigzag in $\cat{Cat}/\cat{C}$ such that all the arrows of the underlying zigzag $\cat{J}\leftrightsquigarrow\cat{K}$ in $\cat{Cat}$ are confinal functors.
 \end{enumerate}
\end{prop}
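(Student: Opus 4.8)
The plan is to prove the cycle \ref{condsameimage}$\Rightarrow$\ref{condzigzag}$\Rightarrow$\ref{condsamecolimit}$\Rightarrow$\ref{condsameimage}, using the category-of-elements factorization of \Cref{catel}, the confinality of the canonical functor $\tilde F$ established in \Cref{propconfinal}, and the absolute-colimit property of confinal functors in \Cref{whichiso}. For \ref{condsameimage}$\Rightarrow$\ref{condzigzag}, I would feed a natural isomorphism $\im D\cong\im E$ into the functorial construction of \Cref{catel}: it yields an isomorphism of categories $\groth^o\im D\xrightarrow{\cong}\groth^o\im E$ commuting with the discrete fibrations to $\cat{C}$, hence a morphism in $\cat{Cat}/\cat{C}$, and isomorphisms are confinal. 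By \Cref{propconfinal} the canonical functors $\tilde D\colon\cat{J}\to\groth^o\im D$ and $\tilde E\colon\cat{K}\to\groth^o\im E$ are confinal, and each lies over $\cat{C}$ since $\pi\circ\tilde D=D$ and $\pi\circ\tilde E=E$ strictly. Thus the roof
$$\cat{J}\xrightarrow{\tilde D}\groth^o\im D\xrightarrow{\cong}\groth^o\im E\xleftarrow{\tilde E}\cat{K}$$
is a zigzag in $\cat{Cat}/\cat{C}$ all of whose underlying functors are confinal, which is exactly \ref{condzigzag}.

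For \ref{condzigzag}$\Rightarrow$\ref{condsamecolimit}, I would post-compose the entire zigzag with an arbitrary functor $F\colon\cat{C}\to\cat{D}$ into a locally small category. This leaves the index functors unchanged and turns each node into a diagram in $\cat{D}$. Here I use that a confinal functor not only preserves colimits, as spelled out in \Cref{whichiso}, but also reflects their existence; this is precisely the content of confinality being an \emph{equivalent} characterization, namely that precomposition induces a bijection between cocones under $G$ and cocones under $G\circ R$. Traversing the zigzag one edge at a time then shows that $\colim(F\circ D)$ exists if and only if $\colim(F\circ E)$ does, and that the two are canonically isomorphic whenever they exist, giving \ref{condsamecolimit}.

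For \ref{condsamecolimit}$\Rightarrow$\ref{condsameimage}, I would specialize \ref{condsamecolimit} to the Yoneda embedding, which by definition turns colimits of $D$ and $E$ into their image presheaves. Since $[\cat{C}^\op,\cat{Set}]$ need not be locally small, I take $F$ to be the corestricted Yoneda embedding $Y\colon\cat{C}\to\mathcal{P}\cat{C}$ into the locally small category of small presheaves (\Cref{smallpsh}), which is closed under small colimits, so that the colimits of $Y\circ D$ and $Y\circ E$ computed there coincide with $\im D$ and $\im E$ as computed in $[\cat{C}^\op,\cat{Set}]$. As $\im D=\colim(Y\circ D)$ always exists, \ref{condsamecolimit} forces $\colim(Y\circ E)=\im E$ to exist and to be isomorphic to it, whence $\im D\cong\im E$, which is \ref{condsameimage}.

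The main obstacle is this last size issue: condition \ref{condsamecolimit} quantifies only over locally small targets, so to recover the image presheaf one must exhibit it as a colimit inside a locally small, small-cocomplete category, and the category of small presheaves is exactly the device that makes this legitimate. The only other ingredient that is not purely formal is the reflection half of confinality used in \ref{condzigzag}$\Rightarrow$\ref{condsamecolimit}, which I take from the standard equivalent characterization of confinality underlying \Cref{whichiso}; everything else reduces to the functoriality of $\groth^o$ and of $\im$, and to chaining isomorphisms along the zigzag.
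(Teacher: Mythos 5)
Your cycle of implications matches the paper's overall architecture: \ref{condzigzag}$\Rightarrow$\ref{condsamecolimit} from the standard theory of confinal functors, \ref{condsamecolimit}$\Rightarrow$\ref{condsameimage} by specializing to the Yoneda embedding corestricted to $\cat{PC}$ (exactly the paper's device for the local-smallness issue), and the real work in \ref{condsameimage}$\Rightarrow$\ref{condzigzag}. However, your \ref{condsameimage}$\Rightarrow$\ref{condzigzag} step has a genuine gap. The roof
$$
\cat{J}\xrightarrow{\tilde D}\groth^o\im D\xrightarrow{\cong}\groth^o\im E\xleftarrow{\tilde E}\cat{K}
$$
is \emph{not} a zigzag in $\cat{Cat}/\cat{C}$: condition \ref{condzigzag} requires the underlying zigzag to live in $\cat{Cat}$, i.e.\ all intermediate categories must be small, but the categories of elements $\groth^o\im D$ and $\groth^o\im E$ are in general large. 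Their objects are pairs $(C,[J,f])$ with $C$ ranging over all objects of $\cat{C}$, and $\cat{C}$ is only assumed locally small; already for $\cat{C}=\cat{Set}$ and $D$ a one-object diagram at a nonempty set, $\im D$ is nonempty at every object, so $\groth^o\im D$ has a proper class of objects. What you have built is therefore a zigzag in $\cat{CAT}/\cat{C}$, which is a strictly weaker statement than \ref{condzigzag}; in particular your cycle \ref{condsameimage}$\Rightarrow$\ref{condzigzag}$\Rightarrow$\ref{condsamecolimit}$\Rightarrow$\ref{condsameimage} no longer closes as an equivalence of the three conditions as stated.

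The paper closes exactly this hole. After forming $\groth^o\alpha:\groth^o\im D\cong\groth^o\im E$ and the confinal functors $\tilde D$, $\tilde E$, it passes to the full subcategory $\cat{S}\subseteq\groth^o\im E$ given by the \emph{joint full image} of $\groth^o\alpha\circ\tilde D$ and $\tilde E$. This $\cat{S}$ is small, since its set of objects is bounded by the disjoint union of the object sets of $\cat{J}$ and $\cat{K}$, and by \Cref{confinalsubcat} (if $G\circ F$ is confinal and $G$ is fully faithful, then $F$ is confinal) the corestricted functors $\cat{J}\to\cat{S}$ and $\cat{K}\to\cat{S}$ remain confinal. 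The resulting length-2 zigzag $\cat{J}\to\cat{S}\leftarrow\cat{K}$ over $\cat{C}$ then lies genuinely in $\cat{Cat}/\cat{C}$. Your other two implications, including the observation that confinal functors reflect existence of colimits and the use of $\cat{PC}$ to keep the target locally small, are correct and agree with the paper; only this smallness repair is missing.
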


\begin{deph}
 If the diagrams $D:\cat{J}\to\cat{C}$ and $E:\cat{K}\to\cat{C}$ satisfy any (and, hence, all) of the conditions above, we call them \emph{mutually confinal}.
\end{deph}

One should view the property of being mutually confinal as the \emph{absolute} coincidence of their colimits: existence granted, their colimits remain the same even after applying any other functor.

\begin{proof}[Proof of \Cref{confinallemma}]
 The statement $\ref{condzigzag}\Rightarrow\ref{condsamecolimit}$ is part of the standard theory of confinal functors (see the references).
 The statement $\ref{condsamecolimit}\Rightarrow\ref{condsameimage}$ follows from choosing for $F:\cat{C}\to\cat{D}$ the Yoneda embedding $\eta:\cat{C}\to\cat{PC}$.
 
 The real work is to prove $\ref{condsameimage}\Rightarrow\ref{condzigzag}$.
 To this end, suppose that $\alpha$ is an isomorphism $\im D \cong \im E$. We have an isomorphism between the corresponding categories of elements,
 $$
 \begin{tikzcd}[column sep=small]
  \groth^o \im D \ar{rr}{\groth^o\alpha}[swap]{\cong} \ar{dr} && \groth^o \im E \ar{dl} \\
  & \cat{C}
 \end{tikzcd}
 $$
 together with functors $\tilde{D}:\cat{J}\to \groth^o \im D$ and $\tilde{E}:\cat{K}\to \groth^o \im E$ which are confinal by \Cref{propconfinal}. We have the following diagram of confinal functors.
 $$
 \begin{tikzcd}[column sep=small]
  \cat{J} \ar{d}{\tilde D} && \cat{K} \ar{d}{\tilde E} \\
  \groth^o \im D \ar{rr}{\groth^o\alpha}[swap]{\cong} && \groth^o \im E
 \end{tikzcd}
 $$
 Denote now by $\cat{S}$ the full subcategory of $\groth^o \im E$ given by the joint full image of the two functors $\groth^o\alpha\circ\tilde{D}$ and $\tilde E$. The situation is depicted in the following commutative diagram.
 $$
 \begin{tikzcd}[column sep=small]
  \cat{J} \ar{d}{\tilde D} && \cat{K} \ar{d} \ar{ddr}{\tilde E} \\
  \groth^o \im D \ar{rr} \ar{drrr}[swap]{\groth^o\alpha} && S \ar[hookrightarrow]{dr} \\
  &&& \groth^o \im E
 \end{tikzcd}
 $$ 
 By construction, $S$ is small, since its cardinality is bounded by the one of the disjoint union of the sets of objects of $\cat{J}$ and $\cat{K}$, which are small. Moreover, the resulting functors $\cat{J}\to\cat{S}$ and $\cat{K}\to\cat{S}$ are confinal by \Cref{confinalsubcat}. The resulting diagram
 $$
 \begin{tikzcd}
  \cat{J} \ar{dr}[swap]{D} \ar{r} & \cat{S} \ar{d} & \cat{K} \ar{l} \ar{dl}{E} \\
  & \cat{C}
 \end{tikzcd}
 $$
 gives the desired zigzag (of length 2).
\end{proof}

\subsubsection{Absolute colimits}

An \emph{absolute colimit} is a colimit which is preserved by every functor \cite{absolutecolimits}.
We can redefine the concept of absolute colimits in terms of mutually confinal functor as follows. As we will see, this is equivalent to the usual definition.

\begin{deph}
 Let $\cat{C}$ be a locally small category, and let $D:\cat{J}\to\cat{C}$ be a small diagram. An \emph{absolute colimit} of $D$ is an object $X$ of $\cat{C}$ such that the diagrams $D:\cat{J}\to\cat{C}$ and $X:\cat{1}\to\cat{C}$ are mutually confinal.
\end{deph}

The image presheaf of a one-object diagram is the one given by the Yoneda embedding $y:\cat{C}\to [\cat{C}^\op,\cat{Set}]$, as the following proposition show. 

\begin{prop}\label{unmm}
 For each locally small category $\cat{C}$, the following diagram commutes up to natural isomorphism.
 $$
 \begin{tikzcd}[column sep=small]
  & \cat{C} \ar{dl}[swap]{\eta} \ar{dr}{y} \\
  \cat{Diag(C)} \ar{rr}{\im} && {[\cat{C}^\op,\cat{Set}]}
 \end{tikzcd}
 $$
\end{prop}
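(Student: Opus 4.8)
The plan is to observe that the image presheaf of a one-object diagram is precisely the representable presheaf it names, since the colimit defining $\im$ is then indexed by the terminal category $\cat{1}$ and so collapses to a single value. Concretely, for an object $C$ of $\cat{C}$ the diagram $\eta(C)$ is $\cat{1}\xrightarrow{C}\cat{C}$, so by the definition of the image presheaf $\im\eta(C)$ is the colimit of the composite $\cat{1}\xrightarrow{C}\cat{C}\xrightarrow{y}[\cat{C}^\op,\cat{Set}]$, i.e.\ the colimit over $\cat{1}$ of the single representable $y(C)$. A colimit indexed by $\cat{1}$ is canonically the value of the diagram at the unique object, which here is $y(C)$; this already yields $\im\eta(C)\cong y(C)$.

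To pin down the natural isomorphism explicitly (so as to control naturality, rather than relying on an unspecified choice of colimit), I would use the pointwise formula recalled earlier, namely $(\im\eta(C))(C')=\colim_{\cat{1}}\Hom_\cat{C}(C',C)$. Since $\cat{1}$ has only its identity morphism, there are no nontrivial zig-zags and hence no identifications among the arrows $C'\to C$, so the assignment $[\bullet,f]\mapsto f$ is a bijection onto $\Hom_\cat{C}(C',C)=y(C)(C')$, and it is natural in $C'$ by the way $\im\eta(C)$ acts on morphisms of $\cat{C}$ (precomposition). This defines the candidate component $\iota_C\colon\im\eta(C)\xrightarrow{\cong}y(C)$.

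It then remains to verify naturality of $\iota$ in $C$: given $f\colon C\to C''$, the functor $\im$ (its action on the morphism of diagrams $\eta(f)$, computed by pasting) sends the class $[\bullet,g]$ to $[\bullet,f\circ g]$, whereas $y(f)$ sends $g$ to $f\circ g$; under $\iota$ these agree, so the relevant square commutes. I expect no real obstacle beyond this bookkeeping: the only subtlety is that $\im$ is defined only up to a chosen colimit, so the isomorphism must be read off from the universal property (equivalently, from the explicit colimit formula above) and its compatibility with the presheaf structure in $C'$ confirmed, rather than asserted as an on-the-nose equality.
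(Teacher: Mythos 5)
Your proof is correct and follows essentially the same route as the paper: both rest on the observation that the defining (co)limit of $\im(\eta(C))$ is indexed by $\cat{1}$ and therefore collapses to the representable $y(C)$. The only difference is presentational — the paper phrases this via the Kan-extension characterization $\im(\eta(C))=\Lan{1}{C}$ and gets naturality in $C$ from the universal property of free colimits, whereas you use the colimit-of-representables definition and verify the pointwise bijection $[\bullet,f]\mapsto f$ and both naturalities by hand, which is a harmless (and arguably more self-contained) elaboration of the same argument.
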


\begin{proof}
 Using the definition of image in terms of Kan extensions, and recalling that $\eta(X)$ is the diagram $X:\cat{1}\to\cat{C}$ that picks out the object $X$, we have that $\im(\eta(X))$ is given by the following Kan extension,
 $$
  \begin{tikzcd}[row sep=small]
 \cat{1}
  \ar[""{name=ONE,below}]{dr}{1} \ar{dd}[swap]{X} \\
 & \cat{Set} \\
 \cat{C}^\op 
  \ar{ur}[swap]{\Lan{1}{X}} \ar[Rightarrow,from=ONE, shorten <= 0.5em, shorten >= 1em, "\lambda", swap]
 \end{tikzcd}
 $$
 which is isomorphic to $\Hom_\cat{C}(-,X)$, i.e.~the image of $X$ under the Yoneda embedding.
 The isomorphism is moreover natural in $X$, by the universal property of (free) colimits.
\end{proof}

Therefore, equivalently, an object $X$ is an absolute colimit of the diagram $D:\cat{J}\to\cat{C}$ if and only if $\im D$ is naturally isomorphic to the representable presheaf $\Hom_\cat{C}(-,X)$. 

If we instance \Cref{confinallemma} for this case, we get the following statement.

\begin{prop}\label{absolutelemma}
 Let $\cat{C}$ be a locally small category. 
 Let $D:\cat{J}\to\cat{C}$ and be a small diagram, and let $X$ be an object of $\cat{C}$. The following conditions are equivalent.
 \begin{enumerate}
  \item\label{condsameimage2} $X$ is an absolute colimit of $D$ (i.e.~$\im D \cong \Hom_\cat{C}(-,X)$ naturally);
  \item\label{condsamecolimit2} for every locally small category $\cat{D}$ and every functor $F:\cat{C}\to\cat{D}$, the object $F(X)$ is the colimit in $\cat{D}$ of the composite diagram $F\circ D:\cat{J}\to\cat{D}$;
  \item\label{condzigzag2} $D:\cat{J}\to\cat{C}$ and $X:\cat{1}\to\cat{C}$ are connected by a zigzag in $\cat{Cat}/\cat{C}$ such that all the arrows of the underlying zigzag $\cat{J}\leftrightsquigarrow\cat{1}$ in $\cat{Cat}$ are confinal functors.
 \end{enumerate}
\end{prop}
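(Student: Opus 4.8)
The plan is to deduce this directly from \Cref{confinallemma}, specialized to the case where the second diagram is the one-object diagram $E \coloneqq \eta(X)$, that is $\cat{K} = \cat{1}$ and $E = (X\colon\cat{1}\to\cat{C})$. With this choice, each of the three conditions of \Cref{confinallemma} should become the correspondingly-numbered condition here, so the work reduces to matching them up one by one.

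First I would handle condition \ref{condsameimage2}. By \Cref{unmm} there is a natural isomorphism $\im(\eta(X)) \cong \Hom_\cat{C}(-,X)$, so condition \ref{condsameimage} of \Cref{confinallemma}, namely $\im D \cong \im(\eta(X))$, is exactly the assertion that $\im D \cong \Hom_\cat{C}(-,X)$ naturally, which is condition \ref{condsameimage2}. Condition \ref{condzigzag2} is then immediate: with $\cat{K}=\cat{1}$, the zigzag in $\cat{Cat}/\cat{C}$ connecting $D$ and $E$ has underlying zigzag $\cat{J}\leftrightsquigarrow\cat{1}$, which is precisely what condition \ref{condzigzag2} requires.

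The only point needing care is condition \ref{condsamecolimit2}. Here I would use that the composite $F\circ E = F\circ\eta(X)$ is the one-object diagram on $F(X)$, whose colimit always exists and is $F(X)$ itself, equipped with the identity cocone. Hence the biconditional ``$F\circ D$ admits a colimit if and only if $F\circ E$ does'' of condition \ref{condsamecolimit} collapses to ``$F\circ D$ admits a colimit'', and the clause ``the two colimits are isomorphic'' becomes $\colim(F\circ D)\cong F(X)$. To upgrade this object-level isomorphism to the statement that $F(X)$ \emph{is} the colimit of $F\circ D$, I would invoke that the isomorphism furnished by \Cref{confinallemma} is the canonical one induced by the connecting morphism of diagrams, hence compatible with the colimiting cocones; since being a colimit is a property stable under a cocone-preserving isomorphism, $F(X)$ equipped with the induced cocone satisfies the universal property. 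This yields condition \ref{condsamecolimit2}.

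The main (and essentially only) obstacle is this last compatibility check: one must confirm that the abstract isomorphism of colimits produced by \Cref{confinallemma} transports the colimiting cocone of the trivial diagram $F\circ\eta(X)$ to a genuine colimiting cocone on $F(X)$ under $F\circ D$, rather than merely producing an isomorphic object. Everything else is a direct substitution $\cat{K}=\cat{1}$ together with the identification of $\im(\eta(X))$ with the representable presheaf.
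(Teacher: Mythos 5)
Your proposal is correct and matches the paper's own argument: the paper proves this proposition precisely by instantiating \Cref{confinallemma} at the one-object diagram $X:\cat{1}\to\cat{C}$ and identifying $\im(\eta(X))$ with the representable presheaf $\Hom_\cat{C}(-,X)$ via \Cref{unmm}. Your final compatibility worry is unnecessary: being a colimit object is stable under \emph{any} isomorphism (simply transport the colimiting cocone along it), so the object-level isomorphism $\colim(F\circ D)\cong F(X)$ already yields condition \ref{condsamecolimit2} as stated, without appealing to canonicity of the isomorphism from \Cref{confinallemma}.
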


Note that condition \ref{condsamecolimit2} implies, in particular, that indeed $X$ is a colimit of $D$ (take $F$ to be the identity). Denote the colimit cone by $h:D\Rightarrow X$. 

We can now rewrite condition \ref{condzigzag2} in a more elementary way. Recall that in the proof of \Cref{confinallemma} we had obtained condition \ref{condzigzag} from \ref{condsameimage} by forming the category of elements of the (common) image presheaf, and taking the joint image of the confinal functors from $\cat{J}$ and from $\cat{1}$ to this category of elements.
The category of elements of the representable presheaf $\Hom_\cat{C}(-,X)$ is isomorphic to the slice category $\cat{C}/X$. We therefore have to take the joint image in $\cat{C}/X$ of the two functors at the top of this diagram,
$$
\begin{tikzcd}
 \cat{J} \ar{dr}[swap]{D} \ar{r} & \cat{C}/X \ar{d}{\pi} & \cat{1} \ar{dl}{X} \ar{l} \\
 & \cat{C}
\end{tikzcd}
$$
where the functor $\cat{J}\to\cat{C}/X$ maps an object $J\in\cat{J}$ to the arrow of the colimit cone $h_J:DJ\to X$.
Just as in the proof of \Cref{confinallemma}, denote this joint full image by $\cat{S}$. Now, the resulting functor $\cat{1}\to \cat{S}$ is trivially confinal, since it maps the unique object of $\cat{1}$ to $\id_X\in\cat{C}/X$. 
More interestingly, the proof of \Cref{confinallemma} says that also the resulting functor $\cat{J}\to \cat{S}$ is confinal. The condition is nontrivial for the only object of $\cat{S}$ that does not come from $\cat{J}$, which is the one coming from $\cat{1}$, namely $\id_X\in\cat{C}/X$. For this object, the confinality condition of the functor $\cat{J}\to \cat{S}$ says the following:

\begin{enumerate}
 \setcounter{enumi}{3}
 \item There exist an object $J$ of $\cat{J}$ and an arrow $f:X\to DJ$ of $\cat{C}$ such that the following diagram commutes:
 $$
 \begin{tikzcd}
  X \ar{dr}[swap]{\id} \ar{r}{f} & DJ \ar{d}{h_J} \\
  & X
 \end{tikzcd}
 $$
 and such that moreover, for each object $J'$ of $\cat{J}$ and arrow $f':X\to DJ'$ making a similar diagram commute, there exists a zigzag $J \leftrightsquigarrow J'$ in $\cat{J}$ making the following diagram in $\cat{C}$ commute.
 $$
 \begin{tikzcd}
  & & & DJ' \ar{dddl}{h_J'} \ar[leftrightsquigarrow]{dl} \\
  X \ar{ddrr}[swap]{\id} \ar{urrr}{f'} \ar{rr}[swap]{f} & & DJ \ar{dd}[swap]{h_J} \\ \\
  & &  X
 \end{tikzcd}
 $$
\end{enumerate}

Intuitively, we can interpret this condition as ``the colimit cone eventually has a section, which is in some sense unique''.
This is similar to very well-known statements in the literature, see for example Theorems 2.1 and 4.1 in \cite{absolutecolimits}.
Therefore we can view our theory of mutually confinal functor as a joint generalization both of confinal functors and of absolute colimits.

\section{The monad of small presheaves}\label{smallpsh}

In this section we study small presheaves, and show that they also form a pseudomonad. Moreover, the image map of the previous section gives a morphism of pseudomonads (also explicitly defined in \Cref{pseudomonads}). 
Again, cocomplete categories are pseudoalgebras of this monad, but this time, every pseudoalgebra is of this form. Indeed, considering the long history of (co)completion theory of categories (see \cite{isbell, completion} for early contributions), one should view the monad of small presheaves as the ``free small-cocompletion monad''. The fact that $\cat{Diag}$ admits cocomplete categories as algebras is then to be thought of as an instance of the ``restriction of scalars'' construction, where algebras of a monad can be pulled back along a morphism of monads, see \Cref{restrictionofscalars}.

It is known that small presheaves form a pseudomonad \cite{smallfunctors}. However, we did not find an explicit construction in the literature, so we give one in the present section. Compared to the pseudomonad of \Cref{monadofdiagrams}, this one is weaker: the underlying pseudofunctor is not a strict 2-functor. A short review of the relevant basic definitions can be found in \Cref{pseudomonads}. 
The fact that $\im$ defines a morphism of pseudomonads (\Cref{immm}) seems to be new. 

\subsection{Small presheaves}

\begin{deph}
 A presheaf is called \emph{small} if it is (naturally isomorphic to) the image presheaf of a small diagram.
 
 Denote by $\cat{PC}$ the full subcategory of $[\cat{C}^\op,\cat{Set}]$ whose objects are small presheaves. 
\end{deph}

The image presheaf of a (small) diagram is by definition a small presheaf, so that the functor $\im:\cat{Diag(C)}\to[\cat{C}^\op,\cat{Set}]$ actually lands in $\cat{PC}$. 
We denote the resulting functor $\cat{Diag(C)}\to\cat{PC}$ again by $\im$. This will not cause confusion, since from now on we will only consider small presheaves.

Despite the slightly new terminology, this is a known concept, see for example \cite{smallfunctors}. 
We recall the following facts.
\begin{itemize}
 \item A presheaf is small if and only if it can be written as a \emph{small} colimit of representables \cite[Section~2]{smallfunctors}. Therefore we can think of small presheaves as of forming the \emph{free small cocompletion} of a category.
 \item The category $\cat{PC}$ of small presheaves on a locally small category $\cat{C}$ is itself locally small. This allows us to avoid several size issues when talking about the free cocompletion.
\end{itemize}

Notice also the following fact.
\begin{remark}\label{ffconfinal} 
 Let $\cat{C}$ be a locally small category, and let $P:\cat{C}^\op\to\cat{Set}$ be a small presheaf. Then we know (\Cref{propconfinal}) that there exists a small category $\cat{S}$ and confinal functor $\tilde F:\cat{S}\to \groth^o P$. By \Cref{confinalsubcat}, we can assume that $\tilde F$ is fully faithful, or equivalently that it is the inclusion of a full subcategory.
\end{remark}

For later use in this section, we recall the following known statement, sometimes called the \emph{co-Yoneda lemma} (see \cite[Section~3.10]{kelly}, as well as \cite[Section~2.2]{coend}).\footnote{Often a stronger statement is called ``co-Yoneda lemma'', see \Cref{generalcoyoneda}.} 

\begin{prop}\label{coyoneda}
 Let $\cat{C}$ be a category, and let $H:\cat{C}\to\cat{Set}$ be a functor. There is an isomorphism
 $$
 H(C) \;\cong\; \int^{C'\in\cat{C}} \Hom_\cat{C}(C',C) \times H(C') ,
 $$
 for each object $C$ of $\cat{C}$ and natural in $C$, given by mapping each element $x\in H(C)$ to the equivalence class in the coend above the ordered pair $(\id_C, x)\in \Hom_\cat{C}(C,C) \times H(C)$.  
\end{prop}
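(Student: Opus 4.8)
The plan is to construct an explicit two-sided inverse to the comparison map and then check naturality, rather than to invoke any abstract universal property. First I would fix notation for the coend: writing $[C',f,x]$ for the equivalence class of a pair $(f,x)\in\Hom_\cat{C}(C',C)\times H(C')$, the defining relation identifies, for every morphism $g\colon C''\to C'$, the class $[C'',f\circ g,y]$ with the class $[C',f,H(g)(y)]$, for all $f\colon C'\to C$ and $y\in H(C'')$. The map in the statement is then $\phi\colon H(C)\to\int^{C'}\Hom_\cat{C}(C',C)\times H(C')$ sending $x$ to $[C,\id_C,x]$.

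The candidate inverse is the \emph{evaluation} map $\psi$ defined on representatives by $\psi([C',f,x])\coloneqq H(f)(x)\in H(C)$. The key point, which I would check first, is that $\psi$ is well defined, i.e.\ that it respects the coend relation; this follows immediately from functoriality of $H$, since $H(f\circ g)(y)=H(f)(H(g)(y))$ equates the values of $\psi$ on the two sides $[C'',f\circ g,y]$ and $[C',f,H(g)(y)]$.

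With $\phi$ and $\psi$ in hand, I would verify the two composites. One direction is immediate: $\psi(\phi(x))=H(\id_C)(x)=x$. The other direction is where the coend relation does the real work: $\phi(\psi([C',f,x]))=[C,\id_C,H(f)(x)]$, and applying the relation with $g=f$ (so that the summand contains the pair $(\id_C,x)$) shows $[C,\id_C,H(f)(x)]=[C',f,x]$, whence $\phi\circ\psi=\id$ as well. Finally, naturality in $C$ amounts to checking, for $h\colon C\to D$, that $[C,h,x]=[D,\id_D,H(h)(x)]$, which is again an instance of the coend relation, this time with $g=h$.

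I do not expect any genuine obstacle here; the argument is entirely formal once the coend is described as a quotient. The only thing demanding care is keeping the contravariant and covariant legs of the coend bifunctor straight, so that the generating relation is applied in the correct direction in each of the three verifications above.
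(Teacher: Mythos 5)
Your proof is correct, but there is in fact nothing in the paper to compare it against: the paper states this proposition as a known result (the co-Yoneda lemma) and simply cites the literature (Kelly, and the coend-calculus references), giving no proof of its own. Your argument supplies what the paper omits, and does so in the most elementary way: you describe the coend as the quotient of $\coprod_{C'}\Hom_{\cat{C}}(C',C)\times H(C')$ by the relations $[C'',f\circ g,y]\sim[C',f,H(g)(y)]$, check that the evaluation map $\psi[C',f,x]=H(f)(x)$ respects these relations (functoriality of $H$), and verify it is a two-sided inverse to the stated map $x\mapsto[C,\id_C,x]$, with the nontrivial composite $\phi\circ\psi=\id$ and the naturality square each being a single instance of the generating relation (taking $g=f$ and $g=h$ respectively). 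All three verifications are accurate, and importantly your argument proves the precise form of the statement — that the \emph{specified} map is an isomorphism — rather than mere abstract existence. For comparison, the standard textbook route applies the Yoneda lemma to cowedges: for any set $K$ one has $\Hom\big(\int^{C'}\Hom_{\cat{C}}(C',C)\times H(C'),K\big)\cong\int_{C'}\Hom_{\cat{Set}}\big(\Hom_{\cat{C}}(C',C),\Hom_{\cat{Set}}(H(C'),K)\big)\cong\mathrm{Nat}\big(\Hom_{\cat{C}}(-,C),\Hom_{\cat{Set}}(H(-),K)\big)\cong\Hom_{\cat{Set}}(H(C),K)$ naturally in $K$; that is slicker but identifies the isomorphism only implicitly, so your element-level version is arguably better suited to the paper's use of the explicit map. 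One point worth a sentence in your write-up: since the paper's categories are only locally small, the coproduct you quotient can be a proper class, so "the coend as a quotient set" needs justification; your own computation repairs this, because it shows every class has a representative of the form $(\id_C,z)$ with $z\in H(C)$ — equivalently, the cowedge $(f,x)\mapsto H(f)(x)$ into $H(C)$ is universal — so the coend exists and is small. Phrasing the argument as "this cowedge is universal" rather than "the quotient is in bijection with $H(C)$" makes that size issue disappear entirely.
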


\subsection{The pseudofunctor}

Given locally small categories $\cat{C}$ and $\cat{D}$ and a functor $F:\cat{C}\to\cat{D}$, we would like to find an assignment $\cat{PC}\to\cat{PD}$, which maps small presheaves to small presheaves.

\begin{deph}
 Let $F:\cat{C}\to\cat{D}$ be a functor between locally small categories, and let $P$ be a small presheaf on $\cat{C}$. The \emph{pushforward of $P$ along $F$} is the presheaf on $\cat{D}$ given by the following left Kan extension.
 $$
 \begin{tikzcd}[row sep=small]
 \cat{C}^\op 
  \ar[""{name=ONE,below}]{dr}{P} \ar{dd}[swap]{F^\op} \\
 & \cat{Set} \\
 \cat{D}^\op 
  \ar{ur}[swap]{\Lan{P}{F^\op}} \ar[Rightarrow,from=ONE, shorten <= 0.5em, shorten >= 1em, "\lambda", swap]
\end{tikzcd}
 $$
 We denote the resulting presheaf by $F_\sharp P$.
\end{deph}

Equivalently, $F_\sharp P$ is given by the free colimit of $F$, weighted by $P$. By the universal property of (weighted) colimits, it is therefore functorial in $F$.
Note that this definition specifies $F_\sharp P$ only up to isomorphism. As usual, the choice of a particular object within its isomorphism class is de facto irrelevant.

Recall the following fact, which says that Kan extension diagrams can be pasted vertically. While the statement is folklore and a consequence of the simple fact that universal arrows \cite{maclane} compose in an obvious sense, we provide a proof because the explicit isomorphism given in the proof will be of use later. 

\begin{prop}\label{verticalpaste}
 Let $\cat{A}$, $\cat{B}$, $\cat{C}$ and $\cat{D}$ be categories, and let $F:\cat{A}\to\cat{B}$, $G:\cat{B}\to\cat{C}$, $H:\cat{A}\to\cat{D}$ be functors. The left Kan extensions $\Lan{(\Lan{H}{F})}{G}$ and $\Lan{H}{G\circ F}$ are naturally isomorphic. 
 \begin{equation}\label{vpastediag}
 \begin{tikzcd}[column sep=huge, row sep=large]
 \cat{A} 
  \ar[""{name=ONE,below}]{dr}{H} \ar{d}[swap]{F} \\
 \cat{B} \ar[Rightarrow,from=ONE, shorten <= 0.5em, shorten >= 1em, "\lambda_F", swap] \ar[""{name=TWO, below}]{r}[swap]{\Lan{H}{F}} \ar{d}[swap]{G} & \cat{D} \\
 \cat{C}
  \ar{ur}[swap]{\Lan{(\Lan{H}{F})}{G}} 
  \ar[Rightarrow,from=TWO, shorten <= 0.8em, shorten >= 1.6em, "\lambda_G", swap]
 \end{tikzcd}
 \quad\cong\qquad
 \begin{tikzcd}[column sep=huge, row sep=large]
 \cat{A} 
  \ar[""{name=ONE,below}]{dr}{H} \ar{d}[swap]{F} \\
 \cat{B}  \ar{d}[swap]{G} & \cat{D} \\
 \cat{C}
  \ar{ur}[swap]{\Lan{H}{G\circ F}} 
  \ar[Rightarrow,from=ONE, shorten <= 1em, shorten >= 2em, "\lambda_{G\circ F}", near start]
 \end{tikzcd}
 \end{equation}
\end{prop}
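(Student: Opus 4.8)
The plan is to prove the isomorphism purely from the defining universal property of left Kan extensions, without invoking pointwise formulas or any size or cocompleteness hypotheses. Recall that for functors $K:\cat{X}\to\cat{Y}$ and $H:\cat{X}\to\cat{D}$, a left Kan extension consists of a functor $\Lan{H}{K}$ together with a universal $2$-cell $\lambda_K:H\Rightarrow (\Lan{H}{K})\circ K$, through which every $2$-cell $H\Rightarrow M\circ K$ factors uniquely. I would show that the functor $\Lan{(\Lan{H}{F})}{G}$, equipped with a suitably pasted $2$-cell, \emph{also} satisfies the universal property that characterizes $\Lan{H}{G\circ F}$; since left Kan extensions are unique up to a unique compatible isomorphism, this yields the desired natural isomorphism together with its explicit form.

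First I would assemble the comparison cell. Let $\lambda_F:H\Rightarrow(\Lan{H}{F})\circ F$ and $\lambda_G:(\Lan{H}{F})\Rightarrow(\Lan{(\Lan{H}{F})}{G})\circ G$ be the two universal cells appearing on the left of \eqref{vpastediag}. Whiskering $\lambda_G$ with $F$ and composing with $\lambda_F$ produces a $2$-cell
$$
\lambda \;\coloneqq\; (\lambda_G F)\circ\lambda_F \;:\; H \Longrightarrow (\Lan{(\Lan{H}{F})}{G})\circ(G\circ F),
$$
which is exactly the pasting displayed on the left-hand diagram. The claim to verify is then that $\big((\Lan{(\Lan{H}{F})}{G}),\lambda\big)$ is a left Kan extension of $H$ along $G\circ F$.

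Next I would check this universal property directly, by a two-stage factorization. Given any functor $M:\cat{C}\to\cat{D}$ and any $2$-cell $\beta:H\Rightarrow M\circ(G\circ F)=(M\circ G)\circ F$, the universal property of $\lambda_F$ yields a unique $\beta':(\Lan{H}{F})\Rightarrow M\circ G$ with $(\beta' F)\circ\lambda_F=\beta$, and then the universal property of $\lambda_G$ yields a unique $\gamma:(\Lan{(\Lan{H}{F})}{G})\Rightarrow M$ with $(\gamma G)\circ\lambda_G=\beta'$. A short pasting computation shows $(\gamma\,(G\circ F))\circ\lambda=\beta$, and uniqueness of $\gamma$ follows from the uniqueness at each of the two stages. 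This establishes that $\Lan{(\Lan{H}{F})}{G}$ and $\Lan{H}{G\circ F}$ are both left Kan extensions of $H$ along $G\circ F$.

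Finally, by the essential uniqueness of left Kan extensions there is a unique natural isomorphism $\theta:\Lan{H}{G\circ F}\Rightarrow\Lan{(\Lan{H}{F})}{G}$ compatible with the universal cells, namely the unique factorization of $\lambda$ through $\lambda_{G\circ F}$; this $\theta$ is the explicit isomorphism to be reused later. I do not expect a genuine conceptual obstacle here, as the argument is entirely formal. The only real care needed is bookkeeping --- tracking the whiskerings correctly and verifying the two pasting identities $(\beta' F)\circ\lambda_F=\beta$ and $(\gamma\,(G\circ F))\circ\lambda=\beta$ --- together with the observation that, because the whole construction proceeds through universal properties, no assumption that the extensions be pointwise, nor any cocompleteness of $\cat{D}$, is required.
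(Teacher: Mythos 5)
Your proof is correct and is essentially the paper's own argument: the paper likewise works purely with the defining universal property (no pointwise formulas, no cocompleteness), constructing the comparison cell $\kappa$ --- which is your $\gamma$ specialized to $(M,\beta)=(\Lan{H}{G\circ F},\lambda_{G\circ F})$ --- and its inverse $\delta$ --- which is your $\theta$ --- by exactly the same two-stage factorizations through $\lambda_F$ and $\lambda_G$. The only difference is organizational: you package the factorizations as the statement that $\Lan{(\Lan{H}{F})}{G}$ equipped with the pasted cell satisfies the universal property of $\Lan{H}{G\circ F}$ (which incidentally also gives existence of the latter) and then invoke essential uniqueness, whereas the paper inlines the same factorizations and verifies $\kappa\circ\delta=\id$ and $\delta\circ\kappa=\id$ directly by uniqueness.
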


\begin{proof}
 By the universal property of $\Lan{H}{F}$, the natural transformation $\lambda_{G\circ F}$ on the right of \eqref{vpastediag} factors uniquely through $\lambda_F$, i.e.~there exists a unique 2-cell $\nu:\Lan{H}{F}\Rightarrow \Lan{H}{G\circ F}\circ G$ such that the following 2-cells are equal.
 $$
 \begin{tikzcd}[column sep=huge, row sep=large]
 \cat{A} 
  \ar[""{name=ONE,below}]{dr}{H} \ar{d}[swap]{F} \\
 \cat{B} \ar[Rightarrow,from=ONE, shorten <= 0.5em, shorten >= 1em, "\lambda_F", swap] \ar[""{name=TWO, below}]{r}[swap]{\Lan{H}{F}} \ar{d}[swap]{G} & \cat{D} \\
 \cat{C}
  \ar{ur}[swap]{\Lan{H}{G\circ F}} 
  \ar[Rightarrow,from=TWO, shorten <= 0.8em, shorten >= 1.6em, "\nu", swap]
 \end{tikzcd}
 \quad=\qquad
 \begin{tikzcd}[column sep=huge, row sep=large]
 \cat{A} 
  \ar[""{name=ONE,below}]{dr}{H} \ar{d}[swap]{F} \\
 \cat{B}  \ar{d}[swap]{G} & \cat{D} \\
 \cat{C}
  \ar{ur}[swap]{\Lan{H}{G\circ F}} 
  \ar[Rightarrow,from=ONE, shorten <= 1em, shorten >= 2em, "\lambda_{G\circ F}", near start]
 \end{tikzcd}
 $$
 Moreover, by the universal property of $\Lan{(\Lan{H}{F})}{G}$, the natural transformation $\nu$ factors uniquely through $\lambda_G$, meaning that there exists a unique natural transformation $\kappa:\Lan{(\Lan{H}{F})}{G}\Rightarrow \Lan{H}{G\circ F}$ such that the following 2-cells are equal,
 $$
 \begin{tikzcd}[column sep=large]
 \cat{B} 
  \ar[""{name=ONE,below}]{dr}{\Lan{H}{F}} \ar{dd}[swap]{G} \\
 & \cat{D} \\
 \cat{C} 
  \ar[bend right, ""{name=SECOND, above}]{ur}[swap]{\Lan{H}{G\circ F}} \ar[bend left, ""{name=FIRST, below}]{ur}
  \ar[Rightarrow,from=ONE, shorten <= 0.5em, shorten >= 2.5em, "\lambda_G", swap, near start]
  \ar[Rightarrow, from=FIRST, to=SECOND, "\kappa"]
 \end{tikzcd}
 \quad=\qquad
 \begin{tikzcd}[column sep=large]
 \cat{B} 
  \ar[""{name=ONE,below}]{dr}{\Lan{H}{F}} \ar{dd}[swap]{G} \\
 & \cat{D} \\
 \cat{C} 
  \ar[bend right]{ur}[swap]{\Lan{H}{G\circ F}} \ar[Rightarrow,from=ONE, shorten <= 0.8em, shorten >= 1.6em, "\nu", near start]
\end{tikzcd}
 $$
 where the unlabeled arrow (for reasons of space) denotes $\Lan{(\Lan{H}{F})}{G}$. We now show that $\kappa$ is an isomorphism, by providing an inverse. By the universal property of $\Lan{H}{G\circ F}$, the composite natural transformation on the left of \eqref{vpastediag} factors uniquely through $\lambda_{G\circ F}$, meaning that there exists a unique natural transformation $\delta:\Lan{H}{G\circ F} \Rightarrow \Lan{(\Lan{H}{F})}{G}$ such that the following 2-cells are equal. 
 \begin{equation}\label{defdelta}
 \begin{tikzcd}[column sep=huge, row sep=large]
 \cat{A} 
  \ar[""{name=ONE,below}]{dr}{H} \ar{d}[swap]{F} \\
 \cat{B}  \ar{d}[swap]{G} & \cat{D} \\
 \cat{C}
  \ar[bend right, ""{name=SECOND, above}]{ur}[swap]{\Lan{(\Lan{H}{F})}{G}} \ar[bend left, ""{name=FIRST, below}]{ur}
  \ar[Rightarrow,from=ONE, shorten <= 1em, shorten >= 3em, "\lambda_{G\circ F}", near start, swap]
  \ar[Rightarrow, from=FIRST, to=SECOND, "\delta"]
 \end{tikzcd}
 \quad = \qquad
 \begin{tikzcd}[column sep=huge, row sep=large]
 \cat{A} 
  \ar[""{name=ONE,below}]{dr}{H} \ar{d}[swap]{F} \\
 \cat{B} \ar[Rightarrow,from=ONE, shorten <= 0.5em, shorten >= 1em, "\lambda_F", swap] \ar[""{name=TWO, below}]{r}[swap]{\Lan{H}{F}} \ar{d}[swap]{G} & \cat{D} \\
 \cat{C}
  \ar[bend right]{ur}[swap]{\Lan{(\Lan{H}{F})}{G}} 
  \ar[Rightarrow,from=TWO, shorten <= 0.8em, shorten >= 1em, "\lambda_G"]
 \end{tikzcd}
 \end{equation}
 where this time the unlabeled arrow denotes $\Lan{H}{G\circ F}$. By the universal properties of the respective Kan extensions, we then have that $\kappa\circ\delta$ has to be the identity natural transformation at $\Lan{H}{G\circ F}$, and $\delta\circ\kappa$ has to be the identity natural transformation at $\Lan{(\Lan{H}{F})}{G}$.
\end{proof}

\begin{cor}\label{smalltosmall}
 Pushforwards of small presheaves exist, are given by pointwise left Kan extensions, and are small.
\end{cor}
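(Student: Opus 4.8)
The plan is to reduce the pushforward of an arbitrary small presheaf to the image presheaf of a small diagram, exploiting the vertical pasting of \Cref{verticalpaste}. Since $P$ is small, there is a small diagram $D:\cat{J}\to\cat{C}$ with $P\cong\im D$, and by definition $\im D=\Lan{1}{D^\op}$ is the pointwise left Kan extension of the constant presheaf $1:\cat{J}^\op\to\cat{Set}$ along $D^\op$. Hence $F_\sharp P=\Lan{P}{F^\op}=\Lan{(\Lan{1}{D^\op})}{F^\op}$. Applying \Cref{verticalpaste} with $H=1:\cat{J}^\op\to\cat{Set}$, the first functor taken to be $D^\op:\cat{J}^\op\to\cat{C}^\op$ and the second to be $F^\op:\cat{C}^\op\to\cat{D}^\op$, yields a natural isomorphism
$$F_\sharp P=\Lan{(\Lan{1}{D^\op})}{F^\op}\;\cong\;\Lan{1}{F^\op\circ D^\op}=\Lan{1}{(F\circ D)^\op}=\im(F\circ D).$$
The right-hand side is the image presheaf of the small diagram $F\circ D:\cat{J}\to\cat{D}$; this simultaneously identifies the pushforward and exhibits it as small.

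What remains, and where care with size is essential, is to justify that all three Kan extensions in the pasting actually exist and that $F_\sharp P$ is genuinely pointwise, since the defining coend of a pushforward a priori ranges over the possibly large category $\cat{C}$. I would verify this directly from the pointwise formula: for an object $d$ of $\cat{D}$,
$$(F_\sharp P)(d)=\int^{c}\Hom_\cat{D}(d,Fc)\times P(c)\;\cong\;\int^{c}\Hom_\cat{D}(d,Fc)\times\colim_{J\in\cat{J}}\Hom_\cat{C}(c,DJ).$$
Because $-\times X$ preserves colimits in $\cat{Set}$ and coends are themselves colimits, the small colimit over $\cat{J}$ commutes out of the coend, and the co-Yoneda lemma (\Cref{coyoneda}) collapses each remaining coend via $\int^{c}\Hom_\cat{D}(d,Fc)\times\Hom_\cat{C}(c,DJ)\cong\Hom_\cat{D}(d,F(DJ))$. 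This leaves $(F_\sharp P)(d)\cong\colim_{J\in\cat{J}}\Hom_\cat{D}(d,F(DJ))$, a \emph{small} colimit in the cocomplete category $\cat{Set}$. Thus the pointwise left Kan extension exists, is computed by this small colimit, and agrees with $\im(F\circ D)$ objectwise, which both licenses the use of \Cref{verticalpaste} and confirms pointwise-ness.

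The main obstacle is precisely this size bookkeeping: a priori the pushforward is a colimit indexed over the large category $\cat{C}$, and one must use the smallness of $P$ to rewrite it as a colimit indexed over the small category $\cat{J}$ before existence, and hence \Cref{verticalpaste}, may legitimately be invoked. Once the reduction to $\im(F\circ D)$ is in place, smallness and pointwise-ness are immediate and everything else is formal.
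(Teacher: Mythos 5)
Your proof is correct and follows essentially the same route as the paper: there the corollary is drawn directly from \Cref{verticalpaste}, identifying $F_\sharp P \cong \im(F\circ D)$ for any small diagram $D$ with $\im D\cong P$, and your pointwise computation collapsing the large coend to the small colimit $\colim_{J\in\cat{J}}\Hom_\cat{D}(d,F(DJ))$ via \Cref{coyoneda} is exactly the content of \Cref{pastewithcoyoneda}. The explicit size bookkeeping you add, establishing existence of the pointwise extension before invoking the pasting lemma, is a sharpening of what the paper leaves implicit.
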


\begin{remark}\label{pastewithcoyoneda}
 Since we are dealing with pointwise Kan extensions, we can also express this vertical pasting law in terms of coends, where it is an instance of the co-Yoneda lemma (\Cref{coyoneda}). In particular, let $\cat{A}$ and $\cat{B}$ be locally small categories, let $F:\cat{A}\to\cat{B}$ be a functor, and let $P:\cat{A}^\op\to\cat{Set}$ be a small presheaf.
 Then 
 $$
 F_\sharp P (B) \cong \int^{A\in\cat{A}} P(A) \times \Hom_\cat{B}(B,FA) .
 $$ 
 Let moreover $\cat{C}$ be locally small, and $G:\cat{B}\to\cat{C}$ be a functor. Then
\begin{align*}
G_\sharp F_\sharp P (C) &\cong \int^{A\in\cat{A}}\int^{B\in\cat{B}} P(A) \times \Hom_\cat{B}(B,FA) \times \Hom_\cat{C}(C,GB) \\
&\cong \int^{A\in\cat{A}} P(A) \times \Hom_\cat{C}(C,GFA) \\
&\cong (G\circ F)_\sharp P (C) ,
\end{align*}
where the middle isomorphism, which in the proof \Cref{verticalpaste} was denoted by $\kappa$, is given by the co-Yoneda lemma.  
\end{remark}

Now, given $F:\cat{C}\to\cat{D}$, we have a (chosen) mapping $F_\sharp:\cat{PC}\to\cat{PD}$. For $\cat{P}$ to be a pseudofunctor, we first of all need $F_\sharp$ to be a functor. To this end, let $\alpha:P\to Q$ be a natural transformation between small presheaves on $\cat{C}$. By the universal property of $F_\sharp P$ as a Kan extension, there is a unique 2-cell $F_\sharp P\Rightarrow F_\sharp Q$, which we denote by $F_\sharp \alpha$, which makes the following 2-cells equal.
\begin{equation}\label{defstaralpha}
\begin{tikzcd}[column sep=large]
 \cat{C}^\op 
  \ar[""{name=ONE,below}, bend left]{dr}{P} \ar[""{name=TWOUP, above},""{name=TWODOWN, below, near end}, bend right]{dr}[swap, pos=0.4]{Q} \ar{dd}[swap]{F^\op} \\
 & \cat{Set} \\
 \cat{D}^\op 
  \ar[bend right]{ur}[swap]{F_\sharp Q} 
  \ar[Rightarrow, from=TWODOWN, shorten <= 0.5em, shorten >= 1em, "\lambda_Q"]
  \ar[Rightarrow, from=ONE, to=TWOUP, "\alpha"]
\end{tikzcd}
\qquad=\qquad
\begin{tikzcd}[column sep=large]
 \cat{C}^\op 
  \ar[""{name=ONE,below, near start}, bend left]{dr}{P}\ar{dd}[swap]{F^\op} \\
 & \cat{Set} \\
 \cat{D}^\op 
  \ar[bend right, ""{name=FOUR, below}]{ur}[swap]{F_\sharp Q} \ar[bend left, ""{name=THREE, above}]{ur}[pos=0.9]{F_\sharp P} 
  \ar[Rightarrow, from=ONE, shorten <= 1em, shorten >= 3em, "\lambda_P", swap, near start]
  \ar[Rightarrow, from=THREE, to=FOUR, shorten <=0.5em, shorten >=0.5em, "F_\sharp\alpha"]
\end{tikzcd}
\end{equation}
This makes $F_\sharp$ a functor $\cat{PC}\to\cat{PD}$, where functoriality holds by uniqueness of the cell $F_\sharp \alpha$.
Uniqueness of such cell holds once a choice of $F_\sharp$ has been made.
(One can obtain this 2-cell also using the pointwise characterization of $F_\sharp$ as a coend.)

\subsubsection{Unitor and compositor}\label{unitcompP}

The left Kan extension of $P\in \cat{PC}$ along the identity functor $\id:\cat{C}\to\cat{C}$ is naturally isomorphic to $P$ itself, and this isomorphism is natural in $P$ as well. In other words, $\id_\sharp: \cat{PC}\to\cat{PC}$ is naturally isomorphic to $\id:\cat{PC}\to\cat{PC}$.
Since we are free to choose $\id_\sharp P$ within its isomorphism class, we can in particular pick $\id_\sharp P=P$, so that the unitor of our pseudofunctor is the identity (one speaks of a \emph{normal pseudofunctor}).

With composition, the matters are not so simple. By \Cref{verticalpaste}, or by \Cref{pastewithcoyoneda}, we know that Kan extensions preserve compositions up to a specified natural isomorphism, which we had denoted by $\kappa$. In general we cannot assume that $\kappa$ is the identity, we cannot make that choice consistently across the whole category. However, we can show that $\kappa$ satisfies all the properties of a compositor, and so it makes $\cat{P}$ pseudofunctorial.

As in \Cref{pastewithcoyoneda}, let $\cat{A}$, $\cat{B}$ and $\cat{C}$ be locally small categories, and let $F:\cat{A}\to \cat{B}$ and $G:\cat{B}\to\cat{C}$ be functors. 
Let moreover $P$ be a small presheaf on $\cat{A}$.
The isomorphism $\kappa: G_\sharp(F_\sharp P) \to (G\circ F)_\sharp P$ of $\cat{PC}$ given by the co-Yoneda lemma, as in \Cref{pastewithcoyoneda}, is (strictly) natural in $P$, in $F$, and in $G$, by the universal property of coends.

In order to have pseudofunctoriality it remains to be shown that the compositor $\kappa$ is associative and unital. Unitality is guaranteed by our choice of unitor (identities), we now prove associativity.

\begin{prop}\label{associativityofpf}
 The following ``associativity'' diagram commutes for all locally small categories $\cat{A}$, $\cat{B}$, $\cat{C}$ and $\cat{D}$ and functors $F:\cat{A}\to \cat{B}$, $G:\cat{B}\to\cat{C}$ and $H:\cat{C}\to\cat{D}$. 
 $$
 \begin{tikzcd}
  H_\sharp \circ G_\sharp \circ F_\sharp \nat{r}{\kappa\circ\id} \nat{d}{\id\circ\kappa} & (H\circ G)_\sharp\circ F_\sharp \nat{d}{\kappa} \\
  H_\sharp \circ (G\circ F)_\sharp \nat{r}{\kappa} & (H\circ G\circ F)_\sharp
 \end{tikzcd}
 $$
\end{prop}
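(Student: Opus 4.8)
The plan is to reduce the claim to the coend description of the pushforward established in \Cref{pastewithcoyoneda}, where $\kappa$ is exhibited as an instance of the co-Yoneda lemma. Applying that description three times, I would first write the triple pushforward, for each object $D$ of $\cat{D}$, as an iterated coend
$$
H_\sharp G_\sharp F_\sharp P(D) \;\cong\; \int^{A}\!\int^{B}\!\int^{C} P(A)\times\Hom_\cat{B}(B,FA)\times\Hom_\cat{C}(C,GB)\times\Hom_\cat{D}(D,HC),
$$
in which the three $\Hom$-factors record the three successive left Kan extensions. Each of the four edges of the square then becomes a single co-Yoneda collapse of one of these coends.

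Next I would identify precisely which coend each $\kappa$ eliminates. Following the pattern of \Cref{pastewithcoyoneda} (with the roles of $F,G$ there played by $G,H$), the compositor $\kappa_{H,G}$ collapses the $\cat{C}$-coend, replacing $\int^{C}\Hom_\cat{C}(C,GB)\times\Hom_\cat{D}(D,HC)$ by $\Hom_\cat{D}(D,HGB)$; similarly $\kappa_{G,F}$ collapses the $\cat{B}$-coend, replacing $\int^{B}\Hom_\cat{B}(B,FA)\times\Hom_\cat{C}(C,GB)$ by $\Hom_\cat{C}(C,GFA)$. Tracing the square, the top-then-right path $\kappa_{HG,F}\circ(\kappa_{H,G}\circ\id)$ first collapses the $\cat{C}$-coend and then the $\cat{B}$-coend, whereas the left-then-bottom path $\kappa_{H,GF}\circ(\id\circ\kappa_{G,F})$ collapses the very same two coends in the opposite order. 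Both paths therefore land on $\int^{A} P(A)\times\Hom_\cat{D}(D,HGFA)\cong (H\circ G\circ F)_\sharp P(D)$.

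It then remains to check that collapsing the $\cat{B}$- and $\cat{C}$-coends in the two orders yields literally the same map. This is where I would invoke the Fubini theorem for coends, which supplies the canonical interchange of the iterated coends over $\cat{B}$ and $\cat{C}$, together with the naturality of the co-Yoneda isomorphism (the one sending $x$ to the class of $(\id,x)$, as in \Cref{coyoneda}). Since each co-Yoneda collapse acts on a different variable and is natural in the remaining data, interchanging the order of elimination commutes with the two collapses, so the two composites agree. The uniqueness clauses packaged into \Cref{verticalpaste} guarantee that these coend-level identifications are exactly the compositors $\kappa$, so the square commutes.

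The main obstacle will be making this Fubini-plus-naturality step fully rigorous: one must confirm that the explicit representatives produced by eliminating the $\cat{B}$- and $\cat{C}$-coends in either order coincide, i.e.\ that reordering the two co-Yoneda reductions introduces no discrepancy in the chosen representatives. Equivalently, for a reader who prefers to avoid coends, the heart of the argument is a diagram chase showing that both composites, once pasted with the universal $2$-cell $\lambda_{H\circ G\circ F}$, satisfy the same factorization through the defining $2$-cells of \Cref{verticalpaste}; they then agree by the uniqueness part of the Kan extension universal property.
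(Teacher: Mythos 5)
Your proposal is correct, but your primary route is the one the paper explicitly mentions and then declines to take: the paper says ``one could again invoke the co-Yoneda lemma, but it may be instructive to give a proof by explicitly pasting Kan extensions vertically,'' and then proves the square (in terms of $\kappa^{-1}$) by showing that both composite $2$-cells, when pasted onto the universal cell $\lambda_{H\circ G\circ F}$, reproduce the same triple vertical pasting of $\lambda_F$, $\lambda_G$, $\lambda_H$, whence they coincide by the uniqueness clause in the universal property of $(H\circ G\circ F)_\sharp P$ --- exactly the argument you sketch only in your final paragraph as an ``equivalent'' reformulation. Your coend route does go through: writing $H_\sharp G_\sharp F_\sharp P(D)$ as the iterated coend, the top--right path collapses the $\cat{C}$-variable and then the $\cat{B}$-variable, the left--bottom path collapses them in the other order, and on representatives both composites send the class of $\left(p,\,f\colon B\to FA,\,g\colon C\to GB,\,h\colon D\to HC\right)$ to the class of $\left(p,\,HGf\circ Hg\circ h\right)$, so the ``Fubini plus naturality'' step you flag as the main obstacle reduces to plain functoriality of $H$ once you chase elements; the identification of each $\kappa$ with the co-Yoneda collapse is licensed by \Cref{pastewithcoyoneda}. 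The trade-off between the two proofs: your computation is concrete and produces explicit formulas, but it leans on the pointwise coend description and the interchange of coends; the paper's argument never opens up the coends, using only the factorization-uniqueness of Kan extensions, which keeps the proof at the level of universal properties and mirrors the style used for the other coherence checks in the paper.
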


One could again invoke the co-Yoneda lemma, but it may be instructive to give a proof by explicitly pasting Kan extensions vertically.
For simplicity, we equivalently prove the statement in terms of the inverse $\kappa^{-1}$.

\begin{proof}
 Let $P$ be a small presheaf on $\cat{A}$. By iterating \eqref{defdelta}, both composite cells
 $$
 \begin{tikzcd}[row sep=tiny, column sep=large]
  \cat{A}^\op \ar{dd}[swap]{F^\op} \ar[bend left=45, ""{name=P,below}]{dddrr}{P} \\ \phantom{\cat{Set}} \\
  \cat{B}^\op \ar{dd}[swap]{G^\op} \\ 
  && \cat{Set} \\
  \cat{C}^\op \ar{dd}[swap]{H^\op} 
  \ar[Rightarrow, from=P, "\lambda_{H\circ G\circ F}", shorten <= 2em, shorten >= 2em, swap, near start]
  \\  \phantom{\cat{Set}}  \\
  \cat{D}^\op 
  \ar[bend left=45, ""{name=FIRST, below}]{uuurr}[pos=0.85]{(H\circ G\circ F)_\sharp P}
  \ar[""{name=SECONDUP, above}, ""{name=SECONDDOWN, below}]{uuurr}[swap, pos=0.1, xshift=-1ex]{(H \circ G)_\sharp F_\sharp P}
  \ar[bend right=45, ""{name=THIRD, above}]{uuurr}[swap]{H_\sharp G_\sharp F_\sharp P}
  \ar[Rightarrow, from=FIRST, to=SECONDUP, "\kappa^{-1}"]
  \ar[Rightarrow, from=SECONDDOWN, to=THIRD, "\kappa^{-1}"]
 \end{tikzcd}
 \qquad\mbox{and}\qquad
 \begin{tikzcd}[row sep=tiny, column sep=large]
  \cat{A}^\op \ar{dd}[swap]{F^\op} \ar[bend left=45, ""{name=P,below}]{dddrr}{P} \\ \phantom{\cat{Set}} \\
  \cat{B}^\op \ar{dd}[swap]{G^\op} \\ 
  && \cat{Set} \\
  \cat{C}^\op \ar{dd}[swap]{H^\op} 
  \ar[Rightarrow, from=P, "\lambda_{H\circ G\circ F}", shorten <= 2em, shorten >= 2em, swap, near start]
  \\  \phantom{\cat{Set}}  \\
  \cat{D}^\op 
  \ar[bend left=45, ""{name=FIRST, below}]{uuurr}[pos=0.85]{(H\circ G\circ F)_\sharp P}
  \ar[""{name=SECONDUP, above}, ""{name=SECONDDOWN, below}]{uuurr}[swap, pos=0.1, xshift=-1ex]{H_\sharp (G \circ F)_\sharp P}
  \ar[bend right=45, ""{name=THIRD, above}]{uuurr}[swap]{H_\sharp G_\sharp F_\sharp P}
  \ar[Rightarrow, from=FIRST, to=SECONDUP, "\kappa^{-1}"]
  \ar[Rightarrow, from=SECONDDOWN, to=THIRD, "\kappa^{-1}"]
 \end{tikzcd}
 $$
 are equal to the following composition.
 $$
 \begin{tikzcd}[row sep=tiny, column sep=large]
  \cat{A}^\op \ar{dd}[swap]{F^\op} \ar[""{name=P,below}]{dddrr}{P} \\ \phantom{\cat{Set}} \\
  \cat{B}^\op \ar{dd}[swap]{G^\op} \ar[""{name=FP, below}]{drr}[near start, swap]{F_\sharp P}
  \ar[Rightarrow, from=P, "\lambda_F", shorten <= 0.5em, shorten >= 0.5em, swap] \\ 
  && \cat{Set} \\
  \cat{C}^\op \ar{dd}[swap]{H^\op} \ar[""{name=GFP, below}]{urr}[near start, swap]{G_\sharp F_\sharp P}
  \ar[Rightarrow, from=FP, "\lambda_G", shorten <= 0.5em, shorten >= 0.5em, near start]   \\  \phantom{\cat{Set}}  \\
  \cat{D}^\op 
  \ar[""{name=SECONDUP, above}]{uuurr}[swap, near start]{H_\sharp G_\sharp F_\sharp P}
  \ar[Rightarrow, from=GFP, "\lambda_H", shorten <= 1.5em, shorten >= 1.5em, swap] 
 \end{tikzcd}
 $$ 
 By the universal property of $(H\circ G\circ F)_\sharp P$ as a Kan extension, then, the two composite 2-cells $(H\circ G\circ F)_\sharp P\Rightarrow H_\sharp G_\sharp F_\sharp P$ are equal.
\end{proof}

This proves that $\cat{P}$ is a pseudofunctor $\cat{CAT}\to\cat{CAT}$.

\subsubsection{Naturality of the image}\label{natimage}

Consider a functor $F:\cat{C}\to\cat{D}$ between locally small categories, and let $D:\cat{I}\to\cat{C}$ be a (small) diagram in $\cat{C}$. One can either form the image presheaf of $D$ and then push it forward along $F$, or one can first form the diagram $F\circ D:\cat{I}\to\cat{D}$, and then take the image presheaf. As we will see shortly, the result is the same, up to coherent isomorphism.

\begin{prop}
 The functors $\im:\cat{Diag(C)}\to\cat{PC}$ form a pseudonatural transformation $\cat{Diag}\Rightarrow\cat{P}$.
\end{prop}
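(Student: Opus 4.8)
The plan is to supply the data and verify the axioms of a pseudonatural transformation, with the $1$-cells already given by the functors $\im_\cat{C}:\cat{Diag(C)}\to\cat{PC}$. What remains is, for each functor $F:\cat{C}\to\cat{D}$, an invertible $2$-cell filling the square
$$
\begin{tikzcd}
\cat{Diag(C)} \ar{r}{\im} \ar{d}[swap]{F_*} & \cat{PC} \ar{d}{F_\sharp} \\
\cat{Diag(D)} \ar{r}{\im} & \cat{PD}
\end{tikzcd}
$$
together with a check that these cells are compatible with the (strict) compositor of $\cat{Diag}$ and the compositor $\kappa$ of $\cat{P}$ from \Cref{verticalpaste}, as well as with the unitors.

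First I would construct the comparison cell $\phi_{F,D}:F_\sharp\,\im D\xrightarrow{\cong}\im(F_*D)$. The quickest route uses that $\im D=\colim_J Y_\cat{C}(DJ)$ is a small colimit of representables, that $F_\sharp=\Lan{(-)}{F^\op}$ is a left adjoint to restriction along $F^\op$ and hence preserves small colimits, and that $F_\sharp$ sends representables to representables, $F_\sharp Y_\cat{C}(C)\cong Y_\cat{D}(FC)$ --- the last being an immediate application of the co-Yoneda lemma (\Cref{coyoneda}) to the coend description in \Cref{pastewithcoyoneda}. Combining these,
$$
F_\sharp\,\im D\;=\;F_\sharp\bigl(\colim_J Y_\cat{C}(DJ)\bigr)\;\cong\;\colim_J F_\sharp Y_\cat{C}(DJ)\;\cong\;\colim_J Y_\cat{D}(FDJ)\;=\;\im(F_*D).
$$
Equivalently, and in the form most useful for the coherence check, one evaluates at an object $D'$ and uses that coends commute with colimits:
$$
F_\sharp(\im D)(D')\;\cong\;\int^{C}(\im D)(C)\times\Hom_\cat{D}(D',FC)\;\cong\;\colim_J\int^{C}\Hom_\cat{C}(C,DJ)\times\Hom_\cat{D}(D',FC)\;\cong\;\colim_J\Hom_\cat{D}(D',FDJ),
$$
the last step again being co-Yoneda. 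Since every arrow here is induced by a universal property of a colimit or coend, $\phi_{F,D}$ is automatically natural in the diagram $D$, i.e.\ $\phi_F$ is a natural isomorphism of functors $\cat{Diag(C)}\to\cat{PD}$.

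The unit axiom is immediate: $\cat{Diag}$ is a strict $2$-functor, so $(\id_\cat{C})_*=\id$ on the nose, and we chose $\cat{P}$ normal with $\id_\sharp=\id$; hence $\phi_{\id,D}$ is literally the identity of $\im D$, which is exactly the compatibility with the unitors demanded in \Cref{defpseudomonad}.

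The main work, and the main obstacle, is the composition axiom. Given $F:\cat{C}\to\cat{D}$ and $G:\cat{D}\to\cat{E}$, and using that $\cat{Diag}$ is strict so that $(GF)_*=G_*F_*$ with trivial compositor, I must verify
$$
\phi_{GF,D}\;=\;\phi_{G,\,F_*D}\circ\bigl(G_\sharp\,\phi_{F,D}\bigr)\circ\kappa^{-1}_{\im D}.
$$
The point is that $\kappa$ is itself defined (in \Cref{pastewithcoyoneda}) by precisely the co-Yoneda isomorphism that enters the construction of the $\phi$'s, so that, unfolding both sides through the coend description above, each reduces --- levelwise in $\cat{Set}$ and commuted past $\colim_J$ --- to the single iterated co-Yoneda reduction
$$
\int^{D'}\!\int^{C}(\im D)(C)\times\Hom_\cat{D}(D',FC)\times\Hom_\cat{E}(E,GD')\;\cong\;\int^{C}(\im D)(C)\times\Hom_\cat{E}(E,GFC).
$$
Both composites are therefore the unique comparison compatible with these universal properties and so coincide; this is the same bookkeeping that establishes the associativity of $\kappa$ in \Cref{associativityofpf}. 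I expect the existence of $\phi$ and the unit axiom to be routine, and anticipate that all the genuine care is concentrated in aligning the co-Yoneda reductions for the composition axiom.
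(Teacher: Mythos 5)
Your construction is correct in substance, and it ultimately rests on the same facts as the paper's proof, but the paper packages them more economically. Since $\im D=\Lan{1}{D^\op}$ and $F_\sharp=\Lan{(-)}{F^\op}$, the two composites around the square are precisely a vertically pasted pair of pointwise left Kan extensions, so the paper takes the filler to \emph{be} the comparison $\kappa$ of \Cref{verticalpaste} (equivalently, the co-Yoneda isomorphism of \Cref{pastewithcoyoneda}), natural in $D$ and in $F$ by \Cref{unitcompP}. That choice makes the coherence axioms come for free: the unit and composition conditions for the pseudonatural transformation are \emph{literally} the unitality and associativity of the compositor $\kappa$, already established in \Cref{unitcompP} and \Cref{associativityofpf}. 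Your route --- $F_\sharp$ preserves small colimits and sends representables to representables, hence $F_\sharp\bigl(\colim_J y(DJ)\bigr)\cong\colim_J y(FDJ)$ --- produces an isomorphism that then has to be re-aligned with $\kappa$ in the composition axiom, which is exactly the ``main work'' you flag; it does go through (your coend unfolding is the computation behind \Cref{associativityofpf}), but it is work the paper's formulation avoids. One caution on your justification of cocontinuity: restriction along $F^\op$ does not preserve smallness of presheaves, so $F_\sharp:\cat{PC}\to\cat{PD}$ is not literally a left adjoint between these two categories. Cocontinuity on small presheaves still holds --- argue via the universal property $\Hom(F_\sharp P,Q)\cong\Hom(P,Q\circ F^\op)$ with the right-hand hom taken in the full presheaf category, or use your coend formulation, which sidesteps the issue --- but in a paper this attentive to size the distinction should be made explicit.

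There is also one genuine omission: a pseudonatural transformation must respect the \emph{2-cells} of $\cat{CAT}$, not only the 1-cells. For every natural transformation $\alpha:F\Rightarrow G$ between functors $\cat{C}\to\cat{D}$ you must check that your fillers are compatible with $\alpha_*$ and $\alpha_\sharp$, i.e.\ that the two composite 2-cells $F_\sharp\circ\im\Rightarrow\im\circ G_*$ --- one given by $\phi_F$ followed by the whiskering $\im\,\alpha_*$, the other by the whiskering $\alpha_\sharp\,\im$ followed by $\phi_G$ --- coincide. Your checklist (fillers, unit axiom, composition axiom) never mentions this condition. It is routine --- it follows from naturality of the Kan-extension/co-Yoneda comparison in $F$, which is precisely how the paper dispatches it (``natural both in $D$ and in $F$'') --- but as written your verification of pseudonaturality is incomplete without it.
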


\begin{proof}
 First of all, for each functor $F:\cat{C}\to\cat{D}$ of $\cat{CAT}$ we need a natural isomorphism in the following form.
 \begin{equation}\label{pnatim}
 \begin{tikzcd}
  \cat{Diag(C)} \ar{d}[swap]{F_*} \ar{r}{\im} & \cat{PC} \ar{d}{F_\sharp} \ar[Rightarrow, shorten <= 1em, shorten >= 1em]{dl}{\nu}[swap]{\cong} \\
  \cat{Diag(D)} \ar{r}[swap]{\im} & \cat{PD}
 \end{tikzcd}
 \end{equation}
 Let now $D:\cat{I}\to\cat{C}$ be a small diagram.
 Using the definition of image in terms of Kan extensions, the two routes of \Cref{pnatim} are given by the following Kan extensions, respectively,
 $$
 \begin{tikzcd}[column sep=huge, row sep=large]
 \cat{I} 
  \ar[""{name=ONE,below}]{dr}{1} \ar{d}[swap]{D} \\
 \cat{C}  \ar{d}[swap]{F} & \cat{Set} \\
 \cat{D}
  \ar{ur}[swap]{\Lan{1}{F\circ D}} 
  \ar[Rightarrow,from=ONE, shorten <= 1em, shorten >= 2em, "\lambda_{F\circ D}", near start]
 \end{tikzcd}
 \qquad\qquad
 \begin{tikzcd}[column sep=huge, row sep=large]
 \cat{I} 
  \ar[""{name=ONE,below}]{dr}{1} \ar{d}[swap]{D} \\
 \cat{C} \ar[Rightarrow,from=ONE, shorten <= 0.5em, shorten >= 1em, "\lambda_D", swap] \ar[""{name=TWO, below}]{r}[swap]{\Lan{1}{D}} \ar{d}[swap]{F} & \cat{Set} \\
 \cat{D}
  \ar{ur}[swap]{\Lan{(\Lan{1}{D})}{F}} 
  \ar[Rightarrow,from=TWO, shorten <= 0.8em, shorten >= 1.6em, "\lambda_F", swap]
 \end{tikzcd}
 $$
 which we know are naturally isomorphic via the map $\kappa$ of \Cref{verticalpaste} and \Cref{unitcompP}, which is also natural both in $D$ and in $F$. The unit and multiplication conditions correspond to the unitality and associativity condition for $\kappa$.
\end{proof}

\subsection{Unit and multiplication}

\subsubsection{The unit: the Yoneda embedding}

Let $\cat{C}$ be a locally small category. By \Cref{unmm}, the Yoneda embedding $y:\cat{C}\to[\cat{C}^\op,\cat{Set}]$ lands in $\cat{PC}$: representable presheaves are small.
We denote again by $\eta:\cat{C}\to\cat{PC}$ the functor induced by the Yoneda embedding $C\mapsto\Hom_\cat{C}(-,C)$. When this causes confusion because both monads $\cat{Diag}$ and $\cat{P}$ are present, we will denote the two units by $\eta^{\cat{Diag}}$ and $\eta^\cat{P}$.

\begin{prop}
 The unit $\eta:\cat{C}\to\cat{PC}$ is canonically pseudonatural in $\cat{C}$.
\end{prop}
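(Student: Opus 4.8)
The plan is to exhibit, for every functor $F:\cat{C}\to\cat{D}$, the invertible structure 2-cell filling the naturality square for $\eta$, and then to verify the two coherence laws (compatibility with the compositor $\kappa$ of $\cat{P}$, and with the identity unitors of \Cref{unitcompP}). Pseudonaturality of $\eta$ amounts to producing a natural isomorphism $\nu^\eta_F:F_\sharp\circ\eta_\cat{C}\Rightarrow\eta_\cat{D}\circ F$ for each $F$, where $\eta_\cat{C}(C)=\Hom_\cat{C}(-,C)$ and $\eta_\cat{D}(FC)=\Hom_\cat{D}(-,FC)$. The component at an object $C$ comes directly from the coend description of the pushforward in \Cref{pastewithcoyoneda}: for every object $D'$ of $\cat{D}$,
$$
F_\sharp\big(\Hom_\cat{C}(-,C)\big)(D') \;\cong\; \int^{A\in\cat{C}} \Hom_\cat{C}(A,C)\times\Hom_\cat{D}(D',FA) \;\cong\; \Hom_\cat{D}(D',FC),
$$
where the second isomorphism is an instance of the co-Yoneda lemma (\Cref{coyoneda}), applied to the covariant functor $A\mapsto\Hom_\cat{D}(D',FA)$ (note $\Hom_\cat{C}(A,C)$ is contravariant in $A$, matching the variance in \Cref{coyoneda}). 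This isomorphism is natural in $D'$, and by the universal property of coends also natural in $C$, so it assembles into the sought $\nu^\eta_F$.

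A more conceptual route, which I would use to organize the coherence checks, is to recall from \Cref{unmm} that $\eta^{\cat{P}}=\im\circ\eta^{\cat{Diag}}$ up to the canonical isomorphism established there. Since $\eta^{\cat{Diag}}$ is (strictly) natural in $\cat{C}$ and $\im$ is a pseudonatural transformation $\cat{Diag}\Rightarrow\cat{P}$ (\Cref{natimage}), their vertical composite is pseudonatural, and $\nu^\eta_F$ is precisely the whiskering of the pseudonaturality cell of $\im$ with $\eta^{\cat{Diag}}$, transported along the isomorphism of \Cref{unmm}. This identifies $\nu^\eta_F$ with the $\kappa$-based isomorphism already in hand and reduces the coherence verification to data we have already proved.

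It remains to check the two pseudonaturality axioms. For the unit axiom, $\id_\sharp$ was chosen to be the identity (\Cref{unitcompP}), so one must see that $\nu^\eta_{\id}$ is the identity, which is immediate because the co-Yoneda isomorphism above degenerates to the identity when $F=\id$. The main obstacle is the composition axiom: for $\cat{C}\xrightarrow{F}\cat{D}\xrightarrow{G}\cat{E}$ one must check that the two ways of building an isomorphism $G_\sharp F_\sharp\eta_\cat{C}\Rightarrow\eta_\cat{E}\circ(G\circ F)$ agree---one via $\nu^\eta_F$ and a whiskered $\nu^\eta_G$, the other via the compositor $\kappa:G_\sharp F_\sharp\Rightarrow(G\circ F)_\sharp$ followed by $\nu^\eta_{G\circ F}$. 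Since every arrow in sight is a co-Yoneda reduction of the iterated coend computing $\int^{A}\Hom_\cat{C}(A,C)\times\Hom_\cat{E}(E',GFA)\cong\Hom_\cat{E}(E',GFC)$, and $\kappa$ is exactly the associativity (Fubini) isomorphism of such coends as in \Cref{pastewithcoyoneda}, the two composites collapse to the same canonical computation. I expect this to be the only delicate step; it is settled either by direct (if tedious) coend bookkeeping, or---more cleanly---by invoking the already-proven coherence of the pseudonatural transformation $\im$ (\Cref{natimage}) together with the strict naturality of $\eta^{\cat{Diag}}$, so that no fresh coherence need be checked from scratch.
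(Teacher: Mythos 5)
This is essentially the paper's proof: the paper defines the structure 2-cell in exactly your way, applying the co-Yoneda lemma (\Cref{coyoneda}) with $H(C)=\Hom_\cat{D}(-,FC)$ to the coend formula for $F_\sharp\big(\Hom_\cat{C}(-,C)\big)$, and it defers the coherence verifications just as you do (indeed more tersely: ``it can be checked that \dots{} the isomorphism respects identities and composition''). Your supplementary route---transporting pseudonaturality from the composite $\im\circ\eta^{\cat{Diag}}$ using \Cref{unmm}, the pseudonaturality of $\im$ (\Cref{natimage}), and the strict naturality of $\eta^{\cat{Diag}}$---is not the paper's argument, but it is sound and non-circular, since neither \Cref{unmm} nor \Cref{natimage} relies on this proposition; it trades direct coend bookkeeping for the already-established coherence of the compositor $\kappa$, at the small cost of checking that the transported cells coincide with the co-Yoneda ones, which you correctly identify.
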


\begin{proof}
 Let $\cat{C}$ and $\cat{D}$ be locally small, and let $F:\cat{C}\to\cat{D}$ be a functor. We have to prove that the following diagram commutes up to coherent isomorphism.
 $$
 \begin{tikzcd}
  \cat{C} \ar{d}{F} \ar{r}{\eta} & \cat{PC} \ar{d}{F_\sharp} \\
  \cat{D} \ar{r}{\eta} & \cat{PD}
 \end{tikzcd}
 $$
 In practice, using the coend description of $F_\sharp$ (via Kan extensions), this amounts to a natural isomorphism of presheaves,
 $$
 \Hom_\cat{D}(-, FC) \;\cong\; \int^{C'\in \cat{C}} \Hom_\cat{C}(C',C) \times \Hom_\cat{D}(-, FC') 
 $$
 which is given by the co-Yoneda lemma (\Cref{coyoneda}), by setting $H(C)=\Hom_{D}(-, FC)$. 
 In particular, the isomorphism is given pointwise, for each object $D$ of $\cat{D}$ by mapping $f:D\to FC$ to the equivalence class of $(\id_C,f)\in \Hom_\cat{C}(C,C)\times\Hom_\cat{D}(D,FC)$.
 It can be checked that, defined this way, the isomorphism respects identities and composition. 
\end{proof}

\subsubsection{The multiplication: free weighted colimits}

Let's now turn to the multiplication of the monad. One could define it as the left-adjoint to the unit, since the monad turns out to be lax idempotent (a.k.a.~Kock-Zöberlein). Here, instead, we define the multiplication directly. 

\begin{deph}
 Let $\cat{C}$ be locally small, and let $\Phi$ be an object of $\cat{PPC}$ (i.e.~a small presheaf on small presheaves). We define $\mu(\Phi)$ as the object of $\cat{PC}$ specified, up to isomorphism, by the following ``free weighted colimit'',
 $$
 \mu(\Phi)(C) \coloneqq \int^{P\in\cat{PC}} \Phi(P) \times P(C)
 $$
 for each object $C$ of $\cat{C}$.
\end{deph}

\begin{remark}\label{coendexists}
By functoriality of colimits, $\mu(\Phi)$ is a presheaf $\cat{C}^\op\to\cat{Set}$. 
Since $PC$ is in general not small, let's show why the coend exists. Since $\Phi$ is small, there exists a small diagram $D:\cat{I}\to\cat{PC}$ such that $\im(D)\cong \Phi$. In other words, 
$$
\mu(\Phi)(C) \;\cong\; \int^{P\in\cat{PC}} \int^{I\in\cat{I}} \Hom_\cat{PC} (P, DI) \times P(C) ,
$$
which by Fubini and by the co-Yoneda lemma (\Cref{coyoneda}) is naturally isomorphic to 
$$
\int^{I\in\cat{I}} DI(C).
$$
This coend exists, since it is a coend in $\cat{Set}$ indexed by a small category, and it gives a small presheaf (since $\cat{PC}$ is cocomplete). 
\end{remark}

Therefore $\mu$ is a functor $\cat{PPC}\to\cat{PC}$ (as usual, defined up to natural isomorphism).

\begin{prop}
 The functor $\mu:\cat{PPC}\to\cat{PC}$ is pseudonatural in the category $\cat{C}$.
\end{prop}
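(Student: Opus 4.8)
The plan is to exhibit, for every functor $F:\cat{C}\to\cat{D}$, an invertible $2$-cell filling the naturality square
$$
\begin{tikzcd}
\cat{PPC} \ar{d}[swap]{(F_\sharp)_\sharp} \ar{r}{\mu} & \cat{PC} \ar{d}{F_\sharp} \ar[Rightarrow, shorten <=1em, shorten >=1em]{dl}{\cong} \\
\cat{PPD} \ar{r}[swap]{\mu} & \cat{PD}
\end{tikzcd}
$$
and then to check the coherence conditions of a pseudonatural transformation. The strategy is to compute both legs pointwise using the coend formula for $\mu$ and the coend description of $F_\sharp$ from \Cref{pastewithcoyoneda}, showing that each reduces to the common expression $\int^{P\in\cat{PC}}\Phi(P)\times(F_\sharp P)(D)$, so that the filling isomorphism is the canonical one assembled from Fubini for coends and the co-Yoneda lemma.

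Concretely, I would first compute the clockwise leg. For $\Phi\in\cat{PPC}$ and an object $D$ of $\cat{D}$, unwinding the definitions gives
$$
F_\sharp(\mu\Phi)(D)\;\cong\;\int^{C\in\cat{C}}\int^{P\in\cat{PC}}\Phi(P)\times P(C)\times\Hom_\cat{D}(D,FC),
$$
and interchanging the two coends and recognizing the inner $C$-coend as $(F_\sharp P)(D)$ yields $\int^{P\in\cat{PC}}\Phi(P)\times(F_\sharp P)(D)$. For the counterclockwise leg, using $(F_\sharp)_\sharp\Phi(Q)\cong\int^{P\in\cat{PC}}\Phi(P)\times\Hom_\cat{PD}(Q,F_\sharp P)$ and feeding this into $\mu_\cat{D}$ gives
$$
\mu_\cat{D}\big((F_\sharp)_\sharp\Phi\big)(D)\;\cong\;\int^{Q\in\cat{PD}}\int^{P\in\cat{PC}}\Phi(P)\times\Hom_\cat{PD}(Q,F_\sharp P)\times Q(D),
$$
and after interchange the $Q$-coend collapses by the co-Yoneda lemma (\Cref{coyoneda}, applied to the evaluation functor $Q\mapsto Q(D)$ at the object $F_\sharp P$) to $(F_\sharp P)(D)$, producing the same $\int^{P\in\cat{PC}}\Phi(P)\times(F_\sharp P)(D)$. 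As these coends over $\cat{PC}$ and $\cat{PD}$ are a priori large, I would insert the reduction of \Cref{coendexists}: choosing a small $E:\cat{I}\to\cat{PC}$ with $\im E\cong\Phi$ rewrites every large coend as a small one over $\cat{I}$ in $\cat{Set}$, where Fubini and co-Yoneda apply legitimately. Naturality of the resulting isomorphism in $\Phi$ is then automatic from the universal property of coends.

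Finally I would verify the coherence laws. Naturality in $F$ (with respect to $2$-cells $\alpha:F\Rightarrow F'$) and the unit law for $F=\id$ are immediate, the latter because the unitor of $\cat{P}$ was chosen to be the identity in \Cref{unitcompP}. The main obstacle is the composition law: for composable $F:\cat{C}\to\cat{D}$ and $G:\cat{D}\to\cat{E}$, the filling isomorphism for $G\circ F$ must agree with the pasting of those for $F$ and $G$, mediated by the compositors $\kappa$ of the pseudofunctors $\cat{P}$ and $\cat{PP}$. Here the work is to match the co-Yoneda compositor $\kappa:G_\sharp F_\sharp\Rightarrow(G\circ F)_\sharp$ of \Cref{verticalpaste} against the nested Fubini/co-Yoneda isomorphisms produced above; I would reduce this to the associativity of $\kappa$ already established in \Cref{associativityofpf}, together with the coherence of Fubini for interchanging three coends, so that both the pasted composite and the direct comparison are identified with the unique map induced by the universal property of the triple coend $\int^{P\in\cat{PC}}\Phi(P)\times(G_\sharp F_\sharp P)(E)$.
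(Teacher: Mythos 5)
Your proof is correct and takes essentially the same route as the paper: both legs are computed pointwise via the coend formulas, the clockwise one collapsing by Fubini and the counterclockwise one by the co-Yoneda lemma, so that both meet at the common presheaf $\int^{P\in\cat{PC}}\Phi(P)\times F_\sharp P$. The only difference is one of explicitness: you spell out the size reduction via \Cref{coendexists} and the verification of the pseudonaturality coherence laws, where the paper simply states that ``one can check that this isomorphism respects identities and composition.''
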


\begin{proof}
 We have to prove that the following diagram commutes up to coherent natural isomorphism.
 $$
 \begin{tikzcd}
  \cat{PPC} \ar{d}{F_{\sharp\sharp}} \ar{r}{\mu} & \cat{PC} \ar{d}{F_\sharp} \\
  \cat{PPD} \ar{r}{\mu} & \cat{PD}
 \end{tikzcd}
 $$
 Given $\Phi\in PPC$, the top right path gives the presheaf
 $$
 \int^{C\in\cat{C}}\int^{P\in \cat{PC}}\Hom_\cat{D}(-, FC) \times \Phi(P) \times P(C) ,
 $$
 which (by Fubini) is isomorphic to 
 \begin{equation}\label{fsharpintp}
 \int^{P\in\cat{PC}} \Phi(P) \times F_\sharp P .
 \end{equation}
 The bottom left path gives the presheaf
 $$ 
 \int^{Q\in \cat{PD}} \int^{P\in\cat{PC}} \Hom_\cat{PD}(Q, F_\sharp P) \times \Phi(P) \times Q ,
 $$
 which by the co-Yoneda lemma (\Cref{coyoneda}) is isomorphic to \eqref{fsharpintp}. 
 One can check that this isomorphism respects identities and composition. 
\end{proof}

\subsection{Unitors, associators, coherence}

The left unitor
$$
\begin{tikzcd}[column sep=0, row sep=small]
	{\cat{PC}} &&[+30pt] & {\cat{PPC}} \\[-10pt]
	&& {\cong} \\
	\\
	&&& {\cat{PC}}
	\arrow["{\eta_\cat{PC}}", from=1-1, to=1-4]
	\arrow["{\mu_\cat{C}}", from=1-4, to=4-4]
	\arrow["{\id}"', from=1-1, to=4-4]
\end{tikzcd}
$$
is given as follows. Starting with a presheaf $P$ in $\cat{PC}$, we can apply the unit to get the following representable presheaf on $\cat{PC}$,
$$
\Hom_{\cat{PC}}(-, P) ,
$$
and then, applying the multiplication, we get the following presheaf,
$$
\int^{Q\in\cat{PC}} \Hom_{\cat{PC}}(Q,P) \times Q(-) \;\cong\; P ,
$$
where the last isomorphism, filling the diagram above, is given by the co-Yoneda lemma (\Cref{coyoneda}), and defines the left unitor $\ell$. 
Naturality in $P$ follows from naturality of the co-Yoneda isomorphism. 
The modification property for $\ell$ against functors $F:\cat{C}\to\cat{D}$ is again an instance of the coherence of colimits (uniqueness of the isomorphism), just as in \Cref{cocompcoherence} and \Cref{associativityofpf}, this time for weighted colimits.

The right unitor 
$$
\begin{tikzcd}[column sep=0, row sep=small]
	{\cat{PC}} &&[+30pt] & {\cat{PPC}} \\[-10pt]
	&& {\cong} \\
	\\
	&&& {\cat{PC}}
	\arrow["{(\eta_\cat{C})_\sharp}", from=1-1, to=1-4]
	\arrow["{\mu_\cat{C}}", from=1-4, to=4-4]
	\arrow["{\id}"', from=1-1, to=4-4]
\end{tikzcd}
$$
is given as follows. Again start with a presheaf $P$ in $\cat{PC}$. This time we apply the map $\eta_\sharp$, to get the following presheaf.
$$
Q\mapsto \int^{C\in\cat{C}} PC \times \Hom_{\cat{PC}}(Q, \eta_\cat{C}(C)) = \int^{C\in\cat{C}} PC \times \Hom_{\cat{PC}}(Q, \Hom_\cat{C}(-,C))
$$
(Note that we cannot apply the Yoneda lemma to simplify the expression on the right.) We now apply the multiplication again, to obtain
\begin{align*}
 X &\mapsto \int^{Q\in \cat{PC}} \int^{C\in\cat{C}} PC \times \Hom_{\cat{PC}}(Q, \Hom_\cat{C}(-,C)) \times Q(X) \\
 &\cong \int^{C\in\cat{C}} P(C) \times \Hom_{C}(X,C) \\
 &\cong P(X) ,
\end{align*}
where both isomorphisms are given again by the co-Yoneda lemma (\Cref{coyoneda}). This gives the right unitor $r$, and the reason why it's a modification is analogous to the one for the left unitor $\ell$.

The associator
$$
\begin{tikzcd}[sep=small]
	{\cat{PPPC}} && {\cat{PPC}} \\
	& {\cong} \\
	{\cat{PPC}} && {\cat{PC}}
	\arrow["{(\mu_C)_\sharp}", from=1-1, to=1-3]
	\arrow["{\mu_C}"', from=3-1, to=3-3]
	\arrow["{\mu_C}", from=1-3, to=3-3]
	\arrow["{\mu_{PC}}"', from=1-1, to=3-1]
\end{tikzcd}
$$
is again an instance of the co-Yoneda lemma (\Cref{coyoneda}). Namely, given $\Phi\in \cat{PPPC}$, the top-right path of the diagram gives the following presheaf
$$
C \mapsto \int^{P\in \cat{PC}} \int^{\Psi\in \cat{PPC}} P(C) \times \Phi(\Psi) \times \Hom_{\cat{PC}}\left( P , \int^{Q\in PC} \Psi(Q) \times Q(-)  \right) 
$$
while the bottom-left path gives the following,
$$
C \mapsto  \int^{\Psi\in \cat{PPC}} \int^{Q\in PC} \Phi(\Psi) \times \Phi(Q) \times Q(C) 
$$
and the two differ by one application of the co-Yoneda lemma (over $P$). This gives the associator $a$, which is a modification for reasons analogous to the above.

Again, the higher coherence conditions hold by the uniqueness of isomorphisms given by the universal property, as in \Cref{cocompcoherence}.

\subsection{Algebras} 

It is well-known that the pseudoalgebras of the pseudomonad $\cat{P}$ are cocomplete categories with a choice of (weighted) colimit, and pseudomorphisms of pseudoalgebras are cocontinuous functors \cite{smallfunctors}. 
Differently from the case of $\cat{Diag}$, this is a complete characterization. 
Let's see in detail how the structure maps look. 

Given a small-cocomplete, locally small category $\cat{C}$, let $e:\cat{PC}\to\cat{C}$ be a choice of weighted colimits, that is,
\begin{equation}\label{wcolim}
e(P) \;\cong\; \int^{X\in\cat{C}} P(X) \otimes X ,
\end{equation}
where $\otimes$ denotes the copower (also known as \emph{tensor}, see for example \cite[Section~3.7]{kelly}). 

The copower satifies a (more general) version of the ``co-Yoneda lemma'' of \Cref{coyoneda}, as follows. See again \cite[Section~3.10]{kelly} for more details.

\begin{prop}\label{generalcoyoneda}
 Let $\cat{C}$ be a category, let $\cat{D}$ be a cocomplete category, and let $H:\cat{C}\to\cat{D}$ be a functor.  There is an isomorphism
 $$
 H(C) \;\cong\; \int^{C'\in\cat{C}} \Hom_\cat{C}(C',C) \otimes H(C') ,
 $$
 for each object $C$ of $\cat{C}$ and natural in $C$, given by selecting the component of $\id_C\in\Hom_\cat{C}(C,C)$ in the copower.
\end{prop}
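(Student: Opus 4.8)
The plan is to prove the statement by directly verifying that $H(C)$, equipped with a canonical cocone, satisfies the universal property of the coend on the right. This route has two advantages: it avoids assuming a priori that the (possibly large, since $\cat{C}$ need only be locally small) coend exists, instead establishing its existence together with its value; and it produces exactly the isomorphism described in the statement, the one selecting the $\id_C$-component. Both sides are covariant functors of $C$ into $\cat{D}$, so naturality will follow at the end from the uniqueness clause of the universal property.

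First I would unwind what a cocone under the defining diagram of the coend amounts to. By the defining adjunction of the copower, $\Hom_\cat{D}(\Hom_\cat{C}(C',C)\otimes x, d)\cong \Hom_\cat{Set}(\Hom_\cat{C}(C',C),\Hom_\cat{D}(x,d))$ for all $x,d$ in $\cat{D}$, a wedge from the integrand $\Hom_\cat{C}(C',C)\otimes H(C')$ to an object $d$ is the same as a family assigning to each arrow $g\colon C'\to C$ of $\cat{C}$ a morphism $\widehat{\tau}_{C'}(g)\colon H(C')\to d$ in $\cat{D}$, subject to the dinaturality (wedge) condition in $C'$. As the candidate universal cocone out of $H(C)$ I would take the family $g\mapsto H(g)\colon H(C')\to H(C)$; checking that this is a genuine wedge is the straightforward functoriality computation $H(g'\circ g_0)=H(g')\circ H(g_0)$ read through the copower components.

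The crux is the universality check. Given any wedge $(\widehat{\tau}_{C'}(g))$ with tip $d$, dinaturality applied to a morphism $h\colon C'\to C''$ of $\cat{C}$ relates $\widehat{\tau}_{C'}(g\circ h)$ and $\widehat{\tau}_{C''}(g)$ precisely by precomposition with $H(h)$; taking $C''=C$ and $g=\id_C$ forces $\widehat{\tau}_{C'}(g)=\widehat{\tau}_C(\id_C)\circ H(g)$ for every $g\colon C'\to C$. Hence the entire wedge is determined by the single morphism $\widehat{\tau}_C(\id_C)\colon H(C)\to d$, and conversely any choice of such a morphism produces a wedge; this is exactly the statement that $H(C)$ together with $g\mapsto H(g)$ is the coend, with the universal comparison map picking out the $\id_C$-component. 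Naturality in $C$ then follows because both the coend and $H$ are functorial and the comparison map is characterized by a universal property.

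The main obstacle I expect is purely the dinaturality bookkeeping: getting the variances right through the copower adjunction and verifying that the wedge condition collapses to precomposition by $H(h)$, rather than any genuine difficulty. Alternatively, when one is content to assume the coend exists, there is a slicker proof: for fixed $d$, compute $\Hom_\cat{D}\!\big(\int^{C'}\Hom_\cat{C}(C',C)\otimes H(C'),\,d\big)$ by turning the coend into an end, applying the copower adjunction, and recognizing the resulting end as the set of natural transformations $\Hom_\cat{C}(-,C)\Rightarrow \Hom_\cat{D}(H(-),d)$, which by the ordinary Yoneda lemma is $\Hom_\cat{D}(H(C),d)$; naturality in $d$ together with the Yoneda embedding of $\cat{D}$ then gives the isomorphism, again via the $\id_C$-component. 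I would keep the universal-property argument as the primary proof, precisely so as to handle the size issue cleanly.
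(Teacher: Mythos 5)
Your proof is correct; note, however, that the paper does not actually prove this proposition at all — it is stated as a known fact with a pointer to Kelly (Section~3.10), so there is no internal argument to compare against. Your primary route — exhibiting $H(C)$ together with the wedge whose $g$-component is $H(g)$, transposing wedges through the copower adjunction, and observing that the dinaturality identity $\widehat{\tau}_{C'}(g\circ h)=\widehat{\tau}_{C''}(g)\circ H(h)$ collapses any wedge onto its single datum $\widehat{\tau}_C(\id_C)$ — is complete and elementary, and it buys something a bare citation leaves implicit: since $\cat{C}$ is only assumed locally small, the coend is indexed by a possibly large category, so its existence in a merely small-cocomplete $\cat{D}$ is not automatic; your argument establishes existence and the value simultaneously, produces exactly the isomorphism named in the statement (the $\id_C$-component map), and gets naturality in $C$ for free from uniqueness. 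This matters for how the proposition is used in the paper, e.g.\ for the algebra unitor $\int^{X\in\cat{C}}\Hom_\cat{C}(X,C)\otimes X\cong C$ and in \Cref{pbfunctor}, where the indexing category is a large cocomplete category; the paper handles the analogous existence question for weighted colimits separately (cf.\ \Cref{coendexists}). Your alternative sketch — homming the coend into $d$, converting to an end, applying the copower adjunction, and finishing with the ordinary Yoneda lemma plus representability — is essentially the argument in the cited literature; as you yourself note, it presupposes existence (or must be recast as a representability claim), which is precisely why your universal-property proof is the better primary choice here.
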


A similar argument as \Cref{coendexists} shows then that the coend in \eqref{wcolim} exists.
The action of $e$ on morphisms is the one given by the universal property, as usual. 

The unitor and multiplicator of the algebras are given as follows. First of all, the unitor
$$
  \begin{tikzcd}[sep=large]
   \cat{C} \ar[""{name=ID}]{dr}[swap]{\id} \ar{r}{\eta_\cat{C}} & \cat{PC} 
   \ar[Rightarrow, to=ID, "\iota", shorten >= 0.1em, shorten <= 0.3em] \ar{d}{e} \\
   & \cat{C}
  \end{tikzcd}
$$
is the canonical isomorphism given by the general co-Yoneda lemma (\Cref{generalcoyoneda}),
$$
\int^{X\in\cat{C}} \Hom_\cat{C}(X,C) \otimes X \;\cong\; X .
$$
for all $C\in\cat{C}$.

The multiplicator
$$
  \begin{tikzcd}[sep=large]
   \cat{PPC} \ar{d}[swap]{\mu_\cat{C}} \ar{r}{e_\sharp} & \cat{PC} \ar{d}{e}
   \ar[Rightarrow, shorten >= 1.2em, shorten <= 1.2em]{dl}{\gamma}\\
   \cat{PC} \ar{r}[swap]{e} & \cat{C}
  \end{tikzcd}
$$
is also given by the generalized co-Yoneda lemma, as follows,
\begin{align*}
&\;\int^{Y\in\cat{C}} \int^{P\in\cat{PC}} \Phi(P) \times \Hom_\cat{C}\left( Y, \int^{X\in C} P(X)\otimes X \right) \otimes Y  \\
&\cong\; \int^{P\in\cat{PC}} \int^{X\in \cat{C}} \Phi(P) \times P(X) \otimes X 
\end{align*}
for all $\Phi\in\cat{PPC}$. This can be seen as a ``generalized Fubini'' for coends or weighted colimits, analogous to \Cref{decomplemma}.

Again, the coherence conditions can be seen as a matter of uniqueness of the isomorphism by the universal property of weighted colimits.

\subsection{The image is a morphism of monads}\label{immm}

Here we want to show that the image is a (pseudo)morphism of (pseudo)monads, following \Cref{defmorphmonads}. The unit modification $u$ is given by the isomorphism of \Cref{unmm}. Again, the fact that this gives a modification comes from the universal property. 

The multiplication modification $m$ is in the following form,
\begin{equation}\label{multmm}
\begin{tikzcd}[sep=large]
	{\cat{Diag(Diag(C))}} & {\cat{PPC}} \\
	{\cat{Diag(C)}} & {\cat{PC}}
	\arrow["{(\im\im)_\cat{C}}", from=1-1, to=1-2]
	\arrow["{\mu_C^\cat{Diag}}"', from=1-1, to=2-1]
	\arrow["{\mu_C^\cat{P}}", from=1-2, to=2-2]
	\arrow["{\im_C}"', from=2-1, to=2-2]
	\arrow[Rightarrow, "{m_\cat{C}}", from=1-2, to=2-1, shorten <=16pt, shorten >=16pt]
\end{tikzcd}
\end{equation}
where $\im\im$ is shorthand for the following composite.
$$
\begin{tikzcd}
	{\cat{Diag(Diag(C))}} && {\cat{Diag(PC)}} && {\cat{P P C}}
	\arrow["{(\im_C)_*}", from=1-1, to=1-3]
	\arrow["{\im_\cat{PC}}", from=1-3, to=1-5]
\end{tikzcd}
$$
(Note that, since the interchange law of pseudonatural transformations holds only up to natural isomorphism, a priori horizontal composition is not uniquely defined, as in a weak bicategory. The choice we make, which will be consistent throughout the document, is motivated by later convenience.)

Explicitly, $m$ is given as follows. Let $D:\cat{J}\to\cat{Diag(C)}$ be the following diagram of diagrams.
$$
\begin{tikzcd}
 J
\end{tikzcd}
\qquad\rightsquigarrow\qquad
\begin{tikzcd}
 D_0J \ar{r}{D_1J} & \cat{C}
\end{tikzcd}
$$
Then, for every $C\in\cat{C}$, writing images and $\mu$ in terms of coends, the two paths of diagram \eqref{multmm} are the following objects,
$$
\int^{P\in\cat{PC}} \int^{J\in\cat{J}} \Hom_{PC}\left( P, \int^{K\in D_0J} \Hom_\cat{C} (-, D_1J(K)) \right) \times P(C) 
$$
and
$$
\int^{J\in\cat{J}} \int^{K\in D_0(J)} \Hom_\cat{C}(X, D_1J(K)) 
$$
and they are again isomorphic by the co-Yoneda lemma (\Cref{coyoneda}), over $P$.
This isomorphism is what we take as the multiplicator $m$. Again, the fact that it forms a modification follows from uniqueness, and so do the higher coherence conditions of \Cref{defmorphmonads}.

\subsubsection{The pullback functor of algebras}\label{pbfunctor}

In algebra, given a ring morphism $f:R\to R'$, every $R'$-module is canonically an $R$-module too, via the map $f$, and morphisms of $R'$-modules are morphisms of $R$-modules too.
The resulting ``pullback'' functor between the categories of $R'$-modules and $R$-modules is known as the ``restriction of scalars''~\cite[Chapter~II]{bourbakialg}, or ``Weil restriction'' in algebraic geometry~\cite[Section~1.3]{weilrestriction}. 
Not every $R$-module arises this way if $f$ is not an isomorphism (for example, $R$ itself, seen as an $R$-module, does not).

More generally, given a morphism of monads $\lambda: T\to T'$, every $T'$-algebra is canonically a $T$-algebra via $\lambda$, and morphisms of $T'$-algebras are automatically morphisms of $T$-algebras.
This is well known (see, for example, \cite[Theorem~3 in Section~3.6]{ttt}), and the pullback functor is again called ``restriction of scalars'', after its instance for the case of rings.
Again, not every $T$-algebra arises this way (if $\lambda$ is not an isomorphism), for example, the free algebra $(TX,\mu)$, where $X$ is any object, in general does not. 

With $\im:\cat{Diag}\to\cat{P}$ we are witnessing an instance of this phenomenon in higher dimensions: every $\cat{P}$-algebra, i.e.~a cocomplete category, is also a $\cat{Diag}$-algebra, via the map $\im$, which is a morphism of monads. 
As we have seen in \Cref{secnotallalg}, not every $\cat{Diag}$-algebra arises this way, for example, in general free algebras do not. 
The 2-dimensional restriction-of-scalars theorem is given in \Cref{restrictionofscalars} as \Cref{rescalthm}. 

To see that  cocomplete categories as $\cat{Diag}$-algebras indeed arise in this way, note that the following diagram commutes up to natural isomorphism for each cocomplete category $\cat{C}$ (and any choices of colimits $c$ and coends $e$).
$$
\begin{tikzcd}[column sep=0, row sep=small]
	{\cat{Diag(C)}} &&[+30pt] & {\cat{PC}} \\[-10pt]
	&& {\cong} \\
	\\
	&&& {\cat{C}}
	\arrow["{\im_\cat{C}}", from=1-1, to=1-4]
	\arrow["{e}", from=1-4, to=4-4]
	\arrow["{c}"', from=1-1, to=4-4]
\end{tikzcd}
$$
Indeed, the fact that this diagram commutes is, for the last time, an instance of the general co-Yoneda lemma (\Cref{generalcoyoneda}): given $D:\cat{J}\to\cat{C}$,
$$
\int^{X\in\cat{C}} \int^{J\in\cat{J}} \Hom_\cat{C}(X, DJ) \otimes X \;\cong\;\int^{J} DJ,
$$
which is indeed just the colimit of $D$, given up to isomorphism by $c$.
(Compare this with the analogous ``free'' case of \Cref{coendexists}.)

\section{Partial colimits}\label{partialcolimits}\label{secpev}

We review here the basic ideas of partial evaluations, which are a categorical formalization of the idea of ``computing only pieces of an operation''. We will apply this to the operation of colimit encoded by the monads $\cat{Diag}$ and $\cat{P}$.
We will then show that, for both monads, we have a correspondence between Kan extensions and partial evaluations of colimits (Theorems \ref{weakform}, \ref{strongform} and \ref{kanpevP}). Intuitively, these results may be interpreted as the fact that ``a left Kan extension is a partially computed colimit''. 
While the statement for the $\cat{Diag}$ monad is rather straightforward, the corresponding statement for $\cat{P}$ requires quite more work.

\subsection{Partial evaluations}

Partial evaluations were introduced in \cite[Chapter~4]{thesis} for the case of probability monads, and defined for the general case in \cite{pev}. A detailed study of their compositional structure (in general they don't form a category) is given in \cite{act2019}.

\begin{deph}
 Let $(T,\mu,\nu)$ be a monad on $\cat{Set}$, and let $(A,e)$ be a $T$-algebra. Consider the parallel pair of maps 
 $$
 \begin{tikzcd}
  TTA \ar[shift left]{r}{\mu_A} \ar[shift right, swap]{r}{Te} & TA
 \end{tikzcd}
 $$
 of which $e:TA\to A$ is the coequalizer.  
 Given elements $p,q\in TA$, a \emph{partial evaluation} from $p$ to $q$ is an element $r\in TTA$ such that $\mu_A(r)=p$ and $Te(r)=q$. 
 
 If such a partial evaluation exists, we also say that \emph{$q$ is a partial evaluation of $p$} and that \emph{$p$ can be partially evaluated to $q$}.
\end{deph}

\begin{eg}
 Let $(T,\mu,\nu)$ be the free commutative monoid monad. Given a set $X$, the elements of $TX$ can be thought of as formal sums of elements of $X$, for example in the form $x+y+z$ with $x,y,z\in X$. 
 Natural numbers with addition form a commutative monoid, and so $\N$ forms a $T$-algebra. 
 The formal sum 
 $$
 3 + 4 + 5 + 6 ,
 $$
 seen as an element of $T\N$, can be partially evaluated to the formal sum 
 $$
 7 + 11
 $$
 via the element $[3+4]+[5+6]\in TT\N$.
 The interpretation is that ``we haven't summed everything together, but only some of the terms''. 
\end{eg}

An essential property of partial evaluations is that ``they don't change the total result''. This is reflected by the multiplication diagram of the algebra,
$$
\begin{tikzcd}
	{TTA} & {TA} \\
	{TA} & {A}
	\arrow["{Te}", from=1-1, to=1-2]
	\arrow["{\mu}", from=1-1, to=2-1]
	\arrow["{e}", from=2-1, to=2-2]
	\arrow["{e}", from=1-2, to=2-2]
\end{tikzcd}
$$
which can be seen as saying that evaluating (via the map $e$) two formal expressions which differ by a partial evaluation, the (total) result of the evaluation is the same. In the example above, both formal sums evaluate to $18$.

We refer the interested reader to \cite{pev} and \cite{act2019} for more details and examples.
In our case we need a higher-dimensional, weaker analogue of the concept, since we are dealing with pseudomonads on $\cat{CAT}$ rather than monads on $\cat{Set}$.

\begin{deph}\label{defweakpev}
 Let $(T,\mu,\nu)$ be a pseudomonad on $\cat{CAT}$, and let $(\cat{A},e)$ be a $T$-pseudoalgebra. 
 Given objects $P,Q\in \cat{A}$, a \emph{partial evaluation} from $P$ to $Q$ is an object $R$ of $TTA$ such that $\mu_\cat{A}(R)\cong P$ and $Te(R)\cong Q$. 
 
 If such a partial evaluation exists, we also say that \emph{$Q$ is a partial evaluation of $P$} and that \emph{$P$ can be partially evaluated to $Q$}.
\end{deph}

\subsection{Partial evaluations of diagrams}

We now instance \Cref{defweakpev} for the case of the monad $\cat{Diag}$, keeping in mind the following multiplication square, which commutes up to isomorphism.
\begin{equation}\label{weakdiagsquare}
 \begin{tikzcd}[sep=tiny]
  \cat{Diag(Diag(C))} \ar{dd}{\mu} \ar{rr}{c_*} && \cat{Diag(C)} \ar{dd}{c} \\
  & \cong \\
  \cat{Diag(C)} \ar{rr}{c} && \cat{C}
 \end{tikzcd}
\end{equation}

\begin{deph}
 Let $\cat{C}$ be a cocomplete category, and let $D:\cat{J}\to\cat{C}$ and $D':\cat{K}\to\cat{C}$ be small diagrams. A \emph{partial evaluation} from $D$ to $D'$ for the monad $\cat{Diag}$ is an object $E$ of $\cat{Diag(Diag(C))}$ such that $\mu(E)\cong D$ and $c_*E\cong D'$.
 If such an object exists, we also say that $D'$ is a \emph{partial colimit} of $D$ for the monad $\cat{Diag}$.
\end{deph}

As we see shortly, we now establish an equivalence between partial evaluations of diagrams and left Kan extensions along split opfibrations.
There are now two ways of talking about the correspondence. One, which is probably the easiest to state, is as an equivalence of properties.

\begin{thm}\label{weakform}
 Let $\cat{C}$ be a cocomplete category, and let $D:\cat{J}\to\cat{C}$ and $D':\cat{K}\to\cat{C}$ be small diagrams. Then $D'$ is a partial colimit of $D$ (for the monad $\cat{Diag}$) if and only if $D'$ can be written as the (pointwise) left Kan extension of $D$ along a split opfibration.
\end{thm}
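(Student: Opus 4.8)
The plan is to unwind both notions into the same combinatorial data, namely a functor $E_0:\cat{K}\to\cat{Cat}$ together with a lax cocone, and to let the Grothendieck correspondence match the ``split opfibration'' side with the Grothendieck construction computing $\mu$. First I would recall that an object $E$ of $\cat{Diag(Diag(C))}$ with outer index $\cat{K}$ is precisely a functor $E_0:\cat{K}\to\cat{Cat}$ together with a lax cocone $E_1:E_0\Rightarrow\Delta\cat{C}$ (as spelled out in \Cref{monadofdiagrams}), that $\mu(E)$ is the diagram $\groth E_0\to\cat{C}$ produced by \Cref{oplaxcolimit}, and that $(c_*E)(K)=c(E_1K)$ is the levelwise chosen colimit. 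The projection $\pi:\groth E_0\to\cat{K}$ is, by the Grothendieck correspondence, exactly a split opfibration, and every split opfibration over $\cat{K}$ arises this way; this is the bridge between the two sides of the equivalence.

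The crux of the argument is a single isomorphism: for the split opfibration $\pi:\groth E_0\to\cat{K}$, the pointwise left Kan extension of $\mu(E)$ along $\pi$ is computed levelwise by the fibers,
$$
(\Lan{\mu(E)}{\pi})(K)\;\cong\;\colim\big(E_1K\big)\;=\;(c_*E)(K).
$$
I would prove this by showing that, for each $K$, the inclusion of the fiber $E_0K\hookrightarrow \pi\downarrow K$ sending $X\mapsto(X,\id_K)$ is confinal: non-emptiness and connectedness of the relevant comma categories follow immediately from the opcartesian lifts supplied by the (split) opfibration, exactly as in the classical argument. Granting this, \Cref{whichiso} identifies the pointwise Kan-extension colimit over $\pi\downarrow K$ with the colimit over the fiber, and the latter restriction of $\mu(E)$ is $E_1K$ by the strict triangle \eqref{ijstrict}, so that it equals $c(E_1K)$ by definition of $c_*$.

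With this isomorphism in hand, both implications are immediate. For the forward direction, a partial evaluation $E$ gives $\mu(E)\cong D$, hence an isomorphism $\cat{J}\cong\groth E_0$ transporting $\pi$ to a split opfibration out of $\cat{J}$; the displayed isomorphism then reads $\Lan{D}{\pi}\cong c_*E\cong D'$, exhibiting $D'$ as the required pointwise Kan extension. Conversely, given $D'\cong\Lan{D}{\pi}$ for a split opfibration $\pi:\cat{J}\to\cat{K}$, the Grothendieck correspondence yields $E_0:\cat{K}\to\cat{Cat}$ with $\groth E_0\cong\cat{J}$; defining $E_1K$ as the restriction of $D$ to the fiber over $K$, with lax-naturality cells induced by the chosen opcartesian lifts, produces an object $E\in\cat{Diag(Diag(C))}$ with $\mu(E)\cong D$ and, by the crux isomorphism, $c_*E\cong D'$, i.e.\ a partial evaluation from $D$ to $D'$. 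I expect the main obstacle to be the confinality of the fiber inclusion, together with the bookkeeping needed to verify that the lax cocone $E_1$ assembled from opcartesian lifts is genuinely (lax) functorial; everything else follows formally from \Cref{oplaxcolimit} and the Grothendieck correspondence.
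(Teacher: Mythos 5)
Your proposal is correct and follows essentially the same route as the paper's proof: both sides are reduced, via the Grothendieck correspondence between split opfibrations and functors $\cat{K}\to\cat{Cat}$, to the fiberwise computation of pointwise left Kan extensions along (split) opfibrations — the paper's \Cref{onlyfibers}, which the paper cites as well known (via a left adjoint to the fiber inclusion) rather than reproving by the nonemptiness/connectedness argument you sketch. The forward and converse implications are then handled exactly as you describe, matching the paper's proof of \Cref{strongform}.
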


Instead of an equivalence of properties we can also write the result as an equivalence of structures, as follows.

\begin{thm}\label{strongform}
 Let $\cat{C}$ be a cocomplete category, and let $D:\cat{J}\to\cat{C}$ and $D':\cat{K}\to\cat{C}$ be small diagrams. There is a bijection between partial evaluations of colimits from $D$ to $D'$ and split opfibrations $F:\cat{J}\to\cat{K}$ exhibiting $D'$ as a (pointwise) left Kan extension of $D$ along $F$.
\end{thm}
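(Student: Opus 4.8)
The plan is to exploit the Grothendieck correspondence to convert the datum of a partial evaluation into that of a split opfibration, and then to verify that under this correspondence the functor $c_*$ computes precisely a pointwise left Kan extension. First I would unwind both sides. As recalled in \Cref{monadofdiagrams}, an object $E$ of $\cat{Diag(Diag(C))}$ indexed by $\cat{K}$ is the same datum as a functor $E_0\colon\cat{K}\to\cat{Cat}$ together with a lax cocone $E_1\colon E_0\Rightarrow\Delta\cat{C}$, and $\mu(E)$ is the diagram $\groth E_0\to\cat{C}$ determined by \Cref{oplaxcolimit}. By the Grothendieck correspondence between functors into $\cat{Cat}$ and split opfibrations, the datum $E_0$ is the same as the split opfibration $F\coloneqq\pi\colon\groth E_0\to\cat{K}$, whose fiber over an object $K$ is $E_0K$, while the condition $\mu(E)\cong D$ identifies $\groth E_0$ with $\cat{J}$ and $\mu(E)$ with $D$. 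Conversely, a split opfibration $F\colon\cat{J}\to\cat{K}$ recovers its fiber functor $E_0\colon\cat{K}\to\cat{Cat}$ (with $\groth E_0\cong\cat{J}$), and the diagram $D\colon\cat{J}\to\cat{C}$ restricts along the fiber inclusions $i_K\colon E_0K\to\cat{J}$ to a lax cocone $E_1K\coloneqq D\circ i_K$, reconstructing $E=(E_0,E_1)$. These two passages are mutually inverse thanks to the strictness (genuine isomorphism of hom-categories, not mere equivalence) in \Cref{oplaxcolimit}, so I would record that, once a splitting is fixed, they set up a bijection between objects $E$ with $\mu(E)\cong D$ and split opfibrations $F\colon\cat{J}\to\cat{K}$.

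It then remains to match the remaining conditions, namely $c_*E\cong D'$ on one side and ``$F$ exhibits $D'$ as $\Lan{D}{F}$'' on the other. By definition $c_*E$ sends an object $K$ of $\cat{K}$ to $c(E_1K)$, the chosen colimit of the fiber diagram $D\circ i_K\colon E_0K\to\cat{C}$, whereas the pointwise left Kan extension sends $K$ to $\colim\big((F\downarrow K)\to\cat{J}\xrightarrow{D}\cat{C}\big)$. Hence it suffices to prove that the fiber inclusion $\iota\colon E_0K\hookrightarrow F\downarrow K$, $J'\mapsto(J',\id_K)$, is confinal; then \Cref{whichiso} identifies the two colimits, and naturality in $K$ follows by the usual uniqueness of induced maps between colimits, exactly as in \Cref{decomplemma}. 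With this, $c_*E\cong D'$ holds if and only if $F$ exhibits $D'$ as $\Lan{D}{F}$, which is precisely the matching needed to upgrade the bijection of the previous paragraph to the one asserted in \Cref{strongform}.

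The main obstacle is this confinality of the fiber inclusion, which is the single place where the split-opfibration hypothesis does real work. Given an object $(J,\phi\colon FJ\to K)$ of $F\downarrow K$, the split structure supplies an opcartesian lift $\chi\colon J\to J_\phi$ with $F\chi=\phi$ and $J_\phi\in E_0K$; I would check that $\chi$, viewed as a morphism $(J,\phi)\to(J_\phi,\id_K)$, is an initial object of the comma category $(J,\phi)/\iota$, the universal property of $\chi$ over $\phi$ (taking the reindexing map to be $\id_K$) providing the required unique factorizations through the fiber. Each such comma category is therefore nonempty and connected, so $\iota$ is confinal. Everything else — functoriality of the two assignments, and independence of the chosen colimits up to the coherent isomorphisms supplied by \Cref{decomplemma} — is routine bookkeeping, and the resulting correspondence refines the property-level statement of \Cref{weakform} to the structural bijection claimed here.
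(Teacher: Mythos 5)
Your proposal is correct and takes essentially the same route as the paper's proof: both directions are set up via the Grothendieck correspondence between split opfibrations and functors into $\cat{Cat}$, and the identification of $c_*E$ with the pointwise left Kan extension $\Lan{D}{F}$ is reduced to the fact that Kan extensions along opfibrations are computed fiberwise (the paper's \Cref{onlyfibers}). The only difference is cosmetic: where the paper cites as well known that the fiber inclusion into the comma category has a left adjoint and is hence confinal, you prove this directly, your opcartesian-lift-as-initial-object argument being exactly the proof of that cited fact.
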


$$
\begin{tikzcd}[row sep=small]
 \cat{J} 
  \ar[""{name=ONE,below}]{dr}{D} \ar{dd}[swap]{F} \\
 & \cat{C} \\
 \cat{K} 
  \ar{ur}[swap]{D' \;\cong\; \Lan{D}{F}} \ar[Rightarrow,from=ONE, shorten <= 0.5em, shorten >= 1em, "\lambda", near start, swap]
\end{tikzcd}
$$

We will prove the latter statement, since it clearly implies the former.
We will use the following property of Kan extensions along opfibrations, which can be interpreted as ``fiberwise colimits''.
The following two statements are well known (see for example \href{https://ncatlab.org/nlab/show/Kan+extension#AlongFibrations}{the nLab page on Kan extensions}).

\begin{prop}
 Let $F:\cat{E}\to\cat{B}$ be an opfibration between small categories. For each object $E$ of $\cat{E}$, the inclusion of the fiber $F^{-1}(E)$ into the comma category $F/E$ has a left adjoint. Hence, it is a confinal functor.
\end{prop}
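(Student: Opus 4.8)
The plan is to exhibit an explicit left adjoint to the inclusion functor using the opcartesian lifts supplied by the opfibration $F$, and then to invoke the general principle that any right adjoint is confinal in the sense of \Cref{confinal}.

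First I would fix an object $B$ of the base $\cat{B}$ and spell out the inclusion $\iota\colon F^{-1}(B)\hookrightarrow F/B$, which sends an object $E$ of the fiber (so $FE=B$) to the pair $(E,\id_B)$. To build the candidate left adjoint $\lambda\colon F/B\to F^{-1}(B)$ I would use the opfibration structure: given an object $(E,f\colon FE\to B)$ of the comma category, choose an opcartesian lift $\bar f\colon E\to f_!E$ of $f$ with $F(\bar f)=f$ and $F(f_!E)=B$, and set $\lambda(E,f)\coloneqq f_!E$, which lies in the fiber. Functoriality of $\lambda$ follows from the uniqueness clause in the universal property of opcartesian morphisms.

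Next I would verify the adjunction $\lambda\dashv\iota$ by producing the unit and checking its universal property. The unit component at $(E,f)$ is precisely the opcartesian lift, viewed as a morphism $\bar f\colon (E,f)\to(f_!E,\id_B)=\iota\lambda(E,f)$ of $F/B$, which is legitimate since $F(\bar f)=f$. Given any object $E'$ of the fiber and any morphism $g\colon(E,f)\to\iota E'=(E',\id_B)$ in $F/B$ --- that is, $g\colon E\to E'$ in $\cat{E}$ with $Fg=f$ --- the defining property of the opcartesian $\bar f$, applied with the identity of $B$ downstairs, yields a unique $\tilde g\colon f_!E\to E'$ with $F\tilde g=\id_B$ and $\tilde g\circ\bar f=g$. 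This is exactly the required factorization through the unit, so $\bar f$ exhibits $f_!E$ as the value of the left adjoint.

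Finally, having $\lambda\dashv\iota$, I would conclude confinality of $\iota$ from the standard fact that a right adjoint is confinal: for each object $X$ of $F/B$ the unit $\eta_X\colon X\to\iota\lambda X$ makes $(\lambda X,\eta_X)$ an \emph{initial} object of the comma category $X/\iota$, so that $X/\iota$ is non-empty and connected, which is precisely the defining condition for $\iota$ to be confinal. The only point demanding genuine care --- and the main obstacle --- is the bookkeeping in the universal property of opcartesian morphisms: one must invoke it in the special case where the downstairs morphism is $\id_B$, so as to guarantee that the induced factorization $\tilde g$ again lives in the fiber $F^{-1}(B)$ rather than merely in $\cat{E}$. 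Everything else is formal.
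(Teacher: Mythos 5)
Your proof is correct. Note, however, that the paper gives no proof of this proposition at all: it is stated as a well-known fact with a pointer to the nLab, so there is no in-paper argument to compare against, and your write-up effectively supplies the missing (standard) proof. Your route is exactly the canonical one: opcartesian lifts give, for each object $(E,f)$ of $F/B$, a universal arrow $\bar f\colon (E,f)\to\iota(f_!E)$ into the inclusion $\iota$, which by the universal-arrow characterization of adjunctions yields the left adjoint (with functoriality for free); and a functor admitting a left adjoint is confinal because each comma category $X/\iota$ then has an initial object, hence is non-empty and connected, which is the paper's definition of confinality. You also correctly isolated the one genuinely delicate point — invoking the opcartesian universal property with $\id_B$ downstairs, so that the induced factorization $\tilde g$ has $F\tilde g=\id_B$ and therefore lives in the fiber $F^{-1}(B)$ — and you implicitly repaired the statement's slip of notation: the fiber and comma category are taken over an object $B$ of the base $\cat{B}$, not over an object of $\cat{E}$ as the proposition's wording suggests.
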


\begin{cor}\label{onlyfibers}
 Let $F:\cat{E}\to\cat{B}$ be an opfibration between small categories, and let $\cat{C}$ be cocomplete and locally small. The (pointwise) left Kan extension of a functor $G:\cat{E}\to\cat{C}$ along $F$ at $B$ can then be computed by a colimit labeled by the fiber of $F$ at $B$:
 $$
 \Lan{G}{F}(B) \;\cong\; \colim_{E\in F^{-1}(B)} G(E) .
 $$
\end{cor}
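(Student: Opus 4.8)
The plan is to combine the standard pointwise formula for left Kan extensions with the confinality statement just recalled. Recall (see \cite{maclane}) that, when it exists, the pointwise left Kan extension of $G:\cat{E}\to\cat{C}$ along $F:\cat{E}\to\cat{B}$ is computed objectwise as the colimit
$$
\Lan{G}{F}(B) \;\cong\; \colim_{(E,\,f:FE\to B)\,\in\, F/B} G(E),
$$
that is, as the colimit of the composite functor $H:F/B\to\cat{E}\xrightarrow{G}\cat{C}$, where $F/B\to\cat{E}$ is the projection $(E,f)\mapsto E$. Since $\cat{E}$ and $\cat{B}$ are small, the comma category $F/B$ is small, so this colimit exists because $\cat{C}$ is cocomplete; this already settles the existence of the pointwise Kan extension.

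First I would invoke the preceding proposition, which gives that the fiber inclusion $i:F^{-1}(B)\hookrightarrow F/B$ (sending $E\mapsto (E,\id_B)$) is confinal. Then I would apply \Cref{whichiso} to the confinal functor $i$ and the functor $H:F/B\to\cat{C}$: since $H$ admits a colimit, so does $H\circ i$, and the induced comparison map
$$
\colim_{E\in F^{-1}(B)} (H\circ i)(E) \;\longrightarrow\; \colim_{F/B} H
$$
is an isomorphism.

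Finally, I would identify the restricted diagram $H\circ i$ with $G$ on the fiber. For an object $E$ of $F^{-1}(B)$ we have $i(E)=(E,\id_B)$, hence $(H\circ i)(E)=G(E)$, and likewise $H\circ i$ agrees with the restriction of $G$ to the fiber on morphisms. Chaining the three displays then yields
$$
\Lan{G}{F}(B)\;\cong\;\colim_{F/B} H\;\cong\;\colim_{E\in F^{-1}(B)} G(E),
$$
as claimed.

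I do not expect a genuine obstacle here, since both the pointwise colimit formula and the confinality of the fiber inclusion are available as black boxes. The only points requiring care are bookkeeping: verifying that $F/B$ is honestly small, so that cocompleteness of $\cat{C}$ delivers existence; and checking that the comparison isomorphism of \Cref{whichiso} is precisely the canonical map relating the two colimits (it is, being induced by the evident morphism of diagrams over $\cat{C}$), which in addition makes the identification natural in $B$.
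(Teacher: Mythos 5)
Your proof is correct and takes exactly the route the paper intends: the paper states \Cref{onlyfibers} as an immediate consequence of the preceding proposition (confinality of the fiber inclusion $F^{-1}(B)\hookrightarrow F/B$) combined with the pointwise colimit formula over the comma category and the colimit-invariance of confinal functors (\Cref{whichiso}), which is precisely what you spell out. The smallness and existence bookkeeping you add is also sound.
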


We are now ready to prove the theorem. The crucial point of the theorem is the correspondence between split opfibrations and (strict!) functors into $\cat{Cat}$. 

\begin{proof}[Proof of \Cref{strongform}]
 First of all, suppose that we have a partial evaluation from $D$ to $D'$. That is, let $E=(E_0,E_1)$ be an object of $\cat{Diag(Diag(C))}$ such that $
 \mu(E)=D$ and $c_*E=D'$. Note that this implies that $E_0$ is necessarily indexed by $\cat{K}$ (up to isomorphism), it is a functor $\cat{K}\to\cat{Cat}$.
 We can now express $c_*(E)$ as the left Kan extension of $\mu(E)$ along the Grothendieck fibration $\pi:\groth E_0\to \cat{K}$. Indeed, using \Cref{onlyfibers}, we have that for each object $K$ of $\cat{K}$,
 \begin{align*}
 \Lan{\mu(E)}{\pi} (K) \;&\cong\; \colim_{(K,X)\in \pi^{-1}(K)} \mu(E)(K,X) \\
 \;&=\; \colim_{X\in E_0K} E_1K(X) \\
 \;&\cong\; c_*(E)(K) .
 \end{align*}
 
 Conversely, let $F:\cat{J}\to\cat{K}$ be a split opfibration and suppose that $D'$ is the left Kan extension of $D$ along $F$.
 $$
\begin{tikzcd}[row sep=small]
 \cat{J} 
  \ar[""{name=ONE,below}]{dr}{D} \ar{dd}[swap]{F} \\
 & \cat{C} \\
 \cat{K} 
  \ar{ur}[swap]{D'} \ar[Rightarrow,from=ONE, shorten <= 0.5em, shorten >= 1em, "\lambda", near start, swap]
\end{tikzcd}
 $$
 Let now $E_0:\cat{K}\to\cat{Cat}\subseteq\cat{CAT}$ be the functor associated with the opfibration $F$. Concretely, this is the functor mapping each object $K$ of $\cat{K}$ to its preimage $F^{-1}(K)$, and each morphism $k:K\to K'$ to the functor $F^{-1}(K)\to F^{-1}(K')$ given by the opcartesian lifts.
 Define also, for each object $K$ of $\cat{K}$, the functor $E_1K:E_0K\to \cat{C}$ to be given by 
 $$
 \begin{tikzcd}
 E_0K = F^{-1}(K) \ar[hookrightarrow]{r} & \cat{J} \ar{r}{D} & \cat{C} .
 \end{tikzcd}
 $$
 For each morphism $k:K\to K'$, define $E_1j'$ to be the 2-cell given by the opcartesian lifts in $\cat{J}$, whiskered by $D$. 
 We have that, by construction, $\mu(E_0,E_1)=D$.
 Moreover, by \Cref{onlyfibers}, $D'=c_*(E_0,E_1)$.
\end{proof}

Note also that, in the hypotheses above, $D$ and $D'$ must necessarily have isomorphic colimits, either because ``Kan extensions can be stacked vertically'' (\Cref{verticalpaste}), or because of the multiplication square \eqref{weakdiagsquare}.
A different but related picture is given in \cite[Proposition~2.9]{decomposition}.

\subsection{Partial evaluations of presheaves}

We now give and prove a similar statement for the monad $\cat{P}$.
Let's instance \Cref{defweakpev} for the case of the monad $\cat{P}$.

\begin{deph}
 Let $\cat{C}$ be a cocomplete category, and let $P$ and $Q$ be small presheaves on $\cat{C}$. A \emph{partial evaluation} from $P$ to $Q$ for the monad $\cat{P}$ is a presheaf on presheaves $\Phi$ in $\cat{PPC}$ such that $\mu(\Phi)\cong P$ and $e_\sharp\Phi\cong Q$.
 If such an object exists, we also say that $Q$ is a \emph{partial colimit} of $P$ for the monad $\cat{P}$.
\end{deph}

This time we establish a correspondence with Kan extensions along any functor, not just along a split opfibration. Moreover, we only have a weak statement, analogous to \Cref{weakform}, an equivalence of properties rather than of structures.

\begin{thm}\label{kanpevP}
 Let $\cat{C}$ be a cocomplete category, and let $P,Q\in \cat{PC}$. The following conditions are equivalent.
 \begin{enumerate}
  \item\label{condPkan} There exists a small diagram $D:\cat{J}\to\cat{C}$ such that $\im D \cong P$, and a small category $\cat{K}$ with a functor $F:\cat{J}\to\cat{K}$ such that $\im (\Lan{D}{F})\cong Q$.
  $$
  \begin{tikzcd}[row sep=small]
 \cat{J} 
  \ar[""{name=ONE,below}]{dr}{D} \ar{dd}[swap]{F} \\
 & \cat{C} \\
 \cat{K} 
  \ar{ur}[swap]{\Lan{D}{F}} \ar[Rightarrow,from=ONE, shorten <= 0.5em, shorten >= 1em, "\lambda", near start, swap]
\end{tikzcd}
  $$
  \item\label{condPpev} There exists a partial evaluation from $P$ to $Q$ for the monad $\cat{P}$, i.e.~an object $\Phi\in \cat{PPC}$ such that $\mu(\Phi)\cong P$ and $e_\sharp(\Phi) \cong Q$.
  $$
  \begin{tikzcd}[column sep=small]
   & \Phi\in \cat{PPC} \ar[mapsto]{dl}[swap]{\mu} \ar[mapsto]{dr}{e_\sharp} \\
   P \in\cat{PC} && Q \in\cat{PC}
  \end{tikzcd}
  $$
 \end{enumerate}
\end{thm}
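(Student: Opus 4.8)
The plan is to translate both conditions into their coend descriptions and then prove the two implications by rather different means: the forward direction by a direct comma-category construction, and the converse by reducing to the already-established $\cat{Diag}$ case (\Cref{strongform}) through the morphism-of-monads structure of $\im$ (\Cref{immm}). First I would set up the following dictionary. Any $\Phi\in\cat{PPC}$ is small, so $\Phi\cong\im\mathcal{D}$ for a small diagram $\mathcal{D}\colon\cat{I}\to\cat{PC}$; by \Cref{coendexists}, $\mu(\Phi)(C)\cong\int^{I}\mathcal{D}I(C)$, which is exactly the pointwise colimit $\colim_{\cat{PC}}\mathcal{D}$ evaluated at $C$. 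Moreover, pseudonaturality of $\im$ (\Cref{natimage}) applied to $e\colon\cat{PC}\to\cat{C}$, together with $e\circ\im\cong c$ (\Cref{pbfunctor}), gives $e_\sharp(\im\mathcal{D})\cong\im(e\circ\mathcal{D})$. Thus a partial evaluation $\Phi$ from $P$ to $Q$ is the same datum as a small diagram $\mathcal{D}\colon\cat{I}\to\cat{PC}$ with $\colim_{\cat{PC}}\mathcal{D}\cong P$ and $\im(e\circ\mathcal{D})\cong Q$.

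For $\ref{condPkan}\Rightarrow\ref{condPpev}$, given $D\colon\cat{J}\to\cat{C}$ and $F\colon\cat{J}\to\cat{K}$, I would define $\mathcal{D}\colon\cat{K}\to\cat{PC}$ by $\mathcal{D}K\coloneqq\im(D\circ\pi_K)$, where $\pi_K\colon F/K\to\cat{J}$ is the projection from the comma category; since $\cat{J},\cat{K}$ are small, each $F/K$ is small and $\mathcal{D}$ is a genuine small diagram in $\cat{PC}$, so $\Phi\coloneqq\im\mathcal{D}\in\cat{PPC}$. Then $e(\mathcal{D}K)=c(D\circ\pi_K)=\colim_{F/K}(D\circ\pi_K)\cong(\Lan{D}{F})(K)$ by the pointwise Kan extension formula, whence $e\circ\mathcal{D}\cong\Lan{D}{F}$ and $e_\sharp\Phi\cong\im(\Lan{D}{F})\cong Q$. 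For $\mu\Phi\cong P$, I would use the dictionary to get $\mu\Phi\cong\colim_{K}\mathcal{D}K$ and observe that, by the Fubini theorem for colimits, this iterated colimit is the colimit over the Grothendieck construction $\groth^{K}(F/K)$, i.e.\ over the comma category $(F\downarrow\cat{K})$, of the functor $(J,K,f)\mapsto\Hom_\cat{C}(-,DJ)$. Since the projection $(F\downarrow\cat{K})\to\cat{J}$ admits the left adjoint $J\mapsto(J,FJ,\id_{FJ})$, it is a right adjoint, hence confinal, and \Cref{whichiso} collapses the colimit to $\colim_\cat{J}\Hom_\cat{C}(-,D-)=\im D\cong P$.

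For $\ref{condPpev}\Rightarrow\ref{condPkan}$, rather than handle a general $\mathcal{D}$ directly I would invoke the essential surjectivity of $\im\im\colon\cat{Diag(Diag(C))}\to\cat{PPC}$ and choose $E\in\cat{Diag(Diag(C))}$ with $\im\im(E)\cong\Phi$. The multiplication modification of the morphism of monads (\Cref{immm}) then gives $\mu^{\cat{P}}\Phi\cong\mu^{\cat{P}}(\im\im E)\cong\im(\mu^{\cat{Diag}}E)$, so with $D\coloneqq\mu^{\cat{Diag}}E$ we obtain $\im D\cong P$. Applying pseudonaturality of $\im$ to $e$ once more, $e_\sharp\Phi\cong e_\sharp(\im\im E)\cong\im(c_*E)$, so $\im(c_*E)\cong Q$. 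Finally $E$ exhibits $c_*E$ as a $\cat{Diag}$-partial colimit of $D=\mu^{\cat{Diag}}E$, so by \Cref{strongform} there is a split opfibration $F\colon\cat{J}\to\cat{K}$ with $c_*E\cong\Lan{D}{F}$; this $F$ is in particular a functor, and $\im(\Lan{D}{F})\cong\im(c_*E)\cong Q$, which is exactly condition \ref{condPkan}.

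The main obstacle is the essential surjectivity of $\im\im$ used in the converse. Unwinding it, one must lift a small diagram $\mathcal{D}\colon\cat{I}\to\cat{PC}$ through $\im\colon\cat{Diag(C)}\to\cat{PC}$ to a functor $E\colon\cat{I}\to\cat{Diag(C)}$, coherently in $I$ and with all indexing categories kept small. Realizing each $\mathcal{D}I$ as $\im(D_I)$ is routine pointwise, but making this choice functorial in $I$ while controlling the size of the categories of elements involved (\Cref{ffconfinal}) is precisely the delicate size-theoretic step, and this is where I would lean on the dedicated technical lemma \Cref{imimeso}. Everything else reduces, via the coend dictionary and the monad-morphism coherences, to facts already established.
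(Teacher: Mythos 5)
Your proof is correct, and its two halves relate to the paper differently. Your converse direction ($\ref{condPpev}\Rightarrow\ref{condPkan}$) is essentially the paper's own argument: essential surjectivity of $\im\im$ (\Cref{imimeso}), the morphism-of-monads coherences of \Cref{immm} to transport the two legs, and \Cref{weakform}/\Cref{strongform} to produce the functor (in fact a split opfibration); you also correctly locate the real technical weight in \Cref{imimeso}. Your forward direction, however, takes a genuinely different route. The paper first invokes \Cref{cofibrep} to replace $F$ by a confinal functor followed by a split opfibration, then uses \Cref{weakform} to manufacture a $\cat{Diag}$-partial evaluation, and finally pushes it through the square \eqref{pev2pev}; you instead build $\Phi$ directly as $\im\mathcal{D}$ with $\mathcal{D}K=\im(D\circ\pi_K)$, and verify the two defining isomorphisms by hand --- pseudonaturality of $\im$ (\Cref{natimage}) together with $e\circ\im\cong c$ (\Cref{pbfunctor}) for $e_\sharp\Phi\cong Q$, and \Cref{coendexists}, Fubini in the form of \Cref{decomplemma} applied inside $\cat{PC}$, plus confinality of the projection $(F\downarrow\cat{K})\to\cat{J}$ (via its left adjoint $J\mapsto(J,FJ,\id_{FJ})$) and \Cref{whichiso} for $\mu\Phi\cong P$. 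Note that the combinatorial core is shared: your comma category $(F\downarrow\cat{K})$ is exactly the category $\cat{H}=\groth F/-$ of \Cref{cofibrep}, and your adjointness observation replaces the paper's hands-on nonempty-and-connected verification that $G:\cat{H}\to\cat{J}$ is confinal. What each approach buys: the paper's route makes the $\cat{P}$-statement a formal corollary of the $\cat{Diag}$-statement plus the fact that $\im$ is a morphism of monads, and \Cref{cofibrep} is a reusable factorization result of independent interest; your route is shorter in this direction, stays entirely at the presheaf level, and avoids split opfibrations and the Grothendieck correspondence, at the cost of redoing the colimit manipulations inside $\cat{PC}$. The one point you should make explicit is that the pointwise isomorphisms $e(\mathcal{D}K)\cong c(D\circ\pi_K)\cong(\Lan{D}{F})(K)$ are natural in $K$ (by uniqueness of the colimit-induced comparison maps), so that they assemble into an isomorphism of diagrams $e\circ\mathcal{D}\cong\Lan{D}{F}$ before $\im$ is applied; this is routine, but it is precisely what licenses the conclusion $e_\sharp\Phi\cong\im(\Lan{D}{F})\cong Q$.
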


In other words, $Q$ is a partial colimit of $P$ if and only if it can be written as the image presheaf of the left Kan extension of a diagram with image presheaf $P$.

The proof of this statement, which can be considered the main result of this paper, requires more work than the analogous statement for $\cat{Diag}$, and will have to use two auxiliary lemmas.

\begin{lemma}\label{cofibrep}
 Let $F:\cat{J}\to\cat{K}$ be a functor between small categories. There exist a small category $\cat{H}$, a confinal functor $G:\cat{H}\to\cat{J}$, and a split opfibration $\pi:\cat{H}\to\cat{K}$ such that for every locally small category $\cat{C}$ and every diagram $D:\cat{J}\to\cat{C}$, the pointwise left Kan extension of $D \circ G$ along $\pi$ exists if and only if the one of $D$ along $F$ exists, and in that case the two Kan extensions are naturally isomorphic.
\end{lemma}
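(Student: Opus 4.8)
The plan is to realise $F$, up to a confinal replacement of its domain, as a split opfibration, using the comma-category (``mapping cocylinder'') construction. Concretely, I would take $\cat{H}$ to be the comma category $F/\cat{K}$ whose objects are pairs $(J,\alpha\colon FJ\to K)$ with $J$ an object of $\cat{J}$ and $\alpha$ a morphism of $\cat{K}$, and whose morphisms $(J,\alpha\colon FJ\to K)\to(J',\alpha'\colon FJ'\to K')$ are pairs $(u\colon J\to J',\,v\colon K\to K')$ satisfying $v\circ\alpha=\alpha'\circ Fu$. Since $\cat{J}$ and $\cat{K}$ are small, $\cat{H}$ is small. I would then define $\pi\colon\cat{H}\to\cat{K}$ by $(J,\alpha\colon FJ\to K)\mapsto K$ and $G\colon\cat{H}\to\cat{J}$ by $(J,\alpha)\mapsto J$. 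The functor $\pi$ is a split opfibration: the opcartesian lift of $v\colon K\to K'$ at $(J,\alpha\colon FJ\to K)$ is $(\id_J,v)\colon(J,\alpha)\to(J,v\circ\alpha)$, and these lifts are closed under composition and contain the identities, giving a splitting.

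Next I would check that $G$ is confinal. Fixing an object $J_0$ of $\cat{J}$, I must show that $J_0/G$ is non-empty and connected. It is non-empty because $\big((J_0,\id_{FJ_0}),\,\id_{J_0}\big)$ is an object. For connectedness, given any object $\big((J,\alpha\colon FJ\to K),\,\phi\colon J_0\to J\big)$ of $J_0/G$, the pair $(\phi,\,\alpha\circ F\phi)$ is a morphism $(J_0,\id_{FJ_0})\to(J,\alpha)$ in $\cat{H}$ whose image under $G$ is $\phi$; hence it defines a morphism in $J_0/G$ out of the fixed object $\big((J_0,\id_{FJ_0}),\,\id_{J_0}\big)$. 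Thus every object of $J_0/G$ receives a morphism from this one, so $J_0/G$ is connected.

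Finally I would compare the two pointwise left Kan extensions. The key observation is that the fibre $\pi^{-1}(K)$ is exactly the comma category $F/K$: its objects are $(J,\alpha\colon FJ\to K)$ and its morphisms are those $u\colon J\to J'$ with $\alpha'\circ Fu=\alpha$. Moreover $D\circ G$ restricted to this fibre is $(J,\alpha)\mapsto DJ$, i.e.\ $D$ composed with the comma-category projection. Since $\pi$ is an opfibration, the inclusion $\pi^{-1}(K)\hookrightarrow\pi/K$ is confinal (as recalled just above \Cref{onlyfibers}), so by \Cref{whichiso} the colimit of $D\circ G$ over the comma category $\pi/K$ --- which is the value at $K$ of the pointwise left Kan extension $\Lan{(D\circ G)}{\pi}$ --- exists if and only if its restriction to the fibre admits a colimit, in which case the two agree. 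But that restricted colimit is precisely $\colim_{(J,\alpha)\in F/K}DJ$, which by the pointwise formula is exactly $\Lan{D}{F}(K)$. Hence one pointwise Kan extension exists if and only if the other does, and at each $K$ their values are canonically isomorphic, naturally in $K$ since both sides are pointwise Kan extensions built from the same family of comma-category colimits. I expect the main obstacle to be not the construction itself but the bookkeeping in this last step: the comparison must be run at the level of \emph{existence} of colimits, because the lemma assumes $\cat{C}$ merely locally small rather than cocomplete, so one cannot invoke \Cref{onlyfibers} directly and must instead argue through the confinality of the fibre inclusion together with the existence-preserving property of confinal functors in \Cref{whichiso}.
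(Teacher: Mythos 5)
Your construction is essentially the paper's own: the comma category $F/\cat{K}$ you take as $\cat{H}$ is precisely the Grothendieck construction $\groth F/-$ used there (objects $(K,J,k:FJ\to K)$), with the same projections $G$ and $\pi$, the same splitting of $\pi$, the same confinality argument for $G$ (every object of $J_0/G$ receives a map from $(J_0,\id_{FJ_0})$), and the same fiberwise identification $\pi^{-1}(K)\cong F/K$ carrying $D\circ G$ to $D\circ U$. Your closing observation that the comparison must be run at the level of existence of colimits via confinality of the fiber inclusion and \Cref{whichiso} is, if anything, more careful than the paper's own proof, which invokes \Cref{onlyfibers} (stated for cocomplete $\cat{C}$) even though the lemma assumes $\cat{C}$ merely locally small.
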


We can depict the situation as follows:
$$
\begin{tikzcd}
 \cat{H} \ar{dr}[swap]{\pi} \ar{r}{G} & \cat{J} \ar{d}{F} \ar{r}{D} & \cat{C} \\
 & \cat{K} \ar{ur}[swap]{\Lan{(D\circ G)}{\pi}\;\cong\;\Lan{D}{F}}
\end{tikzcd}
$$
Note that the diagram above does not necessarily commute, nor does any of its two subdiagrams.

\begin{proof}[Proof of \Cref{cofibrep}]
Given an object $K$ of $\cat{K}$, the comma category $F/K$ has 
  \begin{itemize}
   \item As objects, pairs $(J,k)$ where $J$ is an object of $\cat{J}$ and $k:FJ\to K$ is a morphism of $\cat{K}$;
   \item As morphisms, commutative diagrams in the following form,
   $$
   \begin{tikzcd}[column sep=small]
    FJ \ar{rr}{Fj} \ar{dr}[swap]{k} && FJ' \ar{dl}{k'} \\
    & K
   \end{tikzcd}
   $$
   where $j:J\to J'$ is a morphism of $\cat{J}$.
  \end{itemize}
  Now define the functor $F/-:\cat{K}\to\cat{Cat}$ as follows. 
  \begin{itemize}
   \item To each object $K$ of $\cat{K}$ assign the comma category $F/K$;
   \item To each morphism $\ell:K\to K'$ of $\cat{K}$, assign the functor $F/K\to F/K'$ given by post-composition with $\ell$.
  \end{itemize}
Choose now as $\cat{H}$ the Grothendieck construction $\groth F/-$, and notice that it is small. Up to isomorphism, 
  \begin{itemize}
   \item Its objects are triplets $(K,J,k)$ where $K$ is an object of $\cat{K}$, $J$ is an object of $\cat{J}$, and $k:FJ\to K$ is a morphism of $\cat{K}$;
   \item Each morphism $\ell:K\to K'$ of $\cat{K}$ defines an ``opcartesian'' morphism $\ell_*:(K,J,k:FJ\to K)\to (K',J,\ell\circ k: FJ\to K')$;
   \item For each object $K$ of $\cat{K}$, a morphism $j:(J,k:FJ\to K)\to (J', k':FJ'\to K)$ of $F/K$ (i.e.~a morphism $j:J\to J'$ of $\cat{J}$ with $k'\circ Fj= k$) defines a morphism ``in the fiber'' $(K,J,k) \to (K, J', k')$, which we denote again by $j$.
   \item Any other morphism of $\cat{H}$ is a composition of two morphisms in the two forms above.
  \end{itemize}
  The forgetful functor $\pi:\cat{H}\to \cat{K}$ which maps $(K,J,k)$ to $K$ is a split opfibration.
  We can also construct the forgetful functor $G:\cat{H}\to \cat{J}$ which maps $(K,J,k)$ to $J$. This functor is confinal: indeed, let $J$ be an object of $\cat{J}$. 
  \begin{itemize}
   \item The category $J/G$ is nonempty: it always contains 
   $$
   \begin{tikzcd}
    J \ar{r}{\id_J} & J =  G(FJ,J,\id_{FJ}) ;
   \end{tikzcd}
   $$
   \item The category $J/G$ is connected: let $(K',J',k':FJ'\to K')$ and $(K'',J'',k'':FJ''\to K'')$ be objects of $\cat{H}$, and let $f:J\to J'$ and $g:J\to J''$ be morphisms of $\cat{J}$. We can then form the following zigzag in $\cat{H}$:
   $$
   \begin{tikzcd}[column sep=-1.8em]
    &&& (FJ, J, \id_{FJ}) \ar[near end]{dl}[swap]{(Ff)_*} \ar[near end]{dr}{(Fg)_*} \\
    && (FJ', J, Ff)  \ar[near end]{dl}[swap]{{k'}_*} && (FJ'', J, Fg)  \ar[near end]{dr}{{k''}_*} \\
    & (K', J, k'\circ Ff) \ar[near end]{dl}[swap]{f} &&&&  (K'', J, k''\circ Fg) \ar[near end]{dr}{g} \\
     (K',J',k') &&&&&& (K'',J'',k'') .
   \end{tikzcd}
   $$
  \end{itemize}
  
  Suppose now the pointwise left Kan extension 
  $$
  \begin{tikzcd}[row sep=small]
 \cat{H} 
  \ar[""{name=ONE,below}]{dr}{D\circ G} \ar{dd}[swap]{\pi} \\
 & \cat{C} \\
 \cat{K} 
  \ar{ur}[swap]{\Lan{(D\circ G)}{\pi}} \ar[Rightarrow,from=ONE, shorten <= 0.5em, shorten >= 1em, "\lambda", near start, swap]
\end{tikzcd}
  $$
  exists.
 We claim that $\Lan{D\circ G}{\pi}\cong\Lan{D}{F}$. To see this, let $K$ be an object of $\cat{K}$. Since $\pi$ is an opfibration, by \Cref{onlyfibers} we can write $\Lan{(D\circ G)}{\pi}(K)$ up to isomorphism as the colimit of the following composite functor.
 $$ 
 \begin{tikzcd}
 \pi^{-1}(K)  \ar[hookrightarrow]{r} & \cat{H} \ar{r}{G} & \cat{J} \ar{r}{D} & \cat{C}
 \end{tikzcd}
 $$
 Recall now that $H = \groth F/-$, so that $\pi^{-1}(K)\subseteq \cat{H}$ is isomorphic to $F/K$, and the following diagram commutes,
 $$
 \begin{tikzcd}
  F/K \ar{dr}[swap]{U}  \ar[hookrightarrow]{r} & \groth F/- \ar{d}{G} \\
  & \cat{D}
 \end{tikzcd}
 $$
 where $U$ is the forgetful functor mapping $(J,k:FJ\to K)$ to just $J$.
 Therefore we can rewrite $\Lan{(D\circ G)}{\pi}(K)$ up to isomorphism as the colimit of this simpler composition,
 $$ 
 \begin{tikzcd}
 F/K \ar{r}{U} & \cat{J} \ar{r}{D} & \cat{C}
 \end{tikzcd}
 $$
 which gives the pointwise left Kan extension of $D$ along $F$.
\end{proof}

\begin{lemma}\label{imimeso}
 Let $\cat{C}$ be a locally small category. The functor $\im\im:\cat{Diag(Diag(C))}\to \cat{PPC}$ is essentially surjective on objects.
\end{lemma}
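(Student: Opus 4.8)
The plan is to reduce the essential surjectivity of $\im\im=\im_{\cat{PC}}\circ(\im_C)_*$ to a lifting problem. Let $\Phi$ be an arbitrary object of $\cat{PPC}$. Since $\cat{PC}$ is locally small, $\Phi$ is by definition the image presheaf of some small diagram $E:\cat{J}\to\cat{PC}$, so $\Phi\cong\im_{\cat{PC}}(E)$. Because $\im_{\cat{PC}}$ is a functor, it suffices to produce a diagram of diagrams $D:\cat{J}\to\cat{Diag(C)}$, with the \emph{same} indexing category $\cat{J}$, together with a natural isomorphism $\im_C\circ D\cong E$ of diagrams in $\cat{PC}$; then $\im\im(D)=\im_{\cat{PC}}(\im_C\circ D)\cong\im_{\cat{PC}}(E)\cong\Phi$. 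Thus the whole problem becomes: lift the diagram $E$ through the functor $\im_C:\cat{Diag(C)}\to\cat{PC}$, up to natural isomorphism.

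First I would build such a lift with \emph{large} diagrams, and only afterwards cut it down to size. For each object $J$ of $\cat{J}$ consider the category of elements $\groth^o EJ$ with its projection $\pi_J:\groth^o EJ\to\cat{C}$; by the functoriality of the category of elements (\Cref{catel}), each morphism $j:J\to J'$ induces a functor $\groth^o Ej:\groth^o EJ\to\groth^o EJ'$ strictly over $\cat{C}$, so $J\mapsto(\groth^o EJ,\pi_J)$ is a strict functor from $\cat{J}$ into the (super-large) category of possibly large diagrams in $\cat{C}$. The density theorem says precisely that the colimit of representables over $\groth^o EJ$ recovers $EJ$, i.e.\ $\colim_{(C,x)\in\groth^o EJ}\Hom_\cat{C}(-,C)\cong EJ$ naturally in $J$, which is the co-Yoneda identification of \Cref{coyoneda}. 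The obstruction is only that $\groth^o EJ$ is large, so this is not yet a diagram of diagrams in our sense.

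The main step is therefore to replace each $\groth^o EJ$ by a small confinal full subcategory, \emph{functorially} in $J$. By \Cref{ffconfinal} I may choose, for every $J$, a small full subcategory $\cat{S}^0_J\subseteq\groth^o EJ$ whose inclusion is confinal. These choices need not be compatible with the transition functors $\groth^o Ej$, so I would saturate: define $\cat{S}_J$ to be the full subcategory of $\groth^o EJ$ on the object set
\[
\mathrm{ob}\,\cat{S}_J\;=\;\bigcup_{I}\ \bigcup_{i:I\to J}\ \groth^o Ei\big(\mathrm{ob}\,\cat{S}^0_I\big),
\]
the union ranging over all objects $I$ of $\cat{J}$ and all morphisms $i:I\to J$. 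Since $\cat{J}$ is small and each $\cat{S}^0_I$ is small, $\cat{S}_J$ is small; taking $i=\id_J$ shows $\cat{S}^0_J\subseteq\cat{S}_J$, so the inclusion $\cat{S}_J\hookrightarrow\groth^o EJ$ is confinal by the cancellation property of \Cref{confinalsubcat}; and functoriality of $\groth^o E(-)$ gives $\groth^o Ej(\mathrm{ob}\,\cat{S}_J)\subseteq\mathrm{ob}\,\cat{S}_{J'}$ for every $j:J\to J'$, because $\groth^o Ej\circ\groth^o Ei=\groth^o E(j\circ i)$. Hence the restrictions $\groth^o Ej|_{\cat{S}_J}:\cat{S}_J\to\cat{S}_{J'}$, which still commute with the projections to $\cat{C}$, assemble $J\mapsto(\cat{S}_J,\pi_J|_{\cat{S}_J})$ into a genuine strict functor $D:\cat{J}\to\cat{Diag(C)}$.

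It then remains to identify $\im_C\circ D$ with $E$. For each $J$, confinality of $\cat{S}_J\hookrightarrow\groth^o EJ$ implies, via \Cref{whichiso} applied to the Yoneda-composite $\groth^o EJ\to\cat{C}\to[\cat{C}^\op,\cat{Set}]$, that $\im_C(DJ)=\colim_{\cat{S}_J}\Hom_\cat{C}(-,\pi_J(-))$ agrees with the colimit over all of $\groth^o EJ$, which by the density identification above is $EJ$. Since the inclusions $\cat{S}_J\hookrightarrow\groth^o EJ$ are natural in $J$ and live over $\cat{C}$, these isomorphisms are the components of a natural isomorphism $\im_C\circ D\cong E$, completing the argument. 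The one genuinely delicate point---the reason this lemma is more than a formality---is the functorial size reduction of the categories of elements: one must shrink every $\groth^o EJ$ to a small confinal piece while keeping the family closed under all the transition functors $\groth^o Ej$, which is exactly what the saturation above arranges.
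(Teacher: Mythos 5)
Your proposal is correct and takes essentially the same route as the paper's own proof: reduce to lifting a small diagram of small presheaves through $\im_\cat{C}$, pass to the (large) categories of elements, shrink each to a small confinal full subcategory via \Cref{ffconfinal}, and then saturate by taking the union of the images under all transition functors $\groth^o Ei$ (indexed by the slice over each object) to restore functoriality --- exactly the paper's construction of the categories $\cat{T}_I$, with confinality preserved by \Cref{confinalsubcat}. The only cosmetic difference is that you cite \Cref{whichiso} and the density/co-Yoneda identification for the final isomorphism $\im_\cat{C}\circ D\cong E$, where the paper spells out the naturality square explicitly via uniqueness of the induced maps between colimits.
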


Recall that the following diagram commutes only up to natural isomorphism,
$$
 \begin{tikzcd}[sep=small]
  \cat{Diag(Diag(C))} \ar{dd}{\im} \ar{rr}{\im_*} && \cat{Diag(PC)} \ar{dd}{\im} \\
  & \cong \\
  \cat{P(Diag(C))} \ar{rr}{\im_\sharp} && \cat{PPC}
 \end{tikzcd}
 $$
and that, by convention, we denoted by $\im\im$ the top-right path. 

\begin{proof}[Proof of \Cref{imimeso}]
 Since $\im$ is essentially surjective, it suffices to prove that $\cat{Diag}\im$ is as well.
 In other words, we have to prove that every small diagram of small presheaves can be obtained from a diagram of diagrams (by taking the image presheaf of each subdiagram).
 
 Let $\cat{I}$ be a small category, and let $D:\cat{I}\to \cat{PC}$ be a diagram. Explicitly, for each object $I$ of $\cat{I}$ we have a small presheaf $DI:\cat{C}^\op\to\cat{Set}$, and for each morphism of $\cat{I}$ we have a morphism of presheaves.
 We can now take the category of elements of each $DI$, as described in Section~\ref{catel}. For each $I$ of $\cat{I}$ we have a discrete fibration $\pi_I:\groth^o DI \to \cat{C}$, and for each morphism $i:I\to J$ of $\cat{I}$ we have a commutative triangle as follows.
 $$
 \begin{tikzcd}[column sep=small]
  \groth^o DI \ar{dr}[swap]{\pi_I} \ar{rr}{\groth^o Di} && \groth^o DJ \ar{dl}{\pi_J} \\
  & \cat{C}
 \end{tikzcd}
 $$
 However, the categories $\groth^o DI$ are large in general, and so they do not form the desired diagram of (small) diagrams yet. We then proceed as follows. Since each presheaf $DI$ is small, for each $I$ of $\cat{I}$ there exists (\Cref{ffconfinal}) a small full subcategory $\cat{S}_I$ of $\groth^o DI$, such that the inclusion functor is confinal. Denote by $F_I:\cat{S}_I\to \cat{C}$ the composition (or restriction)
 $$
 \begin{tikzcd}
 \cat{S}_I \ar[hookrightarrow]{r} & \groth^o DI \ar{r}{\pi_i} & \cat{C} .
 \end{tikzcd}
 $$
 By construction, $\im(F_i)\cong DI$. The assignment $I\mapsto \cat{S}_I$ is not functorial in $I$: for each morphism $i:I\to J$ of $\cat{I}$, we get the following commutative diagram.
 $$
 \begin{tikzcd}[column sep=small]
  \cat{S}_I \ar[hookrightarrow]{d} && \cat{S}_J \ar[hookrightarrow]{d} \\
  \groth^o DI \ar{dr}[swap]{\pi_I} \ar{rr}{\groth^o Di} && \groth^o DJ \ar{dl}{\pi_J} \\
  & \cat{C}
 \end{tikzcd}
 $$
 However, in principle we cannot lift $\groth^o Di$ to get a commutative square, that is, a priori the restriction of $\groth^o Di$ to $\cat{S}_I$ does not necessarily land in $\cat{S}_J$. We now extend the $\cat{S}_I$, as follows.
 
 Let $I$ be an object of $\cat{I}$, and consider the slice category $\cat{I}/I$, whose objects are pairs $(H,h)$ where $H$ is an object of $\cat{I}$, and $h:H\to I$ is an arrow of $\cat{I}$. This category is small, since its set of objects is given by
 $$
 \coprod_{H\in\mathrm{Ob}(\cat{I})} \Hom_\cat{I}(H,I) ,
 $$
 which is a small union of small sets.
 Now define $\cat{T}_I$ to be the full subcategory of $\groth^o DI$ whose set of objects is given by
 $$
 \bigunion_{(H,h)\in \cat{I}/I} \mathrm{Image} \left(\begin{tikzcd}
                                              \cat{S}_H \ar[hookrightarrow]{d} \\
                                              \groth^o DH \ar{r}{\groth^o Dh} & \groth^o DI
                                             \end{tikzcd}
\right) .
 $$
 That is, an object of $E_I$ is in the form $\groth^o Dh(C,x)$ for some object $(C\in\cat{C},x\in DH(C))$ in $\cat{S}_H$ and some morphism $h:H\to I$ of $\cat{I}$.
 The category $\cat{T}_I$ is small, since its set of objects is a small union of small sets. Moreover, since we can pick $h$ to be the identity $I\to I$, the category $\cat{S}_I$ is fully embedded into $\cat{T}_I$, i.e.~we have a commutative diagram of inclusions
 $$
 \begin{tikzcd}[column sep=small]
  \cat{S}_I \ar[hookrightarrow]{dr} \ar[hookrightarrow]{rr} && \cat{T}_I \ar[hookrightarrow]{dl} \\
  & \groth^o DI
 \end{tikzcd}
 $$
 which are all confinal by \Cref{confinalsubcat}. In particular, if we denote by $E_I: \cat{T}_I\to \cat{C}$ the composition (or restriction)
 $$
 \begin{tikzcd}
 \cat{T}_I \ar[hookrightarrow]{r} & \groth^o DI \ar{r}{\pi_i} & \cat{C} ,
 \end{tikzcd}
 $$
 we have again that $\im(E_I)\cong DI$. 
 
 This time, the assignment $I\mapsto \cat{T}_I$ is functorial. To see this, let $i:I\to J$ be a morphism of $\cat{I}$, and form the following commutative diagram.
 $$
 \begin{tikzcd}[column sep=small]
  \cat{T}_I \ar[hookrightarrow]{d} && \cat{T}_J \ar[hookrightarrow]{d} \\
  \groth^o DI \ar{dr}[swap]{\pi_I} \ar{rr}{\groth^o Di} && \groth^o DJ \ar{dl}{\pi_J} \\
  & \cat{C}
 \end{tikzcd}
 $$
 We claim that the restriction of $\groth^o Di$ to the subcategory $\cat{T}_I$ lands in $\cat{T}_J$. Any object of $\cat{T}_I$ is of the form $\groth^o Dh(C,x)$ for some $(C,x)$ in $\cat{S}_H$ and some $h:H\to I$ of $\cat{I}$. We then have
 $$
 \groth^o Di\left( \groth^o Dh (C,x) \right) = \groth^o D(i\circ h), (C,x),
 $$
 which lies in $\cat{T}_J$ since $i\circ h:H\to J$ belongs to the slice category $\cat{I}/J$. We can then complete the diagram to a commutative diagram
 $$
 \begin{tikzcd}[column sep=small]
  \cat{T}_I \ar{rr}{\cat{T}_i} \ar[hookrightarrow]{d} && \cat{T}_J \ar[hookrightarrow]{d} \\
  \groth^o DI \ar{dr}[swap]{\pi_I} \ar{rr}{\groth^o Di} && \groth^o DJ \ar{dl}{\pi_J} \\
  & \cat{C}
 \end{tikzcd}
 \qquad \mbox{or equivalently} \qquad
 \begin{tikzcd}[column sep=small]
  \cat{T}_I \ar{rr}{\cat{T}_i} \ar{dr}[swap]{E_I} && \cat{T}_J \ar{dl}{E_J} \\
  & \cat{C}
 \end{tikzcd}
 $$
 The assignment $I\mapsto\cat{T}_I$, $i\mapsto \cat{T}_i$ is a functor $\cat{I}\to\cat{Cat}$, and the corresponding assignment $I\mapsto (\cat{T}_I,E_I:\cat{E_I}\to\cat{C})$ gives a functor $E:\cat{I}\to \cat{Diag(C)}$. 
 
 This functor $E:\cat{I}\to \cat{Diag(C)}$ is the diagram of diagrams that we want. Indeed, $\cat{Diag}(\im)(E)$ is given by the postcomposition with the image
 $$
 \begin{tikzcd}
  \cat{I} \ar{r}{E} & \cat{Diag(C)} \ar{r}{\im} & \cat{PC}\;,
 \end{tikzcd}
 $$
 and we know that for each $I$ of $\cat{I}$, $\im(E_I) \cong DI$. To show that $\cat{Diag}(\im)(E)\cong D$, and hence conclude the proof, we have to show that the isomorphism $\im(E_I) \to DI$ is natural in $I$. So let $i:I\to J$ be a morphism of $\cat{I}$. 
 We have to show that the following diagram of $\cat{PC}$ commutes,
 \begin{equation}\label{natEI}
 \begin{tikzcd}
  \im(E_I) \ar{r}{\im(E_i)} \ar{d}{\cong} & \im(E_J) \ar{d}{\cong} \\
  DI \ar{r}{Di} & DJ 
 \end{tikzcd}
 \end{equation}
 where the isomorphisms are the ones given in \Cref{whichiso}.
 Diagram~\eqref{natEI} commutes since the vertices are the colimits of the functors $\eta\circ E_i$, $\eta\circ E_J$, $\eta\circ\pi_I$ and $\eta\circ\pi_J$ obtained by the following commutative diagram of $\cat{CAT}$,
 \begin{equation}\label{squareoflargediags}
 \begin{tikzcd}[column sep=small]
  \cat{T}_I \ar[bend left]{dddrrr}{E_I} \ar{rr}{\cat{T}_i} \ar[hookrightarrow]{dd} && \cat{T}_J \ar[hookrightarrow, crossing over]{dd} \ar[bend left]{dddr}{E_J} \\ \\
  \groth^o DI \ar[bend right=20]{drrr}[swap]{\pi_I} \ar{rr}{\groth^o Di} && \groth^o DJ \ar{dr}[swap]{\pi_J} \\
  && & \cat{C} \ar{r}{\eta} & \cat{PC}
 \end{tikzcd}
 \end{equation}
 and the induced maps between their colimits are the arrows of \eqref{natEI}. Since the square in \eqref{squareoflargediags} commutes, \eqref{natEI} commutes too (by uniqueness of the induced map). 
\end{proof}

We are now finally ready to prove the theorem.

\begin{proof}[Proof of \Cref{kanpevP}]
Since $\im$ is a morphism of monads, the following diagram commutes up to natural isomorphism.
\begin{equation}\label{pev2pev}
\begin{tikzcd}
 \cat{Diag(C)} \ar{d}{\im} & \cat{Diag(Diag(C))} \ar{l}[swap]{\mu} \ar{r}{\cat{Diag} \colim} \ar{d}{\im\,\im} & \cat{Diag(C)} \ar{d}{\im} \\
 \cat{PC} & \cat{PPC} \ar{l}[swap]{\mu} \ar{r}{\cat{P}k} & \cat{PC}
\end{tikzcd}
\end{equation}
In particular, this can be interpreted as ``$\im$, as a morphism of monads, preserves partial evaluations''.
With this and the previous lemmas in mind, let's proceed with the proof.

 \begin{itemize}
  \item $\ref{condPkan}\Rightarrow\ref{condPpev}$: Consider $D:\cat{J}\to\cat{C}$ and $F:\cat{J}\to\cat{K}$ as in \ref{condPkan}. 
  By \Cref{cofibrep} there exists a diagram $D:\cat{H}\to\cat{C}$ together with a confinal functor $G:\cat{H}\to\cat{J}$ and an opfibration $\pi:\cat{H}\to\cat{K}$ with $\Lan{D\circ G}{\pi}\cong\Lan{D}{F}$. Since $G$ is confinal, by \Cref{confinallemma} we have that $\im (D\circ G) \cong \im D \cong P$. 
  Now, by \Cref{weakform}, there exists a diagram of diagrams $E\in\cat{Diag(Diag(C))}$ such that $\mu(E)\cong D\circ G$ and $\cat{Diag}\colim (E)\cong \Lan{D\circ G}{\pi}$. Chasing diagram~\eqref{pev2pev}, we see that $\im\im(E)$ is the desired partial evaluation between $\im (D\circ G) \cong \im D \cong P$ and $\im(\Lan{D\circ G}{\pi})\cong\im(\Lan{D}{F})\cong Q$.
 
  \item $\ref{condPpev}\Rightarrow\ref{condPkan}$: Let $\Phi$ be a partial evaluation from $P$ to $Q$. By \Cref{imimeso}, $\im\im$ is essentially surjective on objects, so that there exists $E\in \cat{Diag(Diag(C))}$ with $\im\im(E)\cong \Phi$. Chasing diagram~\eqref{pev2pev}, let $D=\mu(E)$ and $D'=\cat{Diag}\colim(E)$, so that $\im D \cong P$ and $\im D'\cong Q$. By \Cref{weakform}, $D'$ can be written as the pointwise left Kan extension of $D$ along a split opfibration.
  \qedhere
 \end{itemize}

\end{proof}

\subsection{Comparison with measure theory}\label{integrals}

Coends, in particular when they denote weighted colimits, are often considered similar to integrals in analysis, which is why they are normally denoted by an integral sign.
The correspondence can roughly be summarized in terms of monads, by saying that the monad $\cat{P}$ on $\cat{CAT}$ behaves similarly to the Giry monad $P$ on the category of (say) measurable spaces \cite{girymonad}, and to other probability and measure monads. In particular,
\begin{itemize}
 \item small presheaves on a category are similar to measures on a measurable space;
 \item cocomplete categories (categories where one can take colimits) are similar to algebras over probability monads (spaces where one can take integrals or expectation values, such as the real line);
 \item in a (sufficiently) cocomplete category $\cat{C}$,  given a small presheaf $P\in \cat{PC}$ the following coend
 $$
 \int^{X\in\cat{C}} PX\times X
 $$
 is similar to the following integral of a measure $p$ on, say, real numbers,
 $$
 \int_{\R} x\,dp(x) ;
 $$ 
 \item as Kleisli morphisms, (small) profunctors are similar to Markov kernels. 
\end{itemize}

An introduction to probability monads, for the interested reader, can be found in \cite[Chapter~1]{thesis} as well as in \cite[Section~6]{pev}. 

We now wish to emphasize that \Cref{kanpevP} adds a further analogy between integrals and coends: just like Kan extensions can be thought of as ``partial colimits'', conditional expectations can be thought of as ``partial expectations''.
In particular, we compare \Cref{kanpevP} to a similar theorem for a probability monad on the category of metric spaces, the \emph{Kantorovich monad} (see \cite{kantorovich} as well as the more introductory material in \cite[Section~6]{pev}). 

The following statement is known (\cite[Theorem 6.13]{pev} or \cite[Theorem 4.2.14]{thesis}).

\begin{thm}\label{condexpthm}
 Let $(A,e)$ be a Banach space (an algebra of the Kantorovich monad). Let $p,q\in PA$ be Radon probability measures on $A$ of finite first moment. The following conditions are equivalent:
 \begin{itemize}
  \item There exist probability spaces $(\Omega,\mu)$ and $(\Omega',\mu')$, random variables $f:\Omega\to A$ and $g:\Omega'\to A$ with image measures $p$ and $q$ respectively, and a measure-preserving map $m:\Omega\to\Omega'$ such that $g$ is the conditional expectation of $f$ along (the pull-back sigma-algebra induced by) $m$ -- as in the following not necessarily commutative diagram:
  \begin{equation}\label{condexpdiag}
  \begin{tikzcd}[row sep=small]
   \Omega \ar{dr}{f} \ar{dd}[swap]{m} \\
   & A \\
   \Omega' \ar{ur}[swap]{g = \mathbbm{E}(f|m)}
  \end{tikzcd}
  \end{equation}
  \item There is a partial evaluation $k\in PPA$ from $p$ to $q$ (for the Kantorovich monad).
  $$
  \begin{tikzcd}
   & k \ar[mapsto]{dl}[swap]{E} \ar{dr}{Pe} \\
   p && q
  \end{tikzcd}
  $$
 \end{itemize}
\end{thm}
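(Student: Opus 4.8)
The plan is to translate both constructions—conditional expectation and partial evaluation—into the language of disintegration of measures, exploiting the fact that, for the Kantorovich monad, the algebra structure map $e\colon PA\to A$ is precisely the \emph{barycenter} (center of mass), $e(P)=\int_A x\,dP(x)$, while the monad multiplication $E\colon PPA\to PA$ is the \emph{averaging} operation $E(k)(S)=\int_{PA}P(S)\,dk(P)$. Under this dictionary, a partial evaluation $k$ from $p$ to $q$ is exactly a ``measure on conditional laws'' whose average recovers $p$ and whose fiberwise barycenters have law $q$, which is exactly the data packaged by a conditional expectation.

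First I would prove the direction from conditional expectation to partial evaluation. Given random variables $f\colon\Omega\to A$ and $g\colon\Omega'\to A$ with laws $p$ and $q$ and a measure-preserving $m\colon\Omega\to\Omega'$ realizing $g=\mathbbm{E}(f\mid m)$, I would disintegrate $\mu$ along $m$ into a family of conditional probabilities $(\mu_{\omega'})_{\omega'\in\Omega'}$. Pushing each $\mu_{\omega'}$ forward along $f$ yields a map $\Omega'\to PA$, $\omega'\mapsto f_*\mu_{\omega'}$, and pushing $\mu'$ forward along this map produces an element $k\in PPA$. The identity $E(k)=p$ is the ``gluing'' half of disintegration (averaging the conditional laws recovers the total law), while $Pe(k)=q$ holds because the barycenter of $f_*\mu_{\omega'}$ is by definition the conditional expectation $\mathbbm{E}(f\mid m)(\omega')=g(\omega')$, so that pushing $\mu'$ forward along the composite of $e$ with $\omega'\mapsto f_*\mu_{\omega'}$ returns the law of $g$.

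For the converse I would run this construction backwards. Starting from a partial evaluation $k\in PPA$, take $\Omega'\coloneqq PA$ equipped with the measure $k$ and $g\coloneqq e\colon PA\to A$, so that the law of $g$ is $Pe(k)=q$. For $\Omega$ I would take the total space of the tautological bundle over $PA$ whose fiber over $P$ carries the measure $P$ itself, with $m\colon\Omega\to\Omega'$ the projection and $f\colon\Omega\to A$ the evaluation map. Then $m$ is measure-preserving by construction, the law of $f$ is $\int_{PA}P\,dk(P)=E(k)=p$, and conditioning on $m=P$ returns the law $P$, whose barycenter is $e(P)$; hence $\mathbbm{E}(f\mid m)=e\circ m=g$, as required.

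The hard part will not be the categorical bookkeeping but the measure-theoretic regularity needed to make each step legitimate. I would need the existence and measurability of the disintegration $(\mu_{\omega'})$, which forces a standard-Borel/Polish or Radon hypothesis on the spaces; I would need to check that $\omega'\mapsto f_*\mu_{\omega'}$ is measurable \emph{into} $PA$ with its Wasserstein metric and that $k$ has finite first moment as an element of $PPA$, so that $E(k)$ and the iterated barycenters exist; and I would need to verify that the fiberwise barycenter genuinely coincides with the conditional expectation, i.e.\ the characterizing identity $\int_B g\,d\mu'=\int_{m^{-1}(B)}f\,d\mu$ for all measurable $B\subseteq\Omega'$. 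Since all of these identifications are already assembled in \cite[Theorem~6.13]{pev} and \cite[Theorem~4.2.14]{thesis}, the remaining work is to keep track of the finite-first-moment conditions throughout and to confirm that the two a priori different maps in the diagram, $E$ and $Pe$, match the ``total law'' and the ``law of the conditional expectation'' on the nose.
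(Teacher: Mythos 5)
The paper itself contains no proof of \Cref{condexpthm}: it is imported as a known result, cited from \cite[Theorem~6.13]{pev} and \cite[Theorem~4.2.14]{thesis}, and the paper explicitly remarks that it ``was proven directly, using measure-theoretic methods'' in those references. So there is no in-paper argument to compare against; what you have written is a reconstruction of the cited proof, and as such it follows essentially the right strategy. In the forward direction, disintegrating $\mu$ along $m$, pushing the conditional laws forward along $f$, and taking the law of the resulting map $\Omega'\to PA$ is exactly how one produces the partial evaluation $k$; the identities $E(k)=p$ and $Pe(k)=q$ follow, as you say, from the gluing property of disintegration and from the fact that the barycenter of the conditional law is the conditional expectation. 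In the converse direction, your ``tautological bundle'' is the semidirect-product measure on $PA\times A$ built from $k$ and the identity kernel, with $\Omega'=(PA,k)$, $g=e$, $m=\mathrm{pr}_1$, $f=\mathrm{pr}_2$; this is the standard witness and it verifies all the required identities.

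Two refinements on what you call the hard part. First, the regularity needed for the disintegration is not an extra hypothesis on $\Omega$ or $\Omega'$ (which the theorem deliberately leaves arbitrary), but a condition on the law of $f$: regular conditional distributions exist for random variables whose law is Radon on a metric space, and Radonness of $p$ is already assumed in the statement; the $m^{-1}(\Sigma')$-measurable map $\omega\mapsto f_*\mu_\omega$ is then factored through $m$ by a Doob--Dynkin-type argument. So your worry that one must additionally assume standard-Borel or Polish $\Omega$, $\Omega'$ is unfounded, and the theorem's stated hypotheses suffice. Second, one must check that the constructed $k$ is a legitimate element of $PPA$ for the Kantorovich monad, i.e.\ a Radon measure of finite first moment on the Wasserstein space $PA$; this follows from tightness of $p$ and $\int\|x\|\,dp<\infty$ via Markov-type estimates on the family of conditional laws. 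Neither point breaks your argument; they are precisely the details assembled in the references the paper cites.
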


The similarity to \Cref{kanpevP} is evident. 
One could say intuitively that \emph{if coends are analogous to integrals, Kan extensions are analogous to conditional expectations}. The former can be interpreted as partial (weighted) colimits, and the latter as partial (weighted) averages.
Notice also that the diagram \eqref{condexpdiag} does not commute, but the conditional expectation map, just like a Kan extension, can be thought of as the one ``making the diagram as close as possible to commuting''.

It has to be noted that, while we proved \Cref{kanpevP} by invoking an analogous (but simpler and stronger) statement for the monad of diagrams, \Cref{condexpthm} was proven directly, using measure-theoretic methods.
In the statement of the \Cref{condexpthm} it seems that random variables play somewhat the role that diagrams play in \Cref{kanpevP} (or at least, of diagrams equipped with weights, analogous to the measures on $\Omega$ and $\Omega'$). However, currently it is not clear whether random variables, or related structures, may form a monad analogous to $\cat{Diag}$, and on which category such a structure could be found.


\appendix

\section{Some 2-dimensional monad theory}\label{pseudomonads}

Here we recall some definitions of 2-dimensional monad theory, and spell out explicitly the definitions that we use in the rest of this work.
We also prove a result, \Cref{rescalthm}, which extends to pseudomonads the ``restriction of scalars'' construction.
We follow the definitions given in \cite[Section~2.12]{lucatelli}. We refer the interested reader to that article, as well as \cite{pseudomonads} and \cite{marmolejo-doctrines}, for further details.

\subsection{Pseudomonads and their morphisms}

\begin{deph}\label{defpseudomonad}
 A \emph{pseudomonad} on a strict $2$-category $\cat{K}$ consists of 
 \begin{itemize}
  \item a pseudofunctor $T:\cat{K}\to\cat{K}$, together with
  \item pseudonatural transformations $\eta:\id_\cat{K}\Rightarrow T$ and $\mu:TT\Rightarrow T$, which we call \emph{unit} and \emph{multiplication} respectively, and
  \item invertible modifications
  $$
  \begin{tikzcd}[sep=large]
   T \nat{r}{\eta T} \ar[Rightarrow,""{name=ID, above right}]{dr}[swap]{\id} & |[alias=TT]| TT \nat{d}{\mu} \\
   & T
   \ar[triple, from=TT, to=ID, "\ell", shorten <= 0.2em, shorten >= 0.2em]
  \end{tikzcd}
  \qquad
  \begin{tikzcd}[sep=large]
   T \nat{r}{T\eta} \ar[Rightarrow,""{name=ID, above right}]{dr}[swap]{\id} & |[alias=TT]| TT \nat{d}{\mu}  \\
   & T
   \ar[triple, from=ID, to=TT, "r", shorten <= 0.2em, shorten >= 0.2em, swap]
  \end{tikzcd}
  \qquad
  \begin{tikzcd}[sep=large]
   TTT \nat{r}{T \mu} \nat{d}[swap]{\mu T} & |[alias=TTUR]| TT \nat{d}{\mu} \\
   |[alias=TTDL]| TT \nat{r}[swap]{\mu} & T
   \ar[triple, from=TTUR, to=TTDL, "a", shorten <= 0.8em, shorten >= 0.8em]
  \end{tikzcd}
 $$
  which we call (left and right) unitors and associator, respectively, such that
  \item the following two coherence laws are satisfied,
  $$
  \begin{tikzcd}[column sep=small]
   \mu \circ T\mu \circ T\eta T \ar[triple]{rr}{a (T\eta T)} \ar[triple,swap]{dr}{\mu (\widehat{T\ell})} && \mu\circ\mu T\circ T\eta T  \\ 
   & \mu \circ \id_{TT} \ar[triple]{ur}[swap]{\mu\circ rT}
  \end{tikzcd}
  $$
  $$
  \begin{tikzcd}[column sep=small]
   & \mu\circ T\mu\circ TT\mu \ar[triple]{dl}[swap]{a(TT\mu)} \ar[triple]{dr}{\mu(\widehat{Ta})} \\
   \mu\circ \mu T \circ TT\mu \ar[triple]{dd}[swap]{\mu(\mu_\mu)} && \mu\circ T\mu\circ T\mu T \ar[triple]{dd}{a(T\mu T)} \\ \\
   \mu\circ T\mu\circ \mu TT \ar[triple]{dr}[swap]{a(\mu TT)} && \mu\circ\mu T\circ T\mu T \ar[triple]{dl}{\mu(aT)} \\
   & \mu\circ \mu T\circ \mu TT
  \end{tikzcd}
  $$
  which we call the unit and pentagon\footnote{This name comes from the theory of monoidal categories, where the diagram has the shape of a pentagon (in that case, the analogue of $\mu$ is strictly natural, and so the arrow corresponding to $\mu_\mu$ is an identity).} condition, respectively, where $\widehat{T\ell} = \upsilon^{-1}\circ T\ell\circ\kappa$, $\widehat{Ta}=\kappa^{-1}\circ Ta\circ\kappa$,  
  and $\kappa$ and $\upsilon$ are the compositor and unitor, respectively, of the pseudofunctor $T$.
  The arrow $\mu_\mu$ is the invertible 3-cell filling the pseudonaturality square of $\mu$ along the morphism $\mu$ itself, of (2-cell) components
  $$
  \begin{tikzcd}
	{TTTTX} & {TTTX} \\
	{TTTX} & {TTX}
	\arrow["{\mu_{TTX}}"', from=1-1, to=2-1]
	\arrow["{\mu_{TX}}", from=1-2, to=2-2]
	\arrow["{T\mu_X}"', from=2-1, to=2-2]
	\arrow["{TT\mu_X}", from=1-1, to=1-2]
	\arrow[Rightarrow, from=1-2, to=2-1, shorten <=6pt, shorten >=6pt, "(\mu_{\mu})_X"]
  \end{tikzcd}
  $$
  for each object $X$ of $\cat{K}$.
 \end{itemize}
\end{deph}

For some readers it may be easier to interpret the coherence conditions above if we express them as 2-dimensional diagrams. In particular, the unit and pentagon conditions can be expressed as the following commutative cubes, respectively,
$$
\begin{tikzcd}[column sep=small]
 TT \nat{rr}{T\eta T} \nat{dd}[swap]{T\eta T} \nat{ddrr}[near end]{\id} && |[alias=TTTUR]| TTT \nat{dr}{T\mu} \nat{dd}{T\mu} \\
 & |[alias=ID]| && TT \nat{dd}{\mu} \\
 |[alias=TTTDL]| TTT \nat{dr}[swap]{\mu T} \nat{rr}[swap]{\mu T} && TT \nat{dr}{\mu} \\
 & TT \nat{rr}[swap]{\mu} && T
 \ar[triple, from=TTTUR, to=ID, "\widehat{T\ell}", shorten <= 0.5em, shorten >= 0.5em, swap]
 \ar[triple, from=ID, to=TTTDL, "rT", shorten <= 0.5em, shorten >= 0.5em, swap]
\end{tikzcd}
\qquad=\qquad
\begin{tikzcd}[column sep=small]
 TT \nat{rr}{T\eta T} \nat{dd}[swap]{T\eta T} \nat{dr}[swap, near end]{T\eta T} && TTT \nat{dr}{T\mu} \\
 & TTT \nat{rr}{T\mu} \nat{dd}{\mu T} && |[alias=TTUR]| TT \nat{dd}{\mu} \\
 TTT \nat{dr}[swap]{\mu T} \\
 & |[alias=TTDL]|  TT \nat{rr}[swap]{\mu} && T
    \ar[triple, from=TTUR, to=TTDL, "a", shorten <= 2em, shorten >= 2em]
\end{tikzcd}
$$
 where the unfilled 3-cells are identities, and $\widehat{T\ell}$ denotes the following composite cell,
 $$
 \begin{tikzcd}[sep=large]
  & |[alias=TTT]| TTT \nat{ddr}{T\mu} \\ \\
  TT \nat{uur}{T\eta T} 
  \ar[Rightarrow, bend left=35, ""{name=COMP}]{rr}[swap, pos=0.1, yshift=0.5ex]{T(\mu\circ \eta T)}
  \ar[Rightarrow, ""{name=TID}]{rr}[near start, swap]{T\id}
  \ar[Rightarrow, bend right=35, ""{name=ID}]{rr}[swap]{\id} 
  && TT
  \ar[triple, from=TTT, to=COMP, "\kappa", shorten <= 0.5em, shorten >= 0em]
  \ar[triple, from=COMP, to=TID, "T\ell", shorten <= 0.5em, shorten >= 0em]
  \ar[triple, from=TID, to=ID, "\upsilon^{-1}", shorten <= 0.5em, shorten >= 0em]
 \end{tikzcd}
 $$
and
$$
  \begin{tikzcd}[column sep=small]
    TTTT \nat{rr}{TT\mu} \nat{dd}[swap]{\mu TT} && |[alias=TTTUR]| TTT \nat{dr}{T\mu} \nat{dd}{\mu T} \\
    &&& |[alias=TTUR]| TT \nat{dd}{\mu} \\ 
    |[alias=TTTDL]| TTT \nat{rr}{T\mu} \nat{dr}[swap]{\mu T} && |[alias=TT]| TT \nat{dr}{\mu} \\
    & |[alias=TTDL]| TT \nat{rr}[swap]{\mu} && T
    \ar[triple, from=TTTUR, to=TTTDL, "\mu_\mu", shorten <= 2em, shorten >= 2em]
    \ar[triple, from=TTUR, to=TT, "a", shorten <= 0.5em, shorten >= 0.5em]
    \ar[triple, from=TT, to=TTDL, "a", shorten <= 0.5em, shorten >= 0.5em]
   \end{tikzcd}
      \qquad=\qquad
   \begin{tikzcd}[column sep=small]
    TTTT \nat{rr}{TT\mu} \nat{dd}[swap]{\mu TT} \nat{dr}{T\mu T} && |[alias=TTTUR]| TTT \nat{dr}{T\mu} \\
    & |[alias=TTT]| TTT \nat{rr}{T \mu} \nat{dd}[swap]{\mu T} && |[alias=TTUR]| TT \nat{dd}{\mu} \\ 
    |[alias=TTTDL]| TTT \nat{dr}[swap]{\mu T} \\
    & |[alias=TTDL]| TT \nat{rr}[swap]{\mu} && T
    \ar[triple, from=TTTUR, to=TTT, "\widehat{Ta}", shorten <= 0.5em, shorten >= 0.5em, swap]
    \ar[triple, from=TTT, to=TTTDL, "aT", shorten <= 0.5em, shorten >= 0.5em, swap]
    \ar[triple, from=TTUR, to=TTDL, "a", shorten <= 2em, shorten >= 2em]
   \end{tikzcd} 
  $$
 where $\widehat{Ta}$ denotes the following composite cell.
 $$
 \begin{tikzcd}[sep=large]
 & |[alias=TOP]| TTT \nat{dr}{T\mu} \\
 TTTT \nat{ur}{TT\mu} \nat{dr}[swap]{T\mu T}
 \ar[Rightarrow, bend left=20, ""{name=COMPUP}]{rr}[swap, pos=0.1, yshift=0.4ex]{T(\mu\circ T\mu)}
 \ar[Rightarrow, bend right=20, ""{name=COMPDOWN}]{rr}[pos=0.1, yshift=-0.6ex]{T(\mu\circ\mu T)}
 && TT \\
 & |[alias=BOTTOM]| TTT \nat{ur}[swap]{T\mu}
 \ar[triple, from=TOP, to=COMPUP, "\kappa"]
 \ar[triple, from=COMPUP, to=COMPDOWN, "Ta", shorten <=0.5em]
 \ar[triple, from=COMPDOWN, to=BOTTOM, "\kappa^{-1}", shorten <=0.5em]
 \end{tikzcd}
 $$
 
We now give the definition of a morphism of pseudomonads (over the same category).

\begin{deph}\label{defmorphmonads}
 Let $\cat{K}$ be a strict 2-category. Let $(T,\mu,\eta,\ell,r,a)$ and $(T',\mu',\eta',\ell',r',a')$ be pseudomonads on $\cat{K}$. A \emph{(pseudo-) morphism of pseudomonads} consists of 
 \begin{itemize}
  \item a pseudonatural transformation $\lambda:T \Rightarrow T'$, together with
  \item invertible modifications as follows,
  $$
  \begin{tikzcd}[row sep=large]
   & \id_\cat{K} \ar[Rightarrow, ""{name=ETAP}]{dr}{\eta'} \ar[Rightarrow]{dl}[swap]{\eta} & \\
   |[alias=T]| T \ar[Rightarrow]{rr}[swap]{\lambda}  && T'
   \ar[triple, from=ETAP, to=T, "u", shorten <= 1.2em, shorten >= 1.5em]
  \end{tikzcd}
  \qquad\mbox{and}\qquad
  \begin{tikzcd}[row sep=large]
   TT \ar[Rightarrow]{rr}{\lambda\lambda} \ar[Rightarrow]{d}[swap]{\mu} && |[alias=TPTP]| T'T' \ar[Rightarrow]{d}{\mu'} \\
  |[alias=T]| T  \ar[Rightarrow]{rr}[swap]{\lambda} && T' 
   \ar[triple, from=TPTP, to=T, "m", shorten <= 1em, shorten >= 1em]
  \end{tikzcd}
  $$
  where we recall that $\lambda\lambda$, as in \Cref{immm}, is shorthand for the following composition,
  $$
  \begin{tikzcd}
   TT \ar[Rightarrow]{r}{T\lambda} & TT' \nat{r}{\lambda T'} & |[alias=TPTP]| T'T' \;,
  \end{tikzcd}
  $$
  such that
  \item the following diagrams commute,
  $$
  \begin{tikzcd}
   \mu'\circ (\eta'T')\circ \lambda \ar[triple]{d}[swap]{\ell'\,\lambda} \ar[triple]{r}{\mu' \,\widehat{u T'\eta}} & \mu' \circ \lambda\lambda \circ \eta T \ar[triple]{d}{m\,(\eta T)} \\ 
   \lambda   & \lambda\circ\mu\circ \eta T \ar[triple]{l}{\lambda(\ell)}
  \end{tikzcd}
  \qquad
  \begin{tikzcd}
  \lambda \ar[triple]{d}[swap]{r'\,\lambda} \ar[triple]{r}{\lambda(r)}  & \lambda\circ\mu\circ T\eta    \\
   \mu'\circ (T'\eta')\circ \lambda  \ar[triple]{r}{\mu' \,\widehat{Tu\lambda}} & \mu' \circ \lambda\lambda \circ T\eta  \ar[triple]{u}[swap]{m\,(T\eta)}
  \end{tikzcd}
  $$
  $$
  \begin{tikzcd}[column sep=tiny]
   & \mu'\circ T'\mu'\circ \lambda\lambda\lambda \ar[triple]{dl}[swap]{a'(\lambda\lambda\lambda)} \ar[triple]{dr}{\mu'\,\widehat{Tm\lambda}} \\
   \mu'\circ\mu'T'\circ \lambda\lambda\lambda \ar[triple]{dd}[swap]{\mu'\,\widehat{mT'\mu}} && \mu'\circ\lambda\lambda\circ T\mu \ar[triple]{dd}{m\,(T\mu)} \\ \\
   \mu'\circ\lambda\lambda\circ\mu T \ar[triple]{dr}[swap]{m\,(\mu T)} && \lambda\circ\mu\circ T\mu \ar[triple]{dl}{\lambda a} \\
   & \lambda\circ\mu\circ\mu T
  \end{tikzcd}
  $$
  which we call left unitality, right unitality, and associativity conditions, respectively, where 
  \begin{itemize}
  \item $\widehat{uT'\eta}$ denotes $(\lambda T'\, \eta_\lambda) \circ (uT'\,\lambda)$,
  \item $\widehat{Tu\lambda}$ denotes $\lambda T' \,( \kappa^{-1}\circ Tu) \circ \lambda_{\eta'}^{-1}$,
  \item $\widehat{mT'\mu}$ denotes $(\lambda T'\,\mu_\lambda)\circ (mT'\, TT\lambda)$, and
  \item $\widehat{Tm\lambda}$ denotes $(\lambda T'\,(\kappa^{-1}\circ Tm\circ \kappa\kappa))\circ (\lambda_{\mu'}^{-1}\,T\lambda T'\,TT\lambda)$.
  \end{itemize}
 \end{itemize}
\end{deph}
We can write the coherence conditions 2-dimensionally, as follows.
The unitality conditions are the following two commutative prisms,
$$
\begin{tikzcd}[sep=large]
 & T' \ar[Rightarrow, ""{name=ID}]{dr}[swap]{\id} \ar[Rightarrow]{r}{\eta'T'} & |[alias=TPTP]| T'T' \ar[Rightarrow]{d}{\mu'} \\
 T \ar[Rightarrow]{ur}{\lambda} \ar[Rightarrow]{dr}[swap]{\id} && T' \\
 & T \ar[Rightarrow]{ur}[swap]{\lambda}
 \ar[triple, from=TPTP, to=ID, "\ell'", shorten <=0.3em, shorten >=0.2em]
\end{tikzcd}
\qquad=\qquad
\begin{tikzcd}[sep=large]
 & T' \ar[Rightarrow]{r}{\eta'T'} 
 & T'T' \ar[Rightarrow]{d}{\mu'} \\
 T \ar[Rightarrow]{r}{\eta T} \ar[Rightarrow]{ur}{\lambda} \ar[Rightarrow, ""{name=ID}]{dr}[swap]{\id} 
 \ar[phantom, bend left, ""{name=LET, below, pos=0.55}]{urr}
 \ar[phantom, bend right, ""{name=ETLL, above, pos=0.45}]{urr}
 & |[alias=TT]| TT \ar[Rightarrow]{ur}{\lambda\lambda} \ar[Rightarrow]{d}{\mu} 
 \ar[phantom, bend left=45, ""{name=LLM, below, pos=0.7}]{r}
 \ar[phantom, bend right=45, ""{name=ML, above, pos=0.3}]{r}
 & T' \\
 & T \ar[Rightarrow]{ur}[swap]{\lambda}
 \ar[triple, from=TT, to=ID, "\ell", shorten <=0.3em, shorten >=0.2em]
  \ar[triple, from=LET, to=ETLL, shorten <= 1em, shorten >= 1em, "\widehat{uT'\eta}", swap]
  \ar[triple, from=LLM, to=ML, shorten <=0.5em, shorten >=0.5em, "m"]
\end{tikzcd}
$$
where $\widehat{uT'\eta}$ denotes the following composite,
$$
\begin{tikzcd}[column sep=huge, row sep=large]
 T \nat{rr}{\lambda} \nat{dd}[swap]{\eta T}  && T' \ar[Rightarrow, ""{name=ETAT}]{dd}{\eta' T'} \ar[Rightarrow]{ddl}[swap]{\eta T'} 
 \ar[triple, shorten <=7em, shorten >=4em]{ddll}[swap, pos=0.6]{\eta_\lambda}
 \\ \\
 TT \nat{r}[swap]{T\lambda} 
 & TT' 
 \ar[triple, from=ETAT, "uT'", shorten <=2em, shorten >=2.5em]
 \nat{r}[swap]{\lambda T'} & T'T'
\end{tikzcd}
$$
and
$$
\begin{tikzcd}[sep=large]
 & T' \ar[Rightarrow]{d}[swap, near end]{T'\eta'} \ar[Rightarrow, ""{name=ID, below}]{dr}{\id}  \\
 T \ar[Rightarrow]{d}[swap]{T\eta} \ar[Rightarrow]{ur}{\lambda} 
 \ar[phantom, bend left, ""{name=LET, below, pos=0.7}]{r}
 \ar[phantom, bend right, ""{name=ETLL, above, pos=0.4}]{r}
 & |[alias=TPTP]| T'T' \ar[Rightarrow]{r}[swap, near start]{\mu'}  & T' \\
 |[alias=TT]| TT \ar[Rightarrow]{ur}[swap]{\lambda\lambda} \ar[Rightarrow]{r}[swap]{\mu} 
 \ar[phantom, bend left=45, ""{name=LLM, below, pos=0.6}]{urr}
 \ar[phantom, bend right=45, ""{name=ML, above, pos=0.4}]{urr} 
 & T \ar[Rightarrow]{ur}[swap]{\lambda}
 \ar[triple, from=ID, to=TPTP, "r'", shorten <=0.3em, shorten >=0.2em, swap]
   \ar[triple, from=LET, to=ETLL, shorten <= 0.2em, shorten >= 0.2em, "\widehat{Tu\lambda}", swap, near end]
  \ar[triple, from=LLM, to=ML, shorten <=2em, shorten >=2em, "m"]
\end{tikzcd}
\qquad=\qquad
\begin{tikzcd}[sep=large]
 & T' \ar[Rightarrow, ""{name=IDP}]{dr}{\id} \\
 T \ar[Rightarrow]{d}[swap]{\eta} \ar[Rightarrow]{ur}{\lambda} \ar[Rightarrow, ""{name=ID, below}]{dr}{\id} && T' \\
 |[alias=TT]| TT \ar[Rightarrow]{r}[swap]{\mu}  & T \ar[Rightarrow]{ur}[swap]{\lambda}
 \ar[triple, from=ID, to=TT, "r", shorten <=0.2em, shorten >=0.2em]
\end{tikzcd}
$$
where $\widehat{Tu\lambda}$ denotes the following composite and the unfilled 3-cells are intended to be identities;
$$
\begin{tikzcd}[column sep=huge, row sep=large]
 T \nat{rr}{\lambda} \nat{dd}[swap]{T\eta} \ar[Rightarrow, bend left=20, ""{name=TETAP, below}]{ddr}[near end]{T\eta'} \ar[Rightarrow, bend right=20, ""{name=TCOMP, pos=0.4}]{ddr}[near end, swap, yshift=0.2ex, xshift=0.8ex]{T(\lambda\circ\eta)} && T' \nat{dd}{T'\eta'} 
 \ar[triple, shorten <= 2em, shorten >= 2em]{ddl}{\lambda_{\eta'}^{-1}} \\ \\
 TT \nat{r}[swap]{T\lambda} 
 \ar[triple, from=TCOMP, swap, shorten <=1em, shorten >=1em, near start]{}[inner sep=-0.5ex]{\kappa^{-1}}
 & TT' \nat{r}[swap]{\lambda T'} & T'T'
 \ar[triple, from=TETAP, to=TCOMP, shorten <=0.5em, "Tu", pos=0.6]
\end{tikzcd}
$$
associativity is the following commutative cube,
$$
\begin{tikzcd}[column sep=small]
 TTT \ar[Rightarrow]{rr}{\lambda\lambda\lambda} \ar[Rightarrow]{dd}[swap]{\mu T} && T'T'T' \ar[Rightarrow]{dd}{\mu'T'} \ar[Rightarrow]{dr}{T'\mu'} 
 \ar[triple, shorten <=2em, shorten >=2em]{ddll}{\widehat{mT'\mu}} \\
 &&& T'T' \ar[Rightarrow]{dd}{\mu'}
 \ar[triple, shorten <=0.5em, shorten >=0.5em]{dl}{a'} \\
 TT \ar[Rightarrow]{dr}[swap]{\mu} \ar[Rightarrow]{rr}{\lambda\lambda} && T'T' \ar[Rightarrow]{dr}{\mu'} 
 \ar[triple, shorten <=0.5em, shorten >=0.5em]{dl}{m} \\
 & T \ar[Rightarrow]{rr}[swap]{\lambda} && T'
\end{tikzcd}
\qquad=\qquad
\begin{tikzcd}[column sep=small]
 TTT \ar[Rightarrow]{dr}[near end]{T\mu} \ar[Rightarrow]{rr}{\lambda\lambda\lambda} \ar[Rightarrow]{dd}[swap]{\mu T} && T'T'T' \ar[Rightarrow]{dr}{T'\mu'}
 \ar[triple, shorten <=0.5em, shorten >=0.5em]{dl}{\widehat{Tm\lambda}} \\
 & TT \ar[Rightarrow]{dd}{\mu} \ar[Rightarrow]{rr}[swap]{\lambda\lambda} 
 \ar[triple, shorten <=0.5em, shorten >=0.5em]{dl}{a} && T'T' \ar[Rightarrow]{dd}{\mu'} 
 \ar[triple, shorten <=2em, shorten >=2em]{ddll}{m} \\
 TT \ar[Rightarrow]{dr}[swap]{\mu} \\
 & T \ar[Rightarrow]{rr}[swap]{\lambda} && T'
\end{tikzcd}
$$
where $\widehat{mT'\mu}$ and $\widehat{Tm\lambda}$ denote the following compositions, respectively,
$$
\begin{tikzcd}[row sep=large]
 TTT \nat{dd}[swap]{\mu T} \nat{r}{TT\lambda} & TTT' \nat{r}{T\lambda T'} \nat{dd}{\mu T'} 
 \ar[triple, shorten <=2em, shorten >=2em]{ddl}{\mu_\lambda}
 & TT'T' \nat{r}{\lambda T'T'} & T'T'T' \nat{dd}{\mu'T'} 
 \ar[triple, shorten <=3em, shorten >=3em]{ddll}{m T'} \\ \\
 TT \nat{r}[swap]{T\lambda} & TT' \nat{rr}[swap]{\lambda T'} && T'T'
\end{tikzcd}
$$
and
$$
\begin{tikzcd}[row sep=large]
 TTT \nat{dd}[swap]{T\mu} \nat{r}{TT\lambda} 
 \ar[Rightarrow, bend left=20, ""{name=TCUP, above, near end}, ""{name=TCDOWN, below}]{ddrr}[pos=0.3, inner sep=0ex]{T(\mu'\circ\lambda T'\circ T\lambda)}
 \ar[Rightarrow, bend right=20, ""{name=TDUP, above}, ""{name=TDDOWN, below, near start}]{ddrr}[swap, pos=0.7, inner sep=0ex]{T(T\lambda\circ\mu)}
 & TTT' \nat{r}{T\lambda T'} & |[alias=TR]| TT'T' \nat{dd}{T\mu'} \nat{r}{\lambda T'T'} & T'T'T' \nat{dd}{T'\mu'}
 \ar[triple, shorten <=2em, shorten >=2em]{ddl}{\lambda_{\mu'}^{-1}} \\ \\
 |[alias=BL]| TT \nat{rr}[swap]{T\lambda} && TT' \nat{r}[swap]{\lambda T'} & T'T'
 \ar[triple, from=TR, to=TCUP, "\kappa\kappa", shorten <=0.5em, shorten >=0.5em]
 \ar[triple, from=TCDOWN, to=TDUP, "Tm", shorten <=0.3em, shorten >=0.3em]
 \ar[triple, from=TDDOWN, to=BL, "\kappa^{-1}", shorten <=0.5em, shorten >=0.5em]
\end{tikzcd}
$$

Finally, 2-cells of pseudomonads are defined as follows.

\begin{deph}\label{monad2cell}
 Let $\cat{K}$ be a strict 2-category. Let $(T,\mu,\eta,\ell,r,a)$ and $(T',\mu',\eta',\ell',r',a')$ be pseudomonads on $\cat{K}$. Let $(\lambda, u, m)$ and $(\xi,v,n)$ be (pseudo)morphisms $T\to T'$. A \emph{2-cell of monads} is a modification 
 $$
 \begin{tikzcd}
  T \ar[Rightarrow, bend left, ""{name=L,below}]{rr}{\lambda} \ar[Rightarrow, bend right, ""{name=LP,above}]{rr}[swap]{\xi} && T'
  \ar[triple, from=L, to=LP, "t"]
 \end{tikzcd}
 $$
 such that the following diagrams commute,
 $$
 \begin{tikzcd}[row sep=small]
  & \lambda\circ\eta \ar[triple]{dd}{t\,\eta} \\
  \eta' \ar[triple]{ur}{u} \ar[triple]{dr}[swap]{v} \\
  & \xi\circ \eta
 \end{tikzcd}
 \qquad
 \begin{tikzcd}[row sep=large]
  \mu'\circ (\lambda\lambda) \ar[triple]{r}{m} \ar[triple]{d}{\mu\,(tt)} & \lambda\circ\mu \ar[triple]{d}{t\,\mu} \\
  \mu'\circ (\xi\xi) \ar[triple]{r}{n} & \xi\circ\mu
 \end{tikzcd}
 $$
 which we call ``unit'' and ``multiplication'' conditions, respectively.
\end{deph}

As above, it may be helpful to write the conditions 2-dimensionally. The unit condition forms the following commutative ``cone'',
$$
  \begin{tikzcd}[row sep=large, column sep=small]
   & \id_\cat{K} \ar[Rightarrow, ""{name=ETAP, pos=0.3}]{dr}{\eta'} \ar[Rightarrow]{dl}[swap]{\eta} & \\
   |[alias=T]| T \ar[Rightarrow, bend left, ""{name=L,below}]{rr}[pos=0.56]{\lambda} \ar[Rightarrow, bend right, ""{name=LP,above}]{rr}[swap]{\xi} && T'
  \ar[triple, from=L, to=LP, "t"]
   \ar[triple, from=ETAP, to=T, "u", shorten <= 1.2em, shorten >= 2em, swap, near start, shift right=1]
  \end{tikzcd}
  \qquad=\qquad
  \begin{tikzcd}[row sep=large, column sep=small]
   & \id_\cat{K} \ar[Rightarrow, ""{name=ETAP}]{dr}{\eta'} \ar[Rightarrow]{dl}[swap]{\eta} & \\
   |[alias=T]| T \ar[Rightarrow, bend right]{rr}[swap]{\xi}  && T'
   \ar[triple, from=ETAP, to=T, "v", shorten <= 1.2em, shorten >= 1.5em, shift left]
  \end{tikzcd}
$$
and the multiplication condition forms the following commutative ``cylinder'',
$$
  \begin{tikzcd}[sep=large]
   TT \ar[Rightarrow, bend left]{r}{\lambda\lambda} \ar[Rightarrow]{d}[swap]{\mu} & |[alias=TPTP]| T'T' \ar[Rightarrow]{d}{\mu'} \\
  |[alias=T]|  T \ar[Rightarrow, bend left, ""{name=L,below}]{r}{\lambda} \ar[Rightarrow, bend right, ""{name=LP,above}]{r}[swap]{\xi}  & T'
   \ar[triple, from=TPTP, to=T, "m", shorten <= 1.5em, shorten >= 2em, swap, shift right=4, pos=0.4]
   \ar[triple, from=L, to=LP, "t"]
  \end{tikzcd}
  \qquad=\qquad
  \begin{tikzcd}[sep=large]
   TT \ar[Rightarrow, bend left, ""{name=L,below, pos=0.54}]{r}{\lambda\lambda} \ar[Rightarrow, bend right, ""{name=LP,above}]{r}[swap]{\xi\xi} \ar[Rightarrow]{d}[swap]{\mu} & |[alias=TPTP]| T'T' \ar[Rightarrow]{d}{\mu'} \\
  |[alias=T]| T  \ar[Rightarrow, bend right]{r}[swap]{\xi} & T' 
   \ar[triple, from=L, to=LP, "tt"]
   \ar[triple, from=TPTP, to=T, "n", shorten <= 2em, shorten >= 1.5em, shift left=4]
  \end{tikzcd}
  $$
In line with our usual convention for horizontal composition, $tt$ denotes the following 3-cell.
$$
\begin{tikzcd}[sep=large]
    TT \ar[Rightarrow, bend left, ""{name=TL, below}]{r}{T\lambda}
    \ar[Rightarrow, bend right, ""{name=BL, above}]{r}[swap]{T\lambda}
    & TT' \ar[Rightarrow, bend left, ""{name=TR, below}]{r}{\lambda T'} 
    \ar[Rightarrow, bend right, ""{name=BR, above}]{r}[swap]{\lambda T'}
    & T'T' 
    \ar[triple, from=TL, to=BL, "Tt"]
    \ar[triple, from=TR, to=BR, "t T'"]
\end{tikzcd}
$$

\subsection{Pseudoalgebras and their morphisms}\label{secpseudoalg}

\begin{deph}\label{defpseudoalgebra}
 Let $(T,\mu,\eta,\ell,r,a)$ be a pseudomonad on $\cat{K}$. A \emph{pseudoalgebra} over the pseudomonad $T$ consists of
 \begin{itemize}
  \item an object $A$ of $\cat{K}$, together with
  \item a morphism $e:TA\to A$, and
  \item invertible 2-cells
  $$
  \begin{tikzcd}[sep=large]
   A \ar[""{name=ID}]{dr}[swap]{\id} \ar{r}{\eta_A} & TA 
   \ar[Rightarrow, to=ID, "\iota", shorten >= 0.1em, shorten <= 0.3em] \ar{d}{e} \\
   & A
  \end{tikzcd}
  \qquad\mbox{and}\qquad
  \begin{tikzcd}[sep=large]
   TTA \ar{d}[swap]{\mu_A} \ar{r}{Te} & TA \ar{d}{e}
   \ar[Rightarrow, shorten >= 1.2em, shorten <= 1.2em]{dl}{\gamma}\\
   TA \ar{r}[swap]{e} & A
  \end{tikzcd}
  $$
  such that
  \item the following diagrams commute,
  $$
  \begin{tikzcd}[column sep=small]
   e \circ Te \circ T\eta_A  \ar[Rightarrow]{rr}{\gamma \, (T\eta_A)} \ar[Rightarrow,swap]{dr}{e \, (\widehat{T\iota})} && e \circ\mu_A \circ T\eta_A  \\ 
   & e \circ \id_{TA} \ar[Rightarrow]{ur}[swap]{e \, r_A}
  \end{tikzcd}
  $$
  $$
  \begin{tikzcd}[column sep=tiny]
   & e\circ Te \circ TTe \ar[Rightarrow]{dl}[swap]{\gamma\,(TTe)} \ar[Rightarrow]{dr}{e\,(\widehat{T\gamma})}\\
   e\circ\mu_A\circ TTe \ar[Rightarrow]{dd}[swap]{e\,\mu_e} && e\circ Te\circ T\mu_A \ar[Rightarrow]{dd}{\gamma\,(T\mu_A)} \\ \\
   e\circ Te\circ\mu_{TA} \ar[Rightarrow]{dr}[swap]{\gamma\,\mu_A} && e\circ\mu_A\circ T\mu_A \ar[Rightarrow]{dl}{e\,a_A} \\
   & e\circ\mu\circ \mu_{TA}
  \end{tikzcd}
  $$
  which we call unit and multiplication conditions, respectively, where $\widehat{T\iota} = \upsilon^{-1}\circ T\iota\circ\kappa$, and $\widehat{T\gamma}=\kappa^{-1}\circ T\gamma\circ \kappa$.
 \end{itemize}
 We call the pseudoalgebra \emph{normal} if the 2-cell $\iota$ is the identity.
\end{deph}

We can write the coherence diagrams 2-dimensionally as well, as follows. Here is the unit condition,
$$
\begin{tikzcd}[column sep=small]
 TA \ar{rr}{T\eta} \ar{dd}[swap]{T\eta} \ar{ddrr}[near end]{\id} && |[alias=TTTUR]| TTA \ar{dr}{Te} \ar{dd}{Te} \\
 & |[alias=ID]| && TA \ar{dd}{e} \\
 |[alias=TTTDL]| TTA \ar{dr}[swap]{\mu} \ar{rr}[swap]{\mu} && TA \ar{dr}{e} \\
 & TA \ar{rr}[swap]{e} && A
 \ar[Rightarrow, from=TTTUR, to=ID, "\widehat{T\iota}", shorten <= 0.5em, shorten >= 0.5em, swap]
 \ar[Rightarrow, from=ID, to=TTTDL, "r", shorten <= 0.5em, shorten >= 0.5em, swap]
\end{tikzcd}
\qquad=\qquad
\begin{tikzcd}[column sep=small]
 TA \ar{rr}{T\eta} \ar{dd}[swap]{T\eta} \ar{dr}[swap, near end]{T\eta} && TTA \ar{dr}{Te} \\
 & TTA \ar{rr}{Te} \ar{dd}{\mu} && |[alias=TTUR]| TA \ar{dd}{e} \\
 TTA \ar{dr}[swap]{\mu} \\
 & |[alias=TTDL]|  TA \ar{rr}[swap]{e} && A
    \ar[Rightarrow, from=TTUR, to=TTDL, "\gamma", shorten <= 2em, shorten >= 2em]
\end{tikzcd}
$$
where the unfilled 2-cells are identities, and $\widehat{T\iota}$ denotes the following composite cell,
 $$
 \begin{tikzcd}[sep=large]
  & |[alias=TTT]| TTA \ar{ddr}{Te} \\ \\
  TA \ar{uur}{T\eta} 
  \ar[bend left=35, ""{name=COMP}]{rr}[swap, pos=0.1, yshift=0.5ex]{T(e\circ \eta)}
  \ar[""{name=TID}]{rr}[near start, swap]{T\id}
  \ar[bend right=35, ""{name=ID}]{rr}[swap]{\id} 
  && TA
  \ar[Rightarrow, from=TTT, to=COMP, "\kappa", shorten <= 0.5em, shorten >= 0em]
  \ar[Rightarrow, from=COMP, to=TID, "T\iota", shorten <= 0.5em, shorten >= 0em]
  \ar[Rightarrow, from=TID, to=ID, "\upsilon^{-1}", shorten <= 0.5em, shorten >= 0em]
 \end{tikzcd}
 $$
and here is the multiplication condition,
$$
   \begin{tikzcd}[column sep=small]
    TTTA \ar{rr}{TTe} \ar{dd}[swap]{\mu} && |[alias=TTTUR]| TTA \ar{dr}{Te} \ar{dd}{\mu} \\
    &&& |[alias=TTUR]| TA \ar{dd}{e} \\ 
    |[alias=TTTDL]| TTA \ar{rr}{Te} \ar{dr}[swap]{\mu} && |[alias=TT]| TA \ar{dr}{e} \\
    & |[alias=TTDL]| TA \ar{rr}[swap]{e} && A
    \ar[Rightarrow, from=TTUR, to=TT, "\gamma", shorten <= 0.5em, shorten >= 0.5em]
    \ar[Rightarrow, from=TT, to=TTDL, "\gamma", shorten <= 0.5em, shorten >= 0.5em]
    \ar[Rightarrow, from=TTTUR, to=TTTDL, "\mu_e", shorten <= 2em, shorten >= 2em]
   \end{tikzcd}
      \qquad=\qquad
   \begin{tikzcd}[column sep=small]
    TTTA \ar{rr}{TTe} \ar{dd}[swap]{\mu} \ar{dr}[swap]{T\mu} && |[alias=TTTUR]| TTA \ar{dr}{Te} \\
    & |[alias=TTT]| TTA \ar{rr}{Te} \ar{dd}[swap]{\mu} && |[alias=TTUR]| TA \ar{dd}{e} \\ 
    |[alias=TTTDL]| TTA \ar{dr}[swap]{\mu} \\
    & |[alias=TTDL]| TA \ar{rr}[swap]{e} && A
    \ar[Rightarrow, from=TTTUR, to=TTT, "\widehat{T\gamma}", shorten <= 0.5em, shorten >= 0.5em, swap]
    \ar[Rightarrow, from=TTT, to=TTTDL, "a", shorten <= 0.5em, shorten >= 0.5em, swap]
   \ar[Rightarrow, from=TTUR, to=TTDL, "\gamma", shorten <= 2em, shorten >= 2em]
   \end{tikzcd}
  $$
  where $\widehat{T\gamma}$ denotes the following composite cell.
 $$
 \begin{tikzcd}[sep=large]
 & |[alias=TOP]| TTA \ar{dr}{Te} \\
 TTTA \ar{ur}{TTe} \ar{dr}[swap]{T\mu}
 \ar[bend left=20, ""{name=COMPUP}]{rr}[swap, pos=0.1, yshift=0.4ex]{T(e\circ Te)}
 \ar[bend right=20, ""{name=COMPDOWN}]{rr}[pos=0.1, yshift=-0.6ex]{T(e\circ\mu)}
 && TA \\
 & |[alias=BOTTOM]| TTA \ar{ur}[swap]{Te}
 \ar[Rightarrow, from=TOP, to=COMPUP, "\kappa"]
 \ar[Rightarrow, from=COMPUP, to=COMPDOWN, "T\gamma", shorten <=0.5em]
 \ar[Rightarrow, from=COMPDOWN, to=BOTTOM, "\kappa^{-1}", shorten <=0.5em]
 \end{tikzcd}
 $$

\begin{deph}\label{defmorphalg}
 Let $(A,e_A,\iota_A,\gamma_A)$ and $(B,e_B,\iota_B,\gamma_B)$ be pseudoalgebras over the pseudomonad $(T,\mu,\eta,\ell,r,a)$. A \emph{(strong) morphism of pseudoalgebras} consists of
 \begin{itemize}
  \item a morphism $f:A\to B$ of $\cat{K}$, together with 
  \item an invertible 2-cell 
  $$
  \begin{tikzcd}
   TA \ar{d}[swap]{e_A} \ar{r}{Tf} & TB \ar{d}{e_B}
   \ar[Rightarrow, shorten <= 1em, shorten >= 1em]{dl}{\phi} \\
   A \ar{r}[swap]{f} & B
  \end{tikzcd}
  $$
  such that
  \item the following diagrams commute,
  $$
  \begin{tikzcd}
  e_B\circ\eta_B\circ f \ar{d}{\iota_B} \ar{r}{e_B\,\eta_f} & e_B\circ Tf\circ\eta_A \ar{d}{\phi\,\eta_A} \\ 
  f & f\circ e_A\circ\eta_A \ar{l}{f\,\iota_A}
  \end{tikzcd}
  \qquad
  \begin{tikzcd}[row sep=small, column sep=-1em]
   & e_B\circ Te_B\circ TTf \ar{dr}[near end]{e_B\,(\widehat{T\phi})} \ar{dl}[swap, near end]{\gamma_B\,(TTf)} \\
   e_B\circ\mu_B\circ TTf \ar{dd}[swap]{e_B\,\mu_f} && e_B\circ Tf\circ Te_A \ar{dd}{\phi\,(Te_A)} \\ \\
   e_B\circ Tf \circ \mu_A \ar{dr}[swap]{\phi\,\mu_A} && f\circ e_A\circ Te_A \ar{dl}{f\,\gamma_A} \\
   & f\circ e_A\circ\mu_A
  \end{tikzcd}
  $$
  which we call the unit and multiplication condition, respectively, where $\widehat{T\phi}=\kappa^{-1}\circ T\phi\circ \kappa$.
 \end{itemize}
\end{deph}
As above, it may be helpful to draw the coherence conditions in a 2-dimensional way. The unit condition is the following commutative prism,
$$
\begin{tikzcd}[sep=large]
 & B \ar{r}{\eta} \ar[""{name=ID, above}]{dr}[swap]{\id} & TB \ar{d}{e_B}
 \ar[Rightarrow, to=ID, "\iota_B", shorten <=0.3em, shorten >=0.3em] \\
 A \ar{ur}{f}  \ar{dr}[swap]{\id} && B \\
 & A \ar{ur}[swap]{f}
\end{tikzcd}
\qquad=\qquad
\begin{tikzcd}[sep=large]
 & B \ar{r}{\eta}  
 \ar[Rightarrow, shorten <=0.7em, shorten >=0.7em]{d}{\eta_f}
 & TB \ar{d}{e_B}
 \ar[Rightarrow, shorten <=3em, shorten >=3em]{ddl}{\phi}
  \\
 A \ar{ur}{f} \ar{r}{\eta} \ar[""{name=ID, above}]{dr}[swap]{\id} & TA \ar{ur}{Tf} \ar{d}{e_A}
 \ar[Rightarrow, to=ID, "\iota_A", shorten <=0.3em, shorten >=0.3em]
 & B \\
 & A \ar{ur}[swap]{f}
\end{tikzcd}
$$
where again the unfilled 2-cell is an identity, and the multiplication condition is the following commutative cube,
$$
\begin{tikzcd}[column sep=small]
 TTA \ar{dd}[swap]{\mu} \ar{rr}{TTf} && TTB \ar{dd}{\mu} \ar{dr}{Te_B} 
 \ar[Rightarrow, shorten <=2em, shorten >=2em]{ddll}{\mu_f} 
 \\
 &&& TB \ar{dd}{e_B} 
 \ar[Rightarrow, shorten <=0.5em, shorten >=0.5em]{dl}{\gamma_B}
 \\
 TA \ar{rr}{Tf} \ar{dr}[swap]{e_A} && TB \ar{dr}{e_B} 
 \ar[Rightarrow, shorten <=0.5em, shorten >=0.5em]{dl}{\phi} \\
 & A \ar{rr}[swap]{f} && B
\end{tikzcd}
\qquad=\qquad
\begin{tikzcd}[column sep=small]
 TTA \ar{dr}[swap]{Te_A} \ar{dd}[swap]{\mu} \ar{rr}{TTf} && TTB \ar{dr}{Te_B} \ar[Rightarrow, shorten <=0.5em, shorten >=0.5em]{dl}{\widehat{T\phi}}  \\
 & TA \ar{rr}[swap]{Tf} \ar{dd}{e_A} 
 \ar[Rightarrow, shorten <=0.5em, shorten >=0.5em]{dl}{\gamma_A}
 && TB \ar{dd}{e_B}
 \ar[Rightarrow, shorten <=2em, shorten >=2em]{ddll}{\phi} \\
 TA \ar{dr}[swap]{e_A}  \\
 & A \ar{rr}[swap]{f} && B
\end{tikzcd}
$$
where $\widehat{T\phi}$ denotes the following composite cell.
 $$
 \begin{tikzcd}[sep=large]
 & |[alias=TOP]| TTB \ar{dr}{Te_B} \\
 TTA \ar{ur}{TTf} \ar{dr}[swap]{Te_A}
 \ar[bend left=20, ""{name=COMPUP}]{rr}[swap, pos=0.1, yshift=0.4ex]{T(e_B\circ Tf)}
 \ar[bend right=20, ""{name=COMPDOWN}]{rr}[pos=0.1, yshift=-0.6ex]{T(f\circ e_A)}
 && TB \\
 & |[alias=BOTTOM]| TTA \ar{ur}[swap]{Tf}
 \ar[Rightarrow, from=TOP, to=COMPUP, "\kappa"]
 \ar[Rightarrow, from=COMPUP, to=COMPDOWN, "T\phi", shorten <=0.5em]
 \ar[Rightarrow, from=COMPDOWN, to=BOTTOM, "\kappa^{-1}", shorten <=0.5em]
 \end{tikzcd}
 $$

\begin{deph}\label{alg2cell}
 Let $(A,e_A,\iota_A,\gamma_A)$ and $(B,e_B,\iota_B,\gamma_B)$ be pseudoalgebras over the pseudomonad $(T,\mu,\eta,\ell,r,a)$. Let $(f,\phi)$ and $(g,\chi)$ be strong morphisms of pseudoalgebras. A \emph{2-cell of pseudoalgebras} is a 2-cell
 $$
 \begin{tikzcd}[sep=large]
  A \ar[bend left, ""{name=F, below}]{r}{f} \ar[bend right, ""{name=FP, above}]{r}[swap]{g} & B
  \ar[Rightarrow, from=F, to=FP, "\alpha"]
 \end{tikzcd}
 $$
 such that the following diagram commutes.
 $$
 \begin{tikzcd}
  e_B\circ Tf \nat{r}{\phi} \nat{d}{e_B\,T\alpha} & f\circ e_A \nat{d}{\alpha\,e_A} \\
  e_B\circ Tg \nat{r}{\chi} & g\circ e_A
 \end{tikzcd}
 $$
\end{deph}

If we draw the condition in a 2-dimensional way, we get the following commutative ``cylinder'':
$$
 \begin{tikzcd}[sep=large]
  TA \ar[bend left]{r}{Tf} \ar{d}[swap]{e_A} & TB  \ar{d}{e_B} \ar[Rightarrow, shorten <= 1em, shorten >= 2em, shift right=4, pos=0.4]{dl}[swap]{\phi}
   \\
  A \ar[bend left, ""{name=F, below}]{r}{f} \ar[bend right, ""{name=FP, above}]{r}[swap]{g} & B
  \ar[Rightarrow, from=F, to=FP, "\alpha"]
 \end{tikzcd}
 \qquad=\qquad
 \begin{tikzcd}[sep=large]
  TA \ar[bend left, ""{name=F, below, pos=0.52}]{r}{Tf} \ar[bend right, ""{name=FP, above}]{r}[swap]{Tg} \ar{d}[swap]{e_A} & TB \ar{d}{e_B}
  \ar[Rightarrow, from=F, to=FP, "T\alpha"] \ar[Rightarrow, shorten <= 2em, shorten >= 1em, shift left=4, pos=0.55]{dl}{\chi} \\
  A  \ar[bend right]{r}[swap]{g} & B
 \end{tikzcd}
$$

It is immediate from the definitions to check that, given any object $X$ of $\cat{K}$, the object $TX$ is canonically a pseudoalgebra, with structure morphism $\mu_X:TTX\to TX$ and 2-cells $\ell_X:\mu_X\circ\eta_{TX}\Rightarrow \id_{TX}$ and $a_X:\mu_X\circ T\mu_X\Rightarrow \mu_X\circ\mu_{TX}$. We call this a ``free algebra'', analogously to the 1-dimensional case.
Moreover, by naturality of $\mu$, for any morphism $f:X\to Y$ of $\cat{K}$, the morphism $Tf:TX\to TY$ gives a morphism of pseudoalgebras. Similarly, for every $f,g:X\to Y$ and $\alpha:f\Rightarrow g$, the 2-cell $T\alpha:Tf\Rightarrow Tg$ is automatically a 2-cell of algebras.

\subsection{Restriction of scalars for pseudomonads}\label{restrictionofscalars}

It is very well known that in the one-dimensional context, a morphism of monads induces a pullback functor between the algebras, sometimes named ``restriction of scalars'' after its instance in ring theory \cite[Theorem~3 in Section~3.6]{ttt}.
A similar phenomenon occurs in two dimensions, as follows. We use this statement in \Cref{pbfunctor}, see there also for further context.

\begin{thm}\label{rescalthm}
 Let $\cat{K}$ be a (strict) 2-category. Let $(T,\mu,\eta,\ell,r,a)$ and $(T',\mu',\eta',\ell',r',a')$ be pseudomonads on $\cat{K}$, and let $(\lambda, u, m)$ be a pseudomorphism of monads from $T$ to $T'$. 
 Each $T'$-pseudoalgebra $(A,e',\iota',\gamma')$, defines canonically a $T$-pseudoalgebra structure on $A$ with the following structure 1- and 2-cells.
 \begin{equation}\label{drescal}
 e\coloneqq 
 \begin{tikzcd}
  TA \ar{d}{\lambda_A} \\
  T'A \ar{d}{e'} \\
  A
 \end{tikzcd}
 \qquad
 \iota\coloneqq
 \begin{tikzcd}[column sep=large]
  & |[alias=TA]| TA \ar{d}{\lambda_A} \\
  A \ar[""{name=ID,above}]{dr}[swap]{\id} \ar[""{name=ETAP,above}]{r}[swap]{\eta'_A} \ar{ur}{\eta_A} & |[alias=TPA]| T'A \ar{d}{e'} \\
  & A
  \ar[Rightarrow, from=TA, to=ETAP, "u^{-1}_A", shorten <= 0.8em, shorten >= 0.5em, outer sep=-2pt]
  \ar[Rightarrow, from=TPA, to=ID, "\iota'", shorten <= 0.2em, shorten >= 0.2em]
 \end{tikzcd}
 \qquad
 \gamma\coloneqq
 \begin{tikzcd}
  TTA \ar{r}{T\lambda_A} \ar{dd}[swap]{\mu_A} & TT'A \ar{d}[swap]{\lambda_{T'A}} \ar{r}{Te'}
  \ar[Rightarrow, shorten <= 2em, shorten >= 2em, swap]{ddl}{m} & TA \ar{d}{\lambda_A} 
  \ar[Rightarrow, shorten <= 0.5em, shorten >= 0.5em, swap]{dl}{\lambda_{e'}}
  \\
  & T'T'A \ar{d}[swap]{\mu'_A} \ar{r}{T'e'} & T'A \ar{d}{e'}
  \ar[Rightarrow, shorten <= 0.5em, shorten >=0.5em ]{dl}{\gamma'}
  \\
  TA \ar{r}[swap]{\lambda_A} & T'A \ar{r}[swap]{e'} & A
 \end{tikzcd}
 \end{equation}
 Moreover, this construction defines a 2-functor between the categories of pseudoalgebras $\lambda^*:\cat{K}^{T'}\to\cat{K}^T$.
\end{thm}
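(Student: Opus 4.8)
The plan is to proceed in two stages. First I would verify that the triple $(e,\iota,\gamma)$ of \eqref{drescal} really satisfies the axioms of a $T$-pseudoalgebra in the sense of \Cref{defpseudoalgebra}; then I would check that the assignment $(A,e',\iota',\gamma')\mapsto(A,e,\iota,\gamma)$ extends to strong morphisms and to 2-cells, and that the resulting $\lambda^*$ is a strict 2-functor $\cat{K}^{T'}\to\cat{K}^T$.

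For the first stage I would begin by observing that $\iota$ and $\gamma$ are invertible, since each is a paste of invertible 2-cells: the modification components $u$ and $m$ are invertible by hypothesis on the morphism of pseudomonads (\Cref{defmorphmonads}), $\iota'$ and $\gamma'$ are invertible by hypothesis on the $T'$-algebra, and the cells $\lambda_{e'}$, $\lambda_{\mu'}$ are invertible because $\lambda$ is pseudonatural. The substance is then the two coherence laws of \Cref{defpseudoalgebra}. I would expand both sides of each law by substituting the definitions of $\iota$ and $\gamma$ from \eqref{drescal}, so that each law becomes an equality of pastings whose constituent cells are $u$, $m$, $\iota'$, $\gamma'$, the unitors and associator $\ell,r,a$ of $T$, and the pseudonaturality squares of $\lambda$. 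The unit law should reduce to a combination of the left and right unitality conditions of $(\lambda,u,m)$ together with the unit condition of the $T'$-algebra. The multiplication law is the heavy one: I expect it to require the associativity condition of $(\lambda,u,m)$, the multiplication condition of the $T'$-algebra, and repeated use of the pseudonaturality of $\lambda$ (in particular the interchange 3-cells relating $\lambda_{e'}$ and $\lambda_{\mu'}$) in order to transport the $\lambda$-components coherently past the $e'$- and $\mu$-components.

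For the second stage, given a strong morphism $(f,\phi)\colon(A,e'_A)\to(B,e'_B)$ of $T'$-algebras I would equip $f$ with the 2-cell obtained by pasting the pseudonaturality square $\lambda_f$ with $\phi$ along the underlying commuting square of $1$-cells
$$
\begin{tikzcd}
TA \arrow[r, "\lambda_A"] \arrow[d, "Tf"'] & T'A \arrow[r, "e'_A"] \arrow[d, "T'f"] & A \arrow[d, "f"] \\
TB \arrow[r, "\lambda_B"'] & T'B \arrow[r, "e'_B"'] & B
\end{tikzcd}
$$
with $\lambda_f$ filling the left square and $\phi$ the right one. After expanding $\iota$ and $\gamma$, the unit and multiplication conditions of \Cref{defmorphalg} for this composite should reduce to the corresponding conditions for $\phi$ as a $T'$-morphism, supplemented by the modification axiom for $\phi$ and the pseudonaturality of $\lambda_f$. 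On $2$-cells I would set $\lambda^*\alpha=\alpha$ and verify the single condition of \Cref{alg2cell}; expanding the two pasted cells, the required identity collapses to the original $T'$-algebra condition for $\alpha$ once one invokes the naturality of the pseudonaturality square of $\lambda$ with respect to $\alpha$.

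Finally I would check strict $2$-functoriality, namely preservation of identities and of composition for both $1$-cells and $2$-cells. Preservation of identities is immediate from normality of $\lambda$ (its square at an identity is an identity), and preservation of $2$-cell composition is formal since $\lambda^*$ acts as the identity on underlying $2$-cells. The one genuinely delicate point is preservation of composition of morphisms, where the coherence data of the pseudonatural transformation $\lambda$ relating $\lambda_{g\circ f}$ to $\lambda_g$ and $\lambda_f$ must be matched against the composite of the induced algebra cells; I expect this bookkeeping, together with the multiplication coherence law already established in the first stage, to be the main obstacle of the whole argument. Once these pastings are seen to agree, the claim that $\lambda^*\colon\cat{K}^{T'}\to\cat{K}^T$ is a $2$-functor follows.
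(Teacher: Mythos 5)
Your proposal follows essentially the same route as the paper's proof: substitute \eqref{drescal} into the pseudoalgebra coherence laws and decompose the resulting pastings into regions governed by the unit/associativity conditions of $(\lambda,u,m)$, the $T'$-algebra axioms, pseudonaturality of $\lambda$, and interchange; equip a $T'$-morphism $(f,\phi)$ with the pasting of $\lambda_f$ and $\phi$; send 2-cells to themselves; and conclude 2-functoriality from the composition and unity axioms of the pseudonatural transformation $\lambda$. The only slips are cosmetic bookkeeping predictions (the paper's decompositions invoke the modification properties of $u$ and $m$, not a ``modification axiom for $\phi$'', and only the right unitality of $(\lambda,u,m)$ enters the unit law), which the diagram chase you outline would surface and correct anyway.
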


In analogy with the 1-dimensional case, we call $\lambda^*$ the \emph{restriction of scalar} functor.

\begin{proof}
The unit diagram for $(A,e,\iota,\gamma)$, obtained by plugging \eqref{drescal} into the unit diagram of \Cref{defpseudoalgebra}, can be decomposed in the following way, where the whiskerings have been suppressed for reasons of space.
$$
\begin{tikzpicture}[baseline= (a).base]
 \node[scale=0.55] (a) at (0,0){
\begin{tikzcd}[column sep=tiny, row sep=large]
    {e'\circ\lambda_A\circ Te'\circ T\lambda_A\circ T\eta_A} &&[+10pt]& {e'\circ T'e'\circ(\lambda\lambda)_A\circ T\lambda_A} &&[-20pt]& {e'\circ \mu'_A\circ (\lambda\lambda)_A\circ T\lambda_A} &&& {e'\circ\lambda_A\circ\mu_A\circ T\eta_A} &&&\\
	&& {e'\circ\lambda_A\circ Te'\circ T\eta'_A} &&  {e'\circ T'e'\circ\lambda_{T'A}\circ T\eta'_A} && {e'\circ \mu'_A\circ\lambda_{T'A}\circ T\eta'_A} \\
	&&&&& {e'\circ T'e'\circ T'\eta'_A\circ T\lambda_A} & {e'\circ\mu'_A\circ T\eta'_A\circ\lambda_A} \\
	\\
	&&&&&& {e'\circ\lambda_A} 
	\arrow[Rightarrow, "{\gamma'}", from=1-4, to=1-7]
	\arrow[Rightarrow, "{\lambda_{e'}}", from=1-1, to=1-4]
	\arrow[Rightarrow, "{m_A}", from=1-7, to=1-10]
	\arrow[Rightarrow, "{Tu^{-1}_A\circ\kappa}"', from=1-1, to=2-3]
	\arrow[Rightarrow, "{Tu^{-1}_A\circ\kappa}", from=1-4, to=2-5, near end]
	\arrow[Rightarrow, "{\lambda_{e'}}", from=2-3, to=2-5]
	\arrow[Rightarrow, "{\lambda_{\eta'}}", from=2-5, to=3-6, near end]
	\arrow[Rightarrow, "{\gamma'}", from=2-5, to=2-7]
	\arrow[Rightarrow, "{Tu^{-1}_A\circ\kappa}", from=1-7, to=2-7]
	\arrow[Rightarrow, "{\lambda_{\eta'}}", from=2-7, to=3-7]
	\arrow[Rightarrow, "{\gamma'}", from=3-6, to=3-7]
	\arrow[Rightarrow, "{r'_A}"', from=5-7, to=3-7]
	\arrow[Rightarrow, "{\widehat{T'\iota'_A}}", from=3-6, to=5-7]
	\arrow[Rightarrow, "{\widehat{T\iota'_A}}"', from=2-3, to=5-7]
	\arrow[Rightarrow, "{r_A}"', from=5-7, to=1-10]
\end{tikzcd}
};
\end{tikzpicture}
$$
Now,
\begin{itemize}
 \item the region on the right commutes by the right unit condition of $(\lambda,u,m)$, as in \Cref{defmorphmonads} (recall that all the arrows of the diagram are invertible);
 \item the triangle in the center bottom is the right unitality condition of the algebra $(A,e',\iota',\gamma')$, as in \Cref{defpseudoalgebra};
 \item the region on the bottom left commutes by pseudonaturality of $\lambda$;
 \item finally, the remaining parallelograms commute by the interchange law. 
\end{itemize}

The multiplication diagram for $(A,e,\iota,\gamma)$, analogously obtained by plugging \eqref{drescal} into the multiplication diagram of \Cref{defpseudoalgebra}, can be decomposed as follows, where again the whiskerings have been suppressed, and the hat denotes the suitable application of the compositors.
$$
\begin{tikzpicture}[baseline= (a).base]
 \node[scale=.58] (a) at (0,0){
 \begin{tikzcd}[column sep=-6em]
	&&& {e'\circ\lambda_A\circ Te'\circ T\lambda_A\circ TTe'\circ TT\lambda_A} \\
	&& {e'\circ Te'\circ(\lambda\lambda)_A\circ TTe'\circ TT\lambda_A} && {e'\circ\lambda_A\circ Te'\circ TTe'\circ T\lambda_{T'A}\circ TT\lambda_A} \\
	& {e'\circ \mu'_A\circ(\lambda\lambda)_A\circ TTe'\circ TT\lambda_A} && {e'\circ Te'\circ \lambda_{T'A}\circ TTe'\circ T\lambda_{T'A}\circ TT\lambda_A} && {e'\circ\lambda_A\circ Te'\circ T\mu'_A\circ T\lambda_{T'A}\circ TT\lambda_A} \\
	{e'\circ\lambda_A\circ\mu_A\circ TTe'\circ TT\lambda_A} && {e'\circ \mu'_A\circ \lambda_{T'A}\circ TTe'\circ T\lambda_{T'A}\circ TT\lambda_A} &&&& {e'\circ\lambda_A\circ Te'\circ T\lambda_A\circ T\mu_A} \\
	&&& {e'\circ Te'\circ T'T'e\circ (\lambda\lambda\lambda)_A} && {e'\circ T'e'\circ\lambda_{T'A}\circ T\mu'_A\circ T\lambda_{T'A}\circ TT\lambda_A} \\
	&& {e'\circ\mu'_A\circ T'T'e\circ (\lambda\lambda\lambda)_A} && {e'\circ T'e'\circ T'\mu'_A\circ (\lambda\lambda\lambda)_A} && {e'\circ T'e'\circ(\lambda\lambda)_A\circ T\mu_A} \\
	&&&&& {e'\circ\mu'_A\circ\lambda_{T'A}\circ T\mu'_A\circ T\lambda_{T'A}\circ TT\lambda_A} \\
	{e'\circ\lambda_A\circ Te'\circ\mu_{T'A}\circ TT\lambda_A} && {e'\circ T'e'\circ\mu'_{T'A}\circ (\lambda\lambda\lambda)_A} && {e'\circ \mu'_A\circ T'\mu'_A\circ (\lambda\lambda\lambda)_A} && {e'\circ \mu'_A\circ(\lambda\lambda)_A\circ T\mu_A} \\
	& {e'\circ T'e'\circ(\lambda\lambda)_A\circ\mu_{TA}} && {e'\circ \mu'_A\circ\mu'_{T'A}\circ (\lambda\lambda\lambda)_A} \\
	{e'\circ\lambda_A\circ Te'\circ T\lambda_a\circ\mu_{TA}} && {e'\circ \mu'_A\circ(\lambda\lambda)_A\circ\mu_{TA}} &&&& {e'\circ \lambda_A\circ\mu_A\circ T\mu_A} \\
	& {e'\circ T'e'\circ(\lambda\lambda)_A\circ\mu_{TA}} && {} \\
	&& {e'\circ \mu'_A\circ(\lambda\lambda)_A\circ\mu_{TA}} \\
	&&& {e'\circ\lambda_A\circ\mu_A\circ\mu_{TA}}
	\arrow[Rightarrow, "{\lambda_{e'}}"', from=1-4, to=2-3, near end]
	\arrow[Rightarrow, "{\gamma'}"', from=2-3, to=3-2, near end]
	\arrow[Rightarrow, "{m_A}"', from=3-2, to=4-1, near end]
	\arrow[Rightarrow, "{\widehat{T\lambda_{e'}}}", from=1-4, to=2-5, near end]
	\arrow[Rightarrow, "{\widehat{T\gamma'}}", from=2-5, to=3-6, near end]
	\arrow[Rightarrow, "{\widehat{Tm_A}}", from=3-6, to=4-7, near end]
	\arrow[Rightarrow, "{\widehat{T\lambda_{e'}}}", from=2-3, to=3-4, near end]
	\arrow[Rightarrow, "{\lambda_{e'}}"', from=2-5, to=3-4, near end]
	\arrow[Rightarrow, "{\widehat{T\lambda_{e'}}}", from=3-2, to=4-3, near end]
	\arrow[Rightarrow, "{m_A}"', from=12-3, to=13-4]
	\arrow[Rightarrow, "{\gamma'}"', from=11-2, to=12-3]
	\arrow[Rightarrow, "{\lambda_{e'}}"', from=10-1, to=11-2]
	\arrow[Rightarrow, "{\widehat{Tm_A}}", from=5-6, to=6-7, near end]
	\arrow[Rightarrow, "{\lambda_{e'}}"', from=3-6, to=5-6]
	\arrow[Rightarrow, "{\gamma'}"', from=5-6, to=7-6]
	\arrow[Rightarrow, "{\lambda_{\mu'}}", from=7-6, to=8-5]
	\arrow[Rightarrow, "{a'_A}", from=8-5, to=9-4]
	\arrow[Rightarrow, "{\gamma'}", from=8-3, to=9-4, near end]
	\arrow[Rightarrow, "{\lambda_{\mu'}}"', from=5-6, to=6-5, near end]
	\arrow[Rightarrow, "{\gamma'}"', from=6-5, to=8-5]
	\arrow[Rightarrow, "{\mu'_{e'}}"', from=6-3, to=8-3]
	\arrow[Rightarrow, "{\gamma'}"', from=5-4, to=6-3, near end]
	\arrow[Rightarrow, "{\lambda_{T'e'}}", from=3-4, to=5-4]
	\arrow[Rightarrow, "{\gamma'}"', from=3-4, to=4-3, near end]
	\arrow[Rightarrow, "{\lambda_{T'e'}}"', from=4-3, to=6-3]
	\arrow[Rightarrow, "{\widehat{T'\gamma'}}", from=5-4, to=6-5, near end]
	\arrow[Rightarrow, "{\widehat{Tm_A}}"', from=7-6, to=8-7]
	\arrow[Rightarrow, "{a_A}", from=10-7, to=13-4]
	\arrow[Rightarrow, "{\gamma'}", from=6-7, to=8-7]
	\arrow[Rightarrow, "{\lambda_{e'}}", from=4-7, to=6-7]
	\arrow[Rightarrow, "{m_A}", from=8-7, to=10-7]
	\arrow[Rightarrow, "{m_{T'A}}"', from=8-3, to=9-2, near end]
	\arrow[Rightarrow, "{\lambda_{e'}}", from=8-1, to=9-2, near end]
	\arrow[Rightarrow, "{\mu_\lambda}", from=9-2, to=11-2]
	\arrow[Rightarrow, "{m_{T'A}}", from=9-4, to=10-3]
	\arrow[Rightarrow, "{\mu_\lambda}", from=10-3, to=12-3]
	\arrow[Rightarrow, "{\gamma'}"', from=9-2, to=10-3]
	\arrow[Rightarrow, "{\mu_\lambda}"', from=8-1, to=10-1]
	\arrow[Rightarrow, "{\mu_{e'}}"', from=4-1, to=8-1]
\end{tikzcd}
};
\end{tikzpicture}
$$
Now,
\begin{itemize}
 \item the bottom right region commutes by the associativity condition of $(\lambda,u,m)$, as in \Cref{defmorphmonads};
 \item the top right hexagon commutes by pseudonaturality of $\lambda$;
 \item the center hexagon commutes by the multiplication condition of the algebra $(A,e',\iota',\gamma')$, as in \Cref{defpseudoalgebra};
 \item the top left hexagon commutes by the modification property for $m$;
 \item all the remaining parallelograms commute by the interchange law.
\end{itemize}

Therefore, $(A,e,\iota,\gamma)$ is a $T$-pseudoalgebra.

For functoriality, consider now $T'$-pseudoalgebras $(A,e'_A,\iota'_A,\gamma'_A)$ and $(B,e'_B,\iota'_B,\gamma'_B)$, and a morphism of $T'$-pseudoalgebras $(f,\phi)$ from $A$ to $B$. The composite 2-cell
$$
\begin{tikzcd}
 TA \ar{r}{Tf} \ar{d}[swap]{\lambda_A} & TB \ar{d}{\lambda_B} \ar[Rightarrow, "\lambda_f", shorten <= 1em, shorten >= 1em, swap]{dl} \\
 T'A \ar{r}{T'f} \ar{d}[swap]{e'_A} & T'B \ar{d}{e'_B} \ar[Rightarrow, "\phi", shorten <= 1em, shorten >= 1em]{dl} \\
 A \ar{r}[swap]{f} & B
\end{tikzcd}
$$
makes $f$ a morphism between the $T$-pseudoalgebra structures defined above. Indeed, the unit condition, obtained by plugging \eqref{drescal} into \Cref{defmorphalg}, can be decomposed as follows, again omitting the whiskering.
$$
\begin{tikzcd}[column sep=small]
	{e'_B\circ\lambda_B\circ\eta_B\circ f} && {e'_B\circ \lambda_B\circ Tf\circ\eta_A} \\
	{e'_B\circ\eta'_B\circ f} & {e'_B\circ T'f\circ\eta'_A} & {e'_B\circ T'f\circ\lambda_A\circ\eta_A} \\
	{f} & {f\circ e'_A\circ\eta'_A} & {f\circ e'_A\circ\lambda_A\circ\eta_A}
	\arrow[Rightarrow, "{u^{-1}_B}"', from=1-1, to=2-1]
	\arrow[Rightarrow, "{\iota'_B}"', from=2-1, to=3-1]
	\arrow[Rightarrow, "{\iota'_A}", from=3-2, to=3-1]
	\arrow[Rightarrow, "{u^{-1}_A}", from=3-3, to=3-2]
	\arrow[Rightarrow, "{\lambda_f}", from=1-3, to=2-3]
	\arrow[Rightarrow, "{\phi}", from=2-3, to=3-3]
	\arrow[Rightarrow, "{\eta_f}", from=1-1, to=1-3]
	\arrow[Rightarrow, "{u^{-1}_A}"', from=2-3, to=2-2]
	\arrow[Rightarrow, "{\phi}", from=2-2, to=3-2]
	\arrow[Rightarrow, "{\eta'_f}", from=2-1, to=2-2]
\end{tikzcd}
$$
Now
\begin{itemize}
 \item the top region commutes by the modification property for $u$;
 \item the bottom left rectangle commutes by the unit condition for $(f,\phi)$ as in \Cref{defmorphalg};
 \item the bottom right rectangle commutes by the interchange law.
\end{itemize}

Similarly, the multiplication condition can be decomposed as follows, where again the whiskerings are omitted.
$$
\begin{tikzpicture}[baseline= (a).base]
 \node[scale=.75] (a) at (0,0){
\begin{tikzcd}[column sep=-4em]
	&&& {e'_B\circ\lambda_B\circ Te'_B\circ T\lambda_B\circ TTf} \\
	&& {e'_B\circ T'e'_B\circ (\lambda\lambda)_B\circ TTf} && {e'_B\circ\lambda_B\circ Te'_B\circ TT'f\circ T\lambda_A} \\
	& {e'_B\circ\mu'_B\circ(\lambda\lambda)_B\circ TTf} && {e'_B\circ T'e'_B\circ\lambda_{T'B}\circ TT'f\circ T\lambda_A} && {e'_B\circ\lambda_B\circ Tf\circ Te'_A\circ T\lambda_A} \\
	{e'_B\circ \lambda_B\circ \mu_B\circ TTf} && {e'_B\circ\mu'_B\circ\lambda_{T'B}\circ TT'f\circ T\lambda_A} &&& {} \\
	&&& {e'_B\circ T'e'_B\circ T'T'f\circ(\lambda\lambda)_A} && {e'_B\circ T'f\circ\lambda_A\circ Te'_A\circ T\lambda_A} \\
	&& {e'_B\circ\mu'_B\circ T'T'f\circ(\lambda\lambda)_A} && {e'_B\circ Tf\circ T'e'_A\circ (\lambda\lambda)_A} \\
	&&&&& {f\circ e'_A\circ\lambda_A\circ Te'_A\circ T\lambda_A} \\
	{e'_B\circ\lambda_B\circ Tf} && {e'_B\circ T'f\circ \mu'_A\circ(\lambda\lambda)_A} && {f\circ e'_A\circ T'e'_A\circ(\lambda\lambda)_A} \\
	& {e'_B\circ T'f\circ \lambda_A\circ\mu_A} && {f\circ e'_A\circ\mu'_A\circ(\lambda\lambda)_A} \\
	&& {f\circ e'_A\circ\lambda_A\circ\mu_A}
	\arrow[Rightarrow, "{\widehat{T\lambda_f}}", from=1-4, to=2-5, near end]
	\arrow[Rightarrow, "{\widehat{T\phi}}", from=2-5, to=3-6, near end]
	\arrow[Rightarrow, "{\phi}", from=5-6, to=7-6]
	\arrow[Rightarrow, "{\lambda_{e'_A}}", from=7-6, to=8-5]
	\arrow[Rightarrow, "{\gamma'_A}", from=8-5, to=9-4]
	\arrow[Rightarrow, "{\lambda_f}", from=3-6, to=5-6]
	\arrow[Rightarrow, "{m_A}", from=9-4, to=10-3]
	\arrow[Rightarrow, "{\lambda_{e'_B}}"', from=1-4, to=2-3, near end]
	\arrow[Rightarrow, "{\gamma'_B}"', from=2-3, to=3-2, near end]
	\arrow[Rightarrow, "{m_B}"', from=3-2, to=4-1, near end]
	\arrow[Rightarrow, "{\mu_f}"', from=4-1, to=8-1]
	\arrow[Rightarrow, "{\lambda_f}"', from=8-1, to=9-2]
	\arrow[Rightarrow, "{\phi}"', from=9-2, to=10-3]
	\arrow[Rightarrow, "{\widehat{T\lambda_f}}", from=3-2, to=4-3, near end]
	\arrow[Rightarrow, "{\lambda_f}"', from=4-3, to=6-3]
	\arrow[Rightarrow, "{\mu'_f}"', from=6-3, to=8-3]
	\arrow[Rightarrow, "{m_A}"', from=8-3, to=9-2, near end]
	\arrow[Rightarrow, "{\widehat{T\lambda_f}}", from=2-3, to=3-4, near end]
	\arrow[Rightarrow, "{\gamma'_B}"', from=3-4, to=4-3, near end]
	\arrow[Rightarrow, "{\lambda_f}", from=3-4, to=5-4]
	\arrow[Rightarrow, "{\gamma'_B}"', from=5-4, to=6-3, near end]
	\arrow[Rightarrow, "{\phi}", from=8-3, to=9-4, near end]
	\arrow[Rightarrow, "{\widehat{T'\phi}}", from=5-4, to=6-5, near end]
	\arrow[Rightarrow, "{\phi}", from=6-5, to=8-5]
	\arrow[Rightarrow, "{\lambda_{e'_B}}"', from=2-5, to=3-4, near end]
	\arrow[Rightarrow, "{\lambda_{e'_A}}"', from=5-6, to=6-5, near end]
\end{tikzcd}
};
\end{tikzpicture}
$$
Now,
\begin{itemize}
 \item the region on the far left commutes by the modification property for $m$;
 \item the hexagon on the bottom commutes by the multiplication condition for $(f,\phi)$ as in \Cref{defmorphalg};
 \item the hexagon on the top right commutes by pseudonaturality of $\lambda$;
 \item all the remaining parallelograms commute by the interchange law.
\end{itemize}
This makes $(f,\phi\,\lambda)$ a pseudomorphism of $T$-pseudoalgebras.

Finally, to prove 2-functoriality, let $(f,\phi)$ and $(g,\chi)$ be pseudomorphisms of $T'$-pseudoalgebras $A\to B$, and let $\alpha:f\Rightarrow g$ be a 2-cell of $T'$-pseudoalgebras. We have that $\alpha$ is canonically also a 2-cell of $T$-pseudoalgebras, since the relevant diagram can be decomposed as follows,
$$
 \begin{tikzcd}
  e'_B\circ\lambda_B\circ Tf \nat{d}{T\alpha} \nat{r}{\lambda_f} & e'_B\circ T'f\circ\lambda_A \nat{r}{\phi} \nat{d}{T'\alpha} & f\circ e'_A\circ\lambda_A \nat{d}{\alpha} \\
  e'_B\circ\lambda_B\circ Tg \nat{r}[swap]{\lambda_g} & e'_B\circ T'g\circ\lambda_A \nat{r}[swap]{\chi} & g\circ e'_A\circ\lambda_A
 \end{tikzcd}
 $$
 again omitting the whiskerings, and now
 \begin{itemize}
  \item the left square commutes by pseudonaturality of $\lambda$;
  \item the right square commutes since $\alpha$ is a 2-cell of pseudoalgebras, as in \Cref{alg2cell}.
 \end{itemize}

 This action on pseudoalgebras, their morphisms and their 2-cells defines then a 2-functor from the 2-category of $T'$-pseudoalgebras to the 2-category of $T$-pseudoalgebras (notice the direction).
\end{proof}

We encourage the readers more familiar with 2-dimensional diagrams to rewrite the proof using 2-cells and, for clarity, we suggest to dedicate one direction in each diagram to the transformation $\lambda$.

\printbibliography

@article{comprehension,
	author = "Street, Ross and Walters, Robert F. C.",
	title = "The comprehensive factorization of a functor",
	journal = "Bulletin of the American Mathematical Society",
    volume = "79",
	number = "5",
	year = "1973",
}

@book{borceux,
	author = "{Borceux}, Francis",
	publisher = "Cambridge University Press",
	title = "Handbook of Categorical Algebra {I}",
	year = "1994"
}

@phdthesis{thesis,
	author = "Perrone, Paolo",
	note = "Available at \href{http://paoloperrone.org/phdthesis.pdf}{http://paoloperrone.org/phdthesis.pdf}",
	school = "University of Leipzig",
	title = "Categorical Probability and Stochastic Dominance in Metric Spaces",
	year = "2018"
}

@incollection{girymonad,
	author = "Giry, {Michèle}",
	booktitle = "{Lecture Notes in Mathematics 915: Categorical Aspects of Topology and Analysis}",
	pages = "68-85",
	publisher = "Springer",
	title = "A categorical approach to probability theory",
	year = "1982",
	doi="10.1007/BFb0092872",
}

@article{smallfunctors,
	author = "Day, Brian J. and Lack, Stephen",
	title = "Limits of small functors",
	journal = "Journal of Pure and Applied Algebra",
    volume = "210",
	issue = "3",
	pages = "651-663",
	year = "2007",
}

@article{pseudomonads,
	author = "Lack, Stephen",
	title = "A coherent approach to pseudomonads",
	journal = "Advances in Mathematics",
    volume = "52",
	pages = "179-202",
	year = "2000",
}

@unpublished{decomposition,
    author = "Peschke, George and Tholen, Walter",
    title = "Diagrams, Fibrations, and the Decomposition of Colimits",
    note = "\href{https://arxiv.org/abs/2006.10890}{arXiv:2006.10890}",
    year = "2020",
}

@article{homdiag,
    author = "{Chachólski}, Wojciech and Scherer, {J\'erôme}",
    title = "Homotopy theory of diagrams",
    journal = "Memoirs of the American Mathematical Society",
    volume = "736",
    year = "2002",
}

@article{pev,
    author = "Fritz, Tobias and Perrone, Paolo",
    title = "Monads, partial evaluations, and rewriting",
    journal = "Proceedings of MFPS 36, ENTCS",
    year = "2020",
}

@article{kantorovich,
  author  = "Fritz, Tobias and Perrone, Paolo",
  title   = "A probability monad as the colimit of spaces of finite samples",
  journal = "Theory and Applications of Categories",
  year    = "2019",
  volume  = "34",
  number  = "7",
  pages   = "170-220",
 % note    = "\href{https://arxiv.org/abs/1712.05363}{arXiv:1712.05363}",
}

@article{marmolejo-doctrines,
 author = "Marmolejo, Francisco",
 title = "Doctrines whose structures forms a fully faithful adjoint string",
 journal = "Theory and Applications of Categories",
 year = "1997",
 volume = "3",
 number = "2",
 pages = "22-42",
}

@unpublished{act2019,
    author = "Constantin, Carmen and Fritz, Tobias and Perrone, Paolo and Shapiro, Brandon",
    title = "Partial evaluations and the compositional structure of the bar construction",
    year = "2020",
    note = "\href{https://arxiv.org/abs/2009.07302}{arXiv:2009.07302}",
}

@unpublished{coend,
    author = "Loregian, Fosco",
    title = "Coend calculus",
    year = "2015",
    note = "\href{https://arxiv.org/abs/1501.02503}{arXiv:1501.02503}",
}

@book{kelly,
    author = "Kelly, G. M.",
    title = "Basic concepts in enriched category theory",
    year = "1982",
    publisher = "Cambridge University Press",
    note = "Available at Reprints in TAC, No. 10, 2005",
}

@article{lucatelli,
    author = "Lucatelli Nunes, Fernando",
    title = "Pseudo-Kan extensions and descent theory",
    journal = "Theory and Applications of Categories",
    year = "2018",
    volume = "33",
    number = "15",
    pages = "390-444",
}

@article{guitart,
    author = "Guitart, René",
    title = "Sur le foncteur diagramme",
    journal = "Cahiers de topologie et géométrie différentielle catégoriques",
    year = "1973",
    volume = "14",
    number = "2",
    pages = "181-182",
}

@book{bourbakialg,
    author = "Bourbaki, Nicholas",
    title = "Algebra I",
    year = "1974",
    publisher = "Hermann",
}

@article{weilrestriction,
    author = "André Weil",
    title = "Adeles and algebraic groups",
    journal = "Progress in Mathematics",
    volume = "23",
    year = "1982",
    publisher = "Birkhäuser",
}

@book{konfinal,
    author = "Gabriel, Peter and Ulmer, Friedrich",
    title = "Lokal präsentierbare Kategorien",
    year = "1971",
    publisher = "Springer",
    series = "Lecture Notes in Mathematics",
    number = "221",
}

@book{ttt,
    author = "Barr, Michael and Wells, Charles",
    title = "Toposes, triples, and theories",
    year = "1983",
    publisher = "Springer",
    series = "Grundlehren der mathematischen Wissenschaften",
    number = "278",
}

@book{riehl,
    author = "Riehl, Emily",
    title = "Categorical Homotopy Theory",
    year = "2014",
    publisher = "Cambridge University Press"
}

@book{maclane,
    author = "Mac Lane, Saunders",
    title = "Categories for the Working Mathematician (Second Edition)",
    year = "1998",
    publisher = "Springer",
}

@book{completion,
    author = "Lambek, Joachim",
    title = "Completions of Categories",
    year = "1966",
    publisher = "Springer",
    series = "Lecture Notes in Mathematics",
    number = "24",
}

@article{isbell,
    author = "Isbell, John R",
    title = "Adequate subcategories",
    journal = "Illinois Journal of Mathematics",
    year = "1960",
    volume = "4",
    pages = "1-33",
}

@article{absolutecolimits,
    author = "Paré, Robert",
    title = "On absolute colimits",
    journal = "Journal of Algebra",
    year = "1971",
    volume = "19",
    issue = "1",
    pages = "80-95",
}

@book{hausdorff,
    author = "Hausdorff, Felix",
    title = "Grundzüge der Mengenlehre",
    year = "1914",
    publisher = "Veit \& comp., Leipzig"
}
\addcontentsline{toc}{section}{\bibname}

\end{document}